\newcommand{\ul}{\underline}
\newcommand{\idem}{\mathrm{Idem}}
\newcommand{\assem}{\mathrm{Assembly}}
\newcommand{\con}{\mathrm{con}}
\newcommand{\proj}{\mathrm{proj}}
\def\cA{\mathcal A}
\def\cR{\mathcal R}
\def\cC{\mathcal C}
\def\xc{\mathcal{HC}}
\def\ob{\mathrm{ob}}
\def\arr{\mathrm{ar}}
\def\inc{\mathrm{inc}}
\def\obc{\ob\cC}
\def\cD{\mathcal D}
\def\cE{\mathcal E}
\def\fF{\mathfrak{F}}
\def\cF{\mathcal F}
\def\fA{\mathfrak A}
\def\fall{\mathcal All}
\def\cG{\mathcal G}
\def\ring{\mathrm{Rings}}
\def\rings{\mathrm{Rings}}
\def\cat{\mathrm{Cat}}
\def\inj{\mathrm{inj}}
\def\spt{\mathrm{Spt}}
\def\sets{\mathfrak Sets}
\def\gpd{\mathfrak Gpd}
\def\fS{\mathbb{S}}
\def\bH{\mathbb{H}}
\def\ab{\mathfrak Ab}
\def\ind{\mathrm{Ind}}
\def\inc{\mathrm{inc}}
\def\comp{\mathrm{Comp}}
\def\bind{\mathrm{Big}\ind}
\def\Ar{\mathrm{Ar}}
\def\Ob{\mathrm{Ob}}
\def\res{\mathrm{Res}}
\def\St{\mathrm{St}}
\def\li{\mathrm{Link}}
\def\cSt{\overline{\St}}
\def\supp{\mathrm{supp}}
\def\equ{\mathrm{eq}}
\def\coequ{\mathrm{coeq}}
\def\Top{\mathrm{Top}}
\def\Sing{\mathrm{Sing}}
\def\dom{\mathrm{dom}}
\def\cod{\mathrm{cod}}
\def\inf{\mathrm{inf}}
\def\ninf{\mathrm{ninf}}
\def\nil{\mathrm{nil}}
\def\nat{\natural}
\def\ev{\mathrm{ev}}
\newcommand{\C}{\mathbb{C}}
\newcommand{\Q}{\mathbb{Q}}
\newcommand{\R}{\mathbb{R}}
\newcommand{\F}{\mathbb{F}}
\renewcommand{\cR}{\mathcal{R}}
\newcommand{\Z}{\mathbb{Z}}
\newcommand{\N}{\mathbb{N}}
\newcommand{\ho}{\mathrm{Ho}}
\newcommand{\hofi}{\mathrm{hofiber}}
\newcommand{\fC}{\mathfrak{A}}
\newcommand{\sS}{\mathbb{S}}
\newcommand{\org}{\mathrm{Or}G}
\newcommand{\orfg}{\mathrm{Or}_{\cF}G}
\def\bu{\bullet}
\def\diag{\operatorname{diag}}
\def\colim{\operatornamewithlimits{colim}}
\def\End{\operatorname{End}}
\def\map{\operatorname{map}}
\def\lra{\longrightarrow}
\def\onto{\twoheadrightarrow}
\def\iso{\stackrel{\cong}\lra}
\def\triqui{\vartriangleleft}
\def\weq{\overset\sim\lra}
\def\lweq{\overset\sim\longleftarrow}
\def\fibeq{\overset\sim\onto}
\def\tor{\mathrm{Tor}}
\def\tot{\mathrm{Tot}}
\numberwithin{equation}{section}
\theoremstyle{plain}
\newtheorem{thm}[equation]{Theorem}
\newtheorem{cor}[equation]{Corollary}
\newtheorem{lem}[equation]{Lemma}
\newtheorem{prop}[equation]{Proposition}
\newcommand{\comment}[1]{}  
\theoremstyle{definition}
\theoremstyle{remark}
\newtheorem{rem}[equation]{Remark}
\newtheorem{ex}[equation]{Example}
\newtheorem{nota}[equation]{Notation}
\newtheorem{para}[equation]{Equivariant homology}
\newtheorem{stan}[equation]{Standing Assumptions}
\newtheorem{sect}[equation]{Sectional Assumptions}
\begin{document}

\bibliographystyle{plain}

\title{Isomorphism conjectures with proper coefficients}
\author{Guillermo Corti\~nas}
\email{gcorti@dm.uba.ar}\urladdr{http://mate.dm.uba.ar/\~{}gcorti}
\address{Dep. Matem\'atica-IMAS, FCEyN-UBA\\ Ciudad Universitaria Pab 1\\
1428 Buenos Aires\\ Argentina}
\author{Eugenia Ellis}
\email{eugenia@cmat.edu.uy}
\address{CMAT, Facultad de Ciencias-UDELAR\\
Igu\'a $4225$, 11400 Montevideo, Uruguay}
\thanks{Corti\~nas was supported by CONICET; both authors were partially supported by
the MathAmSud network U11MATH-05 and by grants UBACyT 20020100100386, and MTM2007-64704 (FEDER
funds).}
\begin{abstract}
Let $G$ be a group and let $E$ be a functor from small $\Z$-linear categories to spectra. Also let $A$ be a ring with a $G$-action. Under mild conditions on $E$ and $A$ one can define an equivariant homology theory of $G$-simplicial sets
$H^G(-,E(A))$ with the property that if $H\subset G$ is a subgroup, then
\[
H^G_*(G/H,E(A))=E_*(A\rtimes H)
\] 
If now $\cF$ is a nonempty family of subgroups of $G$, closed under conjugation and under subgroups, then there
is a model category structure on $G$-simplicial sets such that a map $X\to Y$ is a weak equivalence (resp. a fibration) if and only 
if $X^H\to Y^H$ is an equivalence (resp. a fibration) for all $H\in\cF$. The strong isomorphism conjecture for the
quadruple $(G,\cF,E,A)$ asserts that if $cX\to X$ is the $(G,\cF)$-cofibrant replacement then
\[
H^G(cX,E(A))\to H^G(X,E(A))
\]
is an equivalence. The isomorphism conjecture says that this holds when $X$ is the one point space, in which case
$cX$ is the classifying space $\cE(G,\cF)$. In this paper we introduce an algebraic notion of $(G,\cF)$-properness for $G$-rings, modelled on the analogous notion for $G$-$C^*$-algebras, and show that the strong $(G,\cF,E,P)$ isomorphism conjecture for $(G,\cF)$-proper $P$ is true in
several cases of interest in the algebraic $K$-theory context. Thus we give a purely algebraic, discrete counterpart to a result of Guentner, Higson and Trout in the $C^*$-algebraic case. We apply this to show that under rather general hypothesis, the assembly map $H_*^G(\cE(G,\cF),E(A))\to E_*(A\rtimes G)$ can be identified with the boundary map in the long exact sequence of $E$-groups associated to certain exact sequence of rings. Along the way we prove several results on excision in algebraic $K$-theory and cyclic homology which are of independent interest.
\end{abstract}
\maketitle
\section{Introduction}
Let $G$ be a group; a \emph{family} of subgroups of $G$ is a nonempty family $\cF$ closed under conjugation and under
taking subgroups. If $\cF$ is a family of subgroups of $G$,  then a $G$-simplicial set $X$ is called a {\em $(G,\cF)$-complex} if the stabilizer of every simplex of $X$ is in $\cF$. The category of $G$-simplicial sets can be equipped with a closed model structure
where an equivariant map $X\to Y$ is a weak equivalence (resp. a fibration) if $X^H\to Y^H$ is a weak equivalence
(resp. a fibration) for every $H\in\cF$ (see Section \ref{sec:model}); $(G,\cF)$-complexes are the cofibrant objects in this model
structure (Remark \ref{rem:cofichar}). By a general construction of Davis and L\"uck (see 
\cite{dl}) any functor $E$ from the category $\Z-\cat$ of small $\Z$-linear categories to the category $\spt$ of spectra which sends category equivalences to equivalences of spectra gives rise to an equivariant homology theory of $G$-spaces
$X\mapsto H^G(X,E(R))$ for each unital ring $R$ with a $G$-action (unital $G$-ring, for short), such that if 
$H\subset G$ is a subgroup, then
\begin{equation}\label{intro:cross}
H_*^G(G/H,E(H))=E_*(R\rtimes H)
\end{equation}
is just $E_*$ evaluated at the crossed product. The \emph{strong isomorphism conjecture} for the quadruple $(G,\cF,E,R)$
asserts that $H^G(-,E(R))$ sends $(G,\cF)$-equivalences to weak equivalences of spectra. The strong isomorphism conjecture is equivalent to the assertion that for every $G$-simplicial set $X$ the map
\begin{equation}\label{intro:preass}
H^G(cX,E(R))\to H^G(X,E(R))
\end{equation}
induced by the $(G,\cF)$-cofibrant replacement $cX\to X$ is a weak equivalence. The weaker \emph{isomorphism conjecture} is the particular case when $X$ is a point; it asserts that if 
$\cE(G,\cF)\fibeq pt$ is the cofibrant replacement then the map
\begin{equation}\label{intro:ass}
H^G(\cE(G,\cF),E(R))\to H^G(pt,E(R))
\end{equation}
called the \emph{assembly map}, is an equivalence of spectra. This formulation of the conjecture is equivalent to that of Davis-L\"uck, (\cite{dl}) which is given in terms of topological spaces (see Proposition \ref{prop:equimodel} and paragraph \ref{para:equihom}). 

In this paper we are primarily concerned with the strong isomorphism conjecture for nonconnective algebraic $K$-theory --denoted $K$ in this paper-- homotopy algebraic $K$-theory $KH$, and Hochschild and cyclic homology $HH$ and $HC$.
Our main results are outlined in Theorem \ref{intro:main} below. First we need to explain the terms ``excisive" and ``proper" appearing in the theorem. Let $E:\ring\to \spt$ be a functor; we say that a not necessarily
unital ring $A$ is $E$-\emph{excisive} if whenever $A\to R$ is an embedding
of $A$ as a two sided ideal in a unital ring $R$, the sequence
\[
E(A)\to E(R)\to E(R/A)
\]
is a homotopy fibration. Unital rings are $E$-excisive for all functors $E$ considered in Theorem \ref{intro:main};
thus the theorem remains true if ``unital" is substituted for ``excisive". By a result of Weibel \cite{kh}, Homotopy algebraic $K$-theory satisfies excision; this means that every ring is $KH$-excisive. The rings which are excisive with respect to cyclic and Hochschild homology are the same; they were characterized by Wodzicki in \cite{wodex}, where he coined the term $H$-unital for such rings. By results of Suslin and Wodzicki, a ring is excisive for rational $K$-theory if and only if it is $H$-unital (see \cite{qs} for the if part and \cite{wodex} for the only if part); $K$-excisive rings were characterized by Suslin in \cite{sus}. Under mild assumptions on $E$ (the Standing Assumptions \ref{stan}), which are satisfied by all the examples considered in Theorem \ref{intro:main}, one can make sense of $H^G(-,E(A))$ for not necessarily unital, $E$-excisive $A$ (see Section \ref{sec:r&c}). The ring $\Z^{(X)}$ of polynomial functions on a locally finite simplicial set $X$ which are supported on a finite simplicial subset, and the ring $C_{\rm comp}(|X|,\F)$ of compactly supported continuous functions with
values in $\F=\R,\C$ are unital if and only if $X$ is finite, and are $E$-excisive for all $X$ and all the functors $E$ of Theorem \ref{intro:main}; they are $(G,\cF)$-proper whenever $X$ is a $(G,\cF)$-complex. In general if $X$ is a locally finite simplicial set with a $G$-action and $A$ is
a $G$-ring, then $A$ is called \emph{proper} over $X$ if it carries a $\Z^{(X)}$-algebra structure which is compatible with the action of $G$ and satisfies $\Z^{(X)}\cdot A=A$. We say that $A$ is $(G,\cF)$-\emph{proper} if it is proper over a $(G,\cF)$-complex. 

\begin{thm}\label{intro:main}
Let $G$ be a group, $\cF$ a family of subgroups, $E:\Z-\cat\to\spt$ a functor, and $P$ an $E$-excisive $G$-ring. The strong isomorphism conjecture for the quadruple $(G,\cF,E,P)$ is satisfied in each of the following cases.

\item[i)] $E=HH$ or $HC$ and $\cF$ contains all the cyclic subgroups of $G$.

\item[ii)] $E=KH$ and $P$ is $(G,\cF)$-proper.

\item[iii)] $E=K$ and $P$ is proper over a $0$-dimensional $(G,\cF)$-space.

\item[iv)] $E=K$, $\cF$ contains all the cyclic subgroups of $G$ and $P$ is a $(G,\cF)$-proper
$\Q$-algebra.

\item[v)] $E=K\otimes\Q$, $\cF$ contains all the cyclic subgroups of $G$ and $P$ is $(G,\cF)$-proper.

\end{thm}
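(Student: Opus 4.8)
The plan is to reduce the strong isomorphism conjecture to a statement that can be verified directly on the homogeneous spaces $G/H$ with $H\in\cF$, and then to verify that statement separately for each homology theory $E$ appearing in cases (i)--(v). The key structural observation is that $H^G(-,E(A))$ is, by the Davis--L\"uck machine, a homology theory whose value on cells $G/H$ is $E_*(A\rtimes H)$; since any $(G,\cF)$-equivalence of $G$-simplicial sets is built out of such cells, the strong conjecture for $(G,\cF,E,P)$ follows once one knows that the spectrum-level functor $H\mapsto E(P\rtimes H)$, regarded as a module over the orbit category $\orfg$, is already ``induced'' from the trivial subgroup in the appropriate sense --- equivalently, that the assembly map is an equivalence after applying $E$ not just to $P$ but to $P\otimes_{\Z^{(X)}}(\text{anything})$. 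Concretely, because $P$ is proper over a $(G,\cF)$-complex $X$, the crossed products $P\rtimes H$ decompose according to the $H$-fixed cells of $X$, and the heart of each case is a \emph{descent}-type statement: the spectrum $E(P\rtimes G)$ is computed by the homotopy colimit $\hoco_{G/H\in\orfg}E(P\rtimes H)$, i.e.\ the assembly map is an equivalence.

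First I would set up, in the section on properness, the basic algebra of $\Z^{(X)}$-algebras: for a locally finite $G$-simplicial set $X$ and a $G$-ring $A$ proper over $X$ with $\Z^{(X)}\cdot A=A$, establish the Mayer--Vietoris / open-cover decompositions of $A$ and of $A\rtimes H$ indexed by the simplices of $X$, together with the fact that these are $E$-excisive because each piece and each intersection is (using the stated excision properties: $KH$-excision for all rings by Weibel, $H$-unitality for $HH,HC$ and for rational $K$, Suslin's criterion for $K$). This gives, for each theory, a spectral sequence or homotopy-colimit presentation of $E(A\rtimes H)$ in terms of the cells of $X^H$ with isotropy in $\cF$. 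Next, using the general comparison of Davis--L\"uck homology theories (Proposition~\ref{prop:equimodel} and \ref{para:equihom}), I would reformulate the strong conjecture as: the natural transformation $\hoco_{G/K}H^G(G/K,E(P))\to H^G(X,E(P))$ over the cofibrant replacement is an equivalence --- which, after unwinding, is exactly the claim that $E$ applied to the ideal-decomposition of $P\rtimes G$ along the $(G,\cF)$-complex is exact. The point of properness is that $P\rtimes G$ fibres over the nerve-type object built from $X$, so the assembly map becomes the canonical map from the homotopy colimit over that diagram.

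The case-by-case work then goes as follows. For (i), $HH$ and $HC$ commute with filtered colimits and with the relevant direct-sum decompositions, and the crossed product $P\rtimes H$ has an $HH$- and $HC$-decomposition indexed by conjugacy classes of $H$ (the classical $HH(A\rtimes H)\cong\bigoplus_{[h]}HH(A,\, {}_hA)_{\text{something}}$ type splitting); since $\cF$ contains all cyclic subgroups, the ``interesting'' summands are already accounted for by orbits $G/C$ with $C$ cyclic, and one gets the equivalence by a direct computation --- no properness of $P$ is needed beyond $E$-excisiveness, which matches the hypothesis. For (ii), $KH$ is the theory for which excision is automatic and which is moreover invariant under polynomial (hence under $\Z^{(X)}$-algebra) manipulations, so the open-cover spectral sequence for $KH(P\rtimes H)$ collapses to give the homotopy colimit presentation and hence the assembly equivalence directly from properness. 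For (iii), $0$-dimensionality of $X$ means $P$ decomposes as a product/sum of rings indexed by the vertices, with no higher cells, so $K$ itself (which commutes with finite products and, on the nose, with the relevant filtered colimits of such) gives the result with no rationalisation. For (v), rationalised $K$-theory agrees with a truncation/combination of $KH$ and cyclic homology via the fundamental theorem / Goodwillie's theorem $K\otimes\Q\to HC\otimes\Q[1]$-type comparison, so (v) follows by combining (ii) and (i). Finally (iv) is the integral refinement of (v) under the extra hypothesis that $P$ is a $\Q$-algebra: then $K(P\rtimes H)$ is already rational in the relevant relative sense (the relative $K$-theory of a map of $\Q$-algebras agrees rationally with cyclic homology by Goodwillie), so the argument of (v) applies integrally.

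The hard part will be (iv) and (v): controlling the failure of $K$-theory (as opposed to $KH$) to satisfy excision and to commute with the infinite direct-sum decompositions coming from an infinite $(G,\cF)$-complex. The delicate point is that $P\rtimes G$ need not be $K$-excisive even though $P$ is, so one cannot naively run the open-cover spectral sequence for $K$; the resolution is to pass through $KH$ and the Hochschild--cyclic comparison, which is precisely why the cyclic-subgroup hypothesis on $\cF$ and the $\Q$-algebra (or rationalisation) hypothesis appear. Making the ``excision in algebraic $K$-theory and cyclic homology'' results of the paper (alluded to in the abstract) do this job cleanly --- in particular getting a version of Suslin--Wodzicki excision that is compatible with the $G$-action and with infinite coproducts of $H$-unital ideals --- is the main technical obstacle, and I expect it to occupy the bulk of the argument.
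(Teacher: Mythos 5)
Your overall architecture is the same as the paper's: reduce to a cell-by-cell statement, prove (i) by the conjugacy-class decomposition of $HH(R\rtimes\cG^G(-))$ plus the cyclic-subgroup hypothesis, prove (ii) by excision and induction over the skeleta of the $(G,\cF)$-complex, and deduce (iv)--(v) from (ii) and (i) via the fibration relating $K$, $KH$ and (rational) cyclic homology through the excisive fiber of the relative Chern character. That matches Propositions \ref{prop:asshh} and \ref{prop:ninfexci} and Theorems \ref{thm:propern} and \ref{thm:main}, and you correctly locate where each hypothesis is used.

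There is, however, a genuine gap at the foundation of (ii) and (iii): the single-orbit case. Decomposing a $0$-dimensional $(G,\cF)$-complex into vertices only reduces you (via Standing Assumption v)) to $X=G/K$ with $K\in\cF$, and at that point ``$K$-theory commutes with finite products'' proves nothing -- the orbit $G/K$ is a single transitive $G$-set, not a product decomposition of the ring. What is actually needed, and what your proposal does not supply, is the algebra of induction and compression: $P\cong\ind_K^G(A)$ for $A=\comp_K^G(P)$ (Proposition \ref{prop:indcomp}), the discrete Green imprimitivity isomorphism $\ind_K^G(A)\rtimes G\cong M_{G/K}(A\rtimes K)$ (Theorem \ref{thm:git}), and above all the statement that the induction map $H^K(Y,E(A))\to H^G(Y,E(\ind_K^G A))$ is an equivalence naturally in $Y$ (Proposition \ref{prop:equindiso}); the last of these requires the double-coset decomposition of $\res^H_G\ind_K^G$ and repeated use of matrix stability, and is the real content of the ``descent'' you assert. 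Once it is in place, a $(G,\cF)$-equivalence becomes an $H^K(-,E(A))$-equivalence because every subgroup of $K$ lies in $\cF$, and the $0$-dimensional case follows. This gap is load-bearing: in the inductive step of (ii) the ideal $I\cdot A$ cut out by $\ker(\Z^{(X)}\to\Z^{(Y)})$ is proper over $\coprod_i\ind_{H_i}^G(\Delta^n)$, hence (pushing forward along the projection to $\coprod_i G/H_i$) over a $0$-dimensional complex, so every layer of your ``open-cover'' argument terminates in exactly the case you have not proved.
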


Part i) of the theorem for unital rings is Proposition \ref{prop:asshh}; that it holds for all $HC$-excisive rings follows from this by Corollary \ref{cor:assnuni} and Proposition \ref{prop:hhstands}. Even for unital rings, part i) generalizes a result of L\"uck and Reich \cite{LR1}, who proved it under the additional assumption that $G$ acts trivially on $A$. Theorem \ref{thm:propern} proves that part ii) holds for any functor $E:\Z-\cat\to\spt$ satisfying
certain properties, including excision; the fact that $KH$ satisfies them is the subject of Section \ref{sec:kh}. We prove in Theorem \ref{thm:proper0} that 
part iii) of the theorem holds for any $E$ satisfying the standing assumptions; that they hold for $K$-theory
is established in Proposition \ref{prop:kstands}. Parts iv) and v) are the content of Theorem \ref{thm:main}.\\

The concept of properness used in this article is a discrete, algebraic translation of the analogous concept of proper
$G$-$C^*$-algebra. By a result of Guentner, Higson and Trout, the full $C^*$-crossed product version of the Baum-Connes conjecture with coefficients holds whenever the coefficient algebra is a proper $G$-$C^*$-algebra \cite{ght}. This result is a basic
fact behind the Dirac-dual Dirac method that was used, for example, in the proof of the Baum-Connes conjecture for
a-$T$-menable groups \cite{hk}. It is also at the basis of recent work of Meyer and Nest (\cite{ralf},\cite{mn1},\cite{mn2}) in which the conjecture and the Dirac method are recast in terms of triangulated categories. We expect that Theorem \ref{intro:main} can similarly be used as a tool in proving instances of the isomorphism conjecture for (homotopy) algebraic $K$-theory.   As a first application of Theorem \ref{intro:main} we prove the following theorem, which identifies the assembly map \eqref{intro:ass} as the connecting map in an excision
sequence.

\begin{thm}\label{intro:dirac}
Let $G$ be a group and $\cF$ a family of subgroups. Then there is a functor which assigns to each $G$-ring $A$
a $G$-ring $\fF^\infty A=\fF^\infty(\cF,A)$ equipped with an exhaustive filtration by $G$-ideals $\{\fF^nA:n\ge 0\}$, and a natural
transformation $A\to \fF^0A$, which, if $E$ is as in Theorem \ref{intro:main} and $A$ is $E$-excisive, have the following properties.

\item[i)] The map $E(A\rtimes G)\to E(\fF^0A\rtimes G)$ is an equivalence.

\item[ii)] The following sequence is a homotopy fibration
\[
E(\fF^0A\rtimes G)\to E(\fF^\infty A\rtimes G)\to E((\fF^\infty A/\fF^0 A)\rtimes G)
\]
In particular there is a map
\[
\partial:\Omega E((\fF^\infty A/\fF^0 A)\rtimes G)\to E(\fF^0A\rtimes G)
\]
\item[iii)] There is an equivalence
\[
H^G(\cE(G,\cF),E(A))\weq \Omega E((\fF^\infty A/\fF^0 A)\rtimes G)
\]
which makes the following diagram commute up to homotopy
\[
\xymatrix{
H^G(\cE(G,\cF),E(A))\ar[d]^\wr\ar[r]^(.6){\rm Assembly}&E(A\rtimes G)\ar[d]^\wr\\
\Omega E((\fF^\infty A/\fF^0 A)\rtimes G)\ar[r]^(.6)\partial& E(\fF^0A\rtimes G)}
\]
\end{thm}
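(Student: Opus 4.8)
The plan is to build the functor $\fF^\infty$ as a mapping-telescope–type construction that realizes the cofibrant replacement $\cE(G,\cF)\to pt$ at the level of rings, using the fact that a $(G,\cF)$-complex $X$ gives rise to the $(G,\cF)$-proper coefficient ring $\Z^{(X)}\otimes A$. Concretely, I would fix a model for $\cE(G,\cF)$ built as an increasing union of finite-dimensional $(G,\cF)$-complexes, or more flexibly take a directed system of $(G,\cF)$-complexes $X_0\hookrightarrow X_1\hookrightarrow\cdots$ with $\colim X_n\fibeq pt$, and set $\fF^n A$ to be the $G$-ideal of $\fF^\infty A:=\colim_n(\Z^{(X_n)}\otimes A)$ obtained from the subcomplex $X_n$; the map $A\to\fF^0 A$ comes from the augmentation $\Z\to\Z^{(X_0)}$ picking out a suitable vertex (or, more honestly, one replaces $\Z^{(X_0)}$ by $\Z$ itself when $X_0$ can be taken to be a $G$-orbit, and in general uses that $\Z^{(X_0)}\otimes A\rtimes G$ is $E$-equivalent to $A\rtimes G$). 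The ideals $\fF^n A=\Z^{(X_n)}\otimes A$ inside $\fF^\infty A$ are clearly $G$-stable, exhaustive, and each $\fF^n A$ is $(G,\cF)$-proper (hence $E$-excisive by the hypotheses on $E$), while $\fF^\infty A$ and the quotients are filtered colimits of $(G,\cF)$-proper rings.

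For part i), the point is that $\fF^0 A$ is proper over the $0$-dimensional $(G,\cF)$-complex $X_0$ (taken to be a disjoint union of orbits $G/H$ with $H\in\cF$), so $E(\fF^0 A\rtimes G)\simeq H^G(X_0,E(A))$; choosing $X_0\to pt$ so that this map is an $E$-equivalence — which one can do whenever $\cF$ contains the trivial subgroup, i.e., always, since a family is nonempty and closed under subgroups — gives $E(A\rtimes G)\weq E(\fF^0 A\rtimes G)$. More precisely I would invoke the reduction/colimit machinery of Section~\ref{sec:r&c} to identify $E(\fF^n A\rtimes G)$ with $H^G(X_n,E(A))$ naturally in $n$, using that each $\Z^{(X_n)}\otimes A$ is $E$-excisive and $(G,\cF)$-proper and applying Theorem~\ref{intro:main} to pass between the cofibrant replacement and the honest space; continuity of $E$ on filtered colimits of rings then gives $E(\fF^\infty A\rtimes G)\simeq\colim_n H^G(X_n,E(A))\simeq H^G(pt,E(A))=E(A\rtimes G)$ as well, but — and this is the subtle point — the filtration is arranged so that it is $\fF^0 A$, not $\fF^\infty A$, that computes the \emph{cofibrant} side $H^G(\cE(G,\cF),E(A))$: the telescope is set up with the subcomplexes $X_n\subset\cE(G,\cF)$ so that $\colim_n(\Z^{(X_n)}\otimes A)$ corresponds to the \emph{cone} on $\cE(G,\cF)$, which is $E$-acyclic, forcing $E(\fF^\infty A\rtimes G)\simeq 0$ and realizing $H^G(\cE(G,\cF),E(A))$ as $\Omega$ of the cofiber. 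Getting this bookkeeping right — deciding exactly which complex plays the role of $X_n$ so that the colimit is contractible while $\fF^0$ recovers the space — is where I expect the real work to be.

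For part ii), once $\fF^0 A$, $\fF^\infty A$ and $\fF^\infty A/\fF^0 A$ are all $E$-excisive (filtered colimits of $(G,\cF)$-proper, hence $E$-excisive, rings are $E$-excisive under the Standing Assumptions, since $E$ commutes with filtered colimits and excisiveness is detected on such colimits), the short exact sequence of $G$-rings
\[
0\to\fF^0 A\to\fF^\infty A\to\fF^\infty A/\fF^0 A\to 0
\]
crossed with $G$ stays exact, and applying $E$-excision to it yields the homotopy fibration and hence the boundary map $\partial$. Part iii) is then essentially formal: the equivalence $H^G(\cE(G,\cF),E(A))\weq\Omega E((\fF^\infty A/\fF^0 A)\rtimes G)$ is the composite of the identification of the cofibrant side with $\Omega$ of the cofiber of $E(\fF^0 A\rtimes G)\to E(\fF^\infty A\rtimes G)$ (using $E(\fF^\infty A\rtimes G)\simeq 0$) with part~ii), and the square commutes because the assembly map $H^G(\cE(G,\cF),E(A))\to H^G(pt,E(A))=E(A\rtimes G)$ is, under our identifications, induced by the inclusion $X_0\hookrightarrow$ (cone), i.e., by $\fF^0 A\hookrightarrow\fF^\infty A$, and $\partial$ is by construction the connecting map of exactly that inclusion — so one only has to check that the square of long-exact-sequence maps is the naturality square for the cofiber sequence, which is a diagram chase in the stable homotopy category. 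The main obstacle, as indicated, is the combinatorial/model-categorical step of producing the functorial filtered system of proper rings whose colimit is $E$-acyclic and whose bottom term reproduces the classifying space; everything downstream is excision plus continuity of $E$.
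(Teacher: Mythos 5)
Your proposal gets the formal shape of parts ii) and iii) right (compare the fibration sequences over $X=\cE(G,\cF)$ and $X=pt$), but the construction you propose for the filtration does not work, and two of your key claims are actually inconsistent with the theorem. First, setting $\fF^nA=\Z^{(X_n)}\otimes A$ for subcomplexes $X_n$ of $\cE(G,\cF)$ cannot give part i): if $X_0$ is a $0$-dimensional $(G,\cF)$-complex, say a single orbit $G/H$ with $H\in\cF$, then by the imprimitivity isomorphism $\Z^{(G/H)}\otimes A\rtimes G\cong \ind_H^G(\res A)\rtimes G\cong M_{G/H}(A\rtimes H)$, so $E(\fF^0A\rtimes G)\simeq E(A\rtimes H)$, not $E(A\rtimes G)$; and taking $H=1$ only gives $E(A)$. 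More fundamentally, a $(G,\cF)$-proper $\fF^0A$ satisfies the isomorphism conjecture (Theorem \ref{thm:proper0}), so it computes the \emph{cofibrant} side, whereas part i) demands that $\fF^0A$ compute the \emph{point} side $E(A\rtimes G)$ — you assert both in the same paragraph, which is a contradiction unless the assembly map for $A$ is already an equivalence. Second, your claim that $E(\fF^\infty A\rtimes G)\simeq 0$ would, fed into parts ii) and iii), make $\partial$ and hence the assembly map an equivalence for every $A$, which is false. The correct vanishing is only $E_*(\fF^\infty A\rtimes H)=0$ for $H\in\cF$; this kills $H^G(\cE(G,\cF),E(\fF^\infty A))$ because $\cE(G,\cF)$ has isotropy only in $\cF$, but says nothing about $H^G(pt,\cdot)=E(\fF^\infty A\rtimes G)$.

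The paper's construction is quite different and each of its ingredients addresses exactly one of these points. It sets $Y=G/G\coprod X_\cF$ with $X_\cF=\coprod_{H\in\cF}G/H$; the $G$-fixed point $y_0=G/G$ gives an \emph{equivariant} corner inclusion $\Z\to M_{\ul{Y}}$, and $\fF^0=\colim_n(M_\infty M_{\ul{Y}})^{\otimes n}$, so that $A\to\fF^0A$ is an equivariant Morita-type equivalence after crossing with \emph{any} subgroup (Lemma \ref{lem:mxg} plus matrix stability and continuity) — this is part i), and note $\fF^0A$ is \emph{not} $(G,\cF)$-proper precisely because $Y$ contains the full-isotropy point $G/G$. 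Then $\fF^{n+1}=\Gamma_\rho\otimes\fF^n$ where $\Gamma_\rho$ is the mapping cone of the equivariant inclusion $\rho:\Z^{(X_\cF)}\to M_{\ul{Y}}$: the subquotients $\fF^{n+1}/\fF^n\cong\Sigma\Z^{(X_\cF)}\otimes\fF^n$ are proper over the discrete $X_\cF$, so Theorem \ref{thm:proper0} (not the full Theorem \ref{intro:main}) handles condition c) of a Dirac extension by induction; and the vanishing of $E_*(\fF^\infty A\rtimes H)$ for $H\in\cF$ comes from the fact that the connecting map of the cone extension is split surjective over such $H$, the splitting being multiplication by the idempotent $\chi_H\in\Z^{(G/H)}\subset\Z^{(X_\cF)}$ mapped into $M_{\ul Y}$ via $\rho$. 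None of this telescope-over-$\cE(G,\cF)$ versus cone-on-$\cE(G,\cF)$ bookkeeping appears; the classifying space enters only at the very end, through Proposition \ref{prop:dirac}. You correctly identified that "deciding which object is acyclic where" is the crux, but the resolution requires the matrix-algebra and mapping-cone machinery, not a choice of exhausting subcomplexes.
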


The theorem above holds more generally for any functor satisfying certain hypothesis, listed in \ref{stan} and \ref{secstan}; see Proposition \ref{prop:dirac} and Theorem \ref{thm:assbound}. 

We also prove a number of results about $K$-excisive and $H$-unital rings which are needed for the proof of the theorems above; they are summarized in the following theorem.

\begin{thm}\label{intro:exci}
\item[i)] If $A$ is a $K$-excisive (resp. $H$-unital) $G$-ring, then $A\rtimes G$ is $K$-excisive (resp. $H$-unital).
\item[ii)] Let $\{A_i\}$ be a family of rings and let $A=\bigoplus_iA_i$ their direct sum, with coordinate-wise product.
Then $A$ is $K$-excisive (resp. $H$-unital) if and only if each $A_i$ is.
\item[iii)] If $A$ and $B$ are $K$-excisive rings, and at least one of them is flat as a $\Z$-module, then $A\otimes B$
is $K$-excisive.
\end{thm}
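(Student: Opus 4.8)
The plan is to reduce all three statements to the homological characterizations of the two classes of rings: Wodzicki's \cite{wodex}, that $A$ is $H$-unital precisely when its reduced bar complex $\beta_*(A)=(A^{\otimes\bullet+1},b')$ is acyclic --- equivalently, $\tor^{A^+}_*(\Z,A)=0$ for all $*$, i.e.\ $\Z\otimes_{A^+}^{\bL}A\simeq0$ with $A^+=\Z\oplus A$ --- and Suslin's \cite{sus}, which expresses $K$-excision of $A$ through an analogous exactness condition on $\beta_*(A)$ and its derived reductions modulo $m$. Two remarks will be used throughout. First, both properties pass to filtered colimits of rings, since $\beta_*(-)$ and its base changes commute with filtered colimits, as does homology. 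Second, both are inherited by ring retracts: if $A'\to A\to A'$ are ring maps composing to the identity, then $\beta_*(A')$ is a direct summand of $\beta_*(A)$ as a complex, and a direct summand of an acyclic complex is acyclic --- and likewise after base change.

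For part i), put $I=A\rtimes G$ and $R=A^+\rtimes G=\Z[G]\oplus I$, a unital ring containing $I$ as a two-sided ideal with $R/I=\Z[G]$. One checks directly that $R$ is free as a right $A^+$-module (basis $G$, after untwisting each summand $A^+g$ by the automorphism $g$), and that as a \emph{left} $A^+$-module $I=A\rtimes G$ is simply $A^{(G)}$, since $A^+\subseteq A^+\rtimes G$ acts by left multiplication and hence without twisting. Writing $R/I=\Z[G]$ as $\Z\otimes_{A^+}R$ and using the $A^+$-flatness of $R$, the change of rings along $A^+\to R$ gives
\[
\Z[G]\otimes_R^{\bL}I\ \simeq\ \Z\otimes_{A^+}^{\bL}(A\rtimes G)\ \simeq\ \bigl(\Z\otimes_{A^+}^{\bL}A\bigr)^{(G)}.
\]
When $A$ is $H$-unital (resp.\ $K$-excisive) the right-hand side vanishes (resp.\ vanishes after the relevant reductions modulo $m$, using that $\Z[G]$ is $\Z$-free); the vanishing of $\Z[G]\otimes_R^{\bL}I$, together with the standard comparison between the embedding $I\hookrightarrow R$ and the universal one $I\hookrightarrow I^+$ \cite{qs,wodex,sus}, then forces $I=A\rtimes G$ to be $H$-unital (resp.\ $K$-excisive).

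Part ii) is then short. For ``only if'' the structure maps of the product exhibit each $A_i$ as a ring retract of $A=\bigoplus_jA_j$, so the second remark applies. For ``if'', write $A$ as the filtered colimit of its finite sub-sums to reduce to a finite product, then by induction to $A=A_1\times A_2$ with $A_1,A_2$ both $H$-unital (resp.\ $K$-excisive). Given any embedding $A_1\times A_2\hookrightarrow R$ into a unital ring, the identities $A_i=A_i^2$ force $A_1$ and $A_2$ to be ideals of $R$, with $A_2$ an ideal of $R/A_1$; the excision homotopy fibrations for these two ideals juxtapose to a fibration $E(A_1)\to\mathrm{hofiber}\bigl(E(R)\to E(R/A)\bigr)\to E(A_2)$, which the five lemma identifies with the split fibration $E(A_1)\to E(A_1\times A_2)\to E(A_2)$ coming from the product structure --- using that $E=K,HH,HC$ sends finite products of rings to products of spectra, which supplies the splitting. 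Hence $E(A)\simeq\mathrm{hofiber}(E(R)\to E(R/A))$, i.e.\ $A$ is excisive.

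For part iii), assume $B$ is $\Z$-flat. Then the shuffle map is an Eilenberg--Zilber quasi-isomorphism $\beta_*(A\otimes_{\Z}B)\weq\beta_*(A)\otimes_{\Z}\beta_*(B)$, and $\beta_*(B)$ is a bounded-below complex of flat $\Z$-modules, hence $K$-flat. Consequently $\beta_*(A\otimes_{\Z}B)\otimes_{\Z}^{\bL}k\simeq\bigl(\beta_*(A)\otimes_{\Z}^{\bL}k\bigr)\otimes_{\Z}^{\bL}\beta_*(B)$ for every $\Z$-algebra $k$, so each exactness condition that Suslin's criterion attaches to $A$ (at $k=\Q$ and at $k=\Z/p$) is inherited by $A\otimes_{\Z}B$, since tensoring an acyclic complex with $\beta_*(B)$ keeps it acyclic. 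I expect part iii) to be the real obstacle: algebraic $K$-theory admits no Künneth formula, so the argument must be routed entirely through Suslin's homological description of $K$-excision, and the delicate point is that $(A\otimes_{\Z}B)^+$ is \emph{not} a tensor product of unitalizations; one must therefore carry out the flat base change at the level of the bar complex (or of tensor algebras) and then check that Suslin's precise conditions, which see torsion at every prime, really are preserved under replacing $\beta_*(A)$ by $\beta_*(A)\otimes_{\Z}\beta_*(B)$ with $\beta_*(B)$ flat. The change-of-rings bookkeeping of part i), relating the three unitalizations $I^+$, $A^+\rtimes G$ and $A^+$, is a lesser difficulty.
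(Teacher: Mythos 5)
There are two genuine gaps. The fatal one is in part iii): the bar complex $C^{bar}(A)=(A^{\otimes \bullet+1},b')$ is \emph{not} the chain complex of a simplicial abelian group (the differential $b'$ omits the last face), so Eilenberg--Zilber does not apply, and your claimed quasi-isomorphism $C^{bar}(A\otimes B)\weq C^{bar}(A)\otimes C^{bar}(B)$ is false. Concretely, take $A=B=\Z$ with zero multiplication: every differential in $C^{bar}(A)$ and in $C^{bar}(A\otimes B)$ vanishes, so $H_n(C^{bar}(A\otimes B))=\Z$ for all $n$, whereas $H_n(C^{bar}(A)\otimes C^{bar}(B))=\Z^{n+1}$. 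Since your whole argument for iii) routes Suslin's criterion through this K\"unneth formula, it collapses. The paper's Proposition \ref{prop:tenso} instead resolves $B$ by free rings $F\fibeq B$, computes $\tor^{\widetilde{A\otimes TV}}_n(\Z,A\otimes TV)=L_{n-1}A\otimes V^{\otimes n+1}$ (Lemma \ref{lem:freetenso}) --- which uses the $K$-excisiveness of $A$ through the identities $A\cdot L_nA=L_nA$ of Suslin--Wodzicki, not just its flatness --- and then kills the spectral sequence of Proposition \ref{prop:specseq} using the $K$-excisiveness of $B$. Both hypotheses enter in an essential, asymmetric way that no flat-base-change argument sees.

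The second gap is the last step of part i). Your change-of-rings computation $\tor^R_*(R/I,I)\cong\tor^{A^+}_*(\Z,A)^{(G)}$ for $I=A\rtimes G\triqui R=\tilde{A}\rtimes G$ is correct and pleasant, but the criteria of Suslin and Wodzicki are stated for the \emph{universal} unitalization $\tilde{I}$, and the passage from the vanishing of $\tor^{R}_*(R/I,I)$ for this one embedding to the vanishing of $\tor^{\tilde{I}}_*(\Z,I)$ is not a ``standard comparison'': it is the entire content of the statement. (The natural surjection $I\otimes_{\tilde{I}}I\onto I\otimes_RI$ already shows the two $\tor_1$'s need not agree; moreover the identification of $\tor^R_*(R/I,I)$ with bar homology requires $I$ to be $\Z$-flat, which is not assumed, and for $H$-unitality you need acyclicity of $C^{bar}(I)\otimes V$ for \emph{all} $V$, which a single embedding cannot deliver --- note also that your opening identification of $H$-unitality with $\tor^{\tilde A}_*(\Z,A)=0$ is only valid for flat $A$.) The paper avoids the issue entirely by resolving $A$ by free $G$-rings $F\fibeq A$, so that $F\rtimes G\fibeq A\rtimes G$ is a flat resolution, and computing $\tor^{\widetilde{TV\rtimes G}}_*(\Z,TV\rtimes G)$ directly over the universal unitalization (Lemma \ref{lem:freecross}). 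Part ii) of your proposal is essentially sound; in fact your retract argument for the ``only if'' direction is cleaner than the paper's recursive spectral-sequence argument in Proposition \ref{prop:barsum}.
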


Part i) of Theorem \ref{intro:exci} results by combining Propositions \ref{prop:crossbar} and \ref{prop:crossh}. Part ii) follows from Propositions \ref{prop:barsum} and \ref{prop:barsumh}. Part iii) is Proposition \ref{prop:tenso}. The analogue of part iii) for $H$-unital rings is true without flatness assumptions, and was proved by Suslin and Wodzicki in \cite[Theorem 7.10]{qs}. \\

The rest of this paper is organized as follows. In Section \ref{sec:model} we formulate the isomorphism conjectures in terms of closed model categories. If $G$ is a group, $\cF$ a family of subgroups and $\cC$ is either the category $\Top$ of topological spaces or the category $\fS$ of simplicial sets, we introduce closed model structures on the equivariant category $\cC^G$ in which an equivariant map $X\to Y$ is a weak equivalence (resp. a fibration) if $X^H\to Y^H$ is one for every $H\in\cF$. We show in Proposition \ref{prop:equimodel} that the realization and singular functors give a Quillen equivalence between $\fS^G$ and $\Top^G$. In Section \ref{sec:r&c} we give a list of five basic conditions for a functor $E:\Z-\cat\to\spt$, the Standing Assumptions \ref{stan}; all functors $E$ considered
in the paper satisfy them. All but one of these conditions refer to needed permanence properties of $E$-excisive rings; thus they concern only the restriction of $E$ to $\ring$. The remaining condition is that for all $\cC\in\Z-\cat$ there must
be an equivalence
\begin{equation}\label{intro:weqac}
E(\cA(\cC))\weq E(\cC)
\end{equation}
Here 
\[
\cA(\cC)=\bigoplus_{x,y\in \cC}\hom_\cC(x,y)
\]
is the arrow ring. The assignment $\cC\to \cA(\cC)$ is functorial only for functors which are injective on objects;
likewise the equivalence \eqref{intro:weqac} is only required to be natural with respect to such functors. These conditions imply, for example, that $E$ sends naturally equivalent functors to homotopy equivalent maps of spectra (Lemma \ref{lem:enat}), and that $H^G(X,E(-))$ maps extensions of $E$-excisive rings to homotopy fibrations (Proposition \ref{prop:exciequi1}). We also discuss a fully functorial construction $\Z-\cat\to \ring$, $\cC\to\cR(\cC)$ which comes with a map $p:\cR(\cC)\to \cA(\cC)$ and give condtions on $E$ under which $E(p)$ is an equivalence for all 
$\cC$ (Lemma \ref{lem:eac=ebc}); they apply, for example, when $E=KH$, but fail for $E=K$(see Example \ref{ex:acoprdb}).
In Section \ref{sec:kth} we present the model for the (nonconnective) $K$-theory spectrum that we use in this article --essentially borrowed from Pedersen-Weibel's paper \cite{pw2}-- and prove (Proposition \ref{prop:kstands}) that it satisfies the standing assumptions. For this we need several properties of $K$-excisive rings which are proved in the appendix (including those listed as parts i) and ii) of Theorem \ref{intro:exci}). Section \ref{sec:kh} concerns Weibel's homotopy $K$-theory; the fact that it satisfies the standing assumptions is proved in Proposition \ref{prop:homostand}. We also show (Proposition \ref{prop:homob}) that there is a natural equivalence $KH(\cC)\to KH(\cR(\cC))$ $(\cC\in\Z-\cat)$. The basic definitions of Hochschild and cyclic homology for rings and $\Z$-linear categories
are reviewed in Section \ref{sec:hc}, where it is shown (Proposition \ref{prop:hhstands}) that they satisfy the standing assumptions. Part i) of Theorem \ref{intro:main} is proved in Section \ref{sec:asshh} (Proposition \ref{prop:asshh}). In the next section we discuss various Chern characters connecting $K$-theory with cyclic homology. Of these, the relative character 
\[
\nu:K^\nil(\cC)\otimes\Q=\hofi(K(\cC)\to KH(\cC))\to \Omega^{-1}|HC(\cC)|\otimes\Q
\]
(defined in \eqref{map:nu})) plays a prominent role in the article. Here $|-|$ is the spectrum associated by the Dold-Kan correspondence. We show in Proposition \ref{prop:ninfexci} that its fiber 
\begin{equation}\label{intro:kninf}
K^\ninf(\cC)=\hofi(\nu)
\end{equation}
satisfies the standing assumptions, that in addition it is excisive and that $K_*^\ninf$ commutes with filtering colimits. Section \ref{sec:zx} reviews some of the properties of the ring $\Z^{(X)}$ of finitely supported, integral polynomial functions on a simplicial set $X$. For example, $\Z^{(-)}$ is functorial for proper maps, and sends disjoint unions to direct sums (see Subsection \ref{subsec:rings}). Moreover, if $X$ is locally finite, and $Y\subset X$ is a subobject, then the the restriction map $\Z^{(X)}\to\Z^{(Y)}$ is onto (Corollary \ref{cor:sursupp}).
 We also show that if $X$ is locally finite, then $\Z^{(X)}$ is free as an abelian group (see Lemma \ref{lem:zxfree}) and that if $E$ satisfies the standing assumptions then the ring $\Z^{(X)}$ is $E$-excisive (Proposition \ref{prop:tfp}). Thus by Theorem \ref{intro:exci} iii), the class of $K$-excisive rings is closed under tensoring with $\Z^{(X)}$ (Proposition \ref{prop:exiax}).
In Section \ref{sec:propring} we consider $G$-rings which are proper over a $(G,\cF)$-complex $X$. We establish discrete analogues of several of the properties of proper $C^*$-algebras discussed in \cite{ght}. For a subgroup $H\subset G$ we introduce the induction functor $\ind_H^G:H-\rings\to G-\rings$ (Subsection \ref{subsec:ind}) and show that it is an equivalence between
$H-\rings$ and the full subcategory of those $G$-rings which are proper over the $0$-dimensional simplicial set $G/H$ (Proposition \ref{prop:indcomp}). Next we give a discrete variant of Green's imprimitivity theorem; we show in Theorem \ref{thm:git} that there
is an isomorphism
\begin{equation}\label{intro:git}
\ind_H^G(A)\rtimes G\cong M_{G/H}(A\rtimes H)
\end{equation}
Here $M_{G/H}$ denotes matrices indexed by $G/H\times G/H$ with finitely many nonzero coefficients. Also in this section we consider
the restriction functor $\res_G^H$ going from $G$-rings to $H$-rings and study the composites $\ind_H^G\res_G^H$ and
$\res_G^H\ind_K^G$ for subgroups $K,H\subset G$ (Lemmas \ref{lem:indtriv} and \ref{lem:indxtheta}).
The material in Section \ref{sec:propring} is used in the next section to define, for a group $G$, a subgroup $K\subset G$, a $G$-simplicial set $X$,
a functor $E:\Z-\cat\to\spt$ satisfying the standing assumptions, and a $K$-ring $A$, an induction map
\[
\ind:H^K(X,E(A))\to H^G(X,E(\ind_K^G(A)))
\]
We show in Proposition \ref{prop:equindiso} that the map above is an equivalence. Then we use this result to prove
part iii) of Theorem \ref{intro:main} for any functor satisfying assumptions \ref{stan}; see Theorem \ref{thm:proper0}. The latter theorem is applied in Section \ref{sec:dirac}, where
Theorem \ref{intro:dirac} is proved for any $E$ satisfying assumptions \ref{stan} and \ref{secstan} (see Proposition 
\ref{prop:dirac} and Theorem \ref{thm:assbound}). In Section \ref{sec:main} we begin by proving part ii) of Theorem
\ref{intro:main} for any functor $E$ satsifying excision in addition to the hypothesis of \ref{stan} and \ref{secstan} (see Theorem \ref{thm:propern}). In particular, it holds when $E$ is the functor $K^{\ninf}$ of \eqref{intro:kninf}. Parts iv) and v) of Theorem \ref{intro:main} are the content of Theorem \ref{thm:main}). The proof uses part i) of Theorem \ref{intro:main}, and Theorem \ref{thm:propern} applied to $K^\ninf$.
In the Appendix we recall the results of Suslin and Wodzicki on $K$-excisive and $H$-unital rings, and establish Theorem \ref{intro:exci} (see Propositions \ref{prop:barsum}, \ref{prop:barsumh}, \ref{prop:tenso}, \ref{prop:crossbar} and \ref{prop:crossh}).

\smallskip

\begin{nota}
If $\cC$ is a (small) category, we write $\obc$ for the (small) set
of objects and $\arr \cC$ for that of arrows. We often consider a
set $X$ as a discrete category, whose only arrows are the identity
maps. In particular, we do this when $X=\obc$; note that there is a
faithful functor $\obc\to \cC$.\goodbreak

We write $\sS$ for the category of simplicial sets
and $\Top$ for
that of topological spaces. A \emph{family} $\cF$ of subgroups of a
group $G$ is a nonempty family closed under conjugation and under
taking subgroups. We write $\orfg$ for the orbit category relative
to the family $\cF$; its objects are the $G$-sets $G/H$, $H\in\cF$;
its homomorphisms are the $G$-equivariant maps. If $C$ and $D$ are
categories, we write $C^D$ for the category of functors $D\to C$,
where the homomorphisms are the natural transformations. In
particular $\Top^G$ and $\sS^G$ are the categories of $G$-spaces and
$G$-simplicial sets, and $\Top^{{\orfg}^{op}}$ and
$\sS^{{\orfg}^{op}}$ those of contravariant $\orfg$-spaces and
$\orfg$-simplicial sets. If $f:C\to C'$ is a functor, we write
$f_*:C^D\to C'^D$ for the functor $g\mapsto f\circ g$. Thus for
example $|\ \ |_*:\sS^G\to\Top^G$ is the equivariant geometric
realization functor; this notation is used in Section
\ref{sec:model}. In the rest of the paper, if $C$ is a chain complex
of abelian groups,  $|C|$ is the spectrum the Dold-Kan
correspondence associates to it. Topological spaces are considered
briefly in Section \ref{sec:model} where it is explained that we can
equivalently work with simplicial sets, which is what we do in the
rest of the paper. In particular --except briefly in Section
\ref{sec:model}-- a spectrum is a sequence $\{{}_nE\}$ of pointed simplicial
sets and bonding maps $\Sigma {}_nE\to {}_{n+1}E$. If $E,F:C\to
\spt$  are functorial spectra, then by a (natural) \emph{map}
$f:E\weq F$ we mean a zig-zag of natural maps
\[
E=Z_0\overset{f_1}\longrightarrow Z_1\overset{f_2}\longleftarrow
Z_2\overset{f_3}\longrightarrow\dots  Z_n=F
\]
such that each right to left arrow $f_i$ is an object-wise weak
equivalence. If also the left to right arrows are object-wise weak
equivalences, then we say that $f$ is a \emph{weak equivalence} or
simply an \emph{equivalence}. If $\{E_i\}$ is a family of spectra,
we write $\bigoplus_iE_i$ for their wedge or coproduct.
\goodbreak

Rings in this paper are not assumed unital, unless explicitly
stated. We write $\ring$ for the category of rings and ring
homomorphisms, and $\ring_1$ for the subcategory of unital rings and
unit preserving homomorphisms. We use the letters $A,B$ for rings,
and $R,S$ for unital rings. If $V$ is an abelian group, then the
tensor algebra of $V$ is $TV=\bigoplus_{n\ge 1}V^{\otimes n}$; thus
for us $TV$ is nonunital. If $V$ is free, then $TV$ is a free
nonunital ring. If $X$ is a set, then $M_X$ is the ring of all
matrices $(z_{x,y})_{x,y\in X\times X}$ with integer coefficients,
only finitely many of which are nonzero. If $A$ is a ring, then
$M_XA=M_X\otimes A$; in particular $M_X\Z=M_X$. If $\{A_i\}$ is a family
of rings, then $\bigoplus_iA_i$ is their direct sum as abelian groups,
equipped with coordinate-wise multiplication.
\end{nota}

\section{Model category structures and assembly
maps}\label{sec:model}
\numberwithin{equation}{section} We begin
with some general considerations on model category structures for
diagrams of spaces.

We consider $\Top$ and $\sS$ with their usual, cofibrantly generated closed model structures.
If $C=\Top$, $\sS$, and $I$ is any small category, then, by \cite[Thm. 11.6.1]{hirsch}, $C^I$
is again a cofibrantly generated closed model category, with object-wise fibrations and weak equivalences, and where generating (trivial) cofibrations are
of the form
\[
\coprod_{\hom_I(\alpha,-)}f:\coprod_{\hom_I(\alpha,-)}\dom f\to\coprod_{\hom_I(\alpha,-) }\cod f
\]
with $\alpha\in I$ and $f:\dom f\to \cod f$ a generating (trivial)
cofibration in $C$. Recall that the geometric realization functor
$|\ \ |:\sS\to\Top$ and its right adjoint $\Sing:\Top\to\sS$ form a
Quillen equivalence. Hence by \cite[Thm. 11.6.5]{hirsch}, the
induced functors $|-|_*:\sS^I\rightleftarrows \Top^I:\Sing_*$ are
Quillen equivalences too.

Next fix a group $G$ and a family $\cF$ of subgroups of $G$. By the previous discussion applied to the orbit
category $\orfg^{op}$, we have a Quillen equivalence

\begin{equation}\label{map:quilleneqorg}
\xymatrix{\Top^{{\orfg}^{op}}\ar@/^/[rr]^{\Sing_*}&&\sS^{{\orfg}^{op}}\ar@/^/[ll]^{|\ \ |_*}}
\end{equation}

For $C=\Top,\sS$, consider the functor
\[
R:C^G\to C^{{\orfg}^{op}},\quad R(X)(G/H)=\map_G(G/H,X)=X^H
\]
and its left adjoint, the coend
\[
L:C^{{\orfg}^{op}}\to C^{G},\quad L(Y)=\int^{\org} Y(G/H)\times G/H
\]
The Quillen equivalence \eqref{map:quilleneqorg} fits into a diagram
\begin{equation}\label{diag:quilleneqs}
\xymatrix{\Top^{{\orfg}^{op}}\ar@/_/[dd]_{L}\ar@/^/[rr]^{\Sing_*}&&\sS^{{\orfg}^{op}}\ar@/_/[dd]_{L}\ar@/^/[ll]^{|\ \ |_*}\\
\\
\Top^G\ar@/^/[rr]^{\Sing_*}\ar@/_/[uu]_{R}&&\sS^G\ar@/^/[ll]^{|\ \ |_*}\ar@/_/[uu]_{R}}
\end{equation}

\begin{lem}\label{lem:fixed}
Let
\[
\xymatrix{B\ar[r]& Y\\
A\ar@{ >-}[u]^i\ar[r]& X\ar[u]}
\]
be a cocartesian diagram of $G$-sets. Assume that $i$ is injective. Then
\[
\xymatrix{B^G\ar[r]& Y^G\\
A^G\ar@{ >-}[u]^i\ar[r]& X^G\ar[u]
}
\]
is again cocartesian.
\end{lem}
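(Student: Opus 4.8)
The strategy is to give an explicit description of a pushout of $G$-sets whose left‑hand leg is injective, and then to combine it with the fact that taking $G$-fixed points commutes with disjoint unions.

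First I would describe the pushout $Y=B\sqcup_AX$ of the span $B\overset{i}{\longleftarrow}A\overset{f}{\longrightarrow}X$ in $G$-sets concretely: it is the quotient of $B\sqcup X$ by the smallest $G$-equivariant equivalence relation for which $i(a)\sim f(a)$ for all $a\in A$. Because $i$ is injective, this relation never identifies two distinct elements of $X$; its classes are the singletons $\{b\}$ for $b\in B\smallsetminus i(A)$ together with the sets $\{x\}\cup i(f^{-1}(x))$ for $x\in X$. Since $i$ is equivariant, $i(A)$ is a $G$-subset of $B$, and one gets an isomorphism of $G$-sets
\[
Y\ \cong\ \big(B\smallsetminus i(A)\big)\sqcup X
\]
under which $X\to Y$ is the inclusion of the second summand, while $B\to Y$ is the inclusion of $B\smallsetminus i(A)$ as the first summand and equals $f\circ i^{-1}$ on $i(A)$.

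Next I would apply $(-)^G$. This functor preserves coproducts: a point of a coproduct of $G$-sets lies in a unique summand, and it is fixed in the whole if and only if it is fixed in that summand. Moreover $i(A)^G=i(A^G)$ --- this is precisely where the injectivity of $i$ enters, since $g\cdot i(a)=i(a)$ for all $g$ forces $g\cdot a=a$ for all $g$ --- so $(B\smallsetminus i(A))^G=B^G\smallsetminus i(A^G)$ and hence
\[
Y^G\ \cong\ \big(B^G\smallsetminus i(A^G)\big)\sqcup X^G .
\]
Applying the same explicit description of pushouts along an injection to the span $B^G\overset{i}{\longleftarrow}A^G\overset{f}{\longrightarrow}X^G$ of (trivially acted) $G$-sets identifies the right-hand side with $B^G\sqcup_{A^G}X^G$. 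It then remains to check that under these identifications the maps $B^G\to Y^G$ and $X^G\to Y^G$ agree with the canonical maps into the pushout, which is a routine unravelling of the descriptions above; this shows that the square of fixed points is cocartesian.

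I do not expect a serious obstacle: the content is elementary. The one point that needs care is verifying that the bijection $Y^G\cong B^G\sqcup_{A^G}X^G$ is compatible with all four structure maps, rather than being merely an abstract bijection, together with keeping track of where the hypothesis that $i$ is injective is used --- without it the statement is false, as one sees already with $A=G$ and $B=X=\mathrm{pt}$ for $G\neq 1$.
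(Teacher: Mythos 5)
Your argument is correct: the explicit description of the pushout along an injection, the fact that fixed points commute with disjoint unions, and the observation that $i(A)^{G}=i(A^{G})$ precisely because $i$ is injective together give a complete proof, and your counterexample shows the injectivity hypothesis is essential. The paper itself dismisses this lemma as ``Straightforward,'' so your write-up simply supplies the details it omits.
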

\begin{proof} Straightforward.
\end{proof}
\begin{prop}\label{prop:equimodel} Let $\cC=\Top$, $\sS$.
\item[i)] $\cC^G$ is a closed model category where a map $f$ is a fibration (resp. a weak equivalence) if and only if $R(f)$ is. Moreover $\cC^G$ is cofibrantly generated, where the generating (trivial) cofibrations
are the maps $f\times id:\dom f\times G/H\to \cod f\times G/H$, with $f$ a generating (trivial) cofibration
and $H\in\cF$.
\item[ii)] Each of the pairs of functors of diagram \eqref{diag:quilleneqs} is a Quillen equivalence.
\end{prop}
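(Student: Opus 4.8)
The plan is to deduce the whole statement from the transfer principle for cofibrantly generated model categories, applied to the adjoint pair $(L,R)$ in the left-hand column of \eqref{diag:quilleneqs}, followed by a comparison argument for the horizontal adjunctions. First, for part i), I would invoke Kan's transfer theorem (Crans's lifting theorem, as in \cite[Thm. 11.3.2]{hirsch}): since $\cC^{\orfg^{op}}$ is cofibrantly generated by generating (trivial) cofibrations of the form $\coprod_{\hom_{\orfg}(G/H,-)}f$ (from the discussion preceding the proposition, applied to $I=\orfg^{op}$), their images under the left adjoint $L$ are, by the coend formula and the Yoneda-type identity $L(\hom_{\orfg}(G/H,-)\times Z)\cong Z\times G/H$, exactly the maps $f\times\mathrm{id}_{G/H}$ with $H\in\cF$. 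Declaring a map $f$ in $\cC^G$ to be a fibration resp. weak equivalence when $R(f)$ is one, the acyclicity and smallness hypotheses of the transfer theorem must be checked; smallness is automatic because in $\cC=\sS$ everything is small and in $\cC=\Top$ the relevant objects are small relative to the generating cofibrations, while the acyclicity condition — that relative $L(J)$-cell complexes are sent by $R$ to weak equivalences — is the step that needs real work.

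The acyclicity step is where Lemma \ref{lem:fixed} does its job, and this is the main obstacle. One pushes out a generating trivial cofibration $f\times\mathrm{id}_{G/H}$ along an arbitrary map into some $G$-object $X$; applying the fixed-point functor $(-)^K$ for $K\in\cF$ and using that $(G/H)^K$ is a disjoint union of points indexed by $\{gH : g^{-1}Kg\subset H\}$, the pushout square of $G$-objects becomes, after taking $K$-fixed points, a disjoint union of pushouts of $f$ (a trivial cofibration in $\cC$) along maps in $\cC$ — here Lemma \ref{lem:fixed} is exactly what guarantees that fixed points commute with these pushouts along the injective leg. Since trivial cofibrations in $\cC$ are closed under pushout and transfinite composition, and $R(f)$ records precisely all these fixed-point maps $X^K\to Y^K$, the transfinite composite of such cell attachments is sent by $R$ to a trivial cofibration, hence to a weak equivalence. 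This yields the transferred model structure, and its generating (trivial) cofibrations are by construction the asserted maps $f\times\mathrm{id}_{G/H}$. One should also note $(L,R)$ is automatically a Quillen pair for the transferred structure, and that $R$ creates fibrations and weak equivalences, which is the characterization stated in i).

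For part ii), the left-hand adjunctions $(L,R)$ in \eqref{diag:quilleneqs} are Quillen equivalences essentially by fiat: $R$ creates weak equivalences, so it suffices to check that for cofibrant $Y\in\cC^{\orfg^{op}}$ the unit $Y\to RL(Y)$ is a weak equivalence; since both sides and the map are built from colimits of representables, one reduces by the cell-induction of the transferred structure to the case $Y=\hom_{\orfg}(G/H,-)$, where $RL(Y)(G/K)=(G/H)^K=\map_G(G/K,G/H)=\hom_{\orfg}(G/H,G/K)$, so the unit is an isomorphism; the general case follows because both functors preserve the relevant colimits and $R$ reflects weak equivalences (using \cite[Cor. 1.3.16]{hovey}-type criteria). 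For the horizontal pair $(|-|_*,\Sing_*)$ between $\sS^G$ and $\Top^G$, I would argue that a map in $\sS^G$ is a weak equivalence resp. fibration iff its image in $\sS^{\orfg^{op}}$ under $R$ is, iff (by the already-known Quillen equivalence \eqref{map:quilleneqorg}) its realization in $\Top^{\orfg^{op}}$ is, and this last square commutes up to natural isomorphism with $R\circ|-|_*$ because geometric realization commutes with the finite limits computing fixed points over orbits (using that $G/H$ is discrete, so $|X^H|\cong|X|^H$); hence $|-|_*$ preserves and reflects weak equivalences and fibrations between the transferred structures, and since it is the total left derived functor of a levelwise Quillen equivalence it is itself a Quillen equivalence. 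Diagram \eqref{diag:quilleneqs} then commutes up to the evident natural isomorphisms, completing ii).
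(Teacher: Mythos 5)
Your proposal is correct and contains all the essential ingredients, but it is organized rather differently from the paper's proof. For part i) the paper simply invokes the recognition theorem for cofibrantly generated model categories \cite[Thm.\ 2.1.19]{hov} and declares the verification straightforward, whereas you run the transfer (lifting) theorem along the adjunction $(L,R)$, isolate the acyclicity condition as the real content, and discharge it with Lemma \ref{lem:fixed} together with the identification of $(G/H)^K$; this is more explicit about where Lemma \ref{lem:fixed} enters and automatically exhibits the generating cells as $f\times G/H$. For part ii) your treatment of the vertical pairs is essentially the paper's: reduce via \cite[Cor.\ 1.3.16]{hov} to the unit on cofibrant objects, compute it on representables, and propagate by cell induction (the paper in fact shows the unit is an isomorphism, not merely a weak equivalence). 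One caution here: $R$ is a right adjoint and does not preserve arbitrary colimits, so your phrase ``both functors preserve the relevant colimits'' must be read as exactly the content of Lemma \ref{lem:fixed} for pushouts along injective maps plus preservation of coproducts by fixed points, which is what you already used in part i). The genuine divergence is the bottom horizontal pair: the paper obtains it for free from the two-out-of-three property of Quillen equivalences \cite[Cor.\ 1.3.15]{hov} once the top and both vertical pairs are known, whereas you argue directly via the compatibility $|X^H|\cong |X|^H$ and the commutation of $\Sing$ with fixed points. Your route works but costs an extra (standard, yet not entirely trivial for infinite $H$ and for $\cC=\Top$) verification that realization commutes with fixed points, which the paper's formal argument avoids; also note that a left Quillen functor need not preserve fibrations, so the claim that $|-|_*$ ``preserves and reflects weak equivalences and fibrations'' should be replaced by the derived-adjunction criterion for Quillen equivalences, which your fixed-point compatibility does supply.
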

\begin{proof}
One can give conditions on two sets of maps and a subcategory of a category $\cD$ to be respectively the generating cofibrations, generating trivial cofibrations and weak equivalences in a closed model structure of $\cD$; see M. Hovey's book \cite[Thm. 2.1.19]{hov}. It is straightforward that those conditions are satisfied in our case, for $\cD=\cC^G$. This proves i). The top pair of functors in diagram
\eqref{diag:quilleneqs} is a Quillen equivalence by the discussion above the proposition. By definition of fibrations and weak equivalences in $\cC^G$, these are both preserved and reflected by $R$. In particular
$(L,R)$ is a Quillen pair. To show that it is an equivalence, it suffices, by \cite[Cor. 1.3.16]{hov}, to show that if $X\in \cC^{\orfg^{op}}$ is cofibrant, then the unit map
\begin{equation}\label{unit}
X\to RLX
\end{equation}
is a weak equivalence; in fact we shall see that it is an
isomorphism. Because every cofibrant object is a retract of a
cofibrant cell complex, it suffices to check that \eqref{unit} is an
isomorphism on cell complexes. Because the unit map preserves the
skeletal filtration, it suffices to check that $X^n\to RLX^n$ is an
isomorphism for all $n$. By definition, the generating cofibrant
cells in $\cC^{\orfg^{op}}$ are of the form
$\coprod_{\map_G(-,G/H)}\Delta^n$. But for every $T\in\fS$, we have:
\begin{align*}
RL(\coprod_{\map_G(-,G/H)}T)(G/K)=&R(G/H\times T)(G/K)\\
=&(G/H\times T)^K\\
=&\map_{\org}(G/K,G/H)\times T=\coprod_{\map_G(-,G/H)}T
\end{align*}
Thus the unit map is an isomorphism on cells, and therefore on
coproducts of cells, since taking fixed points under a subgroup
preserves coproducts of $G$-simplicial sets. In particular
\eqref{unit} is an isomorphism on the zero skeleton of $X$. Assume
by induction that \eqref{unit} is an isomorphism on the
$n$-skeleton. The $n+1$-skeleton is a pushout

\[
\xymatrix{\coprod_{H\in I_n}\coprod_{\map_G(-,G/H)}\Delta^{n}\ar[r]& X^{n+1}(-)\\
          \coprod_{H\in I_n}\coprod_{\map_G(-,G/H)}\partial\Delta^{n}\ar[u]\ar[r]&
          X^n(-)\ar[u]}
\]
Applying $L$ to this diagram yields a cocartesian diagram with
injective vertical maps. Hence by Lemma \ref{lem:fixed} and the
inductive hypothesis, the diagram
\[
\xymatrix{\coprod_{H\in I_n}(\map_G(-,G/H))\times \Delta^{n}\ar[r]& RLX^{n+1}(-)\\
          \coprod_{H\in I_n}(\map_G(-,G/H))\times \partial\Delta^{n}\ar[r]\ar[u]& RLX^n(-)\ar[u]}
\]
is again a pushout. It follows that $RLX^{n+1}\cong X^{n+1}$ and
thus \eqref{unit} is an isomorphism on all cell complexes, as we had
to prove. We have shown that the top horizontal and both vertical
pairs of functors are Quillen equivalences; by \cite[Cor.
1.3.15]{hov}, this implies that also the bottom pair is a Quillen
equivalence.
\end{proof}

\begin{rem}\label{rem:cofichar}
An object of a cofibrantly generated category is cofibrant if
and only if it is a retract of a cellular complex built from generating cofibrant cells.
In the case of $\fS^G$, every object is built from cells of the form $\Delta^n\times G/H$
for $H\subset G$ a subgroup; it is cofibrant for the model structure of Proposition
\ref{prop:equimodel} if and only if all such cells have $H\in\cF$.
Thus the cofibrant cell complexes exhaust the class of cofibrant objects.
Observe also that they can be characterized as those objects $X\in\fS^G$
such that $X^H=\emptyset$ for $H\notin\cF$.
\end{rem}

\begin{para}\label{para:equihom}
For the model structures of Proposition \ref{prop:equimodel}, the functorial
cofibrant replacement in $\Top^G$ of the point space $*$ is a model
for the classifying space of $G$ with respect to $\cF$ and the
cofibrant replacement of $*$ in $\sS^G$ is a simplicial version.
Moreover because $|-|_*:\sS^G\to \Top^G$ is a Quillen equivalence,
it takes the simplicial version to the topological one. In
particular if $E$ is a functor from $\Top^G$ to spectra and
$\pi:\cE(G,\cF)\to *$ is the cofibrant replacement in $\sS^G$, then
we have a map
\begin{equation}\label{easse}
 E(\pi):E(|\cE(G,\cF)|)\to E(*)
\end{equation}
If
\[
E(X)=F_\%(X)=R(X)\otimes_{\mathrm{Or}G}F:=\int^{\org}X^H_+\wedge F(G/H)
\]
for some functor $F:\mathrm{Or}G\to Spt$, \eqref{easse} is the
Davis-L\"uck assembly map of \cite[\S5.1]{dl}. In case $F=|F'|$ is
the geometric realization of a functorial spectrum in the simplicial
set sense, we have further
\[
|F'|_\%(|X|)=|\int^{\org} X^H_+\wedge
F'(G/H)|=|F'_\%(X)|
\]
and the assembly map for $F$ is the geometric realization of that of
$F'$. Hence we can equivalently work with assembly maps in the
topological or the simplicial setting; we choose to do the latter.
In particular all spectra considered henceforth are simplicial.
If $C$ is a chain complex, we will write $|C|$ for the spectrum
associated to it by the Dold-Kan correspondence; since topological spaces
will occur only rarely from now on, and since we will not use $|\ \ |$
to indicate realization, this should cause no confusion.

\end{para}

\section{Rings and categories}\label{sec:r&c}
\numberwithin{equation}{subsection}
\subsection{Crossed products and equivariant homology}

A \emph{groupoid} is a small category where all arrows are
isomorphisms. Let $\cG$ be a groupoid, and let $R$ be a unital ring.
An \emph{action} of $\cG$ on $R$ is a functor $\rho:\cG\to\ring_1$
such that $\rho(x)=R$ for all $x\in \ob \cG$. For example we may
take $\rho(g)=id_R$ for all arrows $g\in\arr\cG$; this is called the
\emph{trivial} action. Whenever $\rho$ is fixed, we omit it from our
notation, and write
\[
g(r)=\rho(g)(r) 
\]
for $g\in\arr\cG$ and $r\in R$. Given a triple $(\cG,\rho,R)$, we consider a small $\Z$-linear category $R\rtimes\cG$. The objects of $R\rtimes\cG$ are those of $\cG$, and
\[
\hom_{R\rtimes\cG}(x,y)=R\otimes\Z[\hom_\cG(x,y)]
\]
If $s\in R$ and $g\in\hom_\cG(x,y)$, we write $s\rtimes g$ for $s\otimes g$.
Composition is defined by the rule
\begin{equation}\label{rule:cross}
(r\rtimes f)\cdot (s\rtimes g)=r f(s)\rtimes fg
\end{equation}
here $r,s\in R$, and $f$ and $g$ are composable arrows in $\cG$. In case the action of $\cG$ on $R$ is trivial,
we also write $R[\cG]$ for $R\rtimes\cG$.

\smallskip
Let $G$ be a group; consider the functor $\cG^G:G-\sets\to\gpd$ which sends a $G$-set $S$ to its \emph{transport groupoid}. By definition $\ob\cG^G(S)=S$, and $\hom_{\cG^G(S)}(s,t)=\{g\in G:g\cdot s=t\}$.

\begin{nota} If $E$ is a functor from $\Z$-linear categories to spectra, $R$ a unital $G$-ring, and $X$ a $G$-space, we put
\[
H^G(X,E(R)):=E(R\rtimes\cG^G(?))_\%(X)
\]
\end{nota}

\subsection{The ring $\cA(\cC)$}

Let $\cC$ be a small $\Z$-linear category. Put
\begin{equation}\label{ac}
\cA(\cC)=\bigoplus_{a,b\in\obc}\hom_\cC(a,b)
\end{equation}
If $f\in \cA(\cC)$ write $f_{a,b}$ for the component in
$\hom_\cC(b,a)$. The following multiplication law
\begin{equation}\label{rule:matrix}
(fg)_{a,b}=\sum_{c\in\obc}f_{a,c}g_{c,b}
\end{equation}
makes $\cA(\cC)$ into an associative ring, which is unital if and only if $\obc$ is finite. Whatever the cardinal
of $\obc$ is, $\cA(\cC)$ is always a ring with {\it local units}, i.e. a filtering colimit of unital rings.

\goodbreak

\bigskip

\paragraph{\em $\cA(?)$ and tensor products}

The \emph{tensor product} of two $\Z$-linear categories $\cC$ and
$\cD$ is the $\Z$-linear category $\cC\otimes\cD$ with
$\ob(\cC\otimes\cD)=\ob(\cC)\times \ob(\cD)$ and
\[
\hom_{\cC\otimes\cD}((c_1,d_1),(c_2,d_2))=\hom_\cC(c_1,c_2)\otimes\hom_\cD(d_1,d_2)
\]
We have
\[
\cA(\cC\otimes\cD)=\cA(\cC)\otimes\cA(\cD)
\]

\begin{ex} If $\cG$ is a groupoid acting trivially on a unital ring $R$, then
\[
\cA(R[\cG])=\cA(R\otimes\Z[\cG])=R\otimes\cA(\Z[\cG])
\]
\end{ex}

\goodbreak

\bigskip

\paragraph{\em $\cA(?)$ and crossed products}

If $A$ is any, not necessarily unital ring, and $\cG$ is a groupoid acting on $A$, we put
\[
\cA(A\rtimes\cG)=\bigoplus_{x,y\in\ob\cG}A\otimes\Z[\hom_\cG(x,y)]
\]
The rules \eqref{rule:cross} and \eqref{rule:matrix} make
$\cA(A\rtimes\cG)$ into a ring, which in general is nonunital and
does not have local units. The ring $\cA(A\rtimes\cG)$ may also be
described in terms of the \emph{unitalization} $\tilde{A}$ of $A$.
By definition, $\tilde{A}=A\oplus\Z$ equipped with the trivial
$\cG$-action on the $\Z$-summand and the following multiplication
\begin{equation}\label{rule:unitalization}
(a,\lambda)(b,\mu)=(ab+\lambda b+a\mu,\lambda\mu)
\end{equation}
We have
\begin{equation}\label{acrossnuni}
\cA(A\rtimes\cG)=\ker(\cA(\tilde{A}\rtimes\cG)\to \cA(\Z[\cG]))
\end{equation}
Note that $\cA(A\rtimes\cG)$ is defined, even though $A\rtimes\cG$
is not. One can actually define $A\rtimes\cG$ as a nonunital
category, i.e. a category without identity morphisms, but we do not
go into that in this paper.

Next we fix a group $G$ and a subgroup $H\subset G$ and consider the
ring $\cA(A\rtimes\cG^G(G/H))$ associated to the crossed product by
the transport groupoid. Note that
\[
\hom_{\cG^G(G/H)}(H,H)=H=\hom_{\cG^H(H/H)}(H,H)
\]
thus there is a fully faithful functor $\cG^H(H/H)\to \cG^G(G/H)$. This functor induces a ring homomorphism
\[
\j:A\rtimes H=\cA(A\rtimes \cG^H(H/H))\subset \cA(A\rtimes\cG^G(G/H))
\]
The next lemma compares the map $\j$ with the canonical inclusion
\[
\iota:A\rtimes H\to M_{G/H}(A\rtimes H),\quad x\mapsto
e_{H,H}\otimes x
\]
In the following lemma and elsewhere, we make use of a section
$s:G/H\to G$ of the canonical projection onto the quotient by a
sugroup $H\subset G$. We say that the section $s$ is \emph{pointed}
if it is a map of pointed sets, that is, if it maps the class of $H$
to the element $1\in G$.

\begin{lem}\label{lem:across=cross} Let $A$ be a ring, $G$ a group acting on $A$, and $H\subset G$ a subgroup.
Then there is an isomorphism $\alpha:\cA(A\rtimes \cG^G(G/H))\iso
M_{G/H}(A\rtimes H)$ making the following diagram commute:
\[
\xymatrix{A\rtimes H\ar[r]^(.4)\j\ar[dr]_\iota & \cA(A\rtimes
\cG^G(G/H))\ar[d]^{\wr^{\alpha}}\\ & M_{G/H}(A\rtimes H)}
\]
The isomorphism $\alpha$ is natural in $A$ but not in the pair
$(G,H)$, as it depends on a choice of pointed section $s:G/H\to G$
of the projection $\pi:G\to G/H$.
\end{lem}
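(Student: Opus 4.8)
The plan is to write down the isomorphism $\alpha$ explicitly using a pointed section $s\colon G/H\to G$ and then check, by direct computation, that it is a ring homomorphism, that it is bijective, and that the triangle with $\j$ and $\iota$ commutes. First I would fix the section $s$ with $s(\bar H)=1$ and recall that every element $g\in G$ can be written uniquely as $g=s(gH)\cdot h$ with $h\in H$; equivalently, for $x,y\in G/H$ and $g\in G$ with $g\cdot x=y$ (so $g\in\hom_{\cG^G(G/H)}(x,y)$) the element $s(y)^{-1}g\,s(x)$ lies in $H$. This is the ``cocycle'' data that lets us convert an arrow of the transport groupoid into a matrix coefficient. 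Concretely, a generator $a\rtimes g$ of $\hom_{\cG^G(G/H)}(x,y)$, with $a\in A$, should be sent to the elementary matrix $e_{y,x}\otimes\bigl(s(y)^{-1}(a)\rtimes (s(y)^{-1}g\,s(x))\bigr)\in M_{G/H}(A\rtimes H)$, extended additively over the direct-sum decomposition $\cA(A\rtimes\cG^G(G/H))=\bigoplus_{x,y}A\otimes\Z[\hom_{\cG^G(G/H)}(x,y)]$. (One must be slightly careful about which side the $A$-twist goes on; the correct normalization is forced by \eqref{rule:cross} and \eqref{rule:matrix}, and pinning it down is part of the work.)

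Next I would verify multiplicativity. Take composable data $a\rtimes g$ from $x$ to $y$ and $b\rtimes f$ from $y$ to $z$; on the groupoid side the product is governed by \eqref{rule:cross}, on the matrix side by \eqref{rule:matrix} together with the multiplication in $A\rtimes H$. The key identity is the cocycle relation $s(z)^{-1}f\,s(y)\cdot s(y)^{-1}g\,s(x)=s(z)^{-1}(fg)\,s(x)$, which shows the $H$-components multiply correctly; the $A$-components then match because $H$ acts on $A$ through the restriction of the $G$-action and the twist by $s(z)^{-1}(fg)$ decomposes compatibly. Since both rings are generated additively by such elementary pieces and $\alpha$ is additive by construction, this suffices. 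Bijectivity is then immediate: the inverse sends $e_{y,x}\otimes(a\rtimes h)$ (with $h\in H$) to $s(y)(a)\rtimes\bigl(s(y)\,h\,s(x)^{-1}\bigr)$, and one checks the two composites are the identity on generators. Naturality in $A$ is clear from the formula, since nothing in it involves $A$ beyond applying the fixed $G$-action; the dependence on $s$ is also manifest.

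Finally, for the triangle: the map $\j$ is the inclusion coming from the fully faithful functor $\cG^H(H/H)\to\cG^G(G/H)$, so it sends $a\rtimes h\in A\rtimes H$ (with $h\in\hom_{\cG^H(H/H)}(H,H)=H$) to the same element viewed in $\hom_{\cG^G(G/H)}(\bar H,\bar H)$. Applying $\alpha$ and using $s(\bar H)=1$ (this is exactly where pointedness of the section is used) gives $e_{\bar H,\bar H}\otimes(a\rtimes h)$, which is precisely $\iota(a\rtimes h)$. Hence $\alpha\circ\j=\iota$.

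I expect the main obstacle to be bookkeeping rather than conceptual: getting the normalization of the $A$-twist and the left/right placement of $s(y)^{-1}$ vs.\ $s(x)$ exactly right so that multiplicativity holds on the nose, and keeping the identifications $\cA(A\rtimes\cG^H(H/H))=A\rtimes H$ and $\cA(A\rtimes\cG^G(G/H))=\bigoplus_{x,y}A\otimes\Z[\hom_{\cG^G(G/H)}(x,y)]$ straight while transporting the groupoid composition law \eqref{rule:cross} through to matrix multiplication \eqref{rule:matrix}. There is also a minor subtlety that $A$ need not be unital, so $e_{y,x}\otimes(\cdot)$ should be read inside $M_{G/H}(A\rtimes H)$ in the sense of \eqref{acrossnuni} (matrices over the unitalization, supported off the $\Z[\cG]$-part); but since the formulas are additive and natural this causes no real trouble. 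Everything else is the ``Straightforward'' kind of verification.
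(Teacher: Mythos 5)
Your proposal is correct and takes essentially the same route as the paper: the map you write down, sending $a\rtimes g\in\hom_{\cG^G(G/H)}(sH,tH)$ to $e_{tH,sH}\otimes\bigl(\hat t^{-1}(a)\rtimes\hat t^{-1}g\hat s\bigr)$ with $\hat g=s(\pi(g))$, is exactly the paper's formula for $\alpha$, and the cocycle computation, explicit inverse, and use of pointedness for $\alpha\j=\iota$ are precisely the verifications the paper labels ``straightforward.'' No gaps.
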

\begin{proof}
Let $s$ be as in the lemma; put $\hat{g}=s(\pi(g))$ ($g\in G$). The
isomorphism $\alpha:\cA(A\rtimes \cG^G(G/H))\iso M_{G/H}(A\rtimes
H)$ is defined as follows. For $b\in A$, $s,t\in G$, and $g\in
\hom_{\cG^G(G/H)}(sH,tH)$, put
\[
\alpha(b\rtimes g)=e_{tH,sH}\otimes \hat{t}^{-1}(b)\rtimes
(\hat{t}^{-1}g\hat{s})
\]
It is straightforward to check that $\alpha$ is an isomorphism and
that $\alpha\j=\iota$.
\end{proof}

\smallskip
\paragraph{\em Functoriality of $\cA(?)$}

If  $F:\cC\mapsto \cD$ is a $\Z$-linear functor which is injective on objects, then it defines a homomorphism
$\cA(F):\cA(\cC)\to \cA(\cD)$ by the rule $\alpha\mapsto F(\alpha)$.  Hence we may
regard $\cA$ as  a functor
\begin{equation}\label{fun:ac}
\cA:\inj-\Z-\cat\to\ring
\end{equation}
from the category of $\Z$-linear categories and functors which are injective on objects, to the category of rings. However $\cA(F)$ is not defined for general
$\Z$-linear $F$.

\begin{rem}\label{rem:inj}
 The use of the prefix $\inj$ here differs from that in \cite{dl}. Indeed, here $\inj$ indicates that
functors are injective on objects, whereas in \cite{dl}, it refers to functors which are injective on arrows.
\end{rem}

\subsection{The nonunital case}

A {\it Milnor square}  is a pullback square of rings
\begin{equation}\label{diag:mil}
\xymatrix{R'\ar[d]\ar[r]& R\ar[d]^f\\
               S'\ar[r]_g& S}
\end{equation}
such that either $f$ or $g$ is surjective. Below we shall assume $f$
is surjective. Let $E:\Z-\cat\to\spt$ be a functor. If $A$ is a not
necessarily unital ring, embedded as an ideal in a unital ring $R$,
we write $E(R:A)=\hofi(E(R)\to E(R/A))$. The functor $E$ is said to
satisfy \emph{excision} for the Milnor square  \eqref{diag:mil} if
\[
\xymatrix{E(R')\ar[d]\ar[r]& E(R)\ar[d]^{E(f)}\\
               E(S')\ar[r]& E(S)}
\]
is homotopy cartesian. If $\ker f\cong A$, then $E$ satisfies
excision on \eqref{diag:mil} if and only if
\[
E(R',R:A)=\hofi (E(R':A)\to E(R:A))
\]
is weakly contractible. We say that the ring $A$ is {\em
$E$-excisive} if $E$ satisfies excision on every Milnor square
\eqref{diag:mil} with $\ker f\cong A$. Assume unital rings are
$E$-excisive; if $A$ is any, not necessarily $E$-excisive ring, we
consider its unitalization $\tilde{A}$, defined in
\eqref{rule:unitalization} above. Put
\[
E(A)=\hofi(E(\tilde{A})\to E(\Z))
\]
Because of our assumption that unital rings are $E$-excisive, if $A$ happens to be unital, the two definitions of $E(A)$ are naturally homotopy equivalent. Note that
if
\[
0\to A'\to A\to A"\to 0
\]
is an exact sequence of rings and $A'$ is $E$-excisive, then
\[
E(A')\to E(A)\to E(A")
\]
is a homotopy fibration. We say that $E$ is \emph{excisive} or that it \emph{satisfies excision}, if every ring is
$E$-excisive.

\begin{stan}\label{stan}
From now on, we shall be primarily concerned with functors $E:\Z-\cat\to\spt$ that satisfy the following:

\begin{itemize}

\item[i)] Every ring with local units is $E$-excisive.

\item[ii)] If $H$ is a group and $A$ an $E$-excisive $H$-ring, then $A\rtimes H$ is
$E$-excisive.

\item[iii)] If $A$ is $E$-excisive, $X$ a set and $x\in X$, then $M_{X}A$ is $E$-excisive, and $E$ sends the map
$A\to M_{X}A$, $a\mapsto e_{x,x}a$ to a weak equivalence.

\item[iv)] There is a natural weak equivalence $E(\cA(\cC))\weq E(\cC)$ of functors $\inj-\Z-\cat\to\spt$.

\item[v)] Let $\{A_i:i\in I\}$ be a family of rings, and let $A=\bigoplus_{i\in I}
A_i$ be their direct sum, with coordinate-wise multiplication. Then
$A$ is $E$-excisive if and only if each $A_i$ is. Moreover if these
equivalent conditions are satisfied, then the map $\bigoplus_i
E(A_i)\to E(A)$ is an equivalence.
\end{itemize}
\end{stan}

\begin{rem}\label{rem:prerc}
Observe that standing assumptions i)-iii) and v) are only concerned
with the restriction of $E$ to the full subcategory
$\ring\subset\Z-\cat$, and that assumption iv) says that
$E_{|\ring}$ determines the whole functor up to weak equivalence.
However the assumptions are enough to prove for instance that $E$
maps category equivalences to equivalences of spectra; see
\ref{rem:equiv}. Note also that the equivalence of iv) is natural
only with respect to functors which are injective on objects,
because $\cA(-)$ is only functorial on $\inj-\Z-\cat$. One could ask
whether it is possible to extend a functor $E:\ring\to\spt$
satisfying i)-iii) and v) to all of $\Z-\cat$ in such a way that iv)
is satisfied. In the next subsection we introduce a functor
$\cR:\Z-\cat\to\ring$ which restricts to the identity on $\ring$ and
a natural transformation $p:\cR\to\cA$ of functors
$\inj-\Z-\cat\to\ring$ and discuss conditions on $E$ under which
$E(p)$ is an equivalence.
\end{rem}

\begin{rem}\label{rem:suni}
The examples we are primarily interested in, namely $K$-theory and
Hochschild and cyclic homology, satisfy a stronger version of
property i). Indeed, they not only satisfy excision for rings with
local units, but also for (flat) $s$-unital rings. A ring $A$ is
called \emph{$s$-unital} if for every finite collection
$a_1,\dots,a_n\in A$ there exists an element $e\in A$ such that
$a_ie=ea_i=a_i$. Note that if we add the requirement that $e$ be
idempotent we recover the notion of ring with local units. As is explained
in the Appendix (Example \ref{ex:sunihuni}) every $s$-unital ring is excisive for
both Hochschild and cyclic homology, and every $s$-unital ring which
is flat as an abelian group is $K$-excisive.
\end{rem}

\begin{rem}\label{rem:exci}
If $E$ satisfies excision, then assumptions i) and ii) hold
automatically, and assumptions iii) and v) hold if and only if they
hold for unital rings.
\end{rem}

\begin{lem}\label{lem:enat}
Let $E:\Z-\cat\to\spt$ be a functor satisfying the standing assumptions above. If $F_i:\cC\to\cD$  $i=0,1$ are naturally isomorphic linear functors, then $E(F_0)$
and $E(F_1)$ are homotopic.
\end{lem}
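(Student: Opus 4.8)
The plan is to reduce the statement about a natural isomorphism $\eta\colon F_0\Rightarrow F_1$ to a statement about composing with a single "cylinder" functor, and then to exploit standing assumption iv), which lets me pass from arbitrary $\Z$-linear functors to honest ring homomorphisms between arrow rings, where homotopy of maps of spectra is easier to control. First I would introduce the $\Z$-linear category $\cD^{I}$ (or a suitable $\Z$-linearization of the interval category $0\to 1$), together with the two inclusions $i_0,i_1\colon\cD\to\cD^{I}$ and the projection $\pi\colon\cD^{I}\to\cD$. A natural isomorphism $\eta\colon F_0\Rightarrow F_1$ is the same as a single $\Z$-linear functor $H\colon\cC\to\cD^{I}$ with $i_\epsilon\text{-}$components $F_\epsilon$; concretely $\ob\cD^I=\ob\cD\times\{0,1\}$ and $\hom_{\cD^I}((a,\epsilon),(b,\delta))=\hom_\cD(a,b)$, with composition using $\eta$ to identify the off-diagonal hom-groups. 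The point of this construction is that $i_0$ and $i_1$ are both injective on objects, so $\cA(i_0)$ and $\cA(i_1)$ are defined.

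The key step is then to show that $E(i_0)$ and $E(i_1)$ are homotopic as maps $E(\cD)\to E(\cD^I)$; granting this, naturality of the equivalence in iv) applied to $H$ (which is also injective on objects, after composing with an embedding making it so — or one argues directly since only $i_0,i_1$ need the $\cA$-naturality) yields $E(F_0)=E(i_0)\circ E(H)\simeq E(i_1)\circ E(H)=E(F_1)$ up to the natural zig-zags. To see $E(i_0)\simeq E(i_1)$, I would pass through $\cA$: both $\cA(i_0),\cA(i_1)\colon\cA(\cD)\to\cA(\cD^I)$ are ring homomorphisms, and $\cA(\cD^I)$ retracts onto $\cA(\cD)$ via $\cA(\pi)$ with $\cA(\pi)\cA(i_\epsilon)=\mathrm{id}$. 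Moreover $\cA(\cD^I)$ is, up to the weak equivalence of iii), a "matrix-like" thickening of $\cA(\cD)$: writing $\cA(\cD^I)=M_{\{0,1\}}(\cA(\cD))$ as $\Z$-modules with a twisted product, the two maps $a\mapsto e_{00}\otimes a$ and $a\mapsto e_{11}\otimes a$ become equal in $E$ by standing assumption iii) (both are $A\to M_XA$, $a\mapsto e_{xx}a$, which iii) forces to the same equivalence class since they differ by the automorphism of $M_{\{0,1\}}$ permuting the index set). Chasing the naturality square of iv) for the projection $\pi$ and the inclusions then pins down $E(i_0)$ and $E(i_1)$ to the same homotopy class.

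The main obstacle I anticipate is organizing the zig-zags correctly: the equivalence in iv) is only a natural \emph{map} in the sense defined in the Notation (a zig-zag through objectwise weak equivalences), and it is natural only on $\inj$-$\Z$-$\cat$, so I must check that every functor I compose with ($i_0,i_1,\pi$, and ideally $H$) is injective on objects, and that the intermediate functors $Z_j$ in the zig-zag receive the required maps. If $H$ itself fails to be injective on objects, the fix is to factor $F_\epsilon$ through an equivalence $\cC\weq\cC'$ that is injective on objects and invoke the already-known fact (Remark \ref{rem:equiv}/the consequences of the standing assumptions) that $E$ inverts category equivalences, so that it suffices to treat the case where $H$ is injective on objects. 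A secondary, purely bookkeeping point is verifying that the twisted product on $\cA(\cD^I)$ really is conjugate, via a permutation of the index set $\{0,1\}$, to an untwisted $M_{\{0,1\}}(\cA(\cD))$-structure; this is the only place a small explicit computation with $\eta$ enters, and it is exactly the computation that makes the two corner inclusions land in the same $E$-homotopy class.
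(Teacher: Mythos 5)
Your proposal is, at its core, the paper's own argument: encode the natural isomorphism by a single functor out of a ``cylinder'' built from the two-object groupoid $\cG[1]$ with a unique isomorphism between any two objects, reduce to showing that the two inclusions of the original category into the cylinder become homotopic under $E$, pass to arrow rings via standing assumption iv) (the inclusions are injective on objects, so $\cA$ applies), identify $\cA$ of the cylinder with $M_2$ of $\cA$ of the original category, and finish with matrix invariance iii) (the paper cites \cite[Lemma 2.2.4]{friendly} for the fact that the two corner inclusions agree in $\ho\spt$, which is the small argument you sketch via the permutation automorphism).

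There is, however, a variance slip in your setup that you should fix. A natural isomorphism $F_0\Rightarrow F_1$ is \emph{not} the same as a functor $\cC\to\cD^I$ for the category $\cD^I$ you describe (which is exactly $\cD\otimes\Z[\cG[1]]$); it is the same as a functor $H:\cC\otimes\Z[\cG[1]]\to\cD$ with $H\circ\iota_\epsilon=F_\epsilon$, where $\iota_0,\iota_1:\cC\to\cC\otimes\Z[\cG[1]]$ are the two inclusions. As written, your composite $E(F_0)=E(i_0)\circ E(H)$ does not typecheck, since $i_0$ has source $\cD$ and $H$ has target $\cD^I$. Putting the cylinder on the source also dissolves your worry about $H$ being injective on objects: $E$ is defined on all of $\Z-\cat$, so $E(H)$ needs no $\cA$-naturality, and only $\iota_0,\iota_1$ (which are injective on objects) are fed into assumption iv). With that correction your argument is the paper's proof.
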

\begin{proof}
Let $\cG[1]=\{0\leftrightarrows 1\}$ be the groupoid with two
objects and exactly one isomorphism between any two given (equal or
distinct) objects. The linear functors $F,G:\cC \to\cD$ are
equivalent if the dotted arrow in the following diagram of
$\Z$-linear functors exists and makes it commute
\[
\xymatrix{&\cC\otimes \Z[\cG[1]]\ar@{.>}[d]\\
\cC\oplus\cD=\cC\otimes\Z[\ob\cG[1]]\ar[ur]^{\iota_0\oplus
\iota_1}\ar[r]_(.6){F\oplus G}&\cD}
\]
Hence it suffices to show that $E(\iota_0)\cong E(\iota_1)$. By
assumption iv), we are reduced to showing that $E(\cA(\iota_0))\cong
E(\cA(\iota_1))$. But one checks that
$\cA(\cC\otimes\Z[\cG[1]])=M_2(\cA(\cC))$ and that the $\iota_i$
induce the two canonical inclusions $x\mapsto x\otimes e_{1,1},\ \
x\otimes e_{2,2}$, hence we are done by assumption iii) (see
\cite[Lemma 2.2.4]{friendly}, e.g.).
\end{proof}

\begin{rem}\label{rem:equiv}
It follows from Lemma \ref{lem:enat} that $E$ sends category equivalences to equivalences of spectra.
\end{rem}

Let $G$ be a group. Assume $E$ satisfies the standing assumptions above. For $A$ an $E$-excisive $G$-ring, consider the $\org$-spectrum
\begin{equation}\label{fun:orgnuni}
G/H\mapsto E(A\rtimes \cG^G(G/H))=\hofi(E(\tilde{A}\rtimes\cG^G(G/H))\to E(\Z[\cG^G(G/H))])
\end{equation}
Applying $(?)_\%$ to \eqref{fun:orgnuni} defines an equivariant
homology theory of $G$-simplicial sets, which we denote
$H^G(-,E(A))$. Moreover, for each fixed $G$-simplicial set $X$,
$H^G(X,E(?))$ is a functor of $E$-excisive rings. Observe that, for
unital $A$, we have two definitions of $E(A\rtimes\cG^G(-))$ and two
definitions of $H^G(-,E(A))$; the next proposition says that the two
definitions are equivalent.

\begin{prop}\label{prop:exciequi1}
Let $E:\Z-\cat\to\spt$ be a functor and $G$ a group. Assume that $E$
satisfies the standing assumptions \ref{stan} above.
\item[a)] If
$R$ is a unital $G$-ring, then the two definitions of
$E(R\rtimes\cG^G(-))$ and the two definitions of $H^G(-,E(R))$ are
equivalent.
\item[b)] If
\[
0\to A'\to A\to A"\to 0
\]
is an exact sequence of $E$-excisive $G$-rings, and $X$ is a $G$-simplicial set, then
\[
E(A'\rtimes\cG^G(-))\to E(A\rtimes\cG^G(-))\to E(A"\rtimes\cG^G(-))
\]
and
\[
 H^G(X,E(A'))\to H^G(X,E(A))\to H^G(X,E(A"))
\]
are homotopy fibrations.
\end{prop}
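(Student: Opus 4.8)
The plan is to deduce both statements of Proposition \ref{prop:exciequi1} from the standing assumptions, treating part a) first and then using it to organize the proof of part b).

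For part a), the point is that the ``non-unital'' definition of $E(R\rtimes\cG^G(G/H))$ is the homotopy fiber of $E(\widetilde{R}\rtimes\cG^G(G/H))\to E(\Z[\cG^G(G/H)])$, while the ``unital'' definition is simply $E(R\rtimes\cG^G(G/H))$ evaluated via assumption iv) on the category $R\rtimes\cG^G(G/H)$. First I would note that the natural ring surjection $\widetilde{R}\rtimes\cG^G(G/H)\to\Z[\cG^G(G/H)]$ has kernel the ring $\cA(R\rtimes\cG^G(G/H))$ in the sense of \eqref{acrossnuni}; here I must be careful to record that for \emph{unital} $R$ this kernel is exactly $\cA$ of the honest $\Z$-linear category $R\rtimes\cG^G(G/H)$. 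Since $\widetilde{R}\rtimes\cG^G(G/H)$ and $\Z[\cG^G(G/H)]$ both have local units (being $\cA(-)$ of $\Z$-linear categories with finitely or infinitely many objects), assumption i) says this extension is $E$-excisive, so the homotopy fiber of $E(\widetilde{R}\rtimes\cG^G(G/H))\to E(\Z[\cG^G(G/H)])$ is $E(\cA(R\rtimes\cG^G(G/H)))$, which by assumption iv) is equivalent to $E(R\rtimes\cG^G(G/H))$ naturally in $G/H$ (naturality with respect to the $\orfg^{op}$-structure is where I use that the relevant functors between transport groupoids are injective on objects, hence in $\inj-\Z-\cat$, so $\cA$ is functorial and iv) applies). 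This gives a natural equivalence of $\org$-spectra, and applying $(?)_\%$ yields the equivalence of the two versions of $H^G(-,E(R))$.

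For part b), I would fix $H\in\cF$ and work object-wise on the orbit category. The exact sequence $0\to A'\to A\to A''\to 0$ of $E$-excisive $G$-rings induces, after crossing with $\cG^G(G/H)$, the exact sequence $0\to\cA(A'\rtimes\cG^G(G/H))\to\cA(A\rtimes\cG^G(G/H))\to\cA(A''\rtimes\cG^G(G/H))\to 0$ of rings. By assumption ii) (applied after identifying $\cA(A'\rtimes\cG^G(G/H))$ with a crossed product of $A'$ by a suitable group when $G/H$ has finitely many elements, and by a filtered colimit argument otherwise) together with assumption iii) and the remark that $E$-excisiveness of $A'$ passes to these associated rings, the ideal $\cA(A'\rtimes\cG^G(G/H))$ is $E$-excisive; hence by the observation recorded just before the Standing Assumptions, the sequence $E(\cA(A'\rtimes\cG^G(G/H)))\to E(\cA(A\rtimes\cG^G(G/H)))\to E(\cA(A''\rtimes\cG^G(G/H)))$ is a homotopy fibration. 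Using the definition \eqref{fun:orgnuni}, this says precisely that $E(A'\rtimes\cG^G(-))\to E(A\rtimes\cG^G(-))\to E(A''\rtimes\cG^G(-))$ is an object-wise homotopy fibration of $\org$-spectra. Finally, since $(?)_\%=\int^{\org}(-)^H_+\wedge(-)$ is a homotopy colimit construction over $\orfg$ that preserves object-wise homotopy fibrations of $\org$-spectra (it is built from smash products with fixed pointed simplicial sets and wedges), applying it to the fibration sequence produces the asserted homotopy fibration $H^G(X,E(A'))\to H^G(X,E(A))\to H^G(X,E(A''))$.

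The main obstacle I anticipate is the bookkeeping in part b) needed to verify that $\cA(A'\rtimes\cG^G(G/H))$ is $E$-excisive purely from assumptions ii), iii), v): when $G/H$ is infinite this ring is not literally a crossed product by a group, but it is a filtered colimit of rings of the form $M_n(A'\rtimes H')$ over finite sub-transport-groupoids, so one must combine assumption iii) (stability) with the fact — to be extracted from i) and v), or from the permanence properties underlying them — that $E$-excisiveness is preserved under the relevant filtered colimits; this is exactly the kind of permanence statement the standing assumptions are designed to guarantee, so I expect it to go through, but it is the step requiring the most care. A secondary point is ensuring all equivalences in part a) are natural in $G/H$ as maps of $\org$-spectra before applying $(?)_\%$, which again reduces to checking that the structure maps of the transport-groupoid functor lie in $\inj-\Z-\cat$.
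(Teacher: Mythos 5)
Your overall strategy coincides with the paper's, and part a) is essentially right, but the step you yourself flag as delicate is where the argument actually breaks down. To get the first fibration in b) you need $\cA(A'\rtimes\cG^G(G/H))$ to be $E$-excisive, and you propose to see this, for infinite $G/H$, by writing the ring as a filtered colimit of matrix rings over crossed products and invoking a permanence of $E$-excisiveness under filtered colimits ``to be extracted from i) and v)''. No such permanence is among the Standing Assumptions \ref{stan}, and it cannot be extracted from them (it does hold for $K$ and for $HH$, but only by the separate arguments of the appendix). The route the paper takes is Lemma \ref{lem:across=cross}: for any subgroup $H\subset G$ there is a ring isomorphism $\cA(A\rtimes\cG^G(G/H))\cong M_{G/H}(A\rtimes H)$, valid whether or not $G/H$ is finite. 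Assumption ii) makes $A'\rtimes H$ $E$-excisive, and assumption iii) --- which is stated for matrices indexed by an \emph{arbitrary} set $X$ --- then makes $M_{G/H}(A'\rtimes H)$ $E$-excisive. No colimit argument is needed, and with this in hand the rest of b), including the passage through $(?)_\%$, goes through as you describe.

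A secondary problem is the naturality argument in a): the functor $\cG^G(G/H)\to\cG^G(G/K)$ induced by a $G$-map $G/H\to G/K$ is injective on objects only when that map is bijective, so you cannot assemble $E(\cA(A\rtimes\cG^G(-)))$ into an $\org$-spectrum via assumption iv). The paper avoids this by taking as comparison map the one induced by the fully natural, category-level projection $\tilde{R}\cong R\times\Z\to R$, and using the arrow-ring identifications only object-wise to check that this map is an equivalence at each $G/H$. Note also that the hypothesis needed for $E(\cA(R\rtimes\cG^G(G/H)))\to E(\cA(\tilde{R}\rtimes\cG^G(G/H)))\to E(\cA(\Z[\cG^G(G/H)]))$ to be a fibration is that the \emph{kernel} $\cA(R\rtimes\cG^G(G/H))$ is $E$-excisive --- which it is, since for unital $R$ it has local units, so i) applies to it directly --- not that the two ambient rings have local units.
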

\begin{proof} If $A$ is $E$-excisive and $H\subset G$ is a subgroup, then conditions ii) and iii) together with Lemma \ref{lem:across=cross} imply that
$\cA(A\rtimes\cG^G(G/H))$ is $E$-excisive. Hence, by condition iv),
the spectrum in \eqref{fun:orgnuni} is equivalent to
$E(\cA(A\rtimes\cG^G(G/H)))$. In particular, by i),
$\cA(R\rtimes\cG^G(G/H))$ is $E$-excisive for $R$ unital, and the
map
\[\hofi(E(\tilde{R}\rtimes\cG^G(G/H))\to E(\Z[\cG^G(G/H)]))\to E(R\rtimes\cG^G(G/H))\]
induced by the projection $\tilde{R}\cong R\times \Z\to R$ is an
equivalence. This proves a). Moreover, because
$\cA(?\rtimes\cG^G(G/H))$ preserves exact sequences, applying
\eqref{fun:orgnuni} to the exact sequence of part b) yields an
object-wise homotopy fibration of $\org$-spectra, which is the first
homotopy fibration of b). Applying $(?)_\%$ we obtain the second
one.
\end{proof}

\begin{rem}\label{rem:assnuni}
Let $E:\Z-\cat\to \spt$  and let $A$ be any, not necessarily
$E$-excisive $G$-ring, equivariantly embedded as an ideal in a
unital $G$-ring $R$. Consider the $\org$-spectrum
\[
E(R\rtimes\cG^G(-):A\rtimes\cG^G(-))=\hofi(E(R\rtimes\cG^G(-))\to
E((R/A)\rtimes\cG^G(-)))
\]
Put
\[
H^G(X,E(R:A))=E(R\rtimes\cG^G(-):A\rtimes\cG^G(-))_\%(X).
\]
A $(G,\cF)$-cofibrant replacement $cX\to X$ gives rise to a map of homotopy fibrations
\[
\xymatrix{H^G(cX,E(R:A))\ar[r]\ar[d]& H^G(cX,E(R))\ar[d]\ar[r] &H^G(cX,E(R/A))\ar[d]\\
H^G(X,E(R:A))\ar[r]& H^G(X,E(R))\ar[r] &H^G(X,E(R/A))}
\]
If $H^G(cX,E(S))\to H^G(X,E(S))$ is an equivalence for all unital $S$, then both the middle and right
hand side vertical maps are equivalences; it follows that the same
is true of the map on the left. We record a particular case of this
in the following corollary.
\end{rem}

\begin{cor}\label{cor:assnuni}
Let $E:\Z-\cat\to\spt$ be a functor; assume $E$ satisfies the
Standing Assumptions \ref{stan}. Further let $G$ be a group, $X\in\fS^G$,
$\cF$ a family of subgroups, $cX\to X$ and $(G,\cF)$-cofibrant replacement.
Assume that the assembly map
$H^G(cX,E(R))\to H^G(X,E(R))$ is an equivalence for every
unital ring $R$.  Then $H^G(cX,E(A))\to H^G(X,E(A))$ is an
equivalence for every $E$-excisive ring $A$.
\end{cor}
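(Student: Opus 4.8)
The plan is to deduce this directly from the discussion in Remark \ref{rem:assnuni}, applied to the embedding of $A$ as an ideal in its unitalization. First I would take $R=\tilde A$, the unitalization of the $G$-ring $A$, with the trivial $G$-action on the $\Z$-summand as in \eqref{rule:unitalization}. Then $R$ is a unital $G$-ring containing $A$ as a $G$-ideal, and $R/A=\Z$ carries the trivial $G$-action, so $\Z\rtimes\cG^G(G/H)=\Z[\cG^G(G/H)]$. For this choice of $R$ the $\org$-spectrum
\[
E(R\rtimes\cG^G(-):A\rtimes\cG^G(-))=\hofi\bigl(E(\tilde A\rtimes\cG^G(-))\to E(\Z[\cG^G(-)])\bigr)
\]
of Remark \ref{rem:assnuni} is, level by level, precisely the $\org$-spectrum \eqref{fun:orgnuni} that defines $H^G(-,E(A))$; hence $H^G(Y,E(R:A))=H^G(Y,E(A))$ for every $G$-simplicial set $Y$, in particular for $Y=cX$ and $Y=X$.

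Next I would invoke the map of homotopy fibrations exhibited in Remark \ref{rem:assnuni} for this $R$,
\[
\xymatrix{H^G(cX,E(R:A))\ar[r]\ar[d]& H^G(cX,E(R))\ar[d]\ar[r] &H^G(cX,E(R/A))\ar[d]\\
H^G(X,E(R:A))\ar[r]& H^G(X,E(R))\ar[r] &H^G(X,E(R/A))}
\]
and observe that the hypothesis applies to the two unital $G$-rings $R$ and $R/A=\Z$, so that the middle and right-hand vertical maps are equivalences. Hence the left-hand vertical map is an equivalence as well (hofibers of equivalent maps being equivalent), and by the identification of the previous paragraph it is nothing but $H^G(cX,E(A))\to H^G(X,E(A))$, which is the desired conclusion.

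I do not anticipate any real difficulty, since essentially all the content has already been set up in Remark \ref{rem:assnuni}. The two small points to keep track of are the identification of the ``$E(R:A)$'' and ``$E(A)$'' descriptions of $H^G(-,E(A))$ in the case $R=\tilde A$, and the fact --- used implicitly in Remark \ref{rem:assnuni} --- that $(?)_\%$ sends the level-wise homotopy fibration $E(\tilde A\rtimes\cG^G(-):A\rtimes\cG^G(-))\to E(\tilde A\rtimes\cG^G(-))\to E(\Z[\cG^G(-)])$ to the rows of the diagram above.
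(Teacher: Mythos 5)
Your proposal is correct and is essentially the paper's own argument: the corollary is recorded as the special case of Remark \ref{rem:assnuni} in which one takes $R=\tilde A$, so that $H^G(-,E(R:A))$ coincides by definition (via \eqref{fun:orgnuni}) with $H^G(-,E(A))$, and the hypothesis applied to the unital rings $\tilde A$ and $\Z$ forces the left-hand vertical map in the fibration diagram to be an equivalence. The two points you flag (the definitional identification, and that $(?)_\%$ sends object-wise homotopy fibrations of $\org$-spectra to homotopy fibrations) are exactly the ones the paper leaves implicit.
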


\begin{prop}\label{prop:excimoron}
Let $A\triqui R$ be an ideal in a unital $G$-ring, closed under the
action of $G$. Let $E:\ring\to\spt$ be a functor satisfying the
standing assumptions. If $A$ is $E$-excisive then
\[
E(A\rtimes\cG^G(-))\to E(R\rtimes \cG^G(-):A\rtimes \cG^G(-))
\]
is an object-wise weak equivalence of $\org$-spectra.
\end{prop}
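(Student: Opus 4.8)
The plan is to argue object-wise over $\org$, uniformly in $H\in\cF$. Fix $H$; by definition the map in question is the map on horizontal homotopy fibers obtained by applying $E$ to the commutative square of $\Z$-linear functors with rows $\tilde A\rtimes\cG^G(G/H)\to\Z[\cG^G(G/H)]$ (the augmentation $\tilde A\to\Z$) and $R\rtimes\cG^G(G/H)\to(R/A)\rtimes\cG^G(G/H)$ (the reduction $R\to R/A$), and left vertical map induced by $\tilde A\to R$, $(a,\lambda)\mapsto a+\lambda\cdot 1_R$; this square commutes because $\tilde A\to R$ carries $A$ into $A$. The strategy is to run the whole comparison through assumption iv), replacing each corner $E(\cC)$ by $E(\cA(\cC))$ --- legitimately, since all four functors are the identity on objects, hence injective on objects. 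The one external input is the computation already contained in the proof of Proposition \ref{prop:exciequi1}: because $A$ is $E$-excisive, Lemma \ref{lem:across=cross} together with conditions ii) and iii) shows that $\cA(A\rtimes\cG^G(G/H))\cong M_{G/H}(A\rtimes H)$ is $E$-excisive.

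With the square replaced by its $E\circ\cA$-version, the point is that $\cA(A\rtimes\cG^G(G/H))$ is the common kernel of both horizontal rows. By \eqref{acrossnuni} the top row fits into
\[
0\to\cA(A\rtimes\cG^G(G/H))\to\cA(\tilde A\rtimes\cG^G(G/H))\to\cA(\Z[\cG^G(G/H)])\to 0,
\]
while the bottom row fits into
\[
0\to\cA(A\rtimes\cG^G(G/H))\to\cA(R\rtimes\cG^G(G/H))\to\cA((R/A)\rtimes\cG^G(G/H))\to 0,
\]
which is exact because the functor $\bigoplus_{x,y\in G/H}(-)\otimes\Z[\hom_{\cG^G(G/H)}(x,y)]$ is exact (each $\Z[\hom_{\cG^G(G/H)}(x,y)]$ being $\Z$-free). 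Since $\cA(A\rtimes\cG^G(G/H))$ is $E$-excisive, applying $E$ to these two sequences produces homotopy fibrations, so the horizontal homotopy fibers of the $E\circ\cA$-square are both canonically $E(\cA(A\rtimes\cG^G(G/H)))$; and as $\tilde A\to R$ restricts to the identity on $A$, the map induced between these two fibers is $E(\mathrm{id})$, in particular a weak equivalence. Finally, feeding in the natural equivalences $E(\cA(-))\weq E(-)$ of iv), which constitute a homotopy-commutative comparison of the $E\circ\cA$-square with the plain $E$-square, and passing to horizontal homotopy fibers, one recovers the map of the first paragraph --- i.e. the map of the statement --- as an equivalence; since $H$ was arbitrary, this settles every object of $\org$.

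The substantive ingredients --- the two short exact sequences and permanence of $E$-excisive rings under the operations involved --- are immediate or already available; what requires care is the homotopy-coherent bookkeeping. I expect the one point to check attentively is that the two uses of excision are compatible with the map induced by $\tilde A\to R$ (precisely where one invokes that $\tilde A\to R$ restricts to $\mathrm{id}_A$), together with arranging the zig-zag that defines the equivalence of iv) coherently enough that ``pass to horizontal homotopy fibers'' is a well-defined operation on the comparison between the two squares.
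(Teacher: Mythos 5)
Your argument is correct and is exactly the paper's proof written out in full: the paper's own proof is the one-line remark that the claim ``follows from Lemma \ref{lem:across=cross}, using assumptions ii), iii) and iv)'', and your elaboration (identify $\cA(A\rtimes\cG^G(G/H))\cong M_{G/H}(A\rtimes H)$ as the common $E$-excisive kernel of the two arrow-ring extensions, then transport along the equivalence of assumption iv)) is precisely what that remark is pointing at. No gaps.
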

\begin{proof} The proof follows from Lemma \ref{lem:across=cross}, using assumptions ii), iii) and
iv).
\end{proof}

\subsection{The ring $\cR(\cC)$}

\smallskip

Let $\cC$ be a $\Z$-linear category. Imitating a construction used
by M. Joachim (\cite{joach}) in the $C^*$-algebra context, we shall
associate to $\cC$ a ring $\cR(\cC)$ which is a quotient of the
tensor algebra of $\cA(\cC)$; first we need some notation. If $M$ is
an abelian group, we write $T(M)=\bigoplus_{n\ge 1} M^{\otimes n}$
for the (unaugmented) tensor algebra. Put
\[
 \cR(\cC)=T(\cA(\cC))/<\{g\otimes f-g\circ f:f\in\hom_\cC(a,b),
 g\in\hom_\cC(b,c),\ \ a,b,c\in\ob\cC\}>
 \]
Note that any $\Z$-linear functor $\cC\to\cD\in\Z-\cat$ defines a
homomorphism $\cR(\cC)\to\cR(\cD)$. Thus we may regard $\cR$ as a
functor
\[
\cR:\Z-\cat\to\rings, \quad \cC\mapsto \cR(\cC)
\]
Observe that the canonical surjection $T(\cA(\cC))\to \cA(\cC)$
factors through a map
\begin{equation}\label{map:bcontoac}
p:\cR(\cC)\onto\cA(\cC)
\end{equation}
whose kernel is the ideal generated by the elements $g\otimes f$ for
non-composable $g$ and $f$. For example if $\cC$ has only one
object, then $p$ is the identity. In particular any functor
$E:\ring\to\spt$ can be extended to $\Z-\cat$ via
$E(\cC)=E(\cR(\cC))$, and $p$ induces a natural transformation
$E(p):E(\cC)\to E(\cA(\cC))$ of functors of $\inj-\Z-\cat$.

\begin{ex}\label{ex:acoprdb}
Let $R,S$ be unital rings, and let $\cC$ be the $\Z$-linear category
with two objects $a$ and $b$ such that
$\hom_\cC(a,b)=\hom_\cC(b,a)=0$, $\hom_\cC(a,a)=R$ and
$\hom_\cC(b,b)=S$. Then  $\cA(\cC)=R\oplus S$ and $\cR(\cC)=R\coprod
S$ is the nonunital coproduct. We shall see in Proposition \ref{prop:kstands} that $K$-theory
satisfies the standing assumptions; however in general $K_*(R\coprod S)\neq K_*(R)\oplus K_*(S)$.
\end{ex}

In Lemma \ref{lem:eac=ebc} we give
conditions on $E$ which guarantee that it sends the map \eqref{map:bcontoac} to a weak equivalence. First we need some
notation.
If $B$ is a ring, we write
$\ev_i:B[t]\to B$ $i=0,1$ for the evaluation maps. If $f,g:A\to B$
are ring homomorphisms, then a (polynomial) \emph{elementary
homotopy} between $f$ and $g$ is a map $H:A\to B[t]$ such that
$\ev_0 H=f$ and $\ev_1 H=g$. A \emph{homotopy} from $f$ to $g$ is a
sequence of homomorphisms $f=h_0,\dots, h_n=g$ and elementary
homotopies $H_i:A\to B[t]$ from $h_i$ to $h_{i+1}$. The functor $E$
is \emph{invariant under polynomial homotopy} if for every ring $A$,
$E$ sends the inclusion $A\subset A[t]$ to a weak equivalence.
Because the composite $\inc\circ\ev_0:A[t]\to A[t]$ is homotopic to
the identity, if $E$ is invariant under polynomial homotopy, and $f$
and $g$ are homotopic ring homomorphisms, then $E(f)$ and $E(g)$
define the same map in $\ho\spt$.

\goodbreak
\begin{lem}\label{lem:eac=ebc}
Let $E:\rings\to\spt$ be a functor. Assume that $E$ satisfies standing assumptions i) and iii). Let $\cC$ be a $\Z$-linear category such that $\cR(\cC)$ is $E$-excisive. Then $E_*$ sends
\eqref{map:bcontoac} to a naturally split surjection. Assume in
addition that $E$ is invariant under polynomial homotopy. Then $E$
sends \eqref{map:bcontoac} to a weak equivalence.
\end{lem}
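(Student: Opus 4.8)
The plan is to compare $\cA(\cC)$ and $\cR(\cC)$ by viewing both as quotients of the tensor algebra $T:=T(\cA(\cC))$: write $m\colon T\onto\cA(\cC)$ for the canonical surjection, with kernel $J\cA(\cC)$, and $q\colon T\onto\cR(\cC)$ for the one with kernel $\cK$, so that $p\circ q=m$, $\cK\subseteq J\cA(\cC)$, and $J\cA(\cC)/\cK\cong\ker p$. The ingredient I would isolate at the outset is that $E(T(V))$ is contractible for every abelian group $V$ whenever $E$ is invariant under polynomial homotopy: the \emph{dilation} $h\colon T(V)\to T(V)[t]$ acting as multiplication by $t^n$ on $V^{\otimes n}$ is a ring homomorphism, because concatenation adds tensor degrees, and it is a polynomial homotopy from $\mathrm{id}$ at $t=1$ to the zero map at $t=0$, so $E(\mathrm{id}_{T(V)})=E(0)$ in $\ho\spt$. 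I also record that $\cA(\cC)$, having local units, is $E$-excisive by i), and that the hypothesis on $\cR(\cC)$ enters only so that iii) may be applied to $M_{\ob\cC}(\cR(\cC))$.

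For the naturally split surjection --- which should use only i) and iii) --- I would build a one-sided inverse to $E_*(p)$ after stabilizing by matrices. Put $O=\ob\cC$ and set $\sigma\colon\cA(\cC)\to M_O(\cR(\cC))$, $\sigma(x)=\sum_{a,b\in O}e_{a,b}\otimes\overline{x_{a,b}}$, the bar denoting the image in $\cR(\cC)$ of a morphism of $\cC$ seen as a degree-one tensor. Then $\sigma$ is a ring homomorphism: the $(a,c)$-entry of $\sigma(x)\sigma(y)$ is $\sum_b\overline{x_{a,b}}\cdot\overline{y_{b,c}}$, and since $x_{a,b}$ and $y_{b,c}$ are composable in $\cC$ the defining relation of $\cR(\cC)$ rewrites this as $\overline{\sum_b x_{a,b}\circ y_{b,c}}=\overline{(xy)_{a,c}}$. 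Post-composing with $M_O(p)$ yields $\Delta\colon\cA(\cC)\to M_O(\cA(\cC))$, $\Delta(x)=\sum_{a,b}e_{a,b}\otimes x_{a,b}$, which is $\cA(F)$ for the object-injective $\Z$-linear functor $F\colon\cC\to\cC\otimes\cD$, $c\mapsto(c,c)$, with $\cD$ the indiscrete $\Z$-linear category on $O$. As $F$ is naturally isomorphic to a functor inducing a corner inclusion on $\cA$ (collapse the second coordinate to a fixed object, after adjoining one extra object so the comparison functor stays injective on objects), the matrix manipulation used in the proof of Lemma \ref{lem:enat} --- which at this point invokes only iii) and \cite[Lemma 2.2.4]{friendly} --- shows $E(\Delta)$ is an equivalence. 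Together with the corner equivalences $E(\cR(\cC))\xrightarrow{\sim}E(M_O(\cR(\cC)))$ and $E(\cA(\cC))\xrightarrow{\sim}E(M_O(\cA(\cC)))$ from iii), and the natural identity $M_O(p)\circ\iota_{\cR}=\iota_{\cA}\circ p$, this produces a natural $t\colon E(\cA(\cC))\to E(\cR(\cC))$ with $E(p)\circ t=\mathrm{id}$.

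For the equivalence I would add polynomial homotopy invariance and exploit $E(T)\simeq\ast$. Granting that $\cK$ and $J\cA(\cC)$ are $E$-excisive, the two universal extensions give natural equivalences $E(\cR(\cC))\simeq\Omega^{-1}E(\cK)$ and $E(\cA(\cC))\simeq\Omega^{-1}E(J\cA(\cC))$ under which $E(p)$ becomes $\Omega^{-1}$ of the map $E(\cK)\to E(J\cA(\cC))$; by the excision sequence of $0\to\cK\to J\cA(\cC)\to\ker p\to0$ this is an equivalence exactly when $E(\ker p)\simeq\ast$. This vanishing is the step I expect to be the main obstacle: the dilation trick cannot be carried out on $\cR(\cC)$, since concatenating two reduced words adds their lengths while a composition at the junction lowers the length by one, so no reweighting of the word-length grading of $\cR(\cC)$ is multiplicative. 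The plan there is to filter $\cR(\cC)$ by the ideals $\cK_{\ge n}$ spanned by reduced words of length $\ge n$ (so $\cK_{\ge1}=\cR(\cC)$, $\cK_{\ge2}=\ker p$); because at most one merge occurs in a product, $\cK_{\ge n}\cdot\cK_{\ge l}\subseteq\cK_{\ge n+l-1}$, whence $\ker p$ is residually nilpotent and, for $n\ge2$, the quotient $\cK_{\ge n}/\cK_{\ge n+1}$ has zero multiplication, hence is $E$-contractible (now $x\mapsto tx$ is a ring homotopy to $0$). Running the excision sequences of $0\to\cK_{\ge n+1}\to\cK_{\ge n}\to\cK_{\ge n}/\cK_{\ge n+1}\to0$ --- whose kernels are $E$-excisive because polynomial homotopy invariance (with matrix stability) yields excision with $E$-contractible kernels --- gives $E(\ker p)\simeq E(\cK_{\ge n})$ for all $n$, and a completeness argument, using that homotopy-invariant functors kill nilpotent rings, then identifies this common value with $E(0)=\ast$. (For the main application $E=KH$ the vanishing $KH(\ker p)\simeq\ast$ is immediate, since $KH$ is excisive and nil-invariant.) One finishes by checking that every construction above is natural in $\cC$ on $\inj\text{-}\Z\text{-}\cat$.
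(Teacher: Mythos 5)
Your first half is essentially the paper's argument: your $\sigma$ is the map $j(f)=f\otimes e_{b,a}$ used there, and the identification of $M_{\ob\cC}(p)\circ\sigma$ with a multiplier--conjugate of a corner inclusion is carried out in the paper with explicit elements $V,W$ over the pointed object set $\ob_+\cC$ (the added basepoint is exactly your "extra object," and is what makes the splitting natural on $\inj$-$\Z$-$\cat$). So the naturally split surjection from assumptions i) and iii) alone is fine.

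The second half does not work as written, and the trouble sits exactly where you flagged it. First, every reduction you make --- to the extensions $0\to\ker(T(\cA(\cC))\to\cR(\cC))\to T(\cA(\cC))\to\cR(\cC)\to 0$ and $0\to\ker p\to\cR(\cC)\to\cA(\cC)\to 0$, and then to the filtration steps $0\to\cK_{\ge n+1}\to\cK_{\ge n}\to\cK_{\ge n}/\cK_{\ge n+1}\to 0$ --- needs the relevant kernels to be $E$-excisive, and the hypotheses of the lemma (i), iii), and polynomial homotopy invariance) do not provide this. Your proposed fix, that homotopy invariance plus matrix stability "yields excision with $E$-contractible kernels," is not a theorem: excision is an independent property, which is precisely why the paper tracks $E$-excisiveness of specific rings so carefully. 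Second, even granting excision everywhere, the word-length filtration only gives $E(\ker p)\simeq E(\cK_{\ge n})$ for every finite $n$; since $\ker p$ is residually nilpotent but not nilpotent, passing from this to $E(\ker p)\simeq *$ requires $E$ to commute with the relevant limit, which is neither assumed nor true for the functors of interest. The paper sidesteps all of this with a single construction you should compare against: an explicit element $H(f)\in M_{\ob_+\cC}(\cR(\cC))[t]$, defined for each morphism $f$ of $\cC$, satisfying $H(gf)=H(g)H(f)$; multiplicativity is what lets $H$ descend to a ring homomorphism $\cR(\cC)\to M_{\ob_+\cC}(\cR(\cC))[t]$, giving a polynomial homotopy from $j\circ p$ to the corner inclusion $\iota$. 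Homotopy invariance then yields $E(j)E(p)=E(\iota)$ in $\ho\spt$, an equivalence by iii), so $E(p)$ is split injective as well as split surjective --- with no excision and no limit argument. (Your dilation proof that $E(T(V))$ is contractible is correct, but it is not needed.)
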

\begin{proof}
Let $\ob_+\cC=\ob\cC\coprod\{+\}$ be the set of objects of $\cC$
with a base point added. Consider the homomorphism
\[
j:\cA(\cC)\to M_{\ob_+\cC}\cR(\cC),\quad j(f)=f\otimes e_{b,a}\qquad
(f\in\hom_\cC(a,b))
\]
Write $p$ for the map \eqref{map:bcontoac}. Consider the matrices
\begin{gather*}
V=\sum_{a\in\ob\cC}1_a\otimes e_{a,+}\\
W=\sum_{a\in\ob\cC}1_a\otimes e_{+,a}
\end{gather*}
The composite
$q=M_{\ob_+\cC}(p)\circ j$ sends $f\in\cA(\cC)$ to
\[
q(f)=Wf\otimes e_{+,+}V
\]
Observe that left multiplication by $W$ and right multiplication by $V$ leave $M_{\ob_+\cC}\cA(\cC)$ stable, and that
$aVWa'=aa'$ for all $a,a'\in M_{\ob_+\cC}\cA(\cC)$. By the argument of \cite[2.2.6]{friendly}, all this together with matrix invariance imply that
$E_*(q)=E_*(?\otimes e_{+,+})$ is an isomorphism.
 This proves the first assertion of the
Lemma. To prove the second, it suffices to show that $r=j\circ p$\ \
is homotopic to the inclusion $\iota(a)=a\otimes e_{+,+}$. If
$f\in\hom_\cC(a,b)$, write $H(f)\in M_{\ob_+\cC}(\cR(\cC))[t]$ for
\[H(f)=f\otimes(-t(t^3-2t)e_{+,+}+t(t^2-1)e_{+,a}+(1-t^2)(t^3-2t)e_{b,+}+(1-t^2)^2e_{b,a})\]
Note that $\ev_0H(f)=r(f)$, $\ev_1H(f)=\iota(f)$. Further, one
checks that if $g\in\hom_\cC(b,c)$, then $H(gf)=H(g)H(f)$. Thus $H$
induces a homomorphism $\cR(\cC)\to M_{\ob_+\cC}(\cR(\cC))[t]$ which
is a homotopy from $r$ to $\iota$. This concludes the proof.
\end{proof}

\begin{ex}
If $E:\ring\to\spt$ is excisive and homotopy invariant and satisfies standing assumptions
iii) and v), then its extension $E\circ\cR:\Z-\cat\to\spt$ satisfies all the standing assumptions and
agrees with $E$ on $\ring$. If $F$ is another extension of $E$ which also satisfies the standing assumptions,
then composing $E(\cR(\cC))\to E(\cA(\cC))$ with the map of assumption \ref{stan} iv), we get an equivalence
$E(\cR(\cC))\to F(\cC)$ which is natural with respect to functors which are injective on objects.
\end{ex}

\section{$K$-theory}\label{sec:kth}
\numberwithin{equation}{subsection}
\subsection{The $K$-theory spectrum}

Given a $\Z$-linear category $\cC$, we denote by $\cC_\oplus$ the
$\Z$-linear category whose objects are finite sequences of objects
of $\cC$, and whose morphisms are matrices of morphisms in $\cC$
with the obvious matrix product as composition. Concatenation of
sequences yields a sum $\oplus$ and hence we obtain, functorially,
an additive category; write $\idem\cC_\oplus$ for its idempotent
completion. We shall also need  \emph{Karoubi's cone} $\Gamma(\cC)$
(\cite[pp~270]{kv}). The objects of $\Gamma(\cC)$ are the sequences
$x=(x_1,x_2,\dots)$ of objects of $\cC$ such that the set
\begin{equation}\label{F(x)}
F(x)=\{c\in \cC: (\exists n)\quad x_n=c\}
\end{equation}
is finite. A map $x\to y$ in $\Gamma(\cC)$ is a matrix
$f=(f_{i,j})$ of homomorphisms $f_{i,j}:x_j\to y_i$ such that
\begin{enumerate}
\item There exists an $N$ such that every row and every column of $f$ has at most $N$ nonzero entries.
\item The set $\{f_{i,j}:i,j\in\N\}$ is finite.
\end{enumerate}
Interspersing of sequences defines a symmetric monoidal operation
$\boxplus:\Gamma(\cC)\times \Gamma(\cC)\to\Gamma(\cC)$ and there is
an endofunctor $\tau$ such that $1\boxplus \tau\cong \tau$ (see
\cite[\S III]{karder}). If $\cC$ has finite direct sums, e.g. if
$\cC=\cD_\oplus$ for some $\Z$-linear category $\cD$, then the
interspersing operation is naturally equivalent to the induced sum
$(x\oplus y)_i=x_i\oplus y_i$ (\cite[Lemme ~3.3]{karder}). In
particular, if $\cC$ is additive, then $\Gamma\cC$ is a {\em
flasque} additive category; that is, there is an additive
endofunctor $\tau:\cC\to\cC$ such that $\tau\oplus 1\cong \tau$. A
morphism $f$ in $\Gamma(\cC)$ is \emph{finite} if $f_{ij}=0$ for all
but finitely many $(i,j)$. Finite morphisms form an ideal, and we
write $\Sigma(\cC)$ for the category with the same objects as
$\Gamma(\cC)$, and morphisms taken modulo the ideal of finite
morphisms. The category $\Sigma(\cC)$ is Karoubi's \emph{suspension}
of $\cC$.
 By \cite[Thm. 5.3]{pw2},
if $\cC$ is additive, we have a homotopy fibration sequence
\begin{equation}\label{seq:karfib}
K^Q(\idem\cC)\to K^Q(\Gamma(\idem\cC))\to K^Q(\Sigma(\idem\cC))
\end{equation}
Here each of the categories is regarded as a semisimple exact
category, and $K^Q$ denotes the fibrant simplicial set for its
algebraic $K$-theory. Because $\Gamma(\idem\cC)$ is flasque,
$K^Q(\Gamma(\idem\cC))$ is contractible, whence $K^Q(\idem\cC)\cong
\Omega K^Q(\Sigma(\idem\cC))$. Now let $\cC$ be any small
$\Z$-linear category, possibly without direct sums. Consider the
sequence of categories
\begin{equation}\label{cn}
\cC^{(0)}=\idem(\cC_\oplus),\quad \cC^{(n+1)}=\idem(\Sigma\cC^{(n)})
\end{equation}
Then we have a spectrum $K(\cC)=\{{}_nK(\cC)\}$, with
\begin{equation}\label{eq:kspt}
{}_nK(\cC)\cong K^Q(\cC^{(n)})
\end{equation}

\begin{rem}\label{rem:kr=kr} If $R$ is a unital ring, then by \cite[Prop. 1.6]{kv}, we have category equivalences
\begin{equation}\label{kveq}
    \idem(\Gamma(\proj(R)))\cong\proj(\Gamma(R))\text{ and } \idem(\Sigma(\proj(R)))\cong\proj(\Sigma(R))
\end{equation}
Hence the spectrum $K(R)$ defined above is equivalent to the usual, Gersten-Karoubi-Wagoner spectrum of the
ring $R$.
\end{rem}

\subsection{Comparing $K(\cC)$ with $K(\cA(\cC))$}

\goodbreak

\paragraph{\em The operation $\lozenge$}

Let $X$ be a set and let $\cC$ and $\cD$ be $\Z$-linear categories
with $\ob\cC=\ob\cD=X$. Consider the category $\cC\lozenge\cD$ with
set of objects $\ob(\cC\lozenge\cD)=X$, homomorphisms
\[
\hom_{\cC\lozenge\cD}(x,y)=\hom_{\cC}(x,y)\oplus\hom_{\cD}(x,y)
\]
and coordinate-wise composition. If $\cC$, $\cD$ and $\cE$ are $\Z$-linear categories,
we have
\begin{gather}
(\cC\lozenge\cD)_\oplus=\cC_\oplus\lozenge\cD_\oplus\nonumber\\
\idem((\cC\lozenge\cD)_\oplus)=\idem\cC_\oplus\times\idem\cD_\oplus\label{eq:idemsucrets}\\
(\cC\lozenge\cD)\otimes\cE=(\cC\otimes\cE)\lozenge(\cD\otimes\cE)\label{eq:lozetenso}
\end{gather}
\goodbreak

\paragraph{\em Unitalization}

We have already recalled the definition of the unitalization
$\tilde{A}$ of a not necessarily unital ring $A$. Now we need a
version of unitalization for $\Z$-linear categories; this can be
more generally defined for nonunital $\Z$-categories, but we will
have no occasion for that. Let $\cC\in\Z-\cat$; write $\tilde{\cC}$
for the category with $\ob\tilde{\cC}=\ob\cC$ and with homomorphisms
given by
\[
\hom_{\tilde{\cC}}(x,y)=\hom_{\cC}(x,y)\oplus\delta_{x,y}\Z=\left\{\begin{matrix}\hom_{\cC}(x,y)
&x\ne y\\
\hom_{\cC}(x,x)\oplus\Z & x=y\end{matrix}\right.
\]
Composition between $(f,\delta_{x,y}n)\in\hom_{\tilde{\cC}}(x,y)$
and $(g,\delta_{y,z}m)\in\hom_{\tilde{\cC}}(y,z)$ is defined by the
formula
\[
(g,\delta_{y,z}m)\circ(f,\delta_{x,y}n)=(gf+\delta_{y,z}mf+\delta_{x,y}gn,\delta_{x,y}\delta_{y,z}mn)
\]
Observe that if $R$ is a ring, considered as a $\Z$-linear category
with one object, then

\[\tilde{R}\to R\times\Z=R\lozenge\Z,\qquad (r,n)\mapsto (r+n\cdot
1,n)
\]

\noindent is an isomorphism. This isomorphism generalizes to
$\Z$-categories as follows. Let $\Z\langle\ob\cC\rangle\in\Z-\cat$,
be the $\Z$-linear category with the same objects as $\cC$ and
homomorphisms given by
\[
\hom_{\Z\langle\ob\cC\rangle}(x,y)=\delta_{x,y}\Z
\]
We have an isomorphism of linear categories
\begin{equation}\label{map:sucretilde}
\cC\lozenge\Z\langle\ob\cC\rangle\iso \tilde{\cC}
\end{equation}
 which is the
identity on objects, as well as on
$\hom_{\cC\lozenge\Z\langle\ob\cC\rangle}(x,y)$ for $x\ne y$, and
which sends
\[
\hom_{\cC\lozenge\Z\langle\ob\cC\rangle}(x,x)\owns (f,n)\mapsto
(f-n1_x,n)\in \hom_{\tilde{\cC}}(x,x)
\]
\bigskip
\goodbreak
\paragraph{\em The map $K(\cC)\to K(\cA(\cC))$}

If $\cC$ is a $\Z$-linear category, and $x,y\in\ob\cC$, then by
definition of $\cA(\cC)$,
\begin{equation}\label{inc1}
\hom_{\cC}(x,y)\subset\cA(\cC)
\end{equation}
and the inclusion is compatible with composition. We also have an
inclusion
\begin{equation}\label{inc2}
\hom_{\tilde{\cC}}(x,x)\owns (f,n)\mapsto (f,n)\in \widetilde{\cA(\cC)}
\end{equation}
The inclusions \eqref{inc1} and \eqref{inc2} together with the only
map $\ob\tilde{\cC}\to\ob\widetilde{\cA(\cC)}=\{\bullet\}$ define a
functor
\begin{equation}\label{map:phi}
\phi:\tilde{\cC}\to\widetilde{\cA(\cC)}
\end{equation}
Observe that $\Z\langle\ob\cC\rangle\subset \tilde{\cC}$ and that
$\phi(\Z\langle\ob\cC\rangle)\subset\Z\subset\widetilde{\cA(\cC)}$.
We have a commutative diagram
\[
\xymatrix{
 \tilde{\cC}\ar[r]^\phi\ar[d]^{\pi_1}&\widetilde{\cA(\cC)}\ar[d]^{\pi_2}\\
  \Z\langle\obc\rangle\ar[r]&\Z}
\]
Here the vertical maps are the obvious projections. By \eqref{map:sucretilde} and \eqref{eq:idemsucrets} we have an
equivalence
\[
K(\tilde{\cC})\weq K(\cC)\times K(\Z\langle\obc\rangle)
\]
Under this equivalence the map induced by $\pi_1$ becomes the canonical projection; hence its fiber is
$K(\cC)$. On the other hand, by definition, $K(\cA(\cC))$ is the fiber of $K(\pi_2)$. Hence $\phi$ induces
a map
\begin{equation}\label{map:kkpw}
\varphi:K(\cC)\to K(\cA(\cC))
\end{equation}
\begin{prop}\label{prop:kkpw}
Let $\cC$ be a $\Z$-linear category. Then the map \eqref{map:kkpw} is an equivalence.
\end{prop}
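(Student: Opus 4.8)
The plan is to reduce the statement to the ring-level comparison already implicit in the construction, and then to invoke Pedersen--Weibel-type cofinality and the behaviour of $K$-theory under the operations $\Gamma$, $\Sigma$, and idempotent completion. First I would reformulate $\varphi$ at the level of the non-unital categories: the functor $\phi:\tilde\cC\to\widetilde{\cA(\cC)}$ of \eqref{map:phi} restricts to a ``functor'' $\cC\to\cA(\cC)$ (in the sense that it is compatible with composition and kills nothing), and the induced map on $K$-theory, after taking fibers over $K(\Z\langle\obc\rangle)\to K(\Z)$, is exactly \eqref{map:kkpw}. So it suffices to show that $\phi$ induces an equivalence $K(\tilde\cC)\weq K(\widetilde{\cA(\cC)})$ compatibly with the projections to $K(\Z\langle\obc\rangle)$ and $K(\Z)$; equivalently, that $\phi$ induces an equivalence after passing to the fibers.

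The core of the argument is the following. By construction $\cA(\cC)=\bigoplus_{a,b}\hom_\cC(a,b)$, and a finite subset $S\subseteq\obc$ gives a unital subring $\cA_S(\cC)=\bigoplus_{a,b\in S}\hom_\cC(a,b)$, which is precisely $\cA$ of the full subcategory $\cC_S$ on $S$; and $\cA(\cC)=\colim_S \cA_S(\cC)$ is a filtered colimit of unital rings (a ring with local units). Since $K$-theory commutes with filtered colimits of rings (hence of the associated categories), and since $\cC=\colim_S \cC_S$ as well, I am reduced to the case where $\obc$ is \emph{finite}. In that case $\cA(\cC)$ is a genuine unital ring, and the statement becomes: for a $\Z$-linear category $\cC$ with finitely many objects, the canonical functor $\cC\to\cA(\cC)$ (viewing the unital ring $\cA(\cC)$ as a one-object $\Z$-linear category) induces an equivalence $K(\cC)\weq K(\cA(\cC))$. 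This is a Morita-type statement: $\idem(\cC_\oplus)$ is equivalent to $\proj(\cA(\cC))$ when $\obc$ is finite, because choosing an enumeration $\obc=\{c_1,\dots,c_n\}$ identifies the object $c_1\oplus\cdots\oplus c_n$ of $\cC_\oplus$ with a full idempotent in $\cA(\cC)=\End(c_1\oplus\cdots\oplus c_n)$ whose corner ring is $\cA(\cC)$ itself, so $\cC_\oplus$ sits inside $\proj(\cA(\cC))$ as a cofinal additive subcategory generating it under direct summands. One then checks this equivalence is compatible with $\Gamma$, $\Sigma$ and idempotent completion — using the equivalences \eqref{kveq} of \cite[Prop. 1.6]{kv} on the ring side and their evident analogues on the category side — so that it induces an equivalence on the whole spectra \eqref{eq:kspt}, not merely on $K^Q$ of the zeroth category. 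Finally I would track the projections: both $\pi_1:\tilde\cC\to\Z\langle\obc\rangle$ and $\pi_2:\widetilde{\cA(\cC)}\to\Z$ are split by the evident inclusions, the diagram with $\phi$ commutes, and $\phi$ carries the splitting on the left to the splitting on the right, so the equivalence of the $\tilde\cC$-level $K$-theories descends to the fibers, yielding the equivalence \eqref{map:kkpw}.

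The main obstacle I anticipate is \emph{not} the finite-object case — that is essentially classical Morita invariance of $K$-theory combined with Pedersen--Weibel cofinality — but rather making the passage to the colimit clean at the level of \emph{spectra} rather than just homotopy groups. The functors $\Gamma$, $\Sigma$, $\idem$ and $(-)_\oplus$ need to be shown to commute (up to natural equivalence, coherently) with filtered colimits over the poset of finite subsets of $\obc$, so that $K(\cC)=\colim_S K(\cC_S)$ as spectra; the subtlety is that $\Gamma(\cC)$ involves infinite sequences, so one must argue that every morphism of $\Gamma(\cC)$, having a finite value-set by condition (2) in the definition, already lives in $\Gamma(\cC_S)$ for some finite $S$, and likewise for $\Sigma$. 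Once this colimit-compatibility is in place, combined with the facts that $K^Q$ of a filtered colimit of exact categories is the colimit (Quillen) and that $\cA(\cC)=\colim_S\cA_S(\cC)$, everything assembles. A secondary, more bookkeeping-type difficulty is verifying that the Morita equivalence $\idem(\cC_\oplus)\simeq\proj(\cA(\cC))$ intertwines the \emph{specific} choices made in \eqref{cn} defining the spectrum structure on both sides; this should follow formally from the fact that $\Gamma$ and $\Sigma$ are defined identically on categories and on rings-as-one-object-categories, but it requires care with the idempotent completions inserted at each stage.
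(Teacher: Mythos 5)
Your proposal is correct and follows essentially the same route as the paper: reduce to the case of finitely many objects using that both sides commute with filtering colimits, then observe that the induced functor $\cC_\oplus\to\idem(\cA(\cC)_\oplus)$ is fully faithful and cofinal, hence a $K$-theory equivalence. The paper handles the compatibility with the projections by writing out the explicit composite through $\idem(\tilde\cC_\oplus)\to\idem(\widetilde{\cA(\cC)}_\oplus)$, which is the same bookkeeping you describe.
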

\begin{proof}
Because both the source and the target of \eqref{map:kkpw} commute with filtering colimits, we may assume that $\cC$ has finitely many objects.
Then $\cA(\cC)$ is unital, and thus we have an isomorphism $\widetilde{\cA(\cC)}\cong \cA(\cC)\times\Z$. Recall that the idempotent completion of an additive
category $\fA$ is the category whose objects are the idempotent endomorphisms in $\fA$ and where a map $f:e_1\to e_2$ is an element of $\hom_\fA(\dom e_1,\dom e_2)$
 such that $f=e_2fe_1$. One checks that the composite
\begin{multline*}
\cC_\oplus\to\idem\cC_\oplus\overset{1\times 0}\to \idem\cC_\oplus\times\idem\Z\langle\obc\rangle\cong\\
\idem(\tilde{\cC}_\oplus)\overset\phi\to
\idem(\widetilde{\cA(\cC)}_\oplus)\cong
\idem(\cA(\cC)_\oplus)\times\idem(\Z_\oplus)\to
\idem(\cA(\cC)_\oplus)
\end{multline*}
is the functor $\psi$ which sends an object $(c_1,\dots,c_n)$ to the idempotent $\diag(1_{c_1},\dots,1_{c_n})$ and a map
$f=(f_{i,j}):(c_1,\dots,c_n)\to (d_1,\dots,d_m)$ to the corresponding matrix $(f_{i,j})\in \hom_{\cA(\cC)_\oplus}(\bullet^n,\bullet^m)$. Because $\psi$
is fully faithful and cofinal, it induces an equivalence $K(\cC)\to K(\cA(\cC))$. It follows that \eqref{map:kkpw} is an equivalence.
\end{proof}

\subsection{$K$-theory and the standing assumptions}

\begin{prop}\label{prop:kstands}
The functor $K:\Z-\cat\to\spt$ satisfies the standing assumptions.
\end{prop}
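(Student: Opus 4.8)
The plan is to verify in turn the five conditions i)--v) of \ref{stan}. The substantive inputs are Proposition \ref{prop:kkpw} and the Suslin--Wodzicki theory of $K$-excisive rings recalled in the Appendix; beyond quoting these, the only mildly delicate point will be a reduction to the unital, finite case in condition iii), and I do not expect a genuine obstacle.

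Condition iv) is Proposition \ref{prop:kkpw}, which supplies the equivalence $\varphi\colon K(\cC)\weq K(\cA(\cC))$ of \eqref{map:kkpw}; its naturality with respect to $\Z$-linear functors which are injective on objects is built into the construction, as such an $F\colon\cC\to\cD$ induces both the ring homomorphism $\cA(F)$ and a functor $\tilde{F}\colon\tilde{\cC}\to\tilde{\cD}$ compatible with the maps $\phi$ of \eqref{map:phi}. Condition ii) is part i) of Theorem \ref{intro:exci}, proved in the Appendix (Proposition \ref{prop:crossbar}). The ``only if and if'' statement of condition v), together with the fact that $\bigoplus_i K(A_i)\to K(\bigoplus_i A_i)$ is an equivalence, is part ii) of Theorem \ref{intro:exci} (Proposition \ref{prop:barsum}).

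For condition i), recall that a ring $A$ with local units is the filtered colimit of its unital subrings $eAe$, with $e$ ranging over the idempotents of $A$ ordered by $e\le f\iff ef=fe=e$. Since $K^Q$ and each of the operations $\idem$, $(-)_\oplus$, $\Gamma$, $\Sigma$ commutes with filtered colimits of $\Z$-linear categories, so does $K$; and by Suslin's $\tor$-theoretic criterion for $K$-excision recalled in the Appendix, the class of $K$-excisive rings is closed under filtered colimits (the relevant $\tor$-groups being so closed). As each $eAe$ is unital and therefore $K$-excisive, it follows that $A$ is $K$-excisive.

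Condition iii) takes a short argument. Let $A$ be $K$-excisive, $X$ a set and $x\in X$. Then $M_XA=M_X\otimes A$, and $M_X=M_X\Z$ is a ring with local units --- hence $K$-excisive by i) --- which is free as an abelian group, so $M_XA$ is $K$-excisive by part iii) of Theorem \ref{intro:exci}. For the map $\iota\colon A\to M_XA$, $a\mapsto e_{x,x}a$: write $M_XA=\colim_S M_SA$ over the finite subsets $S\subseteq X$ containing $x$, and for each such $S$ fix a bijection $S\cong\{1,\dots,n\}$ carrying $x$ to $1$. For unital rings the corner inclusion $R\to M_SR$, $r\mapsto e_{1,1}r$, induces an equivalence on $K$-theory (matrix invariance); applying this to $\tilde{A}$ and to $\Z$ and comparing the homotopy fibre sequences $K(A)\to K(\tilde{A})\to K(\Z)$ and $K(M_SA)\to K(M_S\tilde{A})\to K(M_S\Z)$ --- the second being valid because $M_SA$ is $K$-excisive and an ideal in $M_S\tilde{A}$ with quotient $M_S\Z$, and the two squares of quotient maps commuting --- we find that $K(A)\to K(M_SA)$ is an equivalence. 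Taking the colimit over $S$ shows that $K(\iota)$ is an equivalence, which completes the verification.
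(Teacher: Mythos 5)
Your proof is correct and follows essentially the same route as the paper: iv) via Proposition \ref{prop:kkpw}, ii) via Proposition \ref{prop:crossbar}, v) via Proposition \ref{prop:barsum}, and the excisiveness half of iii) via Proposition \ref{prop:tenso}. The only divergences are cosmetic: for i) the paper invokes the triple factorization property (Example \ref{ex:tfp}) where you use closure of $K$-excisive rings under filtering colimits (Corollary \ref{cor:filtcoli}), and for the matrix-stability half of iii) you spell out the reduction to finite index sets and the comparison of fibration sequences that the paper leaves implicit.
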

\begin{proof}
Assumption iv) was proved in Proposition \ref{prop:kkpw} above. The
remaining assumptions are either proved in Appendix \ref{app:A} or
follow from results therein. By Example \ref{ex:tfp}, rings with
local units are $K$-excisive; hence $K$-theory satisfies i).
Assumption ii) holds by Proposition \ref{prop:crossbar}. If $A$ is
$K$-excisive and $X$ is a set, then $M_{X}A$ is $K$-excisive, by
Proposition \ref{prop:tenso}. Assumption iii) follows from this and
the fact that $K$-theory is matrix stable on unital rings.
Assumption v) is proved in Proposition \ref{prop:barsum}.
\end{proof}

\section{Homotopy $K$-theory}\label{sec:kh}
\numberwithin{equation}{section}

If $\cC$ is a $\Z$-linear category, then we write $\cC^{\Delta^\bu}$ for the
simplicial $\Z$-linear category
\begin{equation}\label{simp:delta}
\cC^{\Delta^\bu}:[n]\mapsto \cC^{\Delta^n}=\cC\otimes\Z[t_0,\dots,t_n]/<t_0+\dots+t_n-1>
\end{equation}
Applying the functor $K$ dimensionwise we get a simplicial spectrum whose total spectrum is
the \emph{homotopy $K$-theory} spectrum $KH(\cC)$. In particular if $R$ is a unital ring, then
$KH(R)$ was defined by Weibel in \cite{kh}. The following theorem was proved in \cite{kh}; see also
\cite[\S5]{friendly}.

\begin{thm}\label{thm:wei}(Weibel)
The functor $KH:\ring\to\spt$ is excisive, matrix invariant, and invariant under polynomial homotopy.
\end{thm}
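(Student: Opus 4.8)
The theorem is due to Weibel; I sketch how the proof goes. The common device for all three properties is that $KH(R)$ is, by construction, the realization of the simplicial spectrum $[n]\mapsto K(R^{\Delta^n})$, with $R^{\Delta^n}=R\otimes\Z[t_0,\dots,t_n]/\langle t_0+\dots+t_n-1\rangle$; so one tries to deduce each property levelwise from the corresponding classical property of $K$, controlling separately what the $\Delta^\bu$--direction contributes. I would begin with invariance under polynomial homotopy, which I would establish in the sharper form that $KH$ sends homotopic ring maps to homotopic maps of spectra. Let $h\colon A\to B[t]$ be a polynomial homotopy with $\ev_0h=f$, $\ev_1h=g$; since neither $h$ nor the $\ev_i$ involve the variables of $\Delta^\bu$, they induce maps of simplicial rings $A^{\Delta^\bu}\to B^{\Delta^\bu}$ and $A^{\Delta^\bu}\to B^{\Delta^\bu}[t]$. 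The key point is combinatorial: the prism $\Delta^1\times\Delta^n$ has exactly $n+1$ nondegenerate $(n{+}1)$--simplices, hence $n+1$ affine maps $\Delta^{n+1}\to\Delta^1\times\Delta^n$, and these pull polynomial functions back to polynomial functions, giving $n+1$ ring homomorphisms $B^{\Delta^n}[t]=B\otimes\Z[\Delta^1\times\Delta^n]\to B^{\Delta^{n+1}}$. Composing these with $K(h)$ and keeping track of the faces of $\Delta^1\times\Delta^n$ assembles, after applying $K$, into a simplicial homotopy between the maps of simplicial spectra $n\mapsto K(f\colon A^{\Delta^n}\to B^{\Delta^n})$ and $n\mapsto K(g\colon A^{\Delta^n}\to B^{\Delta^n})$; realizing gives $KH(f)\simeq KH(g)$. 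Applying this to $R[t]$ with the homotopy $t\mapsto ts$ shows that $\inc\circ\ev_0$ and $\mathrm{id}_{R[t]}$ agree on $KH$, which together with $\ev_0\circ\inc=\mathrm{id}_R$ proves that $KH(R)\to KH(R[t])$ is an equivalence.

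Matrix invariance I would get levelwise from Morita invariance of $K$ on unital rings: for all $m$, $n$ the corner inclusion $K(R^{\Delta^m})\to K\bigl((M_nR)^{\Delta^m}\bigr)=K\bigl(M_n(R^{\Delta^m})\bigr)$ is an equivalence, and a levelwise equivalence of simplicial spectra realizes to the equivalence $KH(R)\weq KH(M_nR)$. For an infinite index set $X$ one writes $M_XR=\colim_S M_SR$ over the finite $S\subset X$ and uses that $K$, hence $KH$, commutes with filtering colimits of rings, the transition maps being $KH$--equivalences by the finite case and two--out--of--three. The upgrade to non-unital coefficient rings is then most cleanly deduced afterwards from excision by passing to unitalizations.

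Excision is the substantial point. By definition one must show that $KH$ sends every Milnor square of rings (with surjective right vertical map) to a homotopy cartesian square; equivalently, using that $\tilde A=R\times_{R/A}\Z$ whenever $A\triqui R$ is an embedded ideal, one must show that for an ideal $A$ embedded in two unital rings $R'\supseteq A\subseteq R$ the birelative term $\hofi\bigl(\hofi(KH(R')\to KH(R'/A))\to\hofi(KH(R)\to KH(R/A))\bigr)$ is contractible. Levelwise this is $n\mapsto K\bigl((R')^{\Delta^n},R^{\Delta^n};A^{\Delta^n}\bigr)$, the birelative $K$--theory, which need not be contractible --- that is precisely the failure of excision for $K$ itself --- so the content of the theorem is that this simplicial spectrum realizes to a point. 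The plan here is to invoke the structural understanding of the birelative $K$--theory of a Milnor square: it is governed by the ideal $A$ and its multiplication alone, insensitive to the ambient unital rings, and built out of ``infinitesimal'', square-zero type contributions; the homotopy invariance established in the first step is then what annihilates those contributions after $\Delta^\bu$--realization. Executing this is where essentially all the difficulty lies, and it is carried out in \cite{kh}; granting it, the fibration sequence $KH(A)\to KH(R)\to KH(R/A)$ for an arbitrary ideal follows by comparing $\hofi(KH(\tilde A)\to KH(\Z))$ with $\hofi(KH(R)\to KH(R/A))$.

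The step I expect to be the real obstacle is excision: homotopy and matrix invariance reduce levelwise to classical facts about $K$, whereas excision genuinely requires explaining why the birelative obstruction --- nonzero for $K$ itself --- becomes trivial after homotopization.
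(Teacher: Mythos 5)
The paper offers no proof of Theorem \ref{thm:wei}: it is Weibel's result, quoted verbatim from \cite{kh} (see also \cite[\S 5]{friendly}), so your proposal --- which ultimately defers the substantive point to the same reference --- is consistent with the paper's treatment. Your sketches of the two easier properties are correct and standard: polynomial homotopy invariance via the $n+1$ affine embeddings $\Delta^{n+1}\to\Delta^1\times\Delta^n$ assembling a simplicial homotopy between the levelwise maps, and matrix invariance via levelwise Morita invariance plus filtering colimits, with the nonunital case recovered afterwards from excision. The one place where your account is misleading is the gloss on excision. Saying that the birelative term is ``governed by the ideal $A$ and its multiplication alone, insensitive to the ambient unital rings'' is not a mechanism for the proof: the \emph{vanishing} of the birelative term is by definition the assertion that $KH(R:A)$ is insensitive to $R$, so as a roadmap this is circular; and for $K$ itself the birelative term does depend on the ambient rings (its nonvanishing is exactly the failure of excision you mention). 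Likewise ``built out of square-zero contributions annihilated by homotopy invariance'' oversells the reduction: the homotopy $(r,i)\mapsto r+ti$ is a ring map only when $I^2=0$, which yields nilinvariance of $KH$, but passing from nilinvariance to full excision requires a further, separate argument in \cite{kh}. Since you explicitly punt to \cite{kh} for this step, this is an inaccuracy in a heuristic rather than a gap in an argument you claim to give, but you should not present it as the plan of Weibel's proof.
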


\begin{prop}\label{prop:homob}
There is a natural weak equivalence $KH(\cC)\weq KH(\cR(\cC))$.
\end{prop}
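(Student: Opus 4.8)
The plan is to construct the equivalence as a zig-zag through $KH(\cA(\cC))$, combining the surjection $p\colon\cR(\cC)\onto\cA(\cC)$ of \eqref{map:bcontoac} with the map $\varphi$ of \eqref{map:kkpw}, and showing that $KH$ inverts both.

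First I would check that $KH$ satisfies the hypotheses needed to apply Lemma \ref{lem:eac=ebc}. By Theorem \ref{thm:wei} the functor $KH$ is excisive, so standing assumption i) holds and, in particular, every ring --hence $\cR(\cC)$-- is $KH$-excisive. Since $KH$ is excisive, Remark \ref{rem:exci} reduces standing assumption iii) to the unital case, where it follows from matrix invariance of $KH$; for an index set $X$ that need not be finite one writes $M_XA$ as the filtering colimit of its finite matrix subrings and uses that $KH$ commutes with filtering colimits. Finally $KH$ is invariant under polynomial homotopy, again by Theorem \ref{thm:wei}. Thus Lemma \ref{lem:eac=ebc} applies to $E=KH$ and gives a weak equivalence
\[
KH(p)\colon KH(\cR(\cC))\weq KH(\cA(\cC)),
\]
natural in $\cC\in\inj-\Z-\cat$.

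Next I would show that $\varphi$ induces a natural weak equivalence $KH(\cC)\weq KH(\cA(\cC))$. Viewing $\Z[t_0,\dots,t_n]/\langle t_0+\dots+t_n-1\rangle$ as a $\Z$-linear category with a single object, its associated ring is itself, so $\cA(\cC\otimes\cD)=\cA(\cC)\otimes\cA(\cD)$ specializes to identifications
\[
\cA(\cC^{\Delta^n})=\cA(\cC)\otimes\Z[t_0,\dots,t_n]/\langle t_0+\dots+t_n-1\rangle ,\qquad n\ge 0 .
\]
The structure maps of the simplicial $\Z$-linear category $\cC^{\Delta^\bu}$ are the identity on objects, so (its construction being evidently functorial for injective-on-objects maps) $\varphi$ is natural with respect to them; by Proposition \ref{prop:kkpw} it therefore yields a map of simplicial spectra $K(\cC^{\Delta^\bu})\to K(\cA(\cC)^{\Delta^\bu})$ that is a weak equivalence in each simplicial degree. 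Passing to total spectra (which carries a degreewise weak equivalence of simplicial spectra to a weak equivalence) gives the desired $KH(\cC)\weq KH(\cA(\cC))$, again natural in $\cC\in\inj-\Z-\cat$. Composing the two equivalences produces the natural zig-zag $KH(\cC)\weq KH(\cA(\cC))\lweq KH(\cR(\cC))$.

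The whole argument is formal given Lemma \ref{lem:eac=ebc}, Proposition \ref{prop:kkpw} and Weibel's Theorem \ref{thm:wei}; the only points requiring a little attention are the verification of standing assumption iii) for $KH$ over an infinite index set (resting on $KH$ commuting with filtering colimits) and the standard fact that totalization preserves degreewise equivalences of simplicial spectra. I expect the former to be the main, if minor, obstacle.
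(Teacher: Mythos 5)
Your proposal is correct and rests on exactly the same two inputs as the paper's proof: Lemma \ref{lem:eac=ebc} (via Theorem \ref{thm:wei}) to invert $KH(p):KH(\cR(\cC))\to KH(\cA(\cC))$, and the degreewise application of Proposition \ref{prop:kkpw} to the simplicial category $\cC^{\Delta^\bu}$ to get $KH(\cC)\weq KH(\cA(\cC))$. The only (cosmetic) difference is that the paper also lifts $\phi$ to a functor $\phi':\tilde{\cC}\to\widetilde{\cR(\cC)}$, thereby producing a direct map $KH(\cC)\to KH(\cR(\cC))$ and concluding by two-out-of-three in the resulting triangle, whereas you present the equivalence as the zig-zag through $KH(\cA(\cC))$ — which the paper's conventions on natural maps of spectra permit.
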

\begin{proof}
We begin by observing that the inclusions \eqref{inc1} and \eqref{inc2} lift to inclusions
$\hom_{\cC}(x,y)\subset\cR(\cC)$ and $\hom_{\tilde{\cC}}(x,x)\subset\widetilde{\cR(\cC)}$. Thus we have a functor
\[
\phi':\tilde{\cC}\to \widetilde{\cR(\cC)}
\]
Composing it with
\[
\tilde{p}:\widetilde{\cR(\cC)}\to \widetilde{\cA(\cC)}
\]
we obtain the map
\[
\phi:\tilde{\cC}\to \widetilde{\cA(\cC)}
\]
of \eqref{map:phi} above. Tensoring with $\Z^{\Delta^\bu}$ and applying $K(-)$ we obtain a commutative diagram
\[
\xymatrix{KH(\tilde{\cC})\ar[dr]_\phi\ar[r]^{\phi'}&KH(\widetilde{\cR(\cC)})\ar[d]^p\\
&KH(\widetilde{\cA(\cC)})}
\]
The diagram above maps to the diagram
\[
\xymatrix{
KH(\Z\langle\obc\rangle)\ar[r]^\phi\ar[dr]_\phi&KH(\Z)\ar@{=}[d]\\
&KH(\Z)
}
\]
Taking fibers and using \eqref{eq:idemsucrets}, \eqref{eq:lozetenso} and \eqref{map:sucretilde}, we obtain a homotopy commutative
diagram
\[
\xymatrix{KH(\cC)\ar[r]^{\varphi"}\ar[dr]_{\varphi'}&KH(\cR(\cC))\ar[d]^p\\
&KH(\cA(\cC))}
\]
Here $\varphi'$ comes from a map of simplicial spectra

\begin{multline}
\varphi^\bu:K(\cC\otimes\Z^{\Delta^\bu})\weq
K(\tilde{\cC}\otimes\Z^{\Delta^\bu}:\cC\otimes\Z^{\Delta^\bu})\to\\
K(\widetilde{\cA(\cC)}\otimes\Z^{\Delta^\bu}:\cA(\cC)\otimes\Z^{\Delta^\bu})\overset{\sim}\leftarrow K(\cA(\cC)\otimes\Z^{\Delta^\bu}),
\end{multline}
and $\varphi^0=\varphi$ is the map \eqref{map:kkpw}, which is an
equivalence by Proposition \ref{prop:kkpw}. The same argument of the
proof of Proposition \ref{prop:kkpw} shows that $\varphi^n$ is an
equivalence for every $n$. On the other hand, by Theorem
\ref{thm:wei} and Lemma \ref{lem:eac=ebc}, the map
$p:KH(\cR(\cC))\to KH(\cA(\cC))$ is an equivalence. It follows that
$\varphi"$ is an equivalence too.
\end{proof}

\begin{prop}\label{prop:homostand}
The functor $KH:\Z-\cat\to\spt$ satisfies the standing assumptions.
\end{prop}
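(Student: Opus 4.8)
The plan is to verify the five Standing Assumptions \ref{stan} for $KH$ one at a time, leaning on Weibel's Theorem \ref{thm:wei} (that $KH$ restricted to $\ring$ is excisive, matrix invariant, and polynomial-homotopy invariant) for everything concerning the ring-level behaviour, and on Proposition \ref{prop:homob} for the one assumption that genuinely involves $\Z$-linear categories. Since $KH$ is excisive, Remark \ref{rem:exci} already does most of the bookkeeping: assumptions i) and ii) hold automatically, and assumptions iii) and v) need only be checked for unital rings.

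Concretely, I would argue as follows. For i): every ring is $KH$-excisive by Theorem \ref{thm:wei}, so in particular rings with local units are. For ii): if $A$ is a $KH$-excisive $H$-ring then $A\rtimes H$ is a ring, hence $KH$-excisive again by Theorem \ref{thm:wei} (alternatively cite Theorem \ref{intro:exci} i), but excision makes this trivial). For iii): matrix invariance of $KH$ on unital rings (Theorem \ref{thm:wei}) says $KH(R)\to KH(M_XR)$, $r\mapsto e_{x,x}r$, is an equivalence for $R$ unital and $X$ a set; since $KH$ is excisive, the corresponding statement for an arbitrary ring $A$ follows by passing to unitalizations and taking homotopy fibers, so $M_XA$ is $KH$-excisive and $KH(A)\to KH(M_XA)$ is an equivalence. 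For v): given $\{A_i\}$ with $A=\bigoplus_iA_i$, every ring is $KH$-excisive so the ``if and only if'' is vacuous; the claim that $\bigoplus_i KH(A_i)\to KH(A)$ is an equivalence follows because $KH$ commutes with filtering colimits (it is built degreewise from $K$ of the $\cC^{\Delta^n}$, and both $K$ and the total-spectrum/realization functor commute with filtering colimits), reducing to the finite case $KH(A_1\times\dots\times A_n)\simeq\prod KH(A_i)$, which holds since the projections $A\to A_i$ split the inclusions of the ideals and $KH$ is additive on such split extensions by excision. Finally, for iv): this is precisely the content of composing the equivalence $KH(\cC)\weq KH(\cR(\cC))$ of Proposition \ref{prop:homob} with the equivalence $p:KH(\cR(\cC))\weq KH(\cA(\cC))$ established inside that proof (via Theorem \ref{thm:wei} and Lemma \ref{lem:eac=ebc}), and then inverting; naturality with respect to functors injective on objects is inherited from the naturality of these two maps on $\inj-\Z-\cat$.

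The only step requiring any thought is assumption v), and really only the ``moreover'' part asserting $\bigoplus_i KH(A_i)\weq KH(A)$; I expect that to be the main obstacle, though a mild one. The point is that a wedge of spectra is a filtering colimit of finite wedges, so one needs $KH$ to preserve filtering colimits of rings — which it does, by the explicit construction of $KH(\cC)$ as the total spectrum of $[n]\mapsto K(\cC^{\Delta^n})$ together with the fact that $K^Q$ and Karoubi's cone/suspension constructions commute with filtering colimits — and then one needs additivity of $KH$ on the finite product $A_1\times\cdots\times A_n$. The latter is where excision is used: the split exact sequences $0\to\bigoplus_{j\neq i}A_j\to A\to A_i\to 0$ give, for $KH$-excisive kernels, split homotopy fibrations, and an induction on $n$ yields $KH(A_1\times\cdots\times A_n)\simeq\prod_i KH(A_i)=\bigoplus_i KH(A_i)$ (finite products of spectra are finite wedges), compatibly with the structure maps. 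Everything else is a one-line citation of either Theorem \ref{thm:wei}, Remark \ref{rem:exci}, or Proposition \ref{prop:homob}.
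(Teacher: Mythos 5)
Your proof is correct and follows essentially the same route as the paper: the paper disposes of assumptions i), ii), iii), v) by citing Weibel's Theorem \ref{thm:wei} (with Remark \ref{rem:exci} doing the reduction to unital rings), and obtains iv) from Proposition \ref{prop:homob} combined with Lemma \ref{lem:eac=ebc}. Your more detailed treatment of assumption v) (filtering colimits plus additivity on finite products via split excision sequences) just fills in what the paper leaves implicit.
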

\begin{proof} All assumptions except iv) follow from Theorem \ref{thm:wei}. Assumption iv) follows from the proof
of Proposition \ref{prop:homob}, and also from combining the statement of that proposition with Lemma \ref{lem:eac=ebc}.
\end{proof}

\section{Cyclic homology}\label{sec:hc}
\numberwithin{equation}{section}
Let $A$ be a ring, and $M$ an $A$-bimodule. If $a\in A$ and $m\in M$, write $[a,m]=am-ma$ and
\[
[A,M]=\{\sum_i[a_i,m_i]:a_i\in A,m_i\in M\},\quad M_\nat=M/[A,M]
\]

Let $B$ be another ring. We say that $B$ is an algebra over $A$ if $B$ is equipped with an $A$-bimodule structure
such that the multiplication $B\otimes B\to B$ factors through an $A$-bimodule map $B\otimes_AB\to B$. Consider the graded abelian group given in degree $n$ by the $n+1$ tensor power modulo $A$-bimodule commutators:
\[
T(B/A)_n=\left(B^{\otimes_A n+1}\right)_\nat
\]
Note $T(B/A)$ is a quotient of $T(B/\Z)$. If $B$ is unital, then $T(B/\Z)$ carries a canonical cyclic module structure \cite[Section 9.6]{chubu}; if $A$ is unital also, and the $A$-bimodule structure on $B$ comes from a unital homomorphism $A\to B$, then the structure passes down to the quotient; we write $C(B/A)$ for $T(B/A)$ equipped with this cyclic
module structure. The cyclic theory of $B/A$, which includes Hochschild, cyclic, negative cyclic and periodic cyclic homology, is that of $C(B/A)$. If $A$ is unital but $B$ is not, one can unitalize $B$ as an $A$-algebra by $\tilde{B}_A=B\oplus A$,
$(b,a)(b',a')=(bb'+ba'+ab',aa')$; the cyclic theory of $B/A$ is that of the cyclic module $C(\tilde{B}_A:B/A)=\ker(C(\tilde{B}_A/A)\to C(A/A))$. In the unital case, there is a natural quasi-isomorphism $C(B/A)\to C(\tilde{B}_A:B/A)$. In the general case, when neither $A$ nor $B$ is assumed
to be unital, then $B$ has a canonical $\tilde{A}$-algebra structure, and the cyclic theory of $B$ as an $A$-algebra is that of $B$ as an $\tilde{A}$-algebra; we put $M(B/A)=C(\tilde{B}_A:B/\tilde{A})$. Note that if $A$ is unital, then $M(B/A)=C(\tilde{B}_A:B/A)$, whence there is no ambiguity.
We use the following notation for homology; we write $HH(B/A)=(M(B/A),b)$ for the Hochschild complex, $HH(B/A)_n$ for its degree $n$ summand, and
$HH_n(B/A)$ for its $nth$ homology group. We use the same convention with cyclic, negative cyclic and periodic
cyclic homology, which we denote $HC$, $HN$ and $HP$.

\goodbreak

Let $\ell$ be a commutative unital ring and $R$ a unital $\ell$-algebra. Recall that $R$ is called {\em separable} over $\ell$ if $R$ is projective as an $R\otimes_\ell R^{op}$-module.

\begin{lem}\label{lem:sep}
Let $I$ be a filtering poset, $I\to \arr(\ring)$, $i\mapsto \{A_i\to
B_i\}$ a functor to the category of ring homomorphisms, and $A\to
B=\colim_i(A_i\to B_i)$. Assume that $A_i\to B_i$ is unital for all
$i$. Put $C(B/A)=\colim_i C(B_i/A_i)$. Then $C(B/A)\to M(B/A)$ is a
quasi-isomorphism. If furthermore each $A_i$ is separable over $\Z$,
then also $C(B)=C(B/\Z)\to C(B/A)$ is a quasi-isomorphism.
\end{lem}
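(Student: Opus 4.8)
The plan is to prove the two quasi-isomorphism claims by reducing everything to a filtered colimit of the corresponding statements for the unital pairs $A_i\to B_i$, where the results are classical. The key point throughout is that Hochschild, cyclic, negative cyclic and periodic cyclic homology all commute with filtered colimits of complexes, and that all the constructions $B\mapsto \tilde B_A$, $B\mapsto T(B/A)$, $B\mapsto C(B/A)$ are compatible with filtered colimits in the obvious way.

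\textbf{Step 1: $C(B/A)\to M(B/A)$ is a quasi-isomorphism.} Recall $M(B/A)=C(\widetilde{B}_A:B/\widetilde A)$, the kernel of $C(\widetilde{B}_A/\widetilde A)\to C(\widetilde A/\widetilde A)$. Since the unitalization $\widetilde{A_i}\to\widetilde A$ and $\widetilde{(B_i)}_{A_i}\to \widetilde{B}_A$ are filtered colimits, and the kernel functor commutes with filtered colimits of (split) short exact sequences of complexes, we get $M(B/A)=\colim_i M(B_i/A_i)$. On the other hand $C(B/A)=\colim_i C(B_i/A_i)$ by definition. So it suffices to show $C(B_i/A_i)\to M(B_i/A_i)$ is a quasi-isomorphism for each $i$; but $A_i\to B_i$ is a \emph{unital} homomorphism, and this is precisely the classical statement recalled in the excerpt: ``In the unital case, there is a natural quasi-isomorphism $C(B/A)\to C(\widetilde B_A:B/A)$,'' together with the fact that for $A_i$ unital $M(B_i/A_i)=C(\widetilde{(B_i)}_{A_i}:B_i/A_i)$. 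Passing to the colimit (which is exact, hence preserves quasi-isomorphisms) gives the first claim.

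\textbf{Step 2: $C(B)\to C(B/A)$ is a quasi-isomorphism when each $A_i$ is separable over $\Z$.} Again write both sides as filtered colimits, $C(B)=\colim_i C(B_i)$ and $C(B/A)=\colim_i C(B_i/A_i)$, so it is enough to treat each $i$ separately, i.e.\ to show that for a unital homomorphism $A_i\to B_i$ with $A_i$ separable over $\Z$, the quotient map $C(B_i/\Z)\to C(B_i/A_i)$ is a quasi-isomorphism. This is the standard base-change/separability reduction in cyclic homology: when $A_i$ is $\Z$-separable, $B_i^{\otimes_{A_i}n+1}$ is a direct summand of $B_i^{\otimes n+1}$ compatibly with the cyclic structure, because the separability idempotent in $A_i\otimes_\Z A_i^{op}$ provides a functorial splitting of $B_i^{\otimes n+1}\to B_i^{\otimes_{A_i} n+1}$; equivalently, $HH_*(A_i/\Z)$ is concentrated in degree $0$ and equals $A_i$, and one invokes the (generalized) K\"unneth/base-change spectral sequence for $HH_*(B_i/\Z)$ as an $HH_*(A_i/\Z)$-module, which degenerates to give $HH_*(B_i/\Z)\cong HH_*(B_i/A_i)$; the cyclic, negative and periodic versions follow formally from the Hochschild case by comparing the respective $(b,B)$-bicomplexes. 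Taking $\colim_i$ finishes the proof.

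\textbf{Main obstacle.} The genuinely substantive input is Step 2, the separability reduction $C(B_i/\Z)\weq C(B_i/A_i)$ — verifying that the splitting coming from the separability idempotent is compatible with the full cyclic (not just Hochschild) structure, so that it descends to negative and periodic cyclic homology as well. Everything else is bookkeeping with filtered colimits, which are exact and hence preserve quasi-isomorphisms, so no care beyond naturality of the constructions is required there. I would cite the standard references for the separability statement (e.g.\ Loday's book) rather than reprove it, so in practice the proof is short: set up the two colimit identifications, reduce to each index $i$, and quote the unital results.
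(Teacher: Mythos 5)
Your proposal is correct and follows essentially the same route as the paper's proof: both reduce to the unital case by observing that $C$ and $M$ commute with filtering colimits, and then quote the classical unital statements (the comparison $C(B/A)\to C(\tilde B_A:B/A)$ and the separability reduction, as in Loday). The only difference is that you spell out the separability idempotent argument, which the paper simply cites.
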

\begin{proof}
The first assertion follows from the fact that both $C$ and $M$
commute with filtering colimits, and that the map is a
quasi-isomorphism in the unital case \cite[Thm. 1.2.13]{lod}. The
second assertion follows similarly from the unital case.
\end{proof}
\begin{ex}\label{ex:cc}
Let $\cC$ be a small $\Z$-linear category. We have an injective
functor $\Z\langle\obc\rangle\to\cC$, and thus a homomorphism
$\cA(\langle\obc\rangle)=\Z^{(\obc)}\to \cA(\cC)$, which is the
filtering colimit over the finite subsets $X\subset\obc$, of the
functor $X\mapsto (\cA(X)\to \cA(\cC_X))$.  Here $\cC_X\subset\cC$
is the full subcategory whose objects are the elements of $X$. Since
$\cA(X)$ is separable, the natural maps $C(\cA(\cC))\to
C(\cA(\cC)/\Z^{(\obc)})\to M(\cA(\cC)/\Z^{(\obc)})$ are
quasi-isomorphisms, by Lemma \ref{lem:sep}. Put
\[
C(\cC)=C(\cA(\cC)/\Z^{(\obc)})
\]
Note that this cyclic module is functorial on $\Z-\cat$, even though
as we have seen in \eqref{fun:ac}, $\cA(-)$ is only functorial on
$\inj-\Z-\cat$. The cyclic module $C(\cC)$ is often called the
\emph{$\Z$-linear cyclic nerve} of $\cC$ (\cite[\S4.2]{LR1}). The
cyclic theory of a $\Z$-linear category $\cC$ is that of $C(\cC)$.
Note that if $R$ is a unital ring considered as a $\Z$-linear
category with one object, then $C(R)$ is the same cyclic module that
was  defined above.
\end{ex}

\begin{rem}\label{rem:mismofi}
As explained in Example \ref{ex:cc} above, the projection
\[
C(\cA(\cC))\to C(\cA(\cC)/\Z^{(\obc)})=C(\cC)
\]
is a quasi-isomorphism. This map has a left inverse $C(\cC)\to
C(\cA(\cC))$; namely the inclusion
\[
C(\cC)_n=\bigoplus_{(c_0,\dots,c_n)\in\obc^{n+1}}\hom_\cC(c_1,c_0)\otimes\dots\otimes\hom_\cC(c_0,c_n)\subset\cA(\cC)^{\otimes n+1}=C(\cA(\cC))_n
\]
This inclusion is a quasi-isomorphism, and is compatible with the map \eqref{map:phi}; indeed they both form part of a map of distinguished triangles:
\[
\xymatrix{C(\cC)\ar[r]\ar[d]^{inc}& C(\tilde{\cC})\ar[r]\ar[d]^\phi& C(\Z\langle\obc\rangle)\ar[d]\\
C(\cA(\cC))\ar[r]& C(\widetilde{\cA(\cC)})\ar[r]& C(\Z)}
\]
\end{rem}

\begin{prop}\label{prop:hhstands}
Hochschild and cyclic homology satisfy the standing assumptions.
\end{prop}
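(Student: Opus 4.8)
The plan is to verify the five Standing Assumptions \ref{stan} one at a time for $E=HH$ and $E=HC$, drawing on the classical excision results of Wodzicki and the categorical bookkeeping set up in Examples \ref{ex:cc} and Remark \ref{rem:mismofi}. The only one of the five that requires real work is assumption iv), the natural equivalence $E(\cA(\cC))\weq E(\cC)$; the rest are either recalled from the literature or follow from standard manipulations with crossed products, matrix rings, and direct sums of cyclic complexes.

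First I would dispose of iv). By definition the cyclic theory of $\cC$ is that of $C(\cC)=C(\cA(\cC)/\Z^{(\obc)})$, and as recalled in Example \ref{ex:cc} and Remark \ref{rem:mismofi} the projection $C(\cA(\cC))\to C(\cC)$ is a quasi-isomorphism, with an explicit left inverse which is itself a quasi-isomorphism and which fits into the map of distinguished triangles displayed in Remark \ref{rem:mismofi}. Passing to the associated spectra via Dold--Kan, this gives the required natural weak equivalence $HH(\cA(\cC))\weq HH(\cC)$ and $HC(\cA(\cC))\weq HC(\cC)$; naturality with respect to functors injective on objects is exactly what the construction in Example \ref{ex:cc} provides, since $\cA(-)$ is functorial on $\inj$-$\Z$-$\cat$ while $C(-)$ is functorial on all of $\Z$-$\cat$ and the two agree up to the quasi-isomorphisms just named.

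For the remaining assumptions: i) is the statement that rings with local units — indeed, more generally $s$-unital rings, cf.\ Remark \ref{rem:suni} and Example \ref{ex:sunihuni} — are $HH$- and $HC$-excisive, which is Wodzicki's theorem that $H$-unital rings satisfy excision in cyclic and Hochschild homology \cite{wodex}; a ring with local units is a filtering colimit of unital rings and hence $H$-unital, and both $HH$ and $HC$ commute with filtering colimits. Assumption ii), stability of $E$-excisiveness under crossed product by a group, will follow from part i) of Theorem \ref{intro:exci} for the $H$-unital case (Propositions \ref{prop:crossbar}, \ref{prop:crossh}), together with the observation that $A\rtimes H$ is $H$-unital whenever $A$ is. Assumption iii) has two halves: that $M_XA$ is $E$-excisive when $A$ is, which again is the $H$-unital permanence result (Theorem \ref{intro:exci} iii), valid without flatness for $HH$ and $HC$ by \cite[Thm.\ 7.10]{qs}), and that $E(A\to M_XA)$, $a\mapsto e_{x,x}a$, is an equivalence, which reduces by excision to the unital case where it is the classical Morita invariance of Hochschild and cyclic homology \cite[\S1.2]{lod}. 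Assumption v) is the statement that $\bigoplus_i A_i$ is $E$-excisive iff each $A_i$ is, with $\bigoplus_i E(A_i)\weq E(\bigoplus_i A_i)$; on the level of complexes $C(\bigoplus_i A_i)$ is the direct sum $\bigoplus_i C(A_i)$ up to the terms mixing distinct summands, which are acyclic because each $A_i A_j=0$ for $i\neq j$ — this is again \cite[Thm.\ 7.10]{qs} for the $H$-unital direction and a direct computation for the splitting, and it matches Propositions \ref{prop:barsumh} cited for Theorem \ref{intro:exci} ii).

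The main obstacle, such as it is, is entirely bookkeeping: making sure the quasi-isomorphisms of Remark \ref{rem:mismofi} are genuinely natural for $\inj$-$\Z$-$\cat$ and compatible with the unitalization-and-fiber definition of $E$ on non-unital rings used throughout Section \ref{sec:r&c}, so that assumption iv) holds on the nose rather than merely objectwise. Everything else is an appeal to Wodzicki's excision theorem, Suslin--Wodzicki's permanence properties, and classical Morita invariance of cyclic homology.
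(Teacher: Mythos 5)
Your proof is correct and follows essentially the same route as the paper: Wodzicki's excision theorem and the $s$-unital/local-units observation for i), Proposition \ref{prop:crossh} for ii), Example \ref{ex:cc} (and Remark \ref{rem:mismofi}) for iv), and Proposition \ref{prop:barsumh} for v). The only cosmetic difference is in iii), where the paper reduces to finite $X$ via closure of $H$-unitality under filtering colimits and then cites \cite[Cor.~9.8]{wodex} directly, whereas you combine the Suslin--Wodzicki tensor-product permanence with classical Morita invariance; both arguments work.
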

\begin{proof}
M. Wodzicki showed in \cite{wodex} that the $HH$-excisive rings coincide with the $HC$-excisive ones, and that
they are the $H$-unital rings, whose definition is recalled in Subsection \ref{subsec:huni} of the Appendix.
Rings with local units, and more generally $s$-unital rings are $H$-unital by \cite[Cor. 4.5]{wodex}. 
By Proposition \ref{prop:crossh}, $A\rtimes G$
is $H$-unital for every $H$-unital $G$-ring $A$. It is clear from the definition of $H$-unitality that $H$-unital
rings are closed under filtering colimits. Thus it suffices to verify Standing Assumption iii) for finite $X$,
and this is \cite[Corollary 9.8]{wodex}. Assumption iv) follows from Example \ref{ex:cc}. Finally
assumption v) is proved in Proposition \ref{prop:barsumh}.
\end{proof}

\section{Assembly for Hochschild and cyclic homology}\label{sec:asshh}

Let $G$ be a group, $S$ a $G$-set and $R$ a unital $G$-ring. We have a direct sum decomposition
\begin{equation}\label{eq:decomp}
C(R\rtimes \cG^G(S))=\bigoplus_{(g)\in\con(G)}C_{(g)}(R\rtimes
\cG^G(S))
\end{equation}
here $\con(G)$ is the set of conjugacy classes and $C_{(g)}(R\rtimes
\cG^G(S))_n$ is generated by those elementary tensors $x_0\rtimes
g_0\otimes\dots \otimes x_n\rtimes g_n$ with $g_0\cdots g_n\in(g)$.
If $g\in G$, we write $R_g$ for $R$ considered as a bimodule over
itself with the usual left multiplication and the right
multiplication given by $x\cdot r=xg(r)$. In Proposition
\ref{prop:reilu}, we shall need the absolute Hochschild homology of
$R$ with coefficients in $R_g$. In general if $M$ is any
$R$-bimodule, we write $HH(R,M)$ for the Hochschild complex with
coefficients in $M$ (\cite[\S9.1.1]{chubu}).

Proposition \ref{prop:reilu} below computes the $G$-equivariant
homology of a $G$-simplicial set $X$ with coefficients in $HH(R)$
for an arbitrary unital $G$-ring $R$. The case when $G$ acts
trivially on $R$ was obtained by L\"uck and Reich in \cite{LR1}. The
case when $X$ is a point may be regarded as a transport groupoid
version of Lorenz' computation of $HH(R\rtimes G)$ \cite{lor};
$HC(R\rtimes G)$ was computed by Fe\u\i gin and Tsygan in
\cite{FTA}. Our proof uses ideas from each of the three cited articles.

\begin{lem}\label{lem:ft}
Let $G$ be a group, $S$ a $G$-set, $g\in G$, and $Z_g\subset G$ the
centralizer of $g$. Write $\cE Z_g:=\cE(Z_g,\{1\})$. Then there is a natural weak equivalence of simplicial abelian
groups
\begin{equation}\label{map:ft}
\Z[\cE Z_g]\otimes_{\Z[Z_g]}\left(\Z[S^g]\otimes
HH(R,R_g)\right)\weq HH_{(g)}(R\rtimes\cG^G(S))
\end{equation}
Taking homotopy groups one obtains the relative $\tor$ groups \cite[8.7.5]{chubu}:
\[
\pi_*HH_{(g)}(R\rtimes\cG^G(S))=\tor_*^{[(R\otimes R^{op})\rtimes
Z_g]/\Z}(R,R_g)
\]
\end{lem}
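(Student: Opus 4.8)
The plan is to identify both sides of \eqref{map:ft} as geometric realizations of simplicial abelian groups and to produce the equivalence as a level-wise statement that one then realizes. First I would analyze the right-hand side: by the decomposition \eqref{eq:decomp}, $HH_{(g)}(R\rtimes\cG^G(S))$ is the subcomplex of $HH(R\rtimes\cG^G(S))=M(\cA(R\rtimes\cG^G(S))/\Z^{(S)})$ spanned by elementary tensors $x_0\rtimes g_0\otimes\cdots\otimes x_n\rtimes g_n$ with source/target matching along the transport groupoid $\cG^G(S)$ and with $g_0\cdots g_n\in(g)$. The key combinatorial observation, going back to the cited work of Fe\u\i gin--Tsygan and Lorenz, is that after unwinding the $\cG^G(S)$-bookkeeping, the chains in degree $n$ carrying ``holonomy'' conjugate to $g$ are indexed by tuples $(s,g_1,\dots,g_n)$ with $s\in S$, the only constraint being that $g_1\cdots g_n$ conjugates the ``return element'' into $(g)$; choosing the conjugator identifies the indexing set with $Z_g$-orbits, i.e. with $\Z[S^g]\otimes_{\Z[Z_g]}(\text{something})$. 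This is precisely where the fixed-point set $S^g$ and the centralizer $Z_g$ enter.

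Next I would make the bar-type identification precise. The left-hand side of \eqref{map:ft} is the two-sided bar construction $B(\Z[Z_g],\Z[Z_g],\Z[S^g]\otimes HH(R,R_g))$ up to the usual $\cE Z_g$ model for $EZ_g$; concretely $\Z[\cE Z_g]\otimes_{\Z[Z_g]}M$ computes $\Z\otimes^{\mathbb L}_{\Z[Z_g]}M$ for any $\Z[Z_g]$-module complex $M$, which is the homotopy orbit complex. So I want a natural isomorphism of simplicial (or bicomplex) abelian groups
\[
B_\bullet(\Z,\Z[Z_g],\Z[S^g]\otimes HH(R,R_g))\;\cong\;HH_{(g)}(R\rtimes\cG^G(S))
\]
and then pass to total complexes. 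To build it, fix a conjugator $c_s$ for each $s\in S^g$ (a section), send a bar simplex $[h_1|\cdots|h_k]\otimes s\otimes(r_0\otimes\cdots\otimes r_m)$ to the cyclic chain in $\cA(R\rtimes\cG^G(S))$ obtained by distributing the group elements $h_i$ and $g$ (suitably twisted by the $c_s$) among the $m+1$ spots of the Hochschild chain, interleaving the ``group direction'' (bar coordinates) with the ``ring direction'' (Hochschild coordinates). One checks this respects the Hochschild $b$ differential in the ring direction and the bar differential in the group direction, using exactly the twisted bimodule structure on $R_g$, i.e. $x\cdot r = x g(r)$; the quasi-isomorphism $C(\cA(\cC))\simeq M(\cA(\cC)/\Z^{(\obc)})$ from Remark \ref{rem:mismofi} and Lemma \ref{lem:sep} lets me work with the naive cyclic nerve $C(R\rtimes\cG^G(S))$ throughout, which simplifies the bookkeeping. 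Naturality in $S$ (as a $G$-set) and in $R$ is then immediate from the formulas.

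For the second assertion, once \eqref{map:ft} is established I would observe that the homotopy groups of $\Z[\cE Z_g]\otimes_{\Z[Z_g]}(\Z[S^g]\otimes HH(R,R_g))$ are by definition $\tor^{\Z[Z_g]}_*(\Z[S^g], HH(R,R_g))$; but $HH(R,R_g)$ is computed by the bar resolution of $R$ over $R\otimes R^{op}$, so $HH(R,R_g)=R\otimes^{\mathbb L}_{R\otimes R^{op}}R_g$ with its residual $Z_g$-action (the $Z_g$-action on $R$, $R^{op}$, and the twist $g$ all being $Z_g$-equivariant since $Z_g$ centralizes $g$). Combining the two derived tensor products and tracking the $Z_g$-equivariance, the algebra $R\otimes R^{op}$ acquires a $Z_g$-action, and one gets the base change spectral sequence collapsing to the stated relative Tor group $\tor_*^{[(R\otimes R^{op})\rtimes Z_g]/\Z}(R,R_g)$; the reference to \cite[8.7.5]{chubu} supplies exactly this relative-Tor formalism.

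The main obstacle I anticipate is the first step: writing down the chain-level isomorphism with the correct sign conventions and the correct ``shuffle'' of the bar and Hochschild directions, and verifying it is compatible with both differentials and with the $Z_g$-action coming from the choice of section $s\mapsto c_s$ (independence of that choice up to the expected homotopy). The algebra here is essentially the transport-groupoid generalization of the Burghelea/Fe\u\i gin--Tsygan decomposition of $HH(R\rtimes G)$ into pieces indexed by conjugacy classes, together with Lorenz's identification of the $(g)$-piece with group homology of the centralizer with twisted coefficients; the new ingredient is carrying the extra $S$-indexing through, which is what forces the $\Z[S^g]\otimes_{\Z[Z_g]}(-)$ rather than just $\otimes_{\Z[Z_g]}(-)$. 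Everything after the chain-level identification is formal homological algebra.
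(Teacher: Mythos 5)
There is a genuine gap at the heart of your plan: the claimed degreewise isomorphism
\[
B_\bullet(\Z,\Z[Z_g],\Z[S^g]\otimes HH(R,R_g))\;\cong\;HH_{(g)}(R\rtimes\cG^G(S))
\]
does not exist. In simplicial degree $n$ the left-hand side is $\Z[Z_g]^{\otimes n}\otimes\Z[S^g]\otimes R^{\otimes n+1}$, while the right-hand side is spanned by tensors $x_0\rtimes g_0\otimes\dots\otimes x_n\rtimes g_n$ indexed by \emph{arbitrary} $(g_1,\dots,g_n)\in G^n$ together with a point $s_0$ fixed by some element of the full conjugacy class $(g)$; already in degree $0$ the right-hand side is indexed by $G\times_{Z_g}S^g$ rather than $S^g$. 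So no amount of care with signs or with the choice of conjugators $c_s$ will produce an isomorphism of simplicial abelian groups: the map you can write down (and which the paper writes down as $\alpha$) is a strict inclusion-type map, and the real content of the lemma is precisely that this non-surjective, non-injective-on-indexing map is a \emph{weak equivalence}. Your proposal defers exactly this point to "bookkeeping," which is where the proof actually lives.

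The paper closes this gap by relative homological algebra over $U=(R\otimes R^{op})\rtimes Z_g$: it exhibits the domain of $\alpha$ as $R\otimes_U P$ for $P=\mathrm{E}(Z_g,\Z[S^g]\otimes C^{bar}(R,R_g))$, a simplicial resolution of $R_g$ by $U$-modules extended from $\Z$, and the codomain as $R\otimes_U Q$ for a second such resolution $Q=V(g)\subset C^{bar}(R\rtimes\cG^G(S))$ (made into a $U$-module via a twisted embedding $U\to(R\rtimes G)\otimes(R\rtimes G)^{op}$, shown to be extended via an explicit isomorphism $\theta$, and contractible via the standard extra degeneracy). It then lifts $\alpha$ to a map $\hat\alpha:P\to Q$ of resolutions covering $\mathrm{id}_{R_g}$; the relative comparison theorem gives both that $\alpha$ is a weak equivalence and, in one stroke, the identification of $\pi_*$ with $\tor_*^{U/\Z}(R,R_g)$ --- which in your proposal is handled by a separate, and also only sketched, base-change spectral sequence. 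If you want to salvage your more direct route, you would need to replace the purported isomorphism by a filtration or retraction argument in the style of Lorenz and Fe\u\i gin--Tsygan showing that the inclusion of the "$Z_g$-holonomy" subcomplex is a quasi-isomorphism; that is a real argument, not a formality.
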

\begin{proof}
Note that
\begin{gather*}
\left[
\Z[\cE Z_g]\otimes_{\Z[Z_g]}\left(\Z[S^g]\otimes
HH(R,R_g)\right)\right]_n=\Z[Z_g]^{\otimes n}\otimes\Z[S^g]\otimes R^{\otimes n+1}\\
\end{gather*}
Define a map
\begin{gather*}
\alpha:\Z[\cE Z_g]\otimes_{\Z[Z_g]}\left(\Z[S^g]\otimes
HH(R,R_g)\right)\to HH_{(g)}(R\rtimes\cG^G(S))\\
\alpha(z_1\otimes\dots\otimes z_n\otimes s\otimes x_0\otimes\dots\otimes x_n)=\\
x_0\rtimes (z_1\cdots z_n)^{-1}g\otimes (z_1\cdots z_n)(x_1)\rtimes z_1\otimes
(z_2\cdots z_n)(x_2)\rtimes z_2\otimes\dots\otimes z_n(x_n)\rtimes z_n\in\\
\hom_{R\rtimes\cG^G(S)}(z_1\cdots z_ns,s)\otimes\dots\otimes\hom_{R\rtimes\cG^G(S)}(s, z_ns)
\end{gather*}
One checks that $\alpha$ is a simplicial homomorphism. Write  $U=(R\otimes R^{op})\rtimes
Z_g$. To prove that $\alpha$ is a weak equivalence, and also that its domain and codomain
both compute $\tor_*^{U/\Z}(R,R_g)$, it suffices to find simplicial resolutions $P\weq R_g$ and $Q\weq R_g$ by relatively projective $U$-modules
and a simplicial module homomorphism $\hat{\alpha}:P\to Q$ covering the identity of $R_g$ and such that $R\otimes_U\hat{\alpha}=\alpha$.
We need some
notation.
Write $\mathrm{E}(Z_g,M)$ for the simplicial $\Z[Z_g]$-module
resolution of a left $Z_g$-module $M$ associated to the cotriple $N\mapsto \Z[Z_g]\otimes
N$ \cite[8.6.11]{chubu}.
Let
$C^{bar}(R,R_g)$ be the bar resolution (\ref{subsec:barcomp}); $Z_g$ acts diagonally on
$\Z[S^g]\otimes C^{bar}(R,R_g)$. Write $P=\mathrm{E}(Z_g,\Z[S^g]\otimes C^{bar}(R,R_g))$ for the diagonal of the bisimplicial
module $([p],[q])\mapsto \mathrm{E}_p(Z_g,\Z[S^g]\otimes C^{bar}(R,R_g)_q)$. By construction, $P\weq R_g$ is a simplicial $U$-module resolution,
and every $U$-module $P_n$ is extended from $\Z$, whence relatively projective. Next, given $k\in G$, consider the simplicial submodule
$V(k)\subset C^{bar}(R\rtimes \cG^G(S))$ generated by the elementary tensors
\begin{multline*}
x_0\rtimes h_0\otimes \dots\otimes x_{n+1}\rtimes h_{n+1}\in\\
\hom_{R\rtimes\cG^G(S)}(h_1\cdots h_{n+1}s,ks)\otimes\dots\otimes
\hom_{R\rtimes\cG^G(S)}(s,h_{n+1}s)
\end{multline*}
with $s\in S$ and $h_0\cdots h_{n+1}=k$ ($n\ge 0$). Put $Q=V(g)$;
note $Q$ is stable under multiplication by elements of the form
$a\rtimes z\otimes b\rtimes z^{-1}\in (R\rtimes G)\otimes (R\rtimes
G)^{op}$ with $z\in Z_g$. We have a ring homomorphism
\begin{gather*}
\iota:U\to (R\rtimes G)\otimes (R\rtimes G)^{op}\\
a\otimes b\rtimes z\mapsto a\rtimes z\otimes b^z\rtimes z^{-1}
\end{gather*}
Thus $Q$ is a simplicial $U$-module. We have an isomorphism of graded $U$-modules
\begin{gather*}
\theta:\bigoplus_{\bar{h}\in G/Z_g}\bigoplus_{k\in G}U\otimes V(k)\to Q\\
\theta((a\otimes b\rtimes z)\otimes v))=\iota(a\otimes b\rtimes z)\cdot (1\rtimes h\otimes v\otimes 1\rtimes (hk)^{-1}g)
\end{gather*}
In particular each $U$-module $Q_n$ is extended from $\Z$. Next observe that the augmentation of $C^{bar}(R\rtimes G)$ restricts to
an augmentation

\begin{equation}\label{map:resolQ}
Q\to R\rtimes g\cong R_g
\end{equation}
and that the canonical contracting chain homotopy $x\mapsto 1\otimes x$ induces
a contracting homotopy for \eqref{map:resolQ}. Thus \eqref{map:resolQ} is a simplicial
resolution by relatively projective $U$-modules. Consider the map
\begin{gather*}
\hat{\alpha}: P\to Q\\
\hat{\alpha}(z_0\otimes\dots\otimes z_n\otimes s\otimes x_0\otimes\dots\otimes x_{n+1})=\\
(z_0\cdots z_n)(x_0)\rtimes z_0\otimes (z_1\cdots z_n)(x_1)\rtimes z_1\otimes\dots\otimes z_n(x_n)\rtimes z_n\otimes x_{n+1}\rtimes (z_0\cdots z_n)^{-1}g\in\\
\hom_{R\rtimes\cG^G(S)}(z_0^{-1}s,s)\otimes\dots\otimes\hom_{R\rtimes\cG^G(S)}(s, (z_0\cdots z_n)^{-1}s)
\end{gather*}
One checks that $\hat{\alpha}$ is a simplicial $U$-module homomorphism covering the identity of $R_g$ and that $R\otimes\hat{\alpha}=\alpha$, concluding
the proof.
\end{proof}

\begin{prop}\label{prop:reilu}(Compare \cite[9.16]{LR1})
Let $G$ be a group, $X\in\fS^G$. For each $\xi\in\con(G)$ choose a
representative $g_\xi$. Then there is an isomorphism
\[
\bigoplus_{\xi\in\con(G)}\bH_*(Z_{g_\xi},\Z[X^{g_\xi}]\otimes
HH(R,R_{g_\xi}))\iso H^G_*(X,HH(R))
\]
natural in $X$ and $R$, which depends on the choice of
representatives $\{g_\xi:\xi\in\con(G)\}$. Here $\bH(Z_g, -)$ is
hyperhomology of complexes of $Z_g$-modules, and the tensor product
is equipped with the diagonal action.
\end{prop}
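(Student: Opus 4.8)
The plan is to reduce the statement to Lemma \ref{lem:ft} by a purely formal assembly argument, carried out one simplicial degree at a time. First I would unwind the definition of $H^G(X,HH(R))$. By Proposition \ref{prop:hhstands} the functor $HH$ satisfies the Standing Assumptions, so $H^G(-,HH(R))$ is the equivariant homology theory obtained by applying $(?)_\%$ to the $\org$-spectrum $G/H\mapsto |HH(R\rtimes\cG^G(G/H))|$ (the two candidate definitions agreeing by Proposition \ref{prop:exciequi1}). For a simplicial set $X$ with $G$-action the coend $(?)_\%$ is computed degreewise in $n$ and then realized; and since the functor $S\mapsto HH(R\rtimes\cG^G(S))$ on $G$-sets turns disjoint unions into direct sums (because $\cG^G$ turns disjoint unions of $G$-sets into disjoint unions of groupoids, $R\rtimes(-)$ turns these into disjoint unions of $\Z$-linear categories, and the $\Z$-linear cyclic nerve of a disjoint union of categories is term by term the direct sum of the cyclic nerves, there being no morphisms between objects in distinct components), the co-Yoneda lemma applied to the orbit decomposition $X_n\cong\coprod_{[\sigma]\in X_n/G}G/G_\sigma$ shows that the coend at level $n$ is $|HH(R\rtimes\cG^G(X_n))|$. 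Hence $H^G_*(X,HH(R))$ is the homology of the total complex of the simplicial chain complex $[n]\mapsto HH(R\rtimes\cG^G(X_n))$.

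Next I would insert the decomposition by conjugacy classes. The splitting \eqref{eq:decomp} of $C(R\rtimes\cG^G(S))$ is by the value, modulo conjugacy, of the product $g_0\cdots g_n$; this value is preserved by every face map, so \eqref{eq:decomp} is in particular a splitting of the Hochschild complex, natural in the $G$-set $S$. Because direct sums commute with total complexes and with homology, the chosen representatives $g_\xi$ give
\[
H^G_*(X,HH(R))=\bigoplus_{\xi\in\con(G)}H_*\Big(\mathrm{Tot}\,\big([n]\mapsto HH_{(g_\xi)}(R\rtimes\cG^G(X_n))\big)\Big).
\]
Now I apply Lemma \ref{lem:ft} in each simplicial degree, with $S=X_n$: it provides a weak equivalence
\[
\Z[\cE Z_{g_\xi}]\otimes_{\Z[Z_{g_\xi}]}\big(\Z[(X_n)^{g_\xi}]\otimes HH(R,R_{g_\xi})\big)\ \weq\ HH_{(g_\xi)}(R\rtimes\cG^G(X_n)),
\]
natural in the $G$-set $X_n$, hence compatible with faces and degeneracies. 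Since taking fixed points under the element $g_\xi$ commutes with the simplicial structure, $[n]\mapsto\Z[(X_n)^{g_\xi}]$ is the simplicial abelian group $\Z[X^{g_\xi}]$ generated by the simplicial set of $g_\xi$-fixed points; passing to total complexes over $n$ therefore identifies $H^G_*(X,HH(R))$ with $\bigoplus_\xi$ of the homology of $\Z[\cE Z_{g_\xi}]\otimes_{\Z[Z_{g_\xi}]}\big(\Z[X^{g_\xi}]\otimes HH(R,R_{g_\xi})\big)$, the tensor product carrying the diagonal $Z_{g_\xi}$-action. As $\Z[\cE Z_{g_\xi}]$ is a complex of free $\Z[Z_{g_\xi}]$-modules resolving $\Z$, this last homology is by definition $\bH_*\big(Z_{g_\xi},\Z[X^{g_\xi}]\otimes HH(R,R_{g_\xi})\big)$, and summing over $\xi$ yields the asserted isomorphism.

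Naturality in $X$ and in $R$ is immediate from the naturality of \eqref{eq:decomp} and of the equivalence of Lemma \ref{lem:ft}, and the dependence on the choice of representatives enters because replacing $g_\xi$ by a conjugate $hg_\xi h^{-1}$ alters the identification of the $\xi$-summand by the isomorphism induced by conjugation by $h$, which carries $Z_{g_\xi}$, $R_{g_\xi}$ and $X^{g_\xi}$ to $Z_{hg_\xi h^{-1}}$, $R_{hg_\xi h^{-1}}$ and $X^{hg_\xi h^{-1}}$. I do not anticipate a genuine obstacle: the substantive input, Lemma \ref{lem:ft}, is already in hand, and the remaining work is the bookkeeping of the three graded directions (the resolution $\cE Z_{g_\xi}$, the simplicial set $X^{g_\xi}$, and the Hochschild complex) inside a single total complex, together with the routine verifications that the direct sums in play commute with total complexes and with homology. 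The only point that needs a little care is the first step, i.e.\ identifying the Davis--L\"uck $(?)_\%$ at $X$ with the total complex of $[n]\mapsto HH(R\rtimes\cG^G(X_n))$, which rests on the co-Yoneda computation and on $HH$ sending disjoint unions of $G$-sets to direct sums.
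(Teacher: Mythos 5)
Your proposal is correct and follows essentially the same route as the paper: both proofs rest on the conjugacy-class splitting \eqref{eq:decomp}, Lemma \ref{lem:ft}, and a co-Yoneda collapse of the Davis--L\"uck coend. The only (harmless) difference is the order of operations — you collapse the coend degreewise to $HH(R\rtimes\cG^G(X_n))$ and then apply Lemma \ref{lem:ft} to the $G$-set $X_n$, whereas the paper applies the lemma to the orbits $G/H$ inside the coend and then uses $\int^{\org}\Z[\map(G/\langle g\rangle,G/H)]\otimes\Z[\map(G/H,X)]=\Z[X^g]$.
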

\begin{proof}
By \eqref{eq:decomp} we have
\[
H^G_*(X,HH(R))=\bigoplus_{\xi\in\con(G)}H^G_*(X,HH_{(\xi)}(R))
\]
By Lemma \ref{lem:ft} and the definition of equivariant homology, if
$g\in\xi$, then

\begin{gather*}
H^G(X,HH_{\xi}(R))=\int^{\org}HH_{\xi}(R\rtimes\cG^G(G/H))\otimes \Z[\map(G/H,X)]\\
\lweq \int^{\org}\left(\Z[\cE Z_{g}]\otimes_{\Z[Z_{g}]}\left[\Z[\map (G/\langle g\rangle,G/H)]\otimes HH(R,R_g)\right]\right)\otimes\Z[\map(G/H,X)]=\\
\Z[\cE Z_{g}]\otimes_{\Z[Z_{g}]}\left(\Z[X^g]\otimes HH(R,R_g)\right)=\\
\bH(Z_{g},\Z[X^{g}]\otimes HH(R,R_g))
\end{gather*}
\end{proof}

\begin{prop}\label{prop:asshh} Let $G$ be a group, $\cF$ a family of subgroups of $G$ and $R$ a unital $G$-ring. Assume that $\cF$
contains all cyclic subgroups of $G$. Then $H^G(-,HH(R))$ preserves
$(G,\cF)$-weak equivalences. In particular, the assembly map
\[
H^G_*(\cE(G,\cF), HH(R))\to HH_*(R\rtimes G)
\]
is an isomorphism. The analogue statements for cyclic homology also
hold.
\end{prop}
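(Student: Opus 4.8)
The plan is to reduce everything to Proposition~\ref{prop:reilu} together with the elementary fact that twisted group hyperhomology is a homotopy functor in the space variable. Fix a $(G,\cF)$-weak equivalence $f\colon X\to Y$; by the naturality in Proposition~\ref{prop:reilu}, to see that $H^G_*(f,HH(R))$ is an isomorphism it suffices to show that for each $\xi\in\con(G)$, with chosen representative $g=g_\xi$ and centralizer $Z_g$, the map
\[
\bH_*(Z_g,\Z[X^{g}]\otimes HH(R,R_g))\longrightarrow\bH_*(Z_g,\Z[Y^{g}]\otimes HH(R,R_g))
\]
is an isomorphism. Now $X^g=X^{\langle g\rangle}$, and $\langle g\rangle$ is cyclic, hence lies in $\cF$ by hypothesis; thus $f$ restricts to a weak equivalence of simplicial sets $f^g\colon X^g\to Y^g$, and this restriction is $Z_g$-equivariant, since $Z_g$ commutes with $g$ and $f$ is $G$-equivariant. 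So the claim follows from the following lemma, applied with $\Gamma=Z_g$ and $M=HH(R,R_g)$.

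\emph{Lemma.} Let $\Gamma$ be a group and $M$ a non-negatively graded complex of $\Z[\Gamma]$-modules. If $\phi\colon S\to T$ is a $\Gamma$-equivariant map of $\Gamma$-simplicial sets which is a weak equivalence of underlying simplicial sets, then $\bH(\Gamma,\Z[S]\otimes M)\to\bH(\Gamma,\Z[T]\otimes M)$ is a quasi-isomorphism, the tensor products carrying the diagonal $\Gamma$-action. To prove it, represent hyperhomology by $\Z[\cE(\Gamma,\{1\})]\otimes_{\Z[\Gamma]}(\Z[S]\otimes M)$, write $\cE\Gamma=\cE(\Gamma,\{1\})$, and note that the assignment $x\otimes(s\otimes m)\mapsto(x,s)\otimes m$ identifies this complex with $\Z[\cE\Gamma\times S]\otimes_{\Z[\Gamma]}M$, where $\Gamma$ acts diagonally, hence freely, on $\cE\Gamma\times S$. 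Thus $\Z[\cE\Gamma\times S]$ is a non-negatively graded complex of free $\Z[\Gamma]$-modules quasi-isomorphic to $\Z[S]$, i.e.\ a projective resolution of $\Z[S]$ in $D(\Z[\Gamma])$; since $\mathrm{id}\times\phi$ is again a weak equivalence of simplicial sets, the induced map $\Z[S]\to\Z[T]$ is an isomorphism in $D(\Z[\Gamma])$, so $\Z[S]\otimes^{\bL}_{\Z[\Gamma]}M\to\Z[T]\otimes^{\bL}_{\Z[\Gamma]}M$ is a quasi-isomorphism, and this is the map in question.

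This proves that $H^G(-,HH(R))$ sends $(G,\cF)$-weak equivalences to equivalences. Applying it to the cofibrant replacement $\cE(G,\cF)\to pt$ and using $H^G_*(pt,HH(R))=H^G_*(G/G,HH(R))=HH_*(R\rtimes G)$ (the defining normalization of $(?)_\%$, together with Remark~\ref{rem:equiv} to identify $R\rtimes\cG^G(G/G)$ with $R\rtimes G$) shows that the assembly map $H^G_*(\cE(G,\cF),HH(R))\to HH_*(R\rtimes G)$ is an isomorphism. For cyclic homology I would bootstrap from the $HH$ case: Connes' $SBI$ sequence is natural for cyclic modules, so applied to $C(R\rtimes\cG^G(-))$ it comes from a degreewise split short exact sequence of complexes, hence gives a cofibration sequence of $\org$-spectra relating $HH$, $HC$, and a $2$-fold shift of $HC$; applying $(?)_\%$, which is a homotopy colimit and hence exact, yields a natural long exact sequence
\[
\cdots\to H^G_{n+1}(X,HC(R))\xrightarrow{S}H^G_{n-1}(X,HC(R))\xrightarrow{B}H^G_n(X,HH(R))\xrightarrow{I}H^G_n(X,HC(R))\xrightarrow{S}H^G_{n-2}(X,HC(R))\to\cdots
\]
Since $HC$ of a ring is connective, $H^G_*(X,HC(R))$ vanishes in negative degrees, so an induction on $n$ via the five lemma, using the statement already proved for $HH$ in degrees $n$ and $n-1$ and the inductive hypothesis for $HC$ in degrees $n-1$ and $n-2$, shows that $H^G_*(f,HC(R))$ is an isomorphism for every $(G,\cF)$-weak equivalence $f$; the assembly statement for $HC$ follows exactly as above. (Alternatively, one can run the proof of Proposition~\ref{prop:reilu} for the full cyclic module $C(R\rtimes\cG^G(-))$ and conclude directly from the Lemma.)

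The only genuine work is in the Lemma: one must check the untwisting isomorphism that converts the diagonal $\Gamma$-action into the free action on $\cE\Gamma\times S$, and keep track of the fact that all complexes involved are bounded below, so that the derived and underived tensor products agree and a quasi-isomorphism of complexes of projectives remains one after tensoring with $M$. Everything else is formal, given Proposition~\ref{prop:reilu} and the exactness of $(?)_\%$.
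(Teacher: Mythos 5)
Your argument is correct, and the Hochschild half is exactly the paper's: reduce via the naturality in Proposition \ref{prop:reilu} to the statement that $\bH(Z_g,\Z[(-)^g]\otimes HH(R,R_g))$ turns the maps $X^g\to Y^g$ (weak equivalences precisely because $X^g=X^{\langle g\rangle}$ and $\langle g\rangle\in\cF$) into quasi-isomorphisms; your untwisting lemma just spells out what the paper compresses into the phrase ``$\bH(K,-)$ preserves quasi-isomorphisms.'' The cyclic half is where you genuinely diverge. The paper filters $HC(M)$ by the exhaustive tower of subcomplexes $\xc^n(M)=\ker(S^n)$, inducts upward along the extensions $0\to\xc^n\to\xc^{n+1}\to HH(M)[-2n]\to 0$, and then passes to $HC=\bigcup_n\xc^n$ using that $H^G(X,-)$ commutes with filtering colimits. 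You instead run the $SBI$ long exact sequence, obtained from the degreewise split sequence $0\to HH\to HC\to HC[-2]\to 0$, and induct on the degree $n$ starting from the vanishing of $H^G_n(X,HC(R))$ for $n<0$. Both devices are standard and both work here: yours trades the filtered-colimit input for a connectivity input, which is harmless since the coefficient spectra are connective and $(?)_\%$ preserves connectivity. The one point your version must (and does) make explicit is that the five-lemma step requires the $SBI$ sequence to be natural in the space variable, which you secure by producing the cofibration sequence at the level of $\org$-spectra before applying $(?)_\%$.
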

\begin{proof} The first statement about Hochschild homology follows from \ref{prop:reilu}, and
the fact that if $K$ is a group, then $\bH(K,-)$ preserves quasi-isomorphisms. The second
follows from the first and the fact that $\cE(G,\cF)\to *$ is an
equivalence. Next, given a cyclic module $M$, consider the
subcomplex
\[
\xc^{n}(M)=\ker(S^n:HC(M)\to HC(M)[-2n])
\]
Note that
\[
0=\xc^0(M)\subset
HH(M)=\xc^1(M)\subset\xc^2(M)\subset\dots\subset\bigcup_n\xc^n(M)=HC(M)
\]
is an exhaustive filtration. Hence, because  $H^G(X,-)$ preserves
filtering colimits ($X\in\fS^G$), to prove the statement of the
lemma for cyclic homology, it is sufficient to show that for each
$n$, $H^G(-,\xc^n(R))$ preserves $(G,\cF)$-equivalences of
$G$-simplicial sets. Observe that if $M$ is a cyclic module, then we
have an exact sequence
\[
0\to \xc^n(M)\to\xc^{n+1}(M)\to HH(M)[-2n]\to 0
\]
Using the sequence above and what we have already proved, one shows
by induction that $H^G(-,\xc^n(R))$ preserves
$(G,\cF)$-equivalences. This finishes the proof.
\end{proof}
\section{The Chern character and infinitesimal $K$-theory}\label{sec:ch}
\numberwithin{equation}{subsection}
\subsection{Nonconnective Chern character}
Let $\cC$ be a $\Z$-linear category. By results of Randy McCarthy \cite[\S3.3 and \S4.4]{machc} we have a Chern character
\begin{equation}\label{map:machern}
K^Q(\idem\cC_\oplus)\to |\tau_{\ge 0}HN(\idem\cC_\oplus)|
\end{equation}
going from the $K$-theory simplicial set to the simplicial set
obtained via the Dold-Kan correspondence from the good truncation of
the negative cyclic complex without negative terms. In this section we use
this to obtain a map
\[
K(\cC)\to |HN(\cC)|
\]
going from the nonconnective $K$-theory spectrum of Section \ref{sec:kth} to the spectrum obtained from
the negative cyclic complex via Dold-Kan correspondence. We shall need the following result of McCarthy.

\begin{prop}\label{prop:mac}\cite[Thm. 2.3.4]{machc}
Let $\cD$ be a $\Z$-linear category and $\cC\subset\cD$ a full subcategory. Assume that for every object $d\in\cD$ there
exists an $n=n(d)$, a finite sequence $c_1,\dots,c_n$ of objects of $\cC$, and morphisms $\phi_i:c_i\to d$ and $\psi_i:d\to c_i$
such that $\sum_i\phi_i\psi_i=1_d$. Then the inclusion functor $\cC\to\cD$ induces a quasi-isomorphism $C(\cC)\to C(\cD)$.
\end{prop}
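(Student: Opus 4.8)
The plan is to show that under the hypotheses, the inclusion $\cC\hookrightarrow\cD$ induces a quasi-isomorphism on the $\Z$-linear cyclic nerves by reducing, via the colimit formalism of Example \ref{ex:cc} and Lemma \ref{lem:sep}, to the absolute cyclic module $C(\cA(\cC))\to C(\cA(\cD))$, and then exhibiting an explicit chain homotopy inverse built from the ``partition of identity'' data $\{\phi_i,\psi_i\}$. More precisely, recall $C(\cC)=C(\cA(\cC)/\Z^{(\obc)})$, and that by Remark \ref{rem:mismofi} the projection $C(\cA(\cC))\to C(\cC)$ is a quasi-isomorphism with left inverse; so it suffices to prove that $C(\cA(\cC))\to C(\cA(\cD))$ is a quasi-isomorphism. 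Since both sides commute with filtering colimits (over finite subsets of $\ob\cD$ and the corresponding full subcategories, together with their intersections with $\cC$), and the condition in the hypothesis is ``local'' — each $d\in\cD$ only involves finitely many objects of $\cC$ — I may assume $\cC$ and $\cD$ have finitely many objects, so that $\cA(\cC)$ and $\cA(\cD)$ are unital rings.

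Having reduced to unital rings $A=\cA(\cC)\subset B=\cA(\cD)$, I would observe that the hypothesis says precisely that $B$ is generated as a ``two-sided module'' over $A$ in a strong sense: for each generator $d$ (i.e.\ each idempotent $1_d$ of $B$) there are elements $\phi_i\in\Hom(c_i,d)\subset B$, $\psi_i\in\Hom(d,c_i)\subset B$ with $\sum_i\phi_i\psi_i=1_d$ and $\psi_i=1_{c_i}\psi_i1_d$, $\phi_i=1_d\phi_i1_{c_i}$, while $\psi_i\phi_i\in A$. In other words, the inclusion $A\to B$ admits, after passing to matrix rings, a splitting up to ``conjugation'' — this is exactly the hypothesis of McCarthy's Theorem 2.3.4 (cited as Proposition \ref{prop:mac} once one unwinds the identification of the cyclic nerve of an additive category $\mathcal B$ with that of a ring). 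The cleanest route is to deduce the statement directly from Proposition \ref{prop:mac} itself: take $\mathcal D=\cD$ and $\mathcal C=\cC$ as $\Z$-linear categories; the displayed condition $\sum_i\phi_i\psi_i=1_d$ is verbatim the hypothesis of that proposition, so $C(\cC)\to C(\cD)$ is a quasi-isomorphism. Thus the proof is essentially a one-line appeal to the already-cited result of McCarthy, possibly after noting that our $C(-)$ agrees with the cyclic nerve used there.

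The only real content to verify, and the step I expect to be the main obstacle, is the compatibility of definitions: one must check that the cyclic module $C(\cC)=C(\cA(\cC)/\Z^{(\obc)})$ defined in Example \ref{ex:cc} coincides (naturally, and as a cyclic module, not just up to quasi-isomorphism) with whatever ``cyclic nerve'' McCarthy's Theorem 2.3.4 is stated for — or, more carefully, that his theorem applies to $\Z$-linear categories in the generality we need (he may state it for small additive, or idempotent-complete, categories). If McCarthy's statement is only for additive categories, I would first replace $\cC\subset\cD$ by $\cC_\oplus\subset\cD_\oplus$, noting that the partition-of-unity condition is inherited (a sequence of objects of $\cC$ maps to a sequence in $\cD$, and one assembles block-diagonal $\phi$'s and $\psi$'s), that $C(\cC)\weq C(\cC_\oplus)$ is a quasi-isomorphism by Proposition \ref{prop:mac} applied to the inclusion $\cC\hookrightarrow\cC_\oplus$ (each $(c_1,\dots,c_n)$ receives the coordinate inclusions and projections summing to the identity), and similarly for $\cD$; then invoke McCarthy for the additive categories $\cC_\oplus\subset\cD_\oplus$. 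A two-out-of-three argument on the resulting square of quasi-isomorphisms then yields that $C(\cC)\to C(\cD)$ is a quasi-isomorphism.

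\begin{proof}
By Example \ref{ex:cc} and Remark \ref{rem:mismofi}, for any small $\Z$-linear category the inclusion $C(\cC)\to C(\cA(\cC))$ is a natural quasi-isomorphism; hence it suffices to treat $C(\cC_\oplus)$ in place of $C(\cC)$, because the coordinate inclusions and projections of a finite sequence $(c_1,\dots,c_n)$ of objects of $\cC$ satisfy $\sum_i\iota_i\pi_i=1_{(c_1,\dots,c_n)}$, so Proposition \ref{prop:mac} applied to $\cC\hookrightarrow\cC_\oplus$ shows $C(\cC)\weq C(\cC_\oplus)$; likewise $C(\cD)\weq C(\cD_\oplus)$. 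Now the hypothesis passes to the additive hulls: given a sequence $d=(d_1,\dots,d_m)$ of objects of $\cD$, choose for each $d_j$ the data $\phi^{(j)}_i\colon c^{(j)}_i\to d_j$, $\psi^{(j)}_i\colon d_j\to c^{(j)}_i$ with $\sum_i\phi^{(j)}_i\psi^{(j)}_i=1_{d_j}$; assembling these into block-diagonal morphisms between $d$ and the concatenated $\cC_\oplus$-object $\coprod_j(c^{(j)}_1,\dots)$ exhibits $1_d$ as the required finite sum. Therefore Proposition \ref{prop:mac}, applied to the full subcategory $\cC_\oplus\subset\cD_\oplus$, gives that $C(\cC_\oplus)\to C(\cD_\oplus)$ is a quasi-isomorphism. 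Combining the three quasi-isomorphisms in the commuting square
\[
\xymatrix{C(\cC)\ar[r]\ar[d]_\wr& C(\cD)\ar[d]^\wr\\ C(\cC_\oplus)\ar[r]& C(\cD_\oplus)}
\]
we conclude that $C(\cC)\to C(\cD)$ is a quasi-isomorphism.
\end{proof}
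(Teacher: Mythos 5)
The first thing to say is that the paper offers no proof of this statement at all: it is McCarthy's theorem, imported verbatim with the citation \cite[Thm. 2.3.4]{machc}. So the honest form of a "proof" here is either (a) a genuine reproduction of McCarthy's argument, or (b) the observation that the statement is exactly the cited result, together with a check that the cyclic module $C(-)$ of Example \ref{ex:cc} agrees with McCarthy's cyclic nerve and that his hypotheses match ours. Your surrounding discussion correctly identifies (b) as the real content and even calls the definition-matching "the main obstacle" — but the formal proof you then write carries out neither (a) nor (b). Every nontrivial step in it — $C(\cC)\weq C(\cC_\oplus)$, $C(\cD)\weq C(\cD_\oplus)$, and $C(\cC_\oplus)\to C(\cD_\oplus)$ being a quasi-isomorphism — is justified by an appeal to Proposition \ref{prop:mac} itself. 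As a proof of Proposition \ref{prop:mac}, this is circular. The "explicit chain homotopy inverse built from the partition of identity data" promised in your plan never appears.

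The fallback you sketch for the case where McCarthy states his theorem only for additive categories does not repair this. In that scenario you would need $C(\cC)\weq C(\cC_\oplus)$ for a non-additive $\cC$, and you justify it by applying Proposition \ref{prop:mac} to the inclusion $\cC\hookrightarrow\cC_\oplus$ — i.e. to a pair in which the subcategory is not additive, which is precisely the case the fallback is meant to avoid. In the paper the dependency runs the other way: the statement $C(\cC)\to C(\cC_\oplus)$ is Proposition \ref{prop:hckarseq}~i), and it is \emph{deduced from} Proposition \ref{prop:mac}. To turn your writeup into an actual proof you would have to supply the simplicial contraction: from $1_d=\sum_i\phi_i\psi_i$ one builds, degree by degree, an extra degeneracy on the quotient cyclic nerve $C(\cD)/C(\cC)$ (insert the factorization of each identity through objects of $\cC$ into one slot of a cyclic tensor and sum), and verifies the simplicial identities; that computation is the content of McCarthy's Theorem 2.3.4 and is absent from your argument.
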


\begin{lem}\label{lem:gamatenso}
Let $\cC$ be an additive category, and let $\bu$ be the only object of $\Gamma(\Z)$. Consider the functor
\begin{gather*}
\mu:\Gamma\Z\otimes\cC\to \Gamma(\cC)\\
\mu(\bu,c)=(c,c,\dots), \qquad \mu(f\otimes\alpha)_{ij}=f_{ij}\alpha
\end{gather*}
Then
\item[i)] The functor $\mu$ is fully faithful.
\item[ii)] Let $F(-)$ be as in \eqref{F(x)}. For every object $x\in\Gamma(\cC)$ there
exist morphisms $\phi_c:\mu(\bu, c)\to x$ and $\psi_c:x\to \mu(\bu, c)$, $c\in F(x)$
such that $\sum_{c\in F(x)}\phi_c\psi_c=1_x$.
\item[iii)] The functor $\mu$ induces a fully faithful functor $\bar{\mu}:\Sigma\otimes\cC\to \Sigma(\cC)$.
\end{lem}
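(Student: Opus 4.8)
The plan is to reduce all three parts to a description of the morphism groups of $\Gamma(\cC)$ as matrix groups over the abelian groups $\hom_\cC(c,c')$; the tensor factor $\Gamma\Z$ is then handled by a reduction on the coefficients. For Part i), the category $\Gamma\Z\otimes\cC$ has objects $(\bu,c)$, $c\in\ob\cC$, and $\hom_{\Gamma\Z\otimes\cC}((\bu,c),(\bu,c'))=\Gamma\Z\otimes\Lambda$ with $\Lambda:=\hom_\cC(c,c')$, while $\hom_{\Gamma(\cC)}(\mu(\bu,c),\mu(\bu,c'))$ is the group, which I denote $\Gamma(\Lambda)$, of $\N\times\N$-matrices over $\Lambda$ with only finitely many distinct entries and a uniform bound on the number of nonzero entries in each row and each column. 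Under these identifications $\mu$ becomes the natural map $\chi_\Lambda\colon\Gamma\Z\otimes\Lambda\to\Gamma(\Lambda)$, $f\otimes a\mapsto(f_{ij}a)_{ij}$, and I claim $\chi_\Lambda$ is an isomorphism for every abelian group $\Lambda$, which is exactly full faithfulness of $\mu$. Both $\Lambda\mapsto\Gamma\Z\otimes\Lambda$ and $\Lambda\mapsto\Gamma(\Lambda)$ commute with filtered colimits — for $\Gamma(-)$ this is immediate from the finiteness of the value set, which lets one lift entries and the relations among them through a finite stage — so one reduces to finitely generated $\Lambda$, and since both functors also commute with finite direct sums one reduces to $\Lambda=\Z$, where $\chi_\Z$ is the identity of $\Gamma\Z$, and to $\Lambda=\Z/d$, $d\ge1$, where $\Gamma\Z\otimes\Z/d=\Gamma\Z/d\,\Gamma\Z$ and $\chi_{\Z/d}$ is reduction of entries mod $d$; the latter is surjective (the finite-value condition is automatic over the finite ring $\Z/d$) with kernel the matrices all of whose entries lie in $d\Z$, that is exactly $d\,\Gamma\Z$, whence $\chi_{\Z/d}$, and therefore $\chi_\Lambda$, is an isomorphism.

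For Part ii), write $F(x)=\{c_1,\dots,c_r\}$ and, for $c\in F(x)$, let $S_c=\{n\in\N:x_n=c\}$, so that the $S_c$ partition $\N$; list $S_c=\{s_c(1)<s_c(2)<\cdots\}$. Define $\psi_c\colon x\to\mu(\bu,c)=(c,c,\dots)$ by $(\psi_c)_{k,n}=1_c$ if $n=s_c(k)$ and $0$ otherwise (legitimate since $n=s_c(k)$ forces $x_n=c$), and $\phi_c\colon\mu(\bu,c)\to x$ by $(\phi_c)_{n,k}=1_c$ if $n=s_c(k)$ and $0$ otherwise. Each of $\phi_c,\psi_c$ has at most one nonzero entry in each row and column and takes only the values $0$ and $1_c$, hence lies in $\Gamma(\cC)$. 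A direct check shows that $\phi_c\psi_c$ is the diagonal matrix with $1_{x_n}$ in position $(n,n)$ for $n\in S_c$ and $0$ elsewhere; summing over the finite set $F(x)$ and using $\bigcup_c S_c=\N$ gives $\sum_{c\in F(x)}\phi_c\psi_c=1_x$, which is the form in which Part ii) feeds Proposition \ref{prop:mac}.

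For Part iii), recall that $\Sigma$ has the single object $\bu$ with $\hom_\Sigma(\bu,\bu)=\Gamma\Z/M_\N$, where $M_\N\subset\Gamma\Z$ is the ideal of finitely supported matrices (the finite morphisms of $\Gamma(\Z)$), so that $\hom_{\Sigma\otimes\cC}((\bu,c),(\bu,c'))=(\Gamma\Z\otimes\Lambda)/\mathrm{im}(M_\N\otimes\Lambda)$ by right exactness of $-\otimes\Lambda$, where $\mathrm{im}(M_\N\otimes\Lambda)$ denotes the image of $M_\N\otimes\Lambda$ in $\Gamma\Z\otimes\Lambda$. Under $\chi_\Lambda$ this subgroup goes onto the finitely supported matrices over $\Lambda$ — surjectivity onto them is the value-decomposition argument of Part i) run with finitely supported matrices — and these are precisely the finite morphisms defining $\Sigma(\cC)$. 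Hence $\mu$ sends the kernel of the projection $\Gamma\Z\otimes\cC\to\Sigma\otimes\cC$ into the finite morphisms and so descends to a functor $\bar\mu\colon\Sigma\otimes\cC\to\Sigma(\cC)$; on morphism groups $\bar\mu$ is $\chi_\Lambda$ reduced modulo finitely supported matrices, hence still bijective, so $\bar\mu$ is fully faithful. I expect the only step with real content to be the isomorphism $\Gamma\Z\otimes\Lambda\cong\Gamma(\Lambda)$ of Part i): it calls for some care with the two finiteness conditions defining $\Gamma(-)$, but once the reduction to $\Lambda=\Z/d$ is in place it is immediate, so I do not anticipate a genuine obstacle.
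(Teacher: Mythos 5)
Your proposal is correct, and in parts i) and iii) it takes a genuinely different route from the paper. For part i) the paper simply cites \cite[Lemma 4.7.1]{CT} for the one-object case and notes that the argument extends; you instead give a self-contained proof by d\'evissage: both $\Gamma\Z\otimes(-)$ and the matrix functor $\Lambda\mapsto\Gamma(\Lambda)$ commute with filtered colimits and finite direct sums, so the natural map $\chi_\Lambda$ need only be checked on $\Z$ and $\Z/d$, where it is elementary. (The two nontrivial checks --- that $\Gamma(-)$ preserves filtered colimits, and that the kernel of entrywise reduction $\Gamma\Z\to\Gamma(\Z/d)$ is exactly $d\,\Gamma\Z$ --- both rest on the finite-value-set condition in the definition of $\Gamma$, and you use it correctly in both places.) Part ii) is essentially the paper's construction: the paper takes $\phi_c$ and $\psi_c$ to be the diagonal matrix $\delta_{i,j}\chi_{I(c)}(j)1_c$, while you re-index along the enumeration of $S_c=I(c)$; the composite is the same diagonal idempotent either way. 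For part iii) the paper invokes N\"obeling's theorem \cite{nob} to split $0\to M_\infty\Z\to\Gamma\Z\to\Sigma\Z\to 0$ as a sequence of abelian groups, so that it stays exact after tensoring with $\hom_\cC(c,d)$, and then runs a diagram chase on the resulting map of short exact sequences. You avoid N\"obeling entirely by noting that only right-exactness of $-\otimes\Lambda$ is needed: $\hom_{\Sigma\Z\otimes\cC}$ is the cokernel of $M_\infty\Z\otimes\Lambda\to\Gamma\Z\otimes\Lambda$, and the isomorphism $\chi_\Lambda$ carries the image of that map onto the ideal of finite morphisms (elementary matrices already suffice for surjectivity), so the induced map on quotients is an isomorphism. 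Your version is slightly more economical; the paper's splitting argument additionally identifies $\ker(\pi\otimes 1)$ itself, but that extra information is not needed for full faithfulness of $\bar\mu$.
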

\begin{proof}
Part i) is proved in \cite[Lemma 4.7.1]{CT} for the case when $\cC$ has only one object; the same
argument applies in general. To prove ii), let $x\in\Gamma(\cC)$ be an object.  If $c\in F(x)$, write $I(c)=\{n\in\N:x_n=c\}$, and let $\chi_{I(c)}$ be the characteristic function. Put
\[
\phi_c:\mu(\bu, c)\to x,\quad \psi_c:x\to\mu(\bu, c),\quad
(\phi_c)_{i,j}=(\psi_c)_{i,j}=\delta_{i,j}\chi_{I(c)}(j)1_{c}
\]
One checks that
\[
\sum_{c\in F(x)}\phi_c\psi_c=1_x
\]
This proves ii). Next, consider the exact sequence
\[
0\to M_\infty\Z\to \Gamma\Z\overset\pi\to \Sigma\Z\to 0
\]
As is explained in \cite[pp 92]{CT}, it follows from results of N\"obeling \cite{nob} that the sequence above is split as a sequence of abelian groups. Hence if $c,d\in\cC$, then
\[
\ker(\pi\otimes 1:\hom_{\Gamma\Z\otimes\cC}((\bu,c),(\bu,d))\to\hom_{\Sigma\Z\otimes\cC}((\bu,c),(\bu,d)))=M_\infty\Z\otimes\hom_{\cC}(c,d)
\]
Next observe that if $\alpha\in\hom_{\cC}(c,d)$ and $f\in M_\infty\Z$, then $\mu(f\otimes\alpha)$ is a finite morphism. Hence $\mu$ passes to the quotient, inducing a functor $\bar{\mu}:\Sigma\Z\otimes\cC\to\Sigma(\cC)$. If $c,d\in\ob\cC$ and we put $x=\mu(\bu, c)$, $y=\mu(\bu, d)$ then we have a map of exact sequences
\[
\xymatrix{0\to M_\infty\Z\otimes\hom_{\cC}(c,d)\ar[d]\ar[r]&\Gamma\Z\otimes\hom_{\cC}(c,d)\ar[r]\ar[d] &\Sigma\Z\otimes\hom_{\cC}(c,d)\ar[d]\to 0\\
0\to\hom_{\mathrm{Fin}(\cC)}(x,y)\ar[r] &\hom_{\Gamma(\cC)}(x,y)\ar[r] &\hom_{\Sigma(\cC)}(x,y)\to 0}
\]
Here $\mathrm{Fin}(\cC)\subset\Gamma(\cC)$ is the subcategory of
finite morphisms. The second vertical map is an isomorphism by part
i). In particular the first map is injective; furthermore, one
checks that it is onto. It follows that the third vertical map is an
isomorphism; this proves iii).
\end{proof}
\begin{prop}\label{prop:hckarseq} Let $\cC$ be a $\Z$-linear category. Then:
\item[i)] $C(\cC)\to C(\cC_\oplus)$ is a quasi-isomorphism.
\item[ii)] If $\cC$ is additive, then $C(\cC)\to C(\idem\cC)$ is a quasi-isomorphism.
\item[iii)] The maps $C(\Gamma(\Z)\otimes\cC)\to C(\Gamma(\cC))$, $C(\Gamma(\cC))\to 0$ and $C(\Sigma(\Z)\otimes\cC)\to C(\Sigma(\cC))$ are quasi-isomorphisms.
\item[iv)] The sequence
\[
\idem\cC_\oplus\to \Gamma\cC_\oplus\to \Sigma\cC_\oplus
\]
induces a distinguished triangle of Hochschild, cyclic, negative cyclic and periodic cyclic complexes.
\end{prop}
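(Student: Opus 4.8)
The plan is to obtain (i), (ii) and the two outer quasi-isomorphisms of (iii) as instances of McCarthy's domination criterion, Proposition~\ref{prop:mac}; to deduce the acyclicity in (iii) from a vanishing statement for the cone ring; and to derive (iv) from a short exact sequence of rings together with excision for cyclic homology. For (i), view $\cC$ as the full subcategory of $\cC_\oplus$ on the length-one sequences: given $d=(c_1,\dots,c_n)\in\ob(\cC_\oplus)$, the coordinate inclusions $\phi_i\colon c_i\to d$ and projections $\psi_i\colon d\to c_i$ satisfy $\sum_i\phi_i\psi_i=1_d$, so Proposition~\ref{prop:mac} gives that $C(\cC)\to C(\cC_\oplus)$ is a quasi-isomorphism. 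For (ii), with $\cC$ additive, view $\cC\subset\idem\cC$ via $c\mapsto(c,1_c)$: for an object $(d,e)$ with $e^2=e$ the two morphisms $e\colon(d,1_d)\to(d,e)$ and $e\colon(d,e)\to(d,1_d)$ compose to $e=1_{(d,e)}$, so a single dominating object suffices and Proposition~\ref{prop:mac} applies again.

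For (iii), Lemma~\ref{lem:gamatenso}(i) identifies $\Gamma\Z\otimes\cC$ with a full subcategory of $\Gamma(\cC)$, and Lemma~\ref{lem:gamatenso}(ii) supplies, for each object $x$ of $\Gamma(\cC)$, the finitely many objects $\mu(\bu,c)$ ($c\in F(x)$) and morphisms with $\sum_{c\in F(x)}\phi_c\psi_c=1_x$ required by Proposition~\ref{prop:mac}; this yields the quasi-isomorphism $C(\Gamma\Z\otimes\cC)\to C(\Gamma(\cC))$. Pushing the same data forward along the quotient functor $\Gamma(\cC)\to\Sigma(\cC)$ and using Lemma~\ref{lem:gamatenso}(iii) that $\bar\mu$ is fully faithful, the same argument gives $C(\Sigma\Z\otimes\cC)\to C(\Sigma(\cC))$. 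For the remaining claim $C(\Gamma(\cC))\simeq 0$, I would combine the first quasi-isomorphism with the quasi-isomorphism $C(\cA(\cD))\to C(\cD)$ of Example~\ref{ex:cc} (taking $\cD=\Gamma\Z\otimes\cC$) to reduce to the acyclicity of the Hochschild and cyclic complexes of the ring $\cA(\Gamma\Z\otimes\cC)=\Gamma\Z\otimes\cA(\cC)$. Here the cone ring $\Gamma\Z$ is flasque, flasqueness passes to $\Gamma\Z\otimes A$ for any ring $A$, and an Eilenberg swindle (inner automorphisms act trivially on $HH$ and $HC$, which are additive on products) kills the Hochschild and cyclic homology of a flasque ring. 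This last vanishing for the cone ring is the only step of (iii) that is not a formal consequence of Proposition~\ref{prop:mac} and Lemma~\ref{lem:gamatenso}.

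For (iv), write $\mathrm{Fin}(\cC_\oplus)\subset\Gamma(\cC_\oplus)$ for the (nonunital) subcategory of finite morphisms, so that $\Sigma(\cC_\oplus)$ is the quotient; applying $\cA(-)$ gives a short exact sequence of rings $0\to\cA(\mathrm{Fin}(\cC_\oplus))\to\cA(\Gamma(\cC_\oplus))\to\cA(\Sigma(\cC_\oplus))\to 0$, and hence a short exact sequence of cyclic modules
\[
0\to C(\cA(\Gamma(\cC_\oplus)):\cA(\mathrm{Fin}(\cC_\oplus)))\to C(\cA(\Gamma(\cC_\oplus)))\to C(\cA(\Sigma(\cC_\oplus)))\to 0.
\]
The ideal $\cA(\mathrm{Fin}(\cC_\oplus))$ has local units — a finite morphism admits as local unit the partial identity on the finitely many coordinates it involves — hence is $H$-unital, so the relative cyclic module on the left is quasi-isomorphic to $C(\cA(\mathrm{Fin}(\cC_\oplus)))$. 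Using Example~\ref{ex:cc} the middle and right terms are $C(\Gamma(\cC_\oplus))$ and $C(\Sigma(\cC_\oplus))$, while the left term is identified with $C(\idem\cC_\oplus)$ by a further appeal to Example~\ref{ex:cc}, matrix invariance of the cyclic nerve, and part (ii). Applying $HH$, $HC$, $HN$ and $HP$ to the resulting distinguished triangle of cyclic modules then produces the four asserted triangles; this last step is formal, since each of these four functors carries a short exact sequence of cyclic modules to a distinguished triangle. I expect the main obstacle in (iv) to be precisely the bookkeeping in these identifications — pinning down the kernel ideal as $\cA(\mathrm{Fin}(\cC_\oplus))$, verifying it has local units, and matching $C(\cA(\mathrm{Fin}(\cC_\oplus)))$ with $C(\idem\cC_\oplus)$.
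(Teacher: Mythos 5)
Your treatment of (i), (ii) and the two comparison maps in (iii) is exactly the paper's: straightforward applications of Proposition \ref{prop:mac}, with Lemma \ref{lem:gamatenso} supplying the dominating objects for $\Gamma$ and $\Sigma$. You diverge in two places. For the acyclicity of $C(\Gamma(\cC))$, the paper also reduces (via \ref{prop:mac}, \ref{lem:gamatenso} and Example \ref{ex:cc}) to the cone ring of $\cA(\cC)$, but then simply cites Wodzicki's theorem that $HH(\Gamma A)$ is acyclic for $H$-unital $A$ (here $\cA(\cC)$ has local units); you instead propose an Eilenberg swindle. The swindle does work, but be aware that its essential input --- that $HH$ and $HC$ send an internal orthogonal sum $f\oplus g$ of ring endomorphisms to $HH(f)+HH(g)$, which one proves via the generalized trace and conjugation-invariance --- is not ``formal additivity on products''; it is essentially the same matrix-invariance machinery that underlies the cited result, so you are trading a citation for a proof of comparable substance. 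For (iv) the routes genuinely differ: the paper uses the commutative square comparing $\cC\to\Gamma\Z\otimes\cC\to\Sigma\Z\otimes\cC$ with $\idem\cC_\oplus\to\Gamma\cC_\oplus\to\Sigma\cC_\oplus$, shows all three vertical maps are quasi-isomorphisms (using (i)--(iii) and a K\"unneth argument, which is where flatness of $\Gamma\Z$ and $\Sigma\Z$, via N\"obeling, enters), and then invokes Wodzicki's ring-level triangle for $\cA(\cC)\to\Gamma\cA(\cC)\to\Sigma\cA(\cC)$. You instead run the excision argument directly on the extension $0\to\cA(\mathrm{Fin}(\cC_\oplus))\to\cA(\Gamma(\cC_\oplus))\to\cA(\Sigma(\cC_\oplus))\to 0$. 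This is a legitimate, arguably more self-contained alternative: it avoids the K\"unneth/flatness step entirely, at the price of re-deriving at the categorical level what Wodzicki's \S 10 does for rings.

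Two of your steps deserve to be flagged as carrying the real weight rather than being bookkeeping. First, the identification $C(\cA(\mathrm{Fin}(\cC_\oplus)))\simeq C(\idem\cC_\oplus)$, compatibly with the maps into $C(\Gamma\cC_\oplus)$: since $\mathrm{Fin}(\cC_\oplus)$ is a nonunital category, Proposition \ref{prop:mac} does not apply directly, and you must use matrix invariance of $HH$ for $H$-unital rings (the corner inclusion $\cA(\cC_\oplus)\to\cA(\mathrm{Fin}(\cC_\oplus))$ induced by $x\mapsto(x,0,0,\dots)$), together with (i) and (ii); this is precisely the content of \cite[Cor.~9.8]{wodex} that the paper leans on elsewhere. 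Second, your local-units claim for $\cA(\mathrm{Fin}(\cC_\oplus))$ is correct (take the sum of partial identities over the finitely many objects and coordinates involved), and it is what makes the excision step legitimate. With those two points made precise, your argument closes.
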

\begin{proof} The first two assertions are straightforward applications of Proposition \ref{prop:mac}. That
$C(\Gamma(\Z)\otimes\cC)\to C(\Gamma(\cC))$ and
$C(\Sigma(\Z)\otimes\cC)\to C(\Sigma(\cC))$ are quasi-isomorphisms
follows from Proposition \ref{prop:mac} and Lemma
\ref{lem:gamatenso}. In particular we have quasi-isomorphisms
\[
\xymatrix{C(\Gamma(\cC))&
C(\cA(\Gamma\Z\otimes\cC)/\cA(\ob(\Gamma\Z\otimes\cC))\ar[l]_(.65)\sim&
C(\cA(\Gamma\Z\otimes\cC))\ar[l]_(.35)\sim\ar@{=}[d]&\\
&&C(\Gamma\cA(\cC))\ar[r]^(.5)\sim &HH(\Gamma\cA(\cC))}
\]
But because $\cA(\cC)$ is $H$-unital, $HH(\Gamma\cA(\cC))$ is
acyclic by \cite[Thm. 10.1]{wodex}. To prove iv), consider the
commutative diagram
\begin{equation}\label{seq:mapcone}
\xymatrix{\cC\ar[d]\ar[r]&\Gamma\cC\ar[r]\ar[d] &\Sigma\cC\ar[d]\\
\idem\cC_\oplus\ar[r]& \Gamma\cC_\oplus\ar[r]& \Sigma\cC_\oplus}
\end{equation}
By i) and ii), the first vertical map induces quasi-isomorphisms of
cyclic modules. If $R$ is a unital ring flat as a $\Z$-module, then
the quasi-isomorphism $C(\cC)\to C(\cC_\oplus)$ of i) induces a
quasi-isomorphism $C(R\otimes\cC)=C(R)\otimes C(\cC)\to
C(R\otimes\cC_\oplus)$. In particular this applies when
$R=\Gamma\Z,\Sigma\Z$. Hence the second and third vertical maps in
\eqref{seq:mapcone} are quasi-isomorphisms as well, by iii). By
Lemma \ref{lem:sep} the cyclic modules of the top row are
quasi-isomorphic to the cyclic modules of their associated rings;
thus iv) reduces to the fact, proved in \cite[\S10]{wodex}, that the
sequence
\[
\xymatrix{C(\cA(\cC))\ar[r]&C(\Gamma\cA(\cC))\ar[r] &C(\Sigma\cA(\cC))}
\]
induces distinguished triangles for $HH$, $HC$, $HN$ and $HP$.
\end{proof}

Let $\cC^{(n)}$ be as in \eqref{cn}. Observe that by Proposition
\ref{prop:hckarseq}, we have an equivalence $|\tau_{\ge
0}HN(\cC^{(n)})|\weq |\tau_{\ge 0}HN(\cC)[+n]|$. Composing with the
map $K^Q(\cC^{(n)})\to |\tau_{\ge 0}HN(\cC^{(n)})|$ we obtain a
sequence $K^Q(\cC^{(n)})\to \tau_{\ge 0}HN(\cC)[+n]$ which induces a
map of nonconnective spectra
\begin{equation}\label{map:ch}
ch:K(\cC)\to |HN(\cC)|
\end{equation}
\begin{rem}\label{rem:hc=hc}
If $\cC$ has only one object, then the Chern character \eqref{map:ch} agrees
with the usual one. This follows from \eqref{kveq} and the ring
analogue of Proposition \ref{prop:hckarseq}, part iv), proved in
\cite[\S10]{wodex}. Furthermore, for any $\Z$-linear category $\cC$,
the character \eqref{map:ch} agrees with that of $\cA(\cC)$. Indeed,
$K(\cA(\cC))\weq K(\cC)$ by Proposition \ref{prop:kkpw}, and the proof of
Proposition \ref{prop:hckarseq} makes clear that the homology
sequences of iv) are equivalent to the corresponding sequences for
$\cA(\cC)$.
\end{rem}

\subsection{$K^{\nil}$ and the relative Chern character}

Let $E:\Z-\cat\to\spt$ be a functor and $\cC\in\Z-\cat$. Consider the homotopy fiber
\[
E^{\nil}(\cC)=\hofi(E(\cC)\to E(\cC\otimes\Z^{\Delta^\bu}))
\]
Write
\[
ch^{\Delta}:KH(\cC)=K(\cC\otimes\Z^{\Delta^\bu})\to HN(\cC\otimes\Z^{\Delta^\bu})
\]
for the result of applying the map $K\to HN$ dimensionwise.
We have a map of spectra $ch^{\nil}:K^{\nil}(\cC)\to HN^{\nil}(\cC)$ which fits into a map of homotopy
fibrations
\[
\xymatrix{K^{\nil}(\cC)\ar[d]^{ch^{\nil}}\ar[r]& K(\cC)\ar[d]^{ch}\ar[r]& KH(\cC)\ar[d]^{ch^{\Delta}}\\
|HN^{\nil}(\cC)|\ar[r]&|HN(\cC)|\ar[r]&|HN(\cC\otimes\Z^{\Delta^\bu})|}
\]

\begin{lem}
Let $\cC$ be a $\Q$-linear category. Then there is a homotopy commutative diagram with vertical weak equivalences
\[
\xymatrix{|HN^{\nil}(\cC)|\ar[d]_\iota\ar[r]&|HN(\cC)|\ar[r]\ar@{=}[d]&|HN(\cC\otimes\Z^{\Delta^\bu})|\ar[d]^\wr\\
          \Omega^{-1}|HC(\cC)|\ar[r]& |HN(\cC)|\ar[r]&HP(\cC)}
\]
\end{lem}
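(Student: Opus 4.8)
The plan is to realize the two rows of the diagram as homotopy fibration sequences of spectra and to exhibit the three displayed vertical arrows as a map between them, with the middle arrow the identity. The top row is a homotopy fibration essentially by definition, since $HN^{\nil}(\cC)=\hofi(HN(\cC)\to HN(\cC\otimes\Z^{\Delta^\bu}))$ and the Dold--Kan functor $|-|$ is exact and so preserves homotopy fibers. The bottom row is the fundamental periodicity ($SBI$) sequence of cyclic homology: for the cyclic module $C(\cC)$ of Example \ref{ex:cc} there is a natural short exact sequence of complexes relating negative, periodic and a double shift of ordinary cyclic homology (see \cite[\S9.8]{chubu} or \cite{lod}), whose rotated distinguished triangle is the natural homotopy fibration $\Omega^{-1}|HC(\cC)|\to |HN(\cC)|\to HP(\cC)$, valid for every $\cC\in\Z-\cat$. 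Granting this, it suffices to produce the right hand vertical equivalence $|HN(\cC\otimes\Z^{\Delta^\bu})|\weq HP(\cC)$ making the right hand square commute; then $\iota$ is obtained as the induced map on homotopy fibers, and comparing the long exact homotopy sequences of the two rows (the five lemma) forces $\iota$ to be a weak equivalence.

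To construct the right hand equivalence I would apply the periodicity fibration dimensionwise to the simplicial $\Z$-linear category $\cC^{\Delta^\bu}$ and realize in the simplicial direction; realization is exact in the stable setting, so this produces a homotopy fibration $\Omega^{-1}|HC(\cC^{\Delta^\bu})|\to |HN(\cC^{\Delta^\bu})|\to |HP(\cC^{\Delta^\bu})|$ together with a map to it, induced by the inclusion of constants $\cC\to\cC^{\Delta^\bu}$, from the periodicity fibration of $\cC$ viewed as constant in the simplicial variable. Now $\Q$-linearity enters through two facts. First, periodic cyclic homology is invariant under polynomial homotopy over $\Q$ (a theorem of Goodwillie; see \cite{lod}), so $HP(\cC)\to HP(\cC^{\Delta^n})$ is an equivalence for every $n$, the simplicial spectrum $[n]\mapsto HP(\cC^{\Delta^n})$ is homotopy constant, and hence $HP(\cC)\weq |HP(\cC^{\Delta^\bu})|$. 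Second, $|HC(\cC^{\Delta^\bu})|\simeq 0$: because $\cC$ is $\Q$-linear, $\cA(\cC^{\Delta^n})=\cA(\cC)\otimes_\Q\Q^{\Delta^n}$ is flat over $\Z$, so Hochschild and cyclic homology computed dimensionwise and realized agree with their derived versions, which commute with geometric realization; and the realization of the simplicial ring $\Q^{\Delta^\bu}$ is contractible, since $\pi_0(\Q^{\Delta^\bu})$ is the coequalizer of the two evaluations $\Q^{\Delta^1}\rightrightarrows\Q^{\Delta^0}$ at the endpoints of $\Delta^1$, which is the zero ring, and a simplicial commutative ring with vanishing $\pi_0$ is contractible. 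Hence $|HC(\cC^{\Delta^\bu})|\simeq HC$ of a contractible simplicial category $\simeq 0$ (equivalently, one may cite this vanishing as part of Goodwillie's theorem).

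Feeding these two facts into the realized periodicity fibration, its fiber $\Omega^{-1}|HC(\cC^{\Delta^\bu})|$ vanishes, so $|HN(\cC^{\Delta^\bu})|\weq |HP(\cC^{\Delta^\bu})|$, and composing with the inverse of the equivalence $HP(\cC)\weq|HP(\cC^{\Delta^\bu})|$ gives the right hand vertical arrow; commutativity of the right hand square is then precisely naturality of the periodicity map with respect to $\cC\to\cC^{\Delta^\bu}$. The middle square commutes trivially, so we have a map of homotopy fibrations whose middle and right arrows are equivalences, and $\iota$ is one as well by the five lemma. The substantive point, and the only place rationality is used, is the identification $|HN(\cC^{\Delta^\bu})|\weq HP(\cC)$ — the rational shadow of Goodwillie's theorem, fed by the vanishing of the polynomial‑homotopy localization of cyclic homology over $\Q$; I expect that to be the main obstacle, with everything else a formal manipulation of fibration sequences.
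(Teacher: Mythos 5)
Your argument is correct in outline, but it takes a very different route from the paper: the paper's entire proof is to observe, via Example \ref{ex:cc}, that the statement only concerns the cyclic modules of the $\Q$-algebra $\cA(\cC)$ (using $\cA(\cC\otimes\Z^{\Delta^n})=\cA(\cC)\otimes_\Q\Q^{\Delta^n}$), and then to cite Geller--Weibel \cite[Theorem 4.1]{gw}, which is exactly the statement that the ``homotopy-ization'' of $HN$ of a $\Q$-algebra is $HP$ and that the resulting square over the $SBI$ sequence commutes. What you have written is, in effect, a reconstruction of the proof of that cited theorem: the $SBI$ fibration $\Omega^{-1}|HC|\to|HN|\to|HP|$, realized levelwise over $\Delta^\bu$, plus the two rational inputs --- polynomial homotopy invariance of $HP$ over $\Q$ (Goodwillie/Kassel), giving $HP(\cC)\weq|HP(\cC^{\Delta^\bu})|$, and the vanishing of $|HC(\cC^{\Delta^\bu})|$ --- followed by the five lemma. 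The trade-off is clear: the paper's proof is two lines but opaque without the reference; yours is self-contained and makes visible exactly where rationality and the limit/colimit asymmetry between $HN,HP$ and $HH,HC$ enter.

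The one step you should not wave at is the vanishing $|HC(\cC^{\Delta^\bu})|\simeq 0$, which carries all the weight. Your observation that $\Q^{\Delta^\bu}$ is a simplicial unital ring with $\pi_0=0$, hence weakly contractible (since $1=0$ in $\pi_0$ kills $\pi_*$), is correct and nice; but ``$HC$ computed dimensionwise agrees with its derived version, which commutes with realization'' is precisely the nontrivial content and should be justified. Two clean ways: (a) note that $HC$ is a homotopy \emph{colimit} of copies of $HH$ (each total degree of the cyclic bicomplex involves only finitely many columns), so it suffices to show $|HH(\cC^{\Delta^\bu})|\simeq 0$, which over $\Q$ follows from the K\"unneth formula $HH(\cA(\cC)\otimes_\Q\Q^{\Delta^n})\simeq HH(\cA(\cC))\otimes_\Q HH(\Q^{\Delta^n})$ together with the acyclicity of $[n]\mapsto\Omega^p_{\Q^{\Delta^n}/\Q}$ for each $p$; then Connes' periodicity sequence $HH_n\to HC_n\to HC_{n-2}$ and induction (using $HC_n=0$ for $n<0$) kill $|HC(\cC^{\Delta^\bu})|$; or (b) simply cite \cite{gw}, which is what the paper does. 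Note also that $HN$ and $HP$ are homotopy limits and do \emph{not} commute with the realization --- this is not a defect but the very reason the lemma is true, so it is worth saying explicitly rather than leaving the reader to wonder why the same ``derived functor'' argument does not collapse $|HN(\cC^{\Delta^\bu})|$ to zero as well.
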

\begin{proof} By Example \ref{ex:cc}, this is a statement about the $\Q$-algebra $\cA(\cC)$. The latter is proved in
\cite[Theorem 4.1]{gw}.
\end{proof}

By \cite[Prop. 1.6]{kh}, if $A$ is a $\Q$-algebra the groups $K^{\nil}_*(A)$ are $\Q$-vectorspaces. Hence for every ring $A$ we have
a map
\begin{equation}\label{map:q}
q:K^{\nil}(A)\otimes\Q\to K^{\nil}(A\otimes\Q)
\end{equation}
which is an equivalence if $A$ is a $\Q$-algebra.
We write
\begin{gather}
\nu=\iota ch^{\nil}(-\otimes\Q)q:K^{\nil}(\cC)\otimes\Q\to \Omega^{-1}|HC(\cC\otimes\Q)|\lweq \Omega^{-1}|HC(\cC)|\otimes\Q\label{map:nu}\\
K^{\ninf}(\cC)=\hofi(\nu)\nonumber
\end{gather}
We remark that $\nu$ is a variant of the relative character introduced by Weibel in \cite{wenil}.

\begin{prop}\label{prop:ninfexci}
$K^{\ninf}:\Z-\cat\to\spt$ satisfies the standing assumptions. In addition, it is excisive and $K_*^{\ninf}$ commutes with filtering colimits.
\end{prop}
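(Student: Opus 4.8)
The plan is to realise $K^{\ninf}$ as a homotopy fibre assembled from functors whose relevant properties have already been established, and to deduce each assertion from theirs. First I would reorganise the definition. By \eqref{map:nu} we have $K^{\ninf}(\cC)=\hofi(\nu)$ with $\nu\colon K^{\nil}(\cC)\otimes\Q\to\Omega^{-1}|HC(\cC)|\otimes\Q$, and $K^{\nil}(\cC)=\hofi(K(\cC)\to KH(\cC))$; I would also keep at hand the Lemma preceding \eqref{map:nu}, which for $\Q$-linear $\cC$ identifies $|HN^{\nil}(\cC)|$ and $|HN(\cC\otimes\Z^{\Delta^\bu})|$ with $\Omega^{-1}|HC(\cC)|$ and $|HP(\cC)|$ and provides the fibration $\Omega^{-1}|HC(\cC)|\to|HN(\cC)|\to|HP(\cC)|$. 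Under these identifications $\nu$ becomes (up to the comparison map $q$ of \eqref{map:q}, which by \cite[Prop.~1.6]{kh} is an equivalence on $\Q$-linear categories) the nil part of the nonconnective Chern character of \eqref{map:ch}; hence for $\Q$-linear $\cC$ one gets a natural identification of $K^{\ninf}(\cC)$ with the total homotopy fibre of the square with vertices $K(\cC)$, $KH(\cC)$, $|HN(\cC)|$, $|HP(\cC)|$, and the general case is recovered from the $\Q$-linear one through the fibration $\hofi(q_\cC)\to K^{\ninf}(\cC)\to K^{\ninf}(\cC\otimes\Q)$.

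The main point is excision, which I would prove by showing that applying $K^{\ninf}$ to an arbitrary Milnor square \eqref{diag:mil} produces a contractible iterated homotopy fibre. In the square above the two right-hand vertices have contractible birelative spectrum, since $KH$ is excisive (Theorem \ref{thm:wei}) and $HP$ is excisive (the Cuntz--Quillen theorem, which applies after the reduction to $\Q$-linear $\cC$); so the birelative spectrum of $K^{\ninf}$ coincides with that of $\hofi(K\to|HN|)$, i.e.\ of nonconnective infinitesimal $K$-theory. That this last one is contractible is Corti\~nas's theorem on the obstruction to excision in $K$-theory and cyclic homology; to pass from his connective statement to the nonconnective spectrum of Section \ref{sec:kth} I would apply it to each category $\cC^{(n)}$ of \eqref{cn} and invoke the Karoubi fibration \eqref{seq:karfib} together with Proposition \ref{prop:hckarseq}(iv). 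For the non-$\Q$-linear case I would check similarly that $\hofi(q_\cC)$ has contractible birelative spectrum: it agrees with that of $\hofi(K(\cC)\otimes\Q\to K(\cC\otimes\Q))$ (again since $KH$ is excisive), which by Corti\~nas's theorem and the fibration $\Omega^{-1}|HC|\to|HN|\to|HP|$ with $HP$-excision is controlled by the birelative spectrum of $\Omega^{-1}|HC(\cC)|\otimes\Q\to\Omega^{-1}|HC(\cC\otimes\Q)|$ --- an equivalence because $HC$, being a direct-sum total complex, commutes with $-\otimes\Q$. The delicate step, which I expect to be the main obstacle, is checking that all of these identifications are compatible with the one Chern character of \eqref{map:ch} and with $\nu$, $q$, so that the assembled equivalence on birelative spectra is genuinely induced by $K^{\ninf}$; Remark \ref{rem:hc=hc} and the rationality statement of \cite[Prop.~1.6]{kh} are what make this work, but the bookkeeping, together with the connective-to-nonconnective passage, is where the real effort lies.

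Granting excision, the remaining items are formal. Assumptions i) and ii) hold automatically by Remark \ref{rem:exci}; for iii) and v) it suffices, again by that remark, to treat unital rings, where they reduce to matrix (Morita) invariance and to the direct-sum formula $\bigoplus_iE(A_i)\weq E(\bigoplus_iA_i)$ for $E=K,KH,HH,HC$ --- all established in Propositions \ref{prop:kstands}, \ref{prop:homostand} and \ref{prop:hhstands} --- by applying $\hofi$ to $\nu$ and using that arbitrary wedges of spectra commute with homotopy fibres (being filtered colimits of finite products) and, for iii), that $M_XR$ is the filtering colimit of $M_{X'}R$ over finite $X'\subset X$ while $K_*^{\ninf}$ commutes with filtering colimits. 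Assumption iv), the natural equivalence $K^{\ninf}(\cA(\cC))\weq K^{\ninf}(\cC)$ on $\inj-\Z-\cat$, follows from Proposition \ref{prop:kkpw} and its homotopy-$K$-theory analogue (from the proof of Proposition \ref{prop:homob}) for $K^{\nil}$, from Example \ref{ex:cc} for $HC$, and from Remark \ref{rem:hc=hc} for compatibility with $\nu$. Finally $K_*^{\ninf}$ commutes with filtering colimits because $K$, $KH$ and $HC$ do --- hence so do the source $K^{\nil}(-)\otimes\Q$ and the target $\Omega^{-1}|HC(-\otimes\Q)|\otimes\Q$ of $\nu$ --- and homotopy fibres of maps of spectra commute with filtering colimits.
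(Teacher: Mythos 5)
Your proposal is correct and follows essentially the same route as the paper: excision is deduced from the theorem of \cite{kabi} that $K^{\inf,\Q}=\hofi(ch^{\Q})$ is excisive, the standing assumptions are obtained from Remark \ref{rem:exci} together with the corresponding properties of $K^{\nil}$ and $HC$, and compatibility with filtering colimits is inherited from $K^{\nil}_*$ and $HC_*$. The only difference is one of compression: the bookkeeping you flag as the main effort (matching the birelative spectra of $K^{\ninf}$ and $K^{\inf,\Q}$ through $q$, $ch^{\nil}$ and the $HC$--$HN$--$HP$ fibration) is exactly what the paper elides with the phrase ``it follows that $K^{\ninf}$ is excisive too.''
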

\begin{proof}
It is proved in \cite{kabi} that
\begin{equation}\label{def:kinf}
K^{\inf,\Q}:=\hofi(ch^\Q:K(-)\otimes\Q\to HN(-\otimes\Q))
\end{equation}
is excisive; it follows that $K^{\ninf}$ is excisive too. Next observe that $K^\ninf$ satisfies iii) and v) of the standing assumptions \ref{stan} for unital rings, and iv) for all $\cC\in\Z-\cat$, since both $K^\nil$ and $HC$ do. Because $K^\ninf$ is excisive, this implies that it satisfies all standing assumptions, by Remark \ref{rem:exci}. Finally $K_*^\ninf$ commutes with filtering colimits because both $K_*^\nil$ and $HC_*$ do.
\end{proof}

\section{Rings of polynomial functions on a simplicial set}\label{sec:zx}

\subsection{Finiteness}
An object $K$ in a category $\fC$ is \emph{small} if $hom_\fC(K,-)$ preserves colimits.
If $\fC=\fS$, then $X$ is small if and only if it has only a finite number of nondegenerate simplices, or, equivalently, if there
exists a finite set of nonnegative integers $n_1,\dots,n_r$ and a surjection
\[
\coprod_{i=1}^r\Delta^{n_i}\onto X
\]
Small simplicial sets are called \emph{finite}. Similarly, a $G$-simplicial set is small if there are $n_1,\dots,n_r\ge 0$ and
a $G$-equivariant surjection
\[
\coprod_{i=1}^r\Delta^{n_i}\times G\onto X
\]
Let $\cF$ be a family of subgroups of $G$. A \emph{finite} $(G,\cF)$-complex is $G$-simplicial set
obtained by attaching finitely many cells of the form $\Delta^n\times G/H$ with $H\in\cF$. A $G$-finite
simplicial set is a finite $(G,\fall)$-complex. The concept of $G$-finiteness is the simplicial set version of the
concept of $G$-compactness. Indeed one checks that a $G$-simplicial set $X$ is $G$-finite if and only if $X/G$ is finite as a simplicial set.

\subsection{Locally finite simplicial sets}
If $X$ is a simplicial set and $\sigma\in X$ is a simplex, we write
$<\sigma>\subset X$ for the simplicial subset generated by $\sigma$. We have
\[
<\sigma>_n=\{\alpha^*(\sigma):\alpha\in\hom([n],[\dim\sigma])\}
\]
The {\em star} of $\sigma$ is the following set of simplices of $X$:
\[
\St(\sigma)=\St_X(\sigma)=\{\tau\in X:<\tau>\cap <\sigma>\neq\emptyset\}
\]
The {\em closed star} is the simplicial subset
$\cSt(\sigma)=<\St(\sigma)>$ generated by $\St(\sigma)$. If $M$ is a
set of simplices of $X$ we put $\St_X(M)=\cup_{\sigma\in
M}\St_X(\sigma)$, $\cSt_X(M)=<\St_X(M)>$. We also define the {\em
link} of $M$ as $\li(M)=\cSt_X(M)\backslash\St_X(M)$.

\begin{lem}\label{lem:locfin}
Let $X$ be a simplicial set; write $NX$ for the set of nondegenerate simplices. The following are equivalent.
\item[i)]
$(\forall\sigma\in X)\{\tau\in NX:<\tau>\supset<\sigma>\}$ is a finite set.
\item[ii)] For every $\sigma\in X$, $\cSt_X(\sigma)$ is a finite simplicial set.
\end{lem}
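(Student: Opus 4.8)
The plan is to reduce both conditions to their special cases for \emph{vertices} of $X$, and then to play these off against the elementary fact that a simplicial subset generated by a finite set of simplices has only finitely many nondegenerate simplices, i.e.\ (by the paragraph on finiteness above) is a finite simplicial set.

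First I would record two observations about the generated subcomplexes $<\mu>$. For a simplex $\mu\in X$, the set $<\mu>_0$ of its $0$-simplices is exactly the vertex set $V(\mu)$; since a nonempty simplicial subset always contains a $0$-simplex, it follows that $<\mu>\cap<\nu>\ne\emptyset$ if and only if $\mu$ and $\nu$ share a vertex. Consequently $\St_X(\sigma)=\bigcup_{v\in V(\sigma)}\St_X(v)$, and hence $\cSt_X(\sigma)=\bigcup_{v\in V(\sigma)}\cSt_X(v)$, for every $\sigma\in X$; as $V(\sigma)$ is finite, this shows that ii) holds for all $\sigma$ as soon as it holds for all vertices. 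Likewise $<v>\subseteq<\tau>$ exactly when $v$ is a vertex of $\tau$, so, writing $S(v)=\{\tau\in NX:\ v\ \text{is a vertex of}\ \tau\}$, the set $S(v)$ is precisely the set appearing in i) for $\sigma=v$; and for a general $\sigma$, the relation $<\tau>\supseteq<\sigma>$ forces every vertex of $\sigma$ to be a vertex of $\tau$, whence $\{\tau\in NX:<\tau>\supseteq<\sigma>\}\subseteq S(v)$ for any fixed $v\in V(\sigma)$. Hence i) is equivalent to: $S(v)$ is finite for every vertex $v$.

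With these reductions in hand it remains to prove that the two vertexwise statements are equivalent. From i) to ii): a simplex $\mu$ and its nondegenerate core $\mu^{\sharp}$ (the unique nondegenerate simplex of which $\mu$ is a degeneracy) generate the same subcomplex, so $\St_X(v)$ and $\St_X(v)\cap NX=S(v)$ generate the same subcomplex, namely $\cSt_X(v)$; thus if $S(v)=\{\rho_1,\dots,\rho_m\}$ is finite then $\cSt_X(v)=\bigcup_i<\rho_i>$, and each $<\rho_i>$, being the image of the map $\Delta^{\dim\rho_i}\to X$ representing $\rho_i$, is finite, so $\cSt_X(v)$ is finite. From ii) to i): every $\tau\in S(v)$ lies in $\St_X(v)\subseteq\cSt_X(v)$ and is nondegenerate, so $S(v)\subseteq N(\cSt_X(v))$, which is finite whenever $\cSt_X(v)$ is.

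I expect the only genuine subtlety — the nearest thing to an obstacle — to be the bookkeeping with degeneracies: as defined, $\St_X(\sigma)$ is not contained in $NX$, so one must consistently replace it by $\St_X(\sigma)\cap NX$ and use that $<\mu>=<\mu^{\sharp}>$ for every simplex $\mu$, which is what guarantees that $\St_X(\sigma)$ and $\St_X(\sigma)\cap NX$ generate the same closed star $\cSt_X(\sigma)$. The other ingredients — that $<\mu>_0=V(\mu)$, that generating a subcomplex from a union of sets of simplices yields the union of the subcomplexes they generate, and that the image of a finite simplicial set is finite — are all routine.
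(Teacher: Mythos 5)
Your proof is correct and follows essentially the same route as the paper's: both directions come down to showing that, under i), the set $NX\cap\St_X(\sigma)$ is finite (so that $\cSt_X(\sigma)$ is generated by finitely many simplices, each generating a finite subcomplex), and that, under ii), $\{\tau\in NX:<\tau>\supset<\sigma>\}$ is contained in the finite set of nondegenerate simplices of $\cSt_X(\sigma)$. Your preliminary reduction to vertices merely makes explicit what the paper compresses into the observation that the nonempty intersections $<\tau>\cap<\sigma>$ range over finitely many subcomplexes of the finite simplicial set $<\sigma>$.
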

\begin{proof} If $\sigma\in X$, then $<\sigma>$ has finitely many nondegenerate simplices, and thus the set $\{<\tau>\cap<\sigma>:\tau\in X\}$
is finite. Hence if i) holds, there are finitely many $\tau\in NX$ such that $<\tau>\cap <\sigma>\ne\emptyset$; in other words, $NX\cap \St_X(\sigma)$ is a finite set, and therefore $\cSt_X(\sigma)$ is a finite simplicial set. Thus i)$\Rightarrow$ii). Next note that $<\tau>\supset<\sigma>$ implies
$\tau\in\St_X(\sigma)$, whence ii)$\Rightarrow$i).
\end{proof}

We say that $X$ is {\em locally finite} if it satisfies the equivalent conditions of the lemma above.

\subsection{Rings of polynomial functions on a simplicial set}\label{subsec:rings}
 If $X$ is a simplicial set and $A$ is a ring, we put
 \[
 A^{X}=\hom_\fS(X,A^{\Delta^\bu})
 \]
 The simplicial ring $A^{\Delta^\bu}=A\otimes\Z^{\Delta^\bu}$ is defined as in \eqref{simp:delta}.
 Note $X\mapsto A^{X}$, $f\mapsto f^*$ gives a functor $\fS^{op}\to\ring$. By its very definition, the functor $A^{-}$ sends colimits to limits; if $I$ is a small category
 and $X:I\to\fS$ is a functor, then
\[
 A^{\colim_i X_i}=\lim_i A^{X_i}
 \]
\begin{ex} Any simplicial set $X$ is the union of the subobjects generated by each of its nondegenerate simplices; in symbols
\[
X=\colim_{\sigma\in NX}<\sigma>
\]
Thus we obtain
\begin{equation}\label{limax}
A^X=\lim_{\sigma\in NX}A^{<\sigma>}=
\{\phi\in\prod_{\sigma\in NX}A^{<\sigma>}:{\phi(\sigma)}_{|<\sigma>\cap<\tau>}={\phi(\tau)}_{|<\sigma>\cap<\tau>},\sigma,\tau\in NX\}
\end{equation}
\end{ex}

If $\phi\in A^X$, then its {\em support} is
\[
\supp(\phi)=<\{\sigma\in X:\phi(\sigma)\ne 0\}>
\]
Note that if $\phi,\psi\in A^X$ and $f:X\to Y$ is a simplicial map,
then
\begin{equation}\label{supprop}
\supp(\phi\cdot\psi)\subset\supp(\phi)\cap\supp(\psi)\qquad \supp(f^*(\phi))\subset f^{-1}(\supp(\phi))
\end{equation}
We say that $\phi$ is \emph{finitely supported} if $\supp(\phi)$ is a finite simplicial set. Note $\phi$ is finitely supported if and only if there is
only a finite number of nondegenerate simplices $\sigma$ such that $\phi(\sigma)\ne 0$. Put
\[
A^{(X)}=\{f\in A^X:\supp(f) \text{ is finite.}\}
\]
If $X$ is finite, then clearly $A^X=A^{(X)}$. In general, $A^{(X)}\subset A^X$ is a two-sided ideal, by \eqref{supprop}. We remark that
if $f: X\to Y$ is an arbitrary map of simplicial sets, then the associated ring homomorphism $f^*:A^Y\to A^X$ does not necessarily send
$A^{(Y)}$ into $A^{(X)}$. However, if $f$ happens to be {\em proper},
i.e. if $f^{-1}(K)$ is finite for every finite $K\subset Y$, then $f^*(A^{(Y)})\subset A^{(X)}$, by \eqref{supprop}. Hence $A^{(-)}$ is a functor on the category of simplicial sets and proper maps. Next we consider the behaviour of this functor with respect to colimits. First of all, if
 $\{X_i\}$ is a family of simplicial sets, then we have
\begin{equation}\label{coprodfinsupp}
A^{(\coprod X_i)}=\bigoplus_iA^{(X_i)}
\end{equation}
Here $\bigoplus$ indicates the direct sum of abelian groups, equipped with coordinatewise multiplication. Second, $A^{(-)}$ maps coequalizers of proper maps
to equalizers; if $\{f_j:X\to Y\}$ is a family of proper maps, then
\begin{equation}\label{coeqfinsupp}
A^{(\coequ_j \{f_j:X\to Y\})}=\equ_j\{f_j^*:A^{(Y)}\to A^{(X)}\}
\end{equation}
Next recall that if $I$ is a small category and $X:I\to \fS$ is a functor, then the colimit of $X$ can be computed as a coequalizer:
\[
\colim_iX_i=\coequ(\coprod_{\alpha\in\Ar(I)}X_{s(\alpha)}\overset{\partial_0}{\underset{\partial_1}{\rightrightarrows}}\coprod_{i\in\Ob(I)} X_i)
\]
Here $\Ob(I)$ and $\Ar(I)$ are respectively the sets of objects and of arrows of $I$, and if $\alpha\in\Ar(I)$ then $s(\alpha)\in\Ob(I)$ is its source; we
also write $r(\alpha)$ for the range of $\alpha$. The maps $\partial_0$ and $\partial_1$ are defined as follows. The restriction of $\partial_i$ to the copy of $X_{s(\alpha)}$ indexed by $\alpha$ is the inclusion $X_{s(\alpha)}\subset\coprod_jX_j$ if $i=0$ and the composite of $X(\alpha)$ followed by the inclusion $X_{r(\alpha)}\subset\coprod_jX_j$ if $i=1$. The conditions that $\partial_0$ and $\partial_1$ be proper are equivalent to the following
\begin{itemize}
\item[$\partial_0$)] Each object of $I$ is the source of finitely many arrows.
\item[$\partial_1$)] Each object of $I$ is the range of finitely many arrows, and $X$ sends each map of $I$ to a proper map.
\end{itemize}
\begin{ex}\label{ex:locfinex}
For example the functor $\sigma\mapsto <\sigma>$ from the set of nondegenerate simplices of $X$, ordered by $\sigma\le\tau$ if $<\sigma>\subset<\tau>$,  always satisfies $\partial_1$; condition
$\partial_0$ is precisely condition i) of Lemma \ref{lem:locfin}. Hence $\partial_0$ is satisfied if and only if
$X$ is locally finite, and in that case we have
\[
A^{(X)}=\equ(\bigoplus_{\sigma\in
NX}A^{<\sigma>}\overset{\partial_0^*}{\underset{\partial_1^*}{\rightrightarrows}}
\underset{\sigma,\tau\in
NX}{\bigoplus_{<\tau>\subset<\sigma>,}}A^{<\tau>})
\]
\end{ex}

\begin{lem}\label{lem:zxfree}
If $X$ is a locally finite simplicial set, then $\Z^{(X)}$ is a free abelian group.
\end{lem}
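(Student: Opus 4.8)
The plan is to exhibit $\Z^{(X)}$ as a subgroup of a free abelian group and then appeal to the classical theorem that every subgroup of a free abelian group is again free.

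First I would use the equalizer description of $\Z^{(X)}$ obtained in Example~\ref{ex:locfinex}. Since $X$ is locally finite, the functor $\sigma\mapsto <\sigma>$ on $NX$ satisfies the conditions $\partial_0)$ and $\partial_1)$ there, so
\[
\Z^{(X)}=\equ\Bigl(\bigoplus_{\sigma\in NX}\Z^{<\sigma>}\overset{\partial_0^*}{\underset{\partial_1^*}{\rightrightarrows}}\bigoplus_{\substack{\sigma,\tau\in NX\\ <\tau>\subset<\sigma>}}\Z^{<\tau>}\Bigr)=\ker(\partial_0^*-\partial_1^*).
\]
In particular $\Z^{(X)}$ is a subgroup of $\bigoplus_{\sigma\in NX}\Z^{<\sigma>}$.

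Next I would observe that $\bigoplus_{\sigma\in NX}\Z^{<\sigma>}$ is free abelian, for which it suffices to show each $\Z^{<\sigma>}$ is. Writing $n=\dim\sigma$, the canonical surjection $\Delta^n\onto <\sigma>$ induces an injection $\Z^{<\sigma>}\hookrightarrow\Z^{\Delta^n}=\Z[t_0,\dots,t_n]/(t_0+\dots+t_n-1)$; the target is a polynomial ring in $n$ variables over $\Z$, hence free abelian with the monomials as a basis, so its subgroup $\Z^{<\sigma>}$ is free abelian as well. Therefore $\bigoplus_{\sigma\in NX}\Z^{<\sigma>}$ is a direct sum of free abelian groups, hence free abelian, and so is its subgroup $\Z^{(X)}$.

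There is no genuine obstacle in this argument; the only non-formal ingredient is the structure theorem that subgroups of free abelian groups are free, used twice, which is what handles the possibly infinite ranks occurring when $X$ is infinite. Local finiteness enters precisely here: it is exactly the hypothesis under which Example~\ref{ex:locfinex} presents $\Z^{(X)}$ with a \emph{direct sum} (rather than a product) of the groups $\Z^{<\sigma>}$, and a countable product of copies of $\Z$ fails to be free.
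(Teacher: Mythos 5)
Your proof is correct and follows essentially the same route as the paper: both use the equalizer presentation of Example~\ref{ex:locfinex} to exhibit $\Z^{(X)}$ as a subgroup of $\bigoplus_{\sigma\in NX}\Z^{<\sigma>}$ and then invoke the fact that subgroups of free abelian groups are free. The only difference is that the paper quotes \cite[3.1.3]{CT} for the freeness of $\Z^{(X)}$ when $X$ is finite (hence of each $\Z^{<\sigma>}$), whereas you supply a direct argument by embedding $\Z^{<\sigma>}$ into the polynomial ring $\Z^{\Delta^n}$ — a harmless and self-contained substitute.
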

\begin{proof}
By \cite[3.1.3]{CT} the lemma is true when $X$ is finite. Hence if $X$ is any simplicial set, and
$\sigma\in X$ is a simplex, then
$\Z^{<\sigma>}$ is free. If $X$ locally finite, then by Example \ref{ex:locfinex}, $\Z^{(X)}$
is a subgroup of a free group, and therefore it is free.
\end{proof}

\subsection{Extending polynomial functions}

\begin{thm}\label{thm:supp} Let $X$ be a simplicial set, $Y\subset X$ a simplicial subset and $A$ a ring. Let $\phi\in A^{Y}$ and $K=\supp\phi$. Then there
exists $\psi\in A^X$ with $\supp\psi\subset\cSt_XK$ such that
$\psi_{|\li_X(K)}=0$ and $\psi_{|Y}=\phi$.
\end{thm}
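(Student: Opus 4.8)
The plan is as follows. First extend $\phi$ to \emph{some} $\psi_0\in A^X$. Then multiply $\psi_0$ by a suitable ``polynomial partition of unity weight'' $\lambda\in\Z^X$ which is the constant function $1$ on $K$ and which vanishes on every simplex of $X$ not meeting $K$; the element $\psi=\lambda\psi_0$ will then be supported in $\cSt_X(K)$, vanish on $\li_X(K)$, and still restrict to $\phi$ on $Y$.

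For the extension step, the point is that $A^X\to A^Y$ is surjective. Indeed $A^{\Delta^\bu}=A\otimes\Z^{\Delta^\bu}$ is a simplicial abelian group, hence a Kan complex, and it is contractible: the map
\[
H\colon A^{\Delta^\bu}\times\Delta^1\to A^{\Delta^\bu},\qquad H_n(\xi,\alpha)=\alpha^*(t_1)\cdot\xi\qquad (\alpha\colon[n]\to[1],\ \xi\in A^{\Delta^n}),
\]
where $t_1\in\Z^{\Delta^1}$ and $\cdot$ denotes the evident action of $\Z^{\Delta^n}$ on $A^{\Delta^n}$, is a simplicial homotopy between $0$ and the identity, as one checks by noting that $\alpha^*(t_1)$ specialises to $0$ resp.\ $1$ at the two vertices of $\Delta^1$. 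Thus $A^{\Delta^\bu}\to *$ is a trivial fibration and has the right lifting property against the monomorphism $Y\hookrightarrow X$, giving $\psi_0\in A^X=\hom_\fS(X,A^{\Delta^\bu})$ with $\psi_0|_Y=\phi$. (One could instead check by hand that $A^{\Delta^n}\to A^{\partial\Delta^n}$ is onto and build the extension by induction over the skeleta of $X$ relative to $Y$.)

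Next, let $K_0\subseteq X_0$ be the set of vertices of $K$, and for an $n$-simplex $\tau$ of $X$ with vertices $\tau_0,\dots,\tau_n\in X_0$ set
\[
\lambda(\tau)=\sum_{\{\,i\,:\,\tau_i\in K_0\,\}}t_i\ \in\ \Z^{\Delta^n}=\Z[t_0,\dots,t_n]/(t_0+\dots+t_n-1),
\]
with $\Z^{\Delta^\bu}$ as in \eqref{simp:delta}. Using that $\beta^*(t_j)=\sum_{\beta(i)=j}s_i$ for $\beta\colon[m]\to[n]$, one sees at once that $\lambda$ is a map of simplicial sets, i.e.\ $\lambda\in\Z^X$. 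If $\lambda(\tau)\ne0$, then $\tau$ has a vertex $w\in K_0\subseteq K$; since $w\in\langle\tau\rangle\cap\langle w\rangle$ we get $\tau\in\St_X(w)\subseteq\St_X(K)\subseteq\cSt_X(K)$. As $\cSt_X(K)$ is a simplicial subset, this gives $\supp\lambda\subseteq\cSt_X(K)$, and since $\li_X(K)=\cSt_X(K)\setminus\St_X(K)$ it also gives $\lambda|_{\li_X(K)}=0$. Moreover, if $\sigma\in K$ then all vertices of $\sigma$ lie in $K_0$ (as $K$ is a simplicial subset), so $\lambda(\sigma)=\sum_i t_i=1$.

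Now put $\psi=\lambda\psi_0\in A^X$. Then $\supp\psi\subseteq\supp\lambda\subseteq\cSt_X(K)$ (cf.\ \eqref{supprop}), and $\psi$ vanishes on $\li_X(K)$ since $\lambda$ does. Finally, for $\sigma\in Y$ we have $\psi(\sigma)=\lambda(\sigma)\,\psi_0(\sigma)=\lambda(\sigma)\,\phi(\sigma)$, which equals $\phi(\sigma)$ when $\sigma\in K$ (because then $\lambda(\sigma)=1$) and equals $0=\phi(\sigma)$ when $\sigma\in Y\setminus K$ (because $K=\supp\phi$ forces $\phi(\sigma)=0$ there); hence $\psi|_Y=\phi$, as required. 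The real content is the design of $\lambda$: it must be at once $1$ on $K$ and supported within the \emph{closed} star, and the mechanism that makes this work is that the simplices of $\cSt_X(K)$ every vertex of which avoids $K_0$ — which are precisely those of $\li_X(K)$ — are exactly the ones annihilated by $\lambda$, whereas any simplex of $X$ that touches $K_0$ already lies in $\St_X(K)$.
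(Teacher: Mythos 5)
Your proof is correct, and it takes a different route from the paper's for the part of the statement that has actual content, namely the support control. Both arguments rest on the same nontrivial input — $A^{\Delta^\bu}\onto 0$ is a trivial fibration, so sections extend across cofibrations — but the paper never extends $\phi$ to all of $X$ in a single lift: it first observes that $\phi$ already vanishes on $\li_X(K)\cap Y$ (via $\St_X(K)\cap Y=\St_Y(K)$), extends by zero to $Y\cup\li_X(K)$, lifts only over $Y\cup\cSt_X(K)$, and finally extends by zero outside $\cSt_X(K)$; the vanishing on the link and the containment $\supp\psi\subset\cSt_X(K)$ are thus enforced by the order in which the extensions are performed. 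You instead do one global lift $\psi_0$ and then multiply by the explicit simplicial cutoff $\lambda\in\Z^X$, $\lambda(\tau)=\sum_{\tau_i\in K_0}t_i$. The cost of your route is the (routine) verification that $\lambda$ is simplicial, equals $1$ on $K$ (using that $K=\supp\phi$ is a simplicial subset, so its simplices have all vertices in $K_0$), and is nonzero only on simplices with a vertex in $K$, hence only on $\St_X(K)$; all of these you check correctly. What it buys is an explicit formula: your $\lambda$ (the case $Y=K$, $\phi=1$) is exactly the kind of element whose existence the paper later extracts from this theorem, e.g.\ the $\mu$ of Proposition \ref{prop:tfp} and the $\mu_K$ of Lemma \ref{lem:festar}, so your construction makes those cutoffs concrete and makes the final restriction computation on $Y$ a one-line case split between $\sigma\in K$ and $\sigma\in Y\setminus K$. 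Both proofs are complete.
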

\begin{proof}  We have $K\subset \St_YK\subset \cSt_YK$, whence $\phi_{|\li_Y(K)}=0$. Note $\St_XK\cap Y=\St_YK$; thus $\phi$ vanishes on
$\li_X(K)\cap Y$. Hence we may extend $\phi$ to a map
$\phi':Y'=Y\cup\li_X(K)\to A^{\Delta^\bu}$ by $\phi'_{|\li_X(K)}=0$.
Put $Y"=Y\cup\cSt_XK$. Because $Y'\subset Y"$ is a cofibration and
$A^{\Delta^\bu}\onto 0$ is a trivial fibration, we may further
extend $\phi'$ to a map $\phi":Y"\to A^{\Delta^\bu}$. By construction,
$\{\sigma\in X:\phi"(\sigma)\ne 0\}\subset\St_XK$, and $\phi"$
vanishes on $\li_XK$. Hence we may finally extend $\phi"$ to a
map $\psi:X\to A^{\Delta^\bu}$, by letting $\psi(\sigma)=0$ if
$\sigma\notin\cSt_XK$. This concludes the proof.
\end{proof}
\begin{cor}\label{cor:sursupp} If $X$ is locally finite and $Y\subset X$ is a simplicial subset, then the restriction map $A^{(X)}\to A^{(Y)}$ is surjective.
\end{cor}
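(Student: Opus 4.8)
The plan is to deduce this directly from Theorem \ref{thm:supp}, the only non-formal ingredient being that local finiteness forces the closed star of a finite subcomplex to be finite. First I would fix $\phi\in A^{(Y)}$ and set $K=\supp\phi$; by definition of $A^{(Y)}$ this $K$ is a finite simplicial subset of $Y$, hence of $X$. Viewing $\phi$ as an element of $A^Y$ and applying Theorem \ref{thm:supp} produces $\psi\in A^X$ with $\supp\psi\subset\cSt_X K$ and $\psi_{|Y}=\phi$; in particular $\psi$ restricts to $\phi$ under $A^X\to A^Y$. Since by \eqref{supprop} one has $\supp(\psi_{|Y})\subset\supp\psi\cap Y$, the restriction map $A^X\to A^Y$ carries $A^{(X)}$ into $A^{(Y)}$, so it suffices to check that $\psi$ itself lies in $A^{(X)}$, i.e. that $\supp\psi$ is finite.

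For this I would show that $\cSt_X K$ is a finite simplicial set, which is exactly where the hypothesis that $X$ is locally finite enters. Since $K$ is finite it has only finitely many nondegenerate simplices $\tau_1,\dots,\tau_r$; because $<\sigma>$ depends only on the nondegenerate simplex underlying $\sigma$ (an iterated face of $\sigma$, hence still in $K$), one gets $\St_X K=\bigcup_{\sigma\in K}\St_X(\sigma)=\bigcup_{j=1}^r\St_X(\tau_j)$, and therefore $\cSt_X K=<\St_X K>=\bigcup_{j=1}^r\cSt_X(\tau_j)$ using that the subcomplex generated by a union of sets of simplices is the union of the generated subcomplexes. By Lemma \ref{lem:locfin}, local finiteness of $X$ says precisely that each $\cSt_X(\tau_j)$ is a finite simplicial set, and a finite union of finite simplicial sets is finite; hence $\cSt_X K$ is finite.

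Combining the two steps, $\supp\psi\subset\cSt_X K$ is finite, so $\psi\in A^{(X)}$ and $\psi\mapsto\phi$ under the restriction $A^{(X)}\to A^{(Y)}$. As $\phi\in A^{(Y)}$ was arbitrary, this map is surjective. There is no real obstacle here: the argument is essentially an application of the extension theorem, and the single point requiring the standing hypothesis rather than pure formalism is the finiteness of $\cSt_X K$, which is the content of the equivalence recorded in Lemma \ref{lem:locfin}.
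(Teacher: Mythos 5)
Your proposal is correct and is exactly the paper's argument: the paper's proof reads "It follows from Theorem \ref{thm:supp}, using \ref{lem:locfin}," and you have simply spelled out the two steps — extend $\phi$ via Theorem \ref{thm:supp} to $\psi$ with $\supp\psi\subset\cSt_XK$, then use Lemma \ref{lem:locfin} to see that $\cSt_XK$ is finite. No gaps.
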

\begin{proof} It follows from Theorem \ref{thm:supp}, using
\ref{lem:locfin}.
\end{proof}

\begin{prop}\label{prop:axseparate}(Compare \cite[Lemma 2.5]{cunc})
Let $A$ be a nonzero ring. The following are equivalent for a simplicial set $X$.
\item[i)] For every simplex $\sigma\in X$ there exists $\phi\in A^{(X)}$ such that $\phi(\sigma)\ne 0$.
\item[ii)] $X$ is locally finite.
\end{prop}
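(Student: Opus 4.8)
The plan is to prove the two implications separately, treating i)$\Rightarrow$ii) in contrapositive form. Neither direction is deep: both reduce to the fact that an element of $A^{X}$ is a map of simplicial sets (hence natural with respect to the structure maps), combined, for one direction, with the extension result already in hand (Corollary \ref{cor:sursupp}, which rests on Theorem \ref{thm:supp}).

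First I would prove ii)$\Rightarrow$i). Fix a simplex $\sigma\in X$. Writing $\sigma=s_{j_1}\cdots s_{j_k}\tau$ with $\tau$ nondegenerate, one has $<\sigma>=<\tau>$ (as $\sigma$ is a degeneracy of $\tau$ and $\tau$ is recovered from $\sigma$ by iterated faces), so $<\sigma>$ is a \emph{finite} simplicial subset of $X$ no matter what $\sigma$ is. Using the hypothesis $A\neq 0$, pick $0\neq a\in A$ and let $c_a\in A^{<\sigma>}$ be the ``constant function $a$'': $c_a(\rho)=a$, viewed as a constant polynomial in $A^{\Delta^{\dim\rho}}$, for every simplex $\rho$ of $<\sigma>$. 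This is a well-defined simplicial map because each structure map $A^{\Delta^n}\to A^{\Delta^m}$ fixes constants. Since $<\sigma>$ is finite, $A^{<\sigma>}=A^{(<\sigma>)}$; and by Corollary \ref{cor:sursupp} — here the local finiteness of $X$ is used — the restriction $A^{(X)}\to A^{(<\sigma>)}$ is onto, so any preimage $\phi\in A^{(X)}$ of $c_a$ satisfies $\phi(\sigma)=a\neq 0$, which is i).

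Next I would prove the contrapositive of i)$\Rightarrow$ii). Assume $X$ is not locally finite; by Lemma \ref{lem:locfin} there is a simplex $\sigma$ for which $S=\{\tau\in NX:\ <\tau>\supseteq<\sigma>\}$ is infinite. Let $\phi\in A^{X}$ be arbitrary with $\phi(\sigma)\neq 0$. For $\tau\in S$ we have $\sigma\in<\tau>$, so $\sigma=\alpha^*\tau$ for some $\alpha\in\hom([\dim\sigma],[\dim\tau])$; simpliciality of $\phi$ gives $\phi(\sigma)=\alpha^*(\phi(\tau))$, so $\phi(\tau)\neq 0$. Hence $\phi$ is nonzero on the infinite set $S$ of nondegenerate simplices, i.e. $\supp(\phi)$ is an infinite simplicial set and $\phi\notin A^{(X)}$. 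As $\phi$ was arbitrary, no element of $A^{(X)}$ is nonzero at $\sigma$, so i) fails.

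The work here is bookkeeping rather than any real obstacle: one must check that $<\sigma>$ is finite for possibly degenerate $\sigma$, that the constant function genuinely lies in $A^{X}$, and that Corollary \ref{cor:sursupp} is correctly invoked (its proof, through Theorem \ref{thm:supp}, carries the actual content, namely extending a finitely supported polynomial function with support controlled inside a closed star). The hypothesis $A\neq 0$ is essential only for ii)$\Rightarrow$i): it supplies the nonzero constant, and without it $A^{(X)}=0$ so the equivalence would fail for any nonempty locally finite $X$.
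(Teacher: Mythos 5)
Your proof is correct and follows the same route as the paper's: the contrapositive of i)$\Rightarrow$ii) via Lemma \ref{lem:locfin} and the observation that $\phi(\sigma)\neq 0$ propagates to every nondegenerate $\tau$ with $<\tau>\supset<\sigma>$, and the converse by reducing to $<\sigma>$ through the surjectivity of Corollary \ref{cor:sursupp}. The only difference is your choice of nonzero element of $A^{<\sigma>}$: the constant function $a\otimes 1$ (which is legitimate, since $1$ spans a direct summand of $\Z[t_0,\dots,t_n]/(\sum t_i-1)$ and all structure maps fix constants), whereas the paper pulls back $at_0\cdots t_n$ along the quotient $<\sigma>\onto S^n$ — your witness is the more economical of the two.
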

\begin{proof}
Observe that if $\sigma,\tau\in X$ are simplices with
$<\tau>\supset<\sigma>$ and $\phi\in A^X$ satisfies $\phi(\sigma)\ne
0$, then $\phi(\tau)\ne 0$. If $X$ is not locally finite, then by
Lemma \ref{lem:locfin}, there exists a simplex $\sigma\in X$ which
is contained in infinitely many nondegenerate simplices. By the
previous observation, $\phi(\sigma)=0$ for every $\phi\in A^{(X)}$.
We have proved that i)$\Rightarrow$ii). Assume conversely that $X$
is locally finite, and let $\sigma$ be a simplex of $X$. We want to
show that there exists $\phi\in A^{(X)}$ such that $\phi(\sigma)\ne
0$. We may assume that $\sigma$ is nondegenerate. Let
$Y=<\sigma>\subset X$ be the sub-simplicial set generated by
$\sigma$; by Corollary \ref{cor:sursupp}, it suffices to show that
$A^Y\ne 0$. Now $Y$ is an $n$-dimensional quotient of $\Delta^n$,
whence $S^n=\Delta^n/\partial\Delta^n$ is a quotient of $Y$. So we
may further reduce to showing $A^{S^n}$ is nonzero. Now
\[
A^{S^n}=Z_nA^{\Delta^\bu}=\bigcap_{i=0}^n\ker (d_i:A^{\Delta^n}\to A^{\Delta^{n-1}})
\]
But if $0\ne a\in A$, then $at_0\dots t_n$ is a nonzero element of $Z_nA^{\Delta^\bu}$.
\end{proof}
\subsection{Excision properties}

\begin{prop}\label{prop:tfp}
If $X$ is a locally finite simplicial set, then $\Z^{(X)}$ is $s$-unital.
\end{prop}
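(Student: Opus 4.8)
The plan is to show directly that $\Z^{(X)}$ satisfies the defining property of $s$-unitality: given finitely many elements $\phi_1,\dots,\phi_n\in\Z^{(X)}$, I would produce an $e\in\Z^{(X)}$ with $e\phi_i=\phi_i e=\phi_i$ for all $i$, by extending the constant function $1$ from a suitable finite subcomplex. First set $K=\supp\phi_1\cup\dots\cup\supp\phi_n$. Each $\supp\phi_i$ is a simplicial subset of $X$, and it is a \emph{finite} simplicial set because $\phi_i$ is finitely supported; hence $K$ is a finite simplicial subset of $X$. The reduction I rely on is that it suffices to find $e\in\Z^{(X)}$ whose restriction to $K$ is the constant function $\mathbf 1$ (the unit of the unital ring $\Z^K=\Z^{(K)}$, which makes sense since $K$ is finite). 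Indeed, recalling from the definition $A^X=\hom_\fS(X,A^{\Delta^\bu})$ that multiplication in $\Z^X$ is computed simplex by simplex, one evaluates $e\phi_i$ and $\phi_i e$ on an arbitrary simplex $\sigma$: if $\sigma\in K$ then $e(\sigma)=1$ gives $(e\phi_i)(\sigma)=(\phi_i e)(\sigma)=\phi_i(\sigma)$, while if $\sigma\notin K$ then $\phi_i(\sigma)=0$ because $\supp\phi_i\subseteq K$, so both products vanish at $\sigma$ as does $\phi_i$. Thus $e\phi_i=\phi_i e=\phi_i$.

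It remains to produce such an $e$. Since $X$ is locally finite and $K\subseteq X$ is a simplicial subset, Corollary \ref{cor:sursupp} tells us that the restriction map $\Z^{(X)}\to\Z^{(K)}$ is surjective; I would simply take $e$ to be any preimage of $\mathbf 1\in\Z^{(K)}$. Combined with the reduction above, this exhibits the required $s$-unit for the collection $\phi_1,\dots,\phi_n$, and hence shows that $\Z^{(X)}$ is $s$-unital.

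There is essentially no serious obstacle here: the real content has already been isolated in Corollary \ref{cor:sursupp} (ultimately Theorem \ref{thm:supp}, whose proof uses the lifting property of the cofibration $Y'\subseteq Y''$ against the trivial fibration $A^{\Delta^\bu}\onto 0$). What remains is the elementary simplex-wise verification that an extension of the constant function $1$ acts as a two-sided unit on everything supported inside $K$, plus the bookkeeping point that $K$ is genuinely a \emph{finite simplicial subset} — a finite union of the finite simplicial subsets $\supp\phi_i$ — so that Corollary \ref{cor:sursupp} applies and so that $\mathbf 1$ indeed lies in $\Z^{(K)}=\Z^K$.
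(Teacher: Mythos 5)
Your proof is correct and follows essentially the same route as the paper: the paper's proof also takes $K=\bigcup_i\supp(\phi_i)$ and invokes Theorem \ref{thm:supp} to extend the constant function $1$ on $K$ to a $\mu\in\Z^{(X)}$ with $\phi_i\mu=\phi_i$ (your use of Corollary \ref{cor:sursupp} is just that same extension theorem packaged with local finiteness). The only difference is that you spell out the simplex-wise verification that the extension acts as a two-sided unit, which the paper leaves implicit.
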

\begin{proof}
Let $\phi_1,\dots,\phi_n\in \Z^{(X)}$, and let
$K=\bigcup_i\supp(\phi_i)$. By Theorem \ref{thm:supp} there is $\mu\in\Z^{(X)}$
 such that $\mu_{|K}=1$ is the constant map. Thus
\begin{equation}\label{triplefi}
\phi_i=\phi_i\mu\qquad (\forall i).
\end{equation}
\end{proof}

\begin{prop}\label{prop:exiax}
If $A$ is $K$-excisive and $X$ is locally finite, then
$\Z^{(X)}\otimes A$ is $K$-excisive.
\end{prop}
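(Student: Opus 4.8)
The plan is to deduce this from the tensor-product permanence of $K$-excisive rings, Theorem \ref{intro:exci} iii), once we know that $\Z^{(X)}$ is itself $K$-excisive and flat over $\Z$.

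First I would record the two relevant properties of $\Z^{(X)}$ under the hypothesis that $X$ is locally finite. By Lemma \ref{lem:zxfree}, the abelian group $\Z^{(X)}$ is free, hence flat as a $\Z$-module. By Proposition \ref{prop:tfp}, the ring $\Z^{(X)}$ is $s$-unital. Invoking Remark \ref{rem:suni} (which rests on Example \ref{ex:sunihuni} in the Appendix), namely that every $s$-unital ring that is flat as an abelian group is $K$-excisive, we conclude that $\Z^{(X)}$ is $K$-excisive.

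Now both $\Z^{(X)}$ and $A$ are $K$-excisive, and $\Z^{(X)}$ is flat over $\Z$; hence Theorem \ref{intro:exci} iii) (that is, Proposition \ref{prop:tenso}) applies to the pair $(\Z^{(X)},A)$ and yields that $\Z^{(X)}\otimes A$ is $K$-excisive, as claimed.

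The argument is just an assembly of earlier results, so there is no serious obstacle. The one point deserving attention is that the flatness hypothesis in Theorem \ref{intro:exci} iii) is indispensable here, since $A$ carries no flatness assumption; what makes the argument go through is that the other tensor factor $\Z^{(X)}$ is flat, which is precisely the content of Lemma \ref{lem:zxfree}.
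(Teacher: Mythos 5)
Your proof is correct and is precisely the paper's argument: the paper's one-line proof cites Lemma \ref{lem:zxfree}, Proposition \ref{prop:tfp} and Proposition \ref{prop:tenso}, which is exactly the chain you spell out ($\Z^{(X)}$ free hence flat, $s$-unital hence $K$-excisive by Example \ref{ex:sunihuni}, then apply the tensor-product permanence with the flat factor being $\Z^{(X)}$). Your closing remark correctly identifies why the flatness of $\Z^{(X)}$, rather than of $A$, is what makes Proposition \ref{prop:tenso} applicable.
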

\begin{proof} Follows from Lemma \ref{lem:zxfree} and Propositions 
\ref{prop:tfp} and \ref{prop:tenso}.
\end{proof}

\begin{rem}
If $A$ is a ring and $X$ a locally finite simplicial set, then there
is a natural map
\[
\Z^{(X)}\otimes A\to A^{(X)}
\]
It was proved in \cite[3.1.3]{CT} that this map is an isomorphism if
$X$ is finite. 
\end{rem}

\section{Proper $G$-rings}\label{sec:propring}
\subsection{Proper rings over a $G$-simplicial set}

Fix a group $G$ and consider rings equipped with an
action of $G$ by ring automorphisms. We write $G-\ring$ for the
category of such rings and equivariant ring homomorphisms. If $C\in
G-\ring$ is commutative but not necessarily unital and $A\in
G-\ring$, then by a {\em compatible $(G,C)$-algebra structure} on
$A$ we understand a $C$-bimodule structure on $A$ such
that the following identities hold for $a,b\in A$, $c\in C$, and $g\in
G$:
\begin{gather}\label{condipropertriv}
c\cdot a=a\cdot c\nonumber\\
c\cdot(ab)=(c\cdot a)b=a(c\cdot b)\nonumber\\
g(c\cdot a)=g(c)\cdot g(a)\nonumber\\
\end{gather}
If $X$ is a $G$-simplicial set and $A\in G-\ring$, then we say that
$A$ is {\it proper} over $X$ if it carries a compatible
$(G,\Z^{(X)})$ algebra structure such that
\begin{equation}\label{condiproper}
\Z^{(X)}\cdot A=A
\end{equation}
If $\cF$ is a family of subgroups
of $G$, we say that $A$ is {\em $(G,\cF)$-proper} if
it is proper over some $(G,\cF)$ complex $X$.

\begin{ex}\label{ex:proper}
Fix a group $G$, a family of subgroups $\cF$ and a $(G,\cF)$-complex
$X$. By Proposition \ref{prop:tfp}, we have $\Z^{(X)}\cdot
\Z^{(X)}=\Z^{(X)}$; thus $\Z^{(X)}$ is proper over $X$. Hence if $A$ is a $G$-ring with a compatible $(G,\Z^{(X)})$-action, then $\Z^{(X)}\cdot A$ is proper over $X$. If $A$ is proper over
$X$, and $B$ is any ring, then $A\otimes B$ is proper over $X$. In
particular, $\Z^{(X)}\otimes B$ is proper. If $T\in\Top$ is the geometric
realization of $X$, and $\F$ is either $\R$ or $\C$, then the $\F$-algebra
$P=C_{\rm comp}(T)$ of compactly supported continuous functions $T\to \F$ is
proper over $X$. To check that $\Z^{(X)}\cdot P=P$, observe that if $f\in P$
then its support meets finitely many maximal simplices; write $K\subset X$ for their
union. By Corollary \ref{cor:sursupp}, there exists $\phi\in\Z^{(X)}$ which is constantly equal
to $1$ on $K$; thus $f=\phi\cdot f\in \Z^{(X)}\cdot P$. 
\end{ex}
Let $X$ be a locally finite simplicial set, and $Y\subset X$ a subobject.
Put
\[
I(Y)=\{\phi:\supp\phi\subset Y\}\triqui\Z^{(X)}
\]
Note that if $\psi\in\Z^{(Y)}$ and $\hat{\psi}\in\Z^{(X)}$ restricts to $\psi$, then
the product
\[
\psi\cdot\phi:=\hat{\psi}\phi
\]
depends only on $\psi$. This defines a compatible action of $\Z^{(Y)}$ on $I(Y)$ which makes
the latter ring proper over $Y$. More generally,
if $A\in\ring$ has a compatible $(G,\Z^{(X)})$-structure, we put
\begin{equation}\label{eq:aye}
A(Y)=I(Y)\cdot A\triqui A
\end{equation}
Observe that $A(Y)$ is an ideal of $A$, proper over $Y$. In particular if
$X$ is a $(G,\cF)$-complex, then $A(Y)$ is $(G,\cF)$-proper for all $Y\subset X$.

\begin{lem}\label{lem:properfil}
Let $A$ be a $G$-ring. Assume that $A$ is $(G,\cF)$-proper. Then $A$ has an exhaustive filtration
$\{A(K)\}$ by ideals such that each $A(K)$  proper over a finite $(G,\cF)$-complex $K$.
\end{lem}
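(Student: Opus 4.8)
My plan is to obtain the filtration by letting $K$ run over the $G$-finite subcomplexes of the complex over which $A$ is proper. Since $A$ is $(G,\cF)$-proper it carries a compatible $(G,\Z^{(X)})$-algebra structure with $\Z^{(X)}\cdot A=A$ for some $(G,\cF)$-complex $X$, which — as in the discussion preceding the statement — I will assume to be locally finite. I would then observe that the $G$-finite subcomplexes of $X$ form a filtering family under inclusion (a union of two of them is again $G$-finite), and that $X$ is their colimit, since every simplex of $X$ lies in the ($G$-finite) $G$-subcomplex it generates.

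Next I would check that for each such $K$ the ideal $A(K)=I(K)\cdot A$, with $I(K)=\{\phi\in\Z^{(X)}:\supp\phi\subseteq K\}$ as in the construction recalled before the lemma, does the job. First, $K$ is a finite $(G,\cF)$-complex: it is a $G$-subcomplex of the $(G,\cF)$-complex $X$, so the stabilizer of each of its simplices lies in $\cF$, and it is $G$-finite, hence obtained by attaching finitely many cells $\Delta^n\times G/H$ with $H\in\cF$; moreover it is locally finite, being a subcomplex of the locally finite $X$. Then the cited construction applies: $A(K)$ is a two-sided ideal of $A$, stable under $G$ because $I(K)$ is (here one uses that $K$ is $G$-stable), and it is proper over $K$ — the point being that surjectivity of $\Z^{(X)}\to\Z^{(K)}$ (Corollary \ref{cor:sursupp}) makes the $\Z^{(K)}$-action on $A(K)$ well defined, while $s$-unitality of $\Z^{(K)}$ (Proposition \ref{prop:tfp}), via Theorem \ref{thm:supp}, yields $\Z^{(K)}\cdot A(K)=A(K)$.

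To finish I would verify exhaustiveness, i.e. $\bigcup_K A(K)=A$. Given $a\in A=\Z^{(X)}\cdot A$, write $a=\sum_{i=1}^{n}\phi_i\cdot a_i$ with $\phi_i\in\Z^{(X)}$ and $a_i\in A$, put $L=\bigcup_i\supp\phi_i$ — a finite simplicial subcomplex of $X$ — and let $K=G\cdot L$ be the $G$-subcomplex it generates. Since $L$ has only finitely many nondegenerate simplices and the $G$-action commutes with faces and degeneracies, $K/G$ is a finite simplicial set, so $K$ is $G$-finite; and $\supp\phi_i\subseteq L\subseteq K$ forces $\phi_i\in I(K)$, whence $a\in A(K)$.

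The step I expect to be most delicate is precisely this last bookkeeping, the interplay between ``finite simplicial set'' (the nature of a support $\supp\phi_i$) and ``$G$-finite''/``finite $(G,\cF)$-complex'' (the nature of the indexing subcomplexes $K$): one has to use that $G\cdot L$, although it typically has infinitely many simplices when $G$ is infinite, has only finitely many $G$-orbits of simplices and is therefore a finite $(G,\cF)$-complex. A subsidiary point is that the whole argument is run over a locally finite $X$, which is what makes Corollary \ref{cor:sursupp} and Proposition \ref{prop:tfp} available for concluding that each $A(K)$ is genuinely proper over $K$.
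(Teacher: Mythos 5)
Your proof is correct and follows essentially the same route as the paper: both take $A(K)=I(K)\cdot A$ for $K$ ranging over the $G$-finite subcomplexes of the $(G,\cF)$-complex $X$ over which $A$ is proper, invoke the construction preceding the lemma to see each $A(K)$ is an ideal proper over $K$, and prove exhaustiveness by writing $a=\sum_i\phi_i a_i$ and taking $K=\bigcup_i G\cdot\supp(\phi_i)$. The extra bookkeeping you supply (local finiteness of $X$, $G$-finiteness of $G\cdot L$) is consistent with, and slightly more explicit than, the paper's argument.
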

\begin{proof}
By hypothesis, there exists a $(G,\cF)$-complex $X$ such that $A$ is proper over $X$.
Consider the filtration $\{A(K)\}$ where $A(K)$ is defined in \eqref{eq:aye} and $K$ runs among the $G$-finite simplicial subsets of $X$. By the discussion above, $A(K)\subset A$ is an ideal, proper over $K$. It is clear that $\{I(K)\}$ and $\{A(K)\}$ are filtering systems and that $\cup_K I(K)=\Z^{(X)}$. We claim
furthermore that $A=\cup_KA(K)$. By definition of
$\Z^{(X)}$-algebra,  $A=\Z^{(X)}\cdot A$. Hence if $a\in A$, then
there exist $\phi_1,\dots,\phi_n\in \Z^{(X)}$ and $a_1,\dots,a_n\in
A$ such that $a=\sum_i\phi_i a_i$. Hence $a\in A(K)$ for
$K=\cup_iG\cdot\supp(\phi_i)$.
\end{proof}

\begin{lem}\label{lem:festar}(cf. \cite[pp. 51]{ght})
Let $A\in G-\ring$ be proper over a locally finite $G$-simplicial
set $X$, and let $f:X\to Y$ be an equivariant map with $Y$ locally
finite. Then the map $f^*:\Z^Y\to \Z^X$ induces a compatible
$(G,\Z^{(Y)})$-algebra structure on $A$ which makes it proper over
$Y$.
\end{lem}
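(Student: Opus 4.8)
The plan is to use the map $f^*\colon \Z^Y\to\Z^X$ of rings (induced by $f$ applied on $A^{\Delta^\bu}$ with $A=\Z$) together with the given $\Z^{(X)}$-module structure on $A$ to manufacture the desired $\Z^{(Y)}$-module structure, and then check the two defining conditions: compatibility in the sense of \eqref{condipropertriv}, and the nondegeneracy $\Z^{(Y)}\cdot A=A$. First I would note that since $f$ is equivariant and $X,Y$ are both locally finite, the composite $\Z^{(Y)}\hookrightarrow\Z^Y\xrightarrow{f^*}\Z^X$ has image inside $\Z^{(X)}$ \emph{provided $f$ is proper}; but $f$ need not be proper here, so instead I would restrict attention, for a fixed $a\in A$, to a finite subcomplex where the support is controlled. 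Concretely, given $a\in A$, condition \eqref{condiproper} gives $\phi_1,\dots,\phi_n\in\Z^{(X)}$ and $a_1,\dots,a_n\in A$ with $a=\sum_i\phi_i\cdot a_i$; let $K=\supp\phi_1\cup\dots\cup\supp\phi_n$, a finite subcomplex of $X$, and let $L=G\cdot K$ (still locally finite, and $f(L)$ lands in a locally finite subcomplex of $Y$ on which $f|_L$ is finite-to-one over finite sets). On that piece $f^*$ does send finitely supported functions to finitely supported functions, so for $\psi\in\Z^{(Y)}$ the element $f^*(\psi)$ acts on $a$ via the existing $\Z^{(X)}$-structure, and I would define $\psi\cdot a$ to be this. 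I would then verify this is independent of the chosen presentation of $a$ (two presentations of the same $a$ agree after multiplying by a common $\mu\in\Z^{(X)}$ that is $1$ on the relevant finite subcomplex, by Theorem~\ref{thm:supp}), so the action is well defined on all of $A$.

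Next I would check that the formula $\psi\cdot a = f^*(\psi)\cdot a$ (interpreted as above) is additive in $\psi$ and in $a$, is multiplicative in $\psi$ (because $f^*$ is a ring homomorphism and the $\Z^{(X)}$-action is an algebra action), and satisfies the identities in \eqref{condipropertriv}: symmetry $\psi\cdot a = a\cdot\psi$ and the Leibniz-type rule $\psi\cdot(ab)=(\psi\cdot a)b=a(\psi\cdot b)$ follow directly from the corresponding identities for the $\Z^{(X)}$-structure applied to $f^*(\psi)$, and $G$-equivariance $g(\psi\cdot a)=g(\psi)\cdot g(a)$ follows because $f$ is equivariant, so $f^*$ intertwines the $G$-actions on $\Z^Y$ and $\Z^X$, combined with the equivariance of the $\Z^{(X)}$-structure on $A$. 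These are all the routine verifications that make $A$ a compatible $(G,\Z^{(Y)})$-algebra.

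Finally, the nondegeneracy $\Z^{(Y)}\cdot A=A$ is the point that needs the structure of $X$ and $Y$: I would start from $a=\sum_i\phi_i\cdot a_i$ with $K=\bigcup_i\supp\phi_i$ finite in $X$, and apply Theorem~\ref{thm:supp} (or Corollary~\ref{cor:sursupp} and Proposition~\ref{prop:tfp}) to $f(K)\subset Y$ to find $\chi\in\Z^{(Y)}$ with $\chi$ equal to $1$ on the finite subcomplex $\cSt_Y f(K)$ (hence $f^*(\chi)$ equal to $1$ on $K$, using that $f(\supp\phi_i)\subset\cSt_Y f(K)$). Then $f^*(\chi)\cdot\phi_i=\phi_i$ for each $i$, so $\chi\cdot a=\sum_i (f^*(\chi)\phi_i)\cdot a_i=\sum_i\phi_i\cdot a_i=a$, showing $a\in\Z^{(Y)}\cdot A$. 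I expect the main obstacle to be the bookkeeping around properness of $f$: since $f$ is not assumed proper, one cannot simply say $f^*$ carries $\Z^{(Y)}$ into $\Z^{(X)}$, and the argument must everywhere pass through a $G$-finite subcomplex of $X$ (and its finite image subcomplex in $Y$) where $f$ is well-behaved — but because $A$ is proper over $X$, every element of $A$ is captured by such a finite piece, which is exactly what rescues the construction.
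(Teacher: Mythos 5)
Your proposal is correct and follows essentially the same route as the paper: the paper first extends the $\Z^{(X)}$-action on $A$ to a $\Z^{X}$-action by setting $\phi\cdot a:=(\phi\mu_K)a$ for a finitely supported cutoff $\mu_K$ (supported in the finite set $\cSt(K)$) acting as the identity on $A(K)\ni a$, then restricts this along $f^*:\Z^{(Y)}\subset\Z^{Y}\to\Z^{X}$, and proves $\Z^{(Y)}\cdot A=A$ exactly as you do, via the finiteness of $f(K)$ inside the locally finite $Y$. The one inaccuracy is your parenthetical claim that $f$ restricted to $G\cdot K$ is ``finite-to-one over finite sets'' (false in general, e.g.\ for $X$ an infinite discrete $G$-set and $Y$ a point), but it is not load-bearing: what actually makes your definition work is that $\Z^{(X)}$ is an ideal in $\Z^{X}$, so $f^*(\psi)\phi_i\in\Z^{(X)}$, together with your common-cutoff argument for well-definedness, which is precisely the paper's mechanism.
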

\begin{proof} We begin by showing that the compatible $(G,\Z^{(X)})$-algebra structure on $A$ extends to a compatible $(G,\Z^X)$-module structure.
By the lemma above, if $a\in A$ then there exists a finite
simplicial subset $K\subset X$ such that $a\in A(K)=I(K)\cdot A$. By
Theorem \ref{thm:supp} there exists $\mu_K\in Z^X$, with $\supp(\mu_K)\subset \cSt(K)$ such that
\begin{equation}\label{condiid}
\mu_Ka=a \qquad\forall a\in A(K).
\end{equation}
Because $X$ is locally finite, $\cSt(K)$ is finite and $\mu_K\in\Z^{(X)}$. Thus we have a
map $A(K)\to I(\cSt(K)))\otimes A(K)$, $a\mapsto \mu_K\otimes a$.
Now $I(\cSt(K))$ is an ideal in $\Z^{X}$ by \eqref{supprop}; using
the multiplication of $\Z^{X}$ we obtain a map
\begin{equation}\label{map:act}
\Z^X\otimes A(K)\to A(\cSt(K)),\quad \phi\otimes a\mapsto
(\phi\cdot\mu_K)a.
\end{equation}
If $L\supset K$, and we choose an element $\mu_L$ as above, then for
$a\in A(K)$ and $\phi\in \Z^X$ we have:
\[
(\phi\cdot\mu_L)\cdot a=(\phi\cdot\mu_L)\cdot (\mu_K\cdot a)=(\phi\cdot\mu_K)a
\]
This shows that \eqref{map:act} is independent of the choice of the
element $\mu_K$ of \eqref{condiid}, and that we have a well-defined
action $\Z^X\otimes A\to A$. Compatibility with the $G$-action
follows from the fact that $g\cdot\mu_K$ is the identity on $g\cdot
K$. The remaining compatibility conditions are immediate. Now $A$
becomes an $\Z^{(Y)}$-module through $f^*$. If $K\subset X$ is a
finite simplicial subset, then $L=f(K)\subset Y$ is finite, and
since $Y$ is locally finite, there is a $\mu_L\in \Z^{(Y)}$ which is
the identity on $L$, and thus $f^*(\mu_L)$ is the identity on $K$.
It follows that the action of $\Z^{(Y)}$ on $A$ satisfies
\eqref{condiproper}. The remaining $(G,\Z^{(Y)})$-compatibility
conditions of \eqref{condipropertriv} are straightforward.
\end{proof}

\subsection{Induction}\label{subsec:ind}
Let $G$ be a group, $H\subset G$ a subgroup and $A$ an $H$-ring.
Consider
\[
\bind_H^G(A)=\{f:G\to A:f(gh)=h^{-1}f(g)\}
\]
Note that $\bind_H^G(A)$ is a $G$-ring with operations defined pointwise,
and where $G$ acts by left multiplication. If $f\in \bind_H^G(A)$
and $x=sH\in G/H$, then the value of $f$ at any $g\in x$ determines
$f$ on the whole $x$; in particular,
\[
\supp(f)\cap sH\ne\emptyset\Rightarrow sH\subset\supp(f)\quad (sH\in G/H)
\]
Hence
\[
\supp(f)=\coprod_{sH\cap\supp(f)\ne\emptyset} sH
\]
Consider the projection $\pi:G\to G/H$. Put
\[
\ind_H^G(A)=\{f\in\bind_H^G(A):\#\pi(\supp(f))<\infty\}
\]
One checks that $\ind_H^G(A)\subset \bind_H^G(A)$ is a subring; we
shall presently introduce some of its typical elements. If $s\in G$, we
write $\chi_s:G\to \Z$ for the characteristic function. If $a\in A$
and $s\in G$, then
\[
\xi_H(s,a)=\sum_{h\in H}h^{-1}(a)\chi_{sh}\in\ind_H^G(A)
\]
Let $r:G/H\to G$ be a pointed section and $\cR=r(G/H)$. Every
element $\phi\in \bind_H^G(A)$ can be written as a formal sum
\begin{equation}\label{uniquexp}
\phi=\sum_{s\in \cR}\xi_H(s,\phi(s))
\end{equation}
Note that $\phi\in\ind_H^G(A)$ if and only if the sum above is finite. In particular
\[
\ind_H^G(A)=\sum_{s\in G,a\in A}\Z\xi_H(s,a)\subset \bind_H^G(A)
\]
Next observe that, for each fixed $s\in G$, the map
\[
\xi_H(s,-):A\to \bind_H^G(A)
\]
is a ring homomorphism. Moreover, we have the following relations
\begin{gather}
g\xi_H(s,a)=\xi_H(gs,a)\label{actixi}\\
 \xi_H(sh,a)=\xi_H(s,ha)\label{hxi}\\
\xi_H(s,a)\xi_H(t,b)=\left\{\begin{matrix}0 &\text{ if } sH\ne tH\\
\xi_H(s,ab)&\text{ if } s=t\end{matrix}\right.\label{prodxi}
\end{gather}
It follows that $(s,a)\mapsto \xi_H(s,a)$ gives a $G$-equivariant map
\[
G\times_H A\to  \ind_H^G(A).
\]
Here $G\times_H A=G\times A/\sim$, where $(g_1,a_1)\sim (g_2,a_2)\iff h=g_1^{-1}g_2\in H$ and $a_1=ha_2$. Extending by linearity we obtain an isomorphism
of left $G$-modules
\[
\Z[G]\otimes_{\Z[H]}A\to \ind_H^G(A)
\]
Thus we may think of $\ind_H^G(A)$ as the $G$-module induced from
the $H$-module $A$ equipped with a ring structure compatible with
that of $A$. In fact \eqref{prodxi} implies that if $r:G/H\to G$ is
a section as above, then
\begin{equation}\label{map:indinonequi}
\Z^{(G/H)}\otimes A\to \ind_H^G(A),\quad \chi_{x}\otimes a\mapsto
\xi_H(r(x),a)
\end{equation}
is a (nonequivariant) ring isomorphism.

\begin{lem}\label{lem:indx}
Let $X$ be an $H$-simplicial set; put
\[
\ind_H^G(X)=G\times_HX
\]
There is a natural, $G$-equivariant isomorphism
$\Z^{(\ind_H^G(X))}\cong \ind_H^G(\Z^{(X)})$.
\end{lem}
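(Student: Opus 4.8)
The plan is to write the isomorphism down explicitly rather than to chain together the non‑equivariant identifications \eqref{map:indinonequi} and \eqref{coprodfinsupp}, since equivariance and naturality are then transparent. For each $g\in G$ the assignment $\tau\mapsto[g,\tau]$ is a simplicial map $j_g\colon X\to\ind_H^G(X)$ which identifies $X$ with the fibre of $\ind_H^G(X)\to\ind_H^G(\mathrm{pt})=G/H$ over $gH$; in particular $j_g$ is injective. Composing an element $\Phi\in\Z^{(\ind_H^G(X))}\subset\hom_\fS(\ind_H^G(X),\Z^{\Delta^\bu})$ with $j_g$ gives $\Phi\circ j_g\in\hom_\fS(X,\Z^{\Delta^\bu})$, and I would define
\[
\Theta\colon\Z^{(\ind_H^G(X))}\to\bind_H^G(\Z^{(X)}),\qquad \Theta(\Phi)(g)=\Phi\circ j_g,
\]
that is, $\Theta(\Phi)(g)(\tau)=\Phi([g,\tau])$. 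The first task is to check $\Theta$ lands in $\ind_H^G(\Z^{(X)})$: since $j_g$ is injective and $\supp\Phi$ is a finite simplicial subset of $\ind_H^G(X)$, each $\Phi\circ j_g$ has finite support, so $\Theta(\Phi)(g)\in\Z^{(X)}$; moreover $\supp\Phi$ meets only finitely many fibres, so $\Theta(\Phi)(g)=0$ unless $gH$ lies in that finite set of cosets, whence $\#\pi(\supp\Theta(\Phi))<\infty$; and from $[gh,\tau]=[g,h\tau]$ together with the convention by which the left $H$‑action on $X$ induces the left $H$‑action on $\Z^{(X)}$ one gets $\Theta(\Phi)(gh)=h^{-1}\Theta(\Phi)(g)$, which is exactly the defining condition for $\bind_H^G$.

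Next I would verify that $\Theta$ is a $G$‑equivariant ring homomorphism. Both rings have pointwise operations — multiplication in $\Z^{(Y)}$ is computed simplex by simplex, and multiplication in $\bind_H^G(A)$ is pointwise in $G$ — so $\Theta(\Phi\Psi)(g)(\tau)=\Phi([g,\tau])\Psi([g,\tau])=\bigl(\Theta(\Phi)(g)\cdot\Theta(\Psi)(g)\bigr)(\tau)$, giving multiplicativity, and additivity is the same computation. For $G$‑equivariance: $G$ acts on $\ind_H^G(X)$ by $g'\cdot[g,\tau]=[g'g,\tau]$, hence on $\Z^{(\ind_H^G(X))}$ by $(g'\cdot\Phi)(\sigma)=\Phi(g'^{-1}\sigma)$, while $G$ acts on $\ind_H^G(\Z^{(X)})$ by left translation $(g'\cdot f)(g)=f(g'^{-1}g)$; then $\Theta(g'\cdot\Phi)(g)(\tau)=(g'\cdot\Phi)([g,\tau])=\Phi([g'^{-1}g,\tau])=\Theta(\Phi)(g'^{-1}g)(\tau)=(g'\cdot\Theta(\Phi))(g)(\tau)$.

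Finally I would exhibit the inverse directly, sending $f\in\ind_H^G(\Z^{(X)})$ to the function $\Xi(f)$ on $\ind_H^G(X)$ with $\Xi(f)([g,\tau])=f(g)(\tau)$: this is well defined precisely because the relation $f(gh)=h^{-1}f(g)$ makes $f(g)(\tau)$ depend only on the class $[g,\tau]$; it has finite support because $\pi(\supp f)$ is finite and each $f(g)\in\Z^{(X)}$; and $\Xi\Theta$, $\Theta\Xi$ are the identities by inspection, so $\Theta$ is an isomorphism. Naturality in $X$ with respect to proper $H$‑equivariant maps $\varphi\colon X\to X'$ — for which $\ind_H^G(\varphi)$ is again proper and $G$‑equivariant — is immediate from the formula for $\Theta$, since pulling functions back along $\ind_H^G(\varphi)$ commutes with precomposition by the $j_g$'s; one checks along the way that $\Theta$ reduces non‑equivariantly to the composite of \eqref{map:indinonequi} with \eqref{coprodfinsupp}. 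There is no serious obstacle here; the only points demanding care are keeping the two support‑finiteness conditions (finitely many nondegenerate simplices on the one side, finitely many $H$‑cosets on the other) in step, and fixing the sign/inverse convention by which the left $H$‑action on $X$ induces the left action on $\Z^{(X)}$ so that the twist $h^{-1}(\cdot)$ in $\bind_H^G$ comes out correctly.
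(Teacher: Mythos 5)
Your map $\Theta$ is exactly the inverse of the isomorphism $\theta(f)(\pi(g,x))=f(g)(x)$ used in the paper, and your verifications (pullback along the injective, hence proper, fibre inclusions $j_g$ preserves finite support; a finitely supported $\Phi$ meets only finitely many cosets $gH$; the twist $f(gh)=h^{-1}f(g)$ matches the relation $[gh,\tau]=[g,h\tau]$) are the same two finiteness checks the paper performs via the generators $\xi_H(s,\phi)$ and the decomposition of $\ind_H^G(X)$ into fibres. The proposal is correct and follows essentially the same route as the paper's proof.
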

\begin{proof} Let $\pi:G\times X\to\ind_H^G(X)$ be the projection. We have a $G$-ring isomomorphism
\[
\theta:\bind_H^G(\Z^{X})\to\Z^{\ind_H^G(X)},\qquad
\theta(f)(\pi(g,x))=f(g)(x)
\]
For $s\in G$ and $\phi\in \Z^{X}$,
\[
\theta(\xi_H(s,\phi))\pi(g,x)=\left\{\begin{matrix}\phi(s^{-1}gx)& \text{ if } g\in sH\\ 0&\text{ else.}\end{matrix}\right.
\]
In particular, for $\theta(\xi_H(s,\phi))$ not to vanish on
$\pi(g,x)$, we must have $g=sh$ and $x\in h^{-1}\{\phi\ne 0\}$ for
some $h\in H$. Hence $\supp(\theta(\xi_H(s,\phi)))\subset
\pi(\{s\}\times \supp(\phi))$ which is a finite simplicial set if
$\phi\in\Z^{(X)}$. Therefore $\theta$ maps $\ind_H^G(\Z^{(X)})$
inside $\Z^{(\ind_H^G(X))}$. It remains to show that
$\theta^{-1}(\Z^{(\ind_H^G(X))})\subset \ind_H^G(\Z^{(X)})$. Let
$\{g_i\}\subset G$ be a full set of representatives of $G/H$. Every
element of $G\times_HX$ can be written uniquely as $\pi(g_i,x)$ for
some $i$ and some $x\in X$. Hence as a simplicial set, $\ind_H^G(X)$
is the disjoint union of the $Y_i=\pi(\{g_i\}\times X)$. In
particular if $\phi\in\Z^{(\ind_H^G(X))}$, then its support meets
finitely many of the $Y_i$, and $\supp(\phi)\cap Y_i$ is a finite
simplicial set. Thus there is a finite number of $i$ such that
$\psi=\theta^{-1}(\phi)$ is nonzero on $g_iH$, and its restriction
to each of these subsets takes values in $\Z^{(X)}$. By
\eqref{uniquexp}, this implies that $\psi\in \ind_H^G(\Z^{(X)})$, as
we had to prove.
\end{proof}

If $C,A\in H-\ring$ with $C$ commutative and we have a compatible
$(H,C)$-algebra structure on $A$, then $\ind_H^G(A)$ carries a
compatible $(G,\ind_H^G(C))$-algebra structure, given by
\[
\xi_H(s,c)\cdot\xi_H(t,a)=\left\{\begin{matrix}\xi_H(s,c\cdot a)& s=t\\ 0& sH\ne tH\end{matrix}\right.
\]
If moreover $C\cdot A=A$, then $\ind_H^G(C)\cdot \ind_H^G(A)=\ind_H^G(A)$. We record a particular case of this in the following
\begin{lem}\label{lem:indproper} If $A\in H-\ring$ is proper over an $H$-simplicial set $X$, then the $G$-ring $\ind_H^G(A)$ is proper over $\ind_H^G(X)$.
\end{lem}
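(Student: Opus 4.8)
The plan is to read the statement off as a special case of the induced-algebra construction described in the paragraph immediately preceding the lemma, combined with the identification of induced function rings provided by Lemma \ref{lem:indx}.

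First, since $A$ is proper over $X$, by definition $A$ is an $H$-ring carrying a compatible $(H,\Z^{(X)})$-algebra structure for which $\Z^{(X)}\cdot A=A$, and $\Z^{(X)}$ is a commutative ring. I would therefore apply the general construction stated just above the lemma with $C=\Z^{(X)}$: it equips $\ind_H^G(A)$ with a compatible $(G,\ind_H^G(\Z^{(X)}))$-algebra structure, given on the generators $\xi_H(s,a)$ by the displayed multiplication rule, and the second half of that discussion shows that this structure satisfies $\ind_H^G(\Z^{(X)})\cdot\ind_H^G(A)=\ind_H^G(A)$.

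Next I would invoke Lemma \ref{lem:indx}, which furnishes a natural $G$-equivariant ring isomorphism $\Z^{(\ind_H^G(X))}\iso\ind_H^G(\Z^{(X)})$. Pulling the $\ind_H^G(\Z^{(X)})$-module structure on $\ind_H^G(A)$ back along this isomorphism turns $\ind_H^G(A)$ into a $\Z^{(\ind_H^G(X))}$-bimodule; since the isomorphism is both $G$-equivariant and multiplicative, all the compatibility conditions in the definition of a compatible $(G,\Z^{(\ind_H^G(X))})$-algebra structure are inherited from those of the $\ind_H^G(\Z^{(X)})$-structure, and likewise $\Z^{(\ind_H^G(X))}\cdot\ind_H^G(A)=\ind_H^G(A)$. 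By definition this says exactly that $\ind_H^G(A)$ is proper over $\ind_H^G(X)=G\times_HX$.

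There is no serious obstacle here: the only point that truly needs checking is that the isomorphism $\theta$ of Lemma \ref{lem:indx} intertwines the $\ind_H^G(\Z^{(X)})$-action with the $\Z^{(\ind_H^G(X))}$-action on $\ind_H^G(A)$, and this is immediate from the explicit formula for $\theta$ on the elements $\xi_H(s,\phi)$ together with the defining formula for the induced action. The substance of the statement is contained in the construction recalled before the lemma, so the proof is essentially bookkeeping.
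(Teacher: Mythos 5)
Your proposal is correct and follows exactly the paper's own argument: apply the general induced-algebra construction in the paragraph preceding the lemma with $C=\Z^{(X)}$, then transport the structure along the isomorphism $\Z^{(\ind_H^G(X))}\cong\ind_H^G(\Z^{(X)})$ of Lemma \ref{lem:indx}. The paper's proof is the same two-line reduction, so there is nothing to add.
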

\begin{proof} It follows from Lemma \ref{lem:indx} and the discussion above.
\end{proof}
\subsection{Compression}

Let $A\in G-\ring$, and $H\subset G$ a subgroup. Assume that $A$ is
proper over $G/H$. Let $\chi_H\in \Z^{(G/H)}$ be the characteristic
function of $H$. The {\it compression} of $A$ over $H$ is the
subring
\[
\comp_H^G(A)=\chi_H\cdot A
\]
Note the action of $G$ on $A$ restricts to an action of $H$ on $\comp_H^G(A)$, which makes it into an object of $H-\ring$.
\begin{prop}\label{prop:indcomp}(Compare \cite[Lemma 12.3, and paragraph after 12.4]{ght})
\item[i)] If $B\in H-\ring$, then $\ind_H^G(B)$ is proper over $G/H$, and
\[
B\to \comp_H^G\ind_H^GB,\quad b\mapsto\xi_H(1,b)
\]
is an $H$-equivariant isomorphism.
\item[ii)] If $A\in G-\ring$ is proper over $G/H$, then
\[
\ind_H^G\comp_H^G(A)\to A ,\quad \xi_H(s,\chi_Ha)\mapsto \chi_{sH}s(a)
\]
is a $G$-equivariant isomorphism.
\end{prop}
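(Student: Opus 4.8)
The plan is to prove both assertions by explicit computation with the generators $\xi_H(s,a)$, using only the relations \eqref{actixi}, \eqref{hxi} and \eqref{prodxi}, the uniqueness of the expansion \eqref{uniquexp}, and the fact that a ring which is proper over the $0$-dimensional set $G/H$ decomposes as an internal direct sum over its $\chi_{sH}$-components. Throughout I fix a pointed section $r\colon G/H\to G$, so that the coset $H$ is represented by $1$.

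\emph{Part (i).} First I would check that $\ind_H^G(B)$ is proper over $G/H$. Regarding $B$ as (trivially) proper over the one-point $H$-simplicial set, Lemma~\ref{lem:indproper} gives that $\ind_H^G(B)$ is proper over $\ind_H^G(\mathrm{pt})=G\times_H\mathrm{pt}=G/H$; equivalently, one may apply the discussion preceding Lemma~\ref{lem:indproper} to $C=\Z$ with trivial action, together with the identification $\ind_H^G(\Z)=\Z^{(G/H)}$ coming from Lemma~\ref{lem:indx}, to get $\Z^{(G/H)}\cdot\ind_H^G(B)=\ind_H^G(B)$. Next I would identify $\chi_H\in\Z^{(G/H)}=\ind_H^G(\Z)$ with $\xi_H(1,1)$ and compute, for $t\in G$ and $b\in B$, using \eqref{hxi} to move $\xi_H(t,b)$ to a representative and then the product rule for the $(G,\Z^{(G/H)})$-structure, that $\chi_H\cdot\xi_H(t,b)=\xi_H(1,tb)$ if $t\in H$ and $0$ otherwise. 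Hence $\comp_H^G\ind_H^G(B)=\chi_H\cdot\ind_H^G(B)=\xi_H(1,B)$. Since $\xi_H(1,-)\colon B\to\ind_H^G(B)$ is a ring homomorphism (as noted in Subsection~\ref{subsec:ind}) and is injective because $\xi_H(1,b)$ takes the value $b$ at $1\in G$, it restricts to a ring isomorphism $B\iso\comp_H^G\ind_H^G(B)$; $H$-equivariance is the identity $h\cdot\xi_H(1,b)=\xi_H(h,b)=\xi_H(1,hb)$, immediate from \eqref{actixi} and \eqref{hxi}.

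\emph{Part (ii).} Note first that, via $s(\chi_H\cdot a)=\chi_{sH}\cdot s(a)$, the proposed map is just $\Phi\colon\xi_H(s,b)\mapsto s(b)$ for $b\in\comp_H^G(A)=\chi_H\cdot A$, and that $\Phi(\phi)=\sum_{sH\in G/H} s\bigl(\phi(s)\bigr)$ is well defined independently of the section: if $s'=sh$ with $h\in H$ then $\phi(s')=h^{-1}\phi(s)$ lies in the $H$-stable subring $\comp_H^G(A)$ and $s'(\phi(s'))=(sh)(h^{-1}\phi(s))=s(\phi(s))$. Additivity and $G$-equivariance are then immediate, and multiplicativity follows from \eqref{prodxi} together with the vanishing $\chi_{sH}\chi_{tH}=0$ for $sH\neq tH$ in $\Z^{(G/H)}$, which forces both the cross terms of $\phi\psi$ and of $\Phi(\phi)\Phi(\psi)$ to vanish. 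For bijectivity I would use that properness of $A$ over the $0$-dimensional set $G/H$ gives the internal direct sum decomposition $A=\bigoplus_{sH\in G/H}\chi_{sH}\cdot A$ (the $\chi_{sH}$ are orthogonal idempotents in $\Z^{(G/H)}$ and $\Z^{(G/H)}\cdot A=A$), that left translation by $s$ carries $\chi_H\cdot A$ isomorphically onto $\chi_{sH}\cdot A$, and that \eqref{uniquexp} yields $\ind_H^G(\comp_H^G A)=\bigoplus_{sH\in G/H}\xi_H(s,\comp_H^G A)$; the map $\Phi$ is precisely the direct sum over $G/H$ of the isomorphisms $\xi_H(s,\comp_H^G A)\iso\chi_{sH}\cdot A$.

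\emph{Main obstacle.} There is no real difficulty here; the statements are essentially a dictionary between two descriptions of induced and compressed rings. The only points requiring care are bookkeeping ones: one must consistently use \eqref{hxi} to move each $\xi_H(t,\cdot)$ to the chosen coset representative before applying \eqref{prodxi} or the $\Z^{(G/H)}$-action, and one must justify carefully that $\Z^{(G/H)}\cdot A=A$ forces the honest direct-sum decomposition $A=\bigoplus_{sH}\chi_{sH}\cdot A$ --- this uses that $G/H$ is $0$-dimensional, so that $\Z^{(G/H)}$ has coordinatewise multiplication and the $\chi_{sH}$ are orthogonal idempotents. Writing out these two verifications is the bulk of the argument.
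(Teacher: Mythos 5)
Your proposal is correct and follows the same route as the paper: the paper likewise deduces properness of $\ind_H^G(B)$ over $G/H$ from Lemma \ref{lem:indproper} applied to the one-point space, and then simply declares the isomorphism verifications straightforward, citing \eqref{actixi} and \eqref{hxi} for equivariance. Your write-up just fills in those straightforward computations (the orthogonal-idempotent decomposition $A=\bigoplus_{sH}\chi_{sH}\cdot A$ and the coset-by-coset matching with \eqref{uniquexp}), which is exactly what the paper leaves to the reader.
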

\begin{proof} Any $B\in H-\ring$ is proper over the $1$-point space $*$. Hence $\ind_H^G(B)$ is proper over $\ind_H^G(*)=G/H$, by Lemma \ref{lem:indproper}. The proof that the maps of i) and ii) are isomorphisms is straightforward; to show equivariance, one uses \eqref{actixi} and
\eqref{hxi}.
\end{proof}
\subsection{A discrete variant of Green's imprimitivity theorem}
Let $G$ be a group, $H\subset G$ a subgroup and $A$ an $H$-ring.. Observe that, by
definition, the $G$-ring $\ind_H^G(A)$ is a $G$-subring of the ring $\map(G,A^{\Delta^\bu})=\map(G,A)=A^{G}$
(note that this is not the same as the subring of $G$-invariants of $A$).
Since $A^{(G)}\triqui A^{G}$ is a $G$-ideal, we may regard $A^{(G)}$
as a left $\ind_H^G(A)$-module via left multiplication in $A^G$, and
moreover, this action is compatible with that of $G$, in the sense
that the two together define a left $\ind_H^G(A)\rtimes G$-module
structure on $A^{(G)}$. We may also regard $A^{(G)}$ as a right
module over $A\rtimes H$, via
\[
[\phi\cdot(a\rtimes h)](g)=h^{-1}(\phi(gh^{-1})a)
\]
One checks that these two actions satisfy
\[
(f\rtimes g)\cdot[\phi\cdot(a\rtimes h)]=[(f\rtimes g)\cdot\phi]\cdot(a\rtimes h)
\]
Hence they make $A^{(G)}$ into an $(\ind_H^G(A)\rtimes G, A\rtimes
H)$-bimodule. In particular left multiplication by elements of
$\ind_H^G(A)\rtimes G$ induces a ring homomorphism
\begin{equation}\label{map:repcross}
\ind_H^G(A)\rtimes G\to \End_{A\rtimes H}(A^{(G)})
\end{equation}
Observe that the decomposition $G=\coprod_{x\in G/H}x$ induces
\begin{equation}\label{decompoag}
A^{(G)}=\bigoplus_{x\in G/H}A^{(x)}
\end{equation}
 and that $A^{(x)}\cdot (A\rtimes H)\subset A^{(x)}$. Hence \eqref{decompoag} is a direct sum of right $A\rtimes H$-modules. Thus we may think of an element $T\in\End_{A\rtimes H}(A^{(G)})$ as a matrix $T=[T_{x,y}]_{x,y\in G/H}$, where $T_{x,y}:A^{(y)}\to A^{(x)}$  is a homomorphism of right $A\rtimes H$-modules, and is such that for each $v\in A^{(y)}$, $T_{x,y}(v)=0$ for all but a finite number of $x$. Moreover
 \[
 A\rtimes H\to A^{(gH)},\quad a\rtimes h\mapsto \chi_g\cdot (a\rtimes h)=\chi_{gh}h^{-1}(a)
 \]
 is an isomorphism of right $A\rtimes H$-modules. Fix a full set of representatives
 $\cR$ of $G/H$, with $1\in\cR$, write $M_\cR\in\Z-\ring$ for the ring of $\cR\times \cR$-matrices with finitely many nonzero coefficients in $\Z$, and put $M_\cR(A\rtimes H)=M_\cR\otimes (A\rtimes H)$.
 We have a ring homomorphism
 \begin{gather*}
 M_\cR(A\rtimes H)\to \End_{A\rtimes H}(A^{(G)})\\
  M\mapsto (\sum_{y\in \cR}\chi_y\cdot \alpha_y\mapsto \sum_{x\in \cR}\chi_x\sum_{y\in\cR} m_{x,y}\alpha_y)
 \end{gather*}
 Furthermore, we have a map $G\to \cR$, which sends each $s\in G$ to the representative $\hat{s}\in \cR$ of $sH$.
 Using this map we obtain an isomorphism $M_{G/H}\cong M_\cR$ which sends the matrix unit $E_{sH,tH}$ to $E_{\hat{s},\hat{t}}$. By composition, we obtain a ring homomorphism
 \begin{equation}\label{map:MGtoEnd}
 M_{G/H}(A\rtimes H)\to \End_{A\rtimes H}(A^{(G)})\cong \End_{A\rtimes H}((A\rtimes H)^{(G/H)})
 \end{equation}

\begin{rem}\label{rem:unital}
If $A$ happens to be unital, then both \eqref{map:repcross} and \eqref{map:MGtoEnd} are injective.
\end{rem}

 \begin{thm}\label{thm:git}
 Let $G$ be a group, $H\subset G$ a subgroup, and $A\in H-\ring$. Then there is an isomorphism
 $\ind_H^G(A)\rtimes G\cong M_{G/H}(A\rtimes H)$ such that the following diagrams commute
 \begin{gather*}
 \xymatrix{\ind_H^G(A)\rtimes G\ar[rr]^{\eqref{map:repcross}}\ar[dr]_{\cong}&&\End_{A\rtimes H}(A^{(G)})\\
                              &M_{G/H}(A\rtimes H)\ar[ur]_{\eqref{map:MGtoEnd}}&}\\
 \xymatrix{\ind_H^G(A)\rtimes G\ar[rr]^{\cong}& &M_{G/H}(A\rtimes H)\\
 &A\rtimes H\ar[ul]^{\xi_H(1,-)\rtimes id}\ar[ur]_{e_{H,H}\otimes -}&}
 \end{gather*}
 \end{thm}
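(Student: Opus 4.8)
The plan is to write down an explicit isomorphism
$\Theta\colon \ind_H^G(A)\rtimes G\to M_{G/H}(A\rtimes H)$ together with an explicit inverse, and then verify the two triangles by a direct computation on generators, very much in the spirit of Lemma \ref{lem:across=cross}. First fix a pointed section $r\colon G/H\to G$ of the projection $\pi$, so that $r(H)=1$, and for $x\in G/H$ abbreviate $\hat x=r(x)$. By \eqref{uniquexp} every element of $\ind_H^G(A)$ is a \emph{finite} sum $\sum_x\xi_H(\hat x,a_x)$ with $a_x\in A$, so $\ind_H^G(A)\rtimes G$ is spanned as an abelian group by the elements $\xi_H(\hat x,a)\rtimes g$ ($x\in G/H$, $a\in A$, $g\in G$), subject only to additivity in $a$. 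I would define
\[
\Theta\bigl(\xi_H(\hat x,a)\rtimes g\bigr)=e_{x,g^{-1}x}\otimes\bigl(a\rtimes \hat x^{-1}g\,r(g^{-1}x)\bigr),
\]
which makes sense because $\hat x^{-1}g\,r(g^{-1}x)\in \hat x^{-1}g\cdot g^{-1}x=\hat x^{-1}x=H$; since $a\mapsto\xi_H(\hat x,a)$ is additive, $\Theta$ is a well-defined homomorphism of abelian groups.

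The first substantive step is multiplicativity. Using \eqref{actixi} to pull the $G$-action across $\xi_H$, then \eqref{hxi} to renormalize the first argument back into $r(G/H)$, and finally \eqref{prodxi}, one obtains
\[
\bigl(\xi_H(\hat x,a)\rtimes g\bigr)\bigl(\xi_H(\hat y,b)\rtimes k\bigr)=
\begin{cases}\xi_H\bigl(\hat x,\,a\,h^{*}(b)\bigr)\rtimes gk,& g^{-1}x=y,\\ 0,&\text{otherwise,}\end{cases}
\]
where $h^{*}=\hat x^{-1}g\,\hat y\in H$ in the nonzero case. Applying $\Theta$ and comparing with the matrix product $\Theta(\xi_H(\hat x,a)\rtimes g)\,\Theta(\xi_H(\hat y,b)\rtimes k)$ — in which $e_{x,g^{-1}x}e_{y,k^{-1}y}$ is nonzero exactly when $g^{-1}x=y$, and the $A\rtimes H$-entries multiply as $(a\rtimes h^{*})(b\rtimes \hat y^{-1}k\,r(k^{-1}y))=a\,h^{*}(b)\rtimes \hat x^{-1}gk\,r(k^{-1}y)$ — shows that the two agree, so $\Theta$ is a ring homomorphism. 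Next I would check that
\[
\Psi\bigl(e_{x,y}\otimes(a\rtimes h)\bigr)=\xi_H(\hat x,a)\rtimes \hat x\,h\,\hat y^{-1}
\]
is a two-sided set-theoretic inverse of $\Theta$; both $\Psi\Theta=\mathrm{id}$ and $\Theta\Psi=\mathrm{id}$ fall out of the formulas (using $\hat x^{-1}g\,r(g^{-1}x)\cdot r(g^{-1}x)^{-1}=\hat x^{-1}g$ for the first and $g^{-1}x=y$ when $g=\hat x h\hat y^{-1}$ for the second). A bijective ring homomorphism is an isomorphism; note that unitality of $A$ was never used.

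It remains to identify $\Theta$ with the maps in the statement. For the lower triangle, set $x=H$, $g=h\in H$, $a=b$ in the defining formula: since $r$ is pointed we have $r(H)=1$ and $h^{-1}H=H$, so $\Theta(\xi_H(1,b)\rtimes h)=e_{H,H}\otimes(b\rtimes h)$, which is exactly the composite of the inclusion $a\rtimes h\mapsto\xi_H(1,a)\rtimes h$ with $\Theta$. For the upper triangle, since all three arrows are ring homomorphisms it suffices to evaluate on a generator $\xi_H(\hat x,a)\rtimes g$. On one side, \eqref{map:repcross} sends it to the endomorphism of $A^{(G)}$ given by $G$-translation by $g$ followed by left multiplication by $\xi_H(\hat x,a)$; using the decomposition \eqref{decompoag} and the isomorphisms $A\rtimes H\cong A^{(gH)}$, $a\rtimes h\mapsto\chi_{r(gH)h}\,h^{-1}(a)$, a direct computation (of the same type as in the proof of Lemma \ref{lem:across=cross}) shows this endomorphism annihilates every summand $A^{(z)}$ with $z\ne g^{-1}x$, carries $A^{(g^{-1}x)}$ into $A^{(x)}$, and under these identifications is left multiplication by $a\rtimes\hat x^{-1}g\,r(g^{-1}x)$. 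On the other side, $\eqref{map:MGtoEnd}\circ\Theta$ sends the generator to \eqref{map:MGtoEnd} applied to $e_{x,g^{-1}x}\otimes(a\rtimes\hat x^{-1}g\,r(g^{-1}x))$, which by the definition of \eqref{map:MGtoEnd} is precisely that endomorphism. Hence the upper triangle commutes as well.

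The whole argument is a verification, and the one genuinely delicate point is bookkeeping: threading the $H$-twist $h^{*}=\hat x^{-1}g\,\hat y$ and its companions consistently through the crossed-product multiplication and through the isomorphisms $A^{(gH)}\cong A\rtimes H$, so that the cocycle-type terms cancel on the nose. An alternative, should one prefer, is to first handle the unital case, where by Remark \ref{rem:unital} both \eqref{map:repcross} and \eqref{map:MGtoEnd} are injective, so that showing the image of \eqref{map:repcross} is contained in (hence equals) that of \eqref{map:MGtoEnd} already yields the isomorphism and both triangles; one then descends to general $A$ via the embedding $A\hookrightarrow\tilde A$ and functoriality of $\ind_H^G$ and of the crossed product. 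The direct construction above, however, seems cleaner and avoids the reduction.
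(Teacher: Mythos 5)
Your proof is correct and is essentially the paper's own argument: on generators $\xi_H(\hat x,a)\rtimes g$ your $\Theta$ coincides with the paper's map $\alpha(\xi_H(s,a)\rtimes g)=e_{sH,g^{-1}sH}\otimes\phi(s)(a)\rtimes\phi(s)\phi(g^{-1}s)^{-1}$ (with $\phi(s)=\hat s^{-1}s$), and the paper simply asserts the verifications that you carry out explicitly.
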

 \begin{proof} We use the notation introduced in the paragraph preceding the theorem. If $s\in G$, put $\phi(s)=\hat{s}^{-1}s\in H$. Note that $\phi(sh)=\phi(s)h$ $(s\in G, h\in H)$.
 One checks that the following map is a well-defined, bijective ring homomorphism with the required properties
 \begin{gather*}
 \alpha:\ind_H^G(A)\rtimes G\to M_{G/H}(A\rtimes H),\\
 \alpha(\xi_H(s,a)\rtimes g)=e_{sH,g^{-1}sH}\otimes \phi(s)(a)\rtimes \phi(s)\phi(g^{-1}s)^{-1}
 \end{gather*}
  \end{proof}
 \begin{rem}\label{rem:noncan}
 The isomorphism of the theorem above is natural in $A$, but not in the pair $(G,H)$, as it depends
 on a choice of a full set of representatives $\cR$ of $G/H$, or what is the same, of a choice of pointed
 section $G/H\to G$ of the canonical projection.
 \end{rem}

\subsection{Restriction}\label{subsec:restri}
Let $B$ be a $G$-ring, $H\subset G$ a subgroup. Write $\res^H_GB$
for the $H$-ring obtained by restriction to $H$ of the action of $G$
on $B$.

\begin{lem}\label{lem:indtriv}
If $B$ is a $G$-ring, then $\ind_H^G\res_G^H B\to \Z^{(G/H)}\otimes
B$, $\xi_H(s,b)\mapsto \chi_{sH}\otimes s(b)$ is a $G$-ring
isomorphism.
\end{lem}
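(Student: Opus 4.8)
The plan is not to define the map on the generators $\xi_H(s,b)$ directly (which would force a separate check of compatibility with the relations \eqref{actixi}--\eqref{prodxi}), but to produce it intrinsically on all of $\ind_H^G\res_G^H B$ and then recognise it on generators. Recall that $\ind_H^G\res_G^H B$ is the subring of $\map(G,B)$ consisting of those $f\colon G\to B$ with $f(gh)=h^{-1}(f(g))$ for $g\in G$, $h\in H$, and with $\pi(\supp f)$ finite, $\pi\colon G\to G/H$ the projection; here $h^{-1}(f(g))$ is computed with the restriction to $H$ of the $G$-action on $B$. Since $G/H$ is a discrete simplicial set, $\Z^{(G/H)}\otimes B=\bigoplus_{x\in G/H}B$, with coordinate-wise multiplication and the diagonal action $g_0\cdot(\chi_x\otimes b)=\chi_{g_0x}\otimes g_0(b)$. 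I would set
\[
\Theta\colon\ind_H^G\res_G^H B\longrightarrow\bigoplus_{x\in G/H}B,\qquad \Theta(f)_{sH}=s\big(f(s)\big).
\]

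First I would check $\Theta$ is well defined: if $s'=sh$ with $h\in H$, then $s'(f(s'))=(sh)\big(h^{-1}(f(s))\big)=s(f(s))$, so $\Theta(f)_{sH}$ is independent of the representative, and $\Theta(f)$ is finitely supported because $\pi(\supp f)$ is. Since the operations on $\ind_H^G$ are pointwise and each $s(-)$ is a ring automorphism of $B$, $\Theta$ is a ring homomorphism: $\Theta(f_1f_2)_{sH}=s\big(f_1(s)f_2(s)\big)=s(f_1(s))\,s(f_2(s))$. For $G$-equivariance, with $G$ acting on the source by left translation $(g_0\cdot f)(g)=f(g_0^{-1}g)$, one computes $\Theta(g_0\cdot f)_{sH}=s\big(f(g_0^{-1}s)\big)$, while $(g_0\cdot\Theta(f))_{sH}=g_0\big(\Theta(f)_{g_0^{-1}sH}\big)=g_0\big((g_0^{-1}s)(f(g_0^{-1}s))\big)=s\big(f(g_0^{-1}s)\big)$, and the two agree.

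Next I would identify $\Theta$ with the map in the statement: viewing $\xi_H(s,b)$ as a function $G\to B$, it is supported on $sH$ with $\xi_H(s,b)(sh)=h^{-1}(b)$, so $\Theta(\xi_H(s,b))_{tH}=0$ for $tH\ne sH$ and $\Theta(\xi_H(s,b))_{sH}=(sh)\big(h^{-1}(b)\big)=s(b)$, i.e. $\Theta(\xi_H(s,b))=\chi_{sH}\otimes s(b)$. Finally, $\Theta$ is injective since $\Theta(f)=0$ forces $s(f(s))=0$, hence $f(s)=0$, for every $s\in G$; and it is surjective because postcomposing the nonequivariant ring isomorphism \eqref{map:indinonequi} with $\Theta$ sends $\chi_x\otimes b$ to $\chi_x\otimes r(x)(b)$, a coordinate-wise automorphism of $\bigoplus_xB$, so $\Theta$ is bijective.

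I do not expect a genuine obstacle; the only point requiring care is the interplay of the two $G$-actions --- left translation on $\ind_H^G\res_G^H B$ versus the diagonal action on $\Z^{(G/H)}\otimes B$. In particular the ``untwisting'' isomorphism \eqref{map:indinonequi} attached to a section of $G\to G/H$ is \emph{not} $G$-equivariant, which is exactly why $\Theta$ must be defined intrinsically by $\Theta(f)_{sH}=s(f(s))$, and why the factor $s(b)$ rather than $b$ appears in the formula of the lemma.
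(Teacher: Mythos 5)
Your proof is correct; the paper's own proof is just the single word ``Straightforward,'' and your argument supplies exactly the verification being omitted, including the two points that actually require care (well-definedness of $f\mapsto\bigl(sH\mapsto s(f(s))\bigr)$ on coset representatives, and the mismatch between the non-equivariant untwisting \eqref{map:indinonequi} and the diagonal action on the target). Nothing to add.
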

\begin{proof} Straightforward.
\end{proof}

Now suppose $K\subset G$ is another subgroup. Let $x\in H\backslash G/K$. Put

\begin{equation}\label{resindx}
\res_G^H\ind_K^G(A)[x]=\{f\in \ind_K^G(A):\supp(f)\subset x\}\in H-\ring
\end{equation}
We have
\begin{equation}\label{decompx}
\res_G^H\ind_K^G(A)=\bigoplus_{x\in H\backslash G/K}\res_G^H\ind_K^G(A)[x]
\end{equation}
Write $x=H\theta K$ for some $\theta\in G$. Consider the subgroup
\[
H\supset H_\theta=H\cap \theta K\theta^{-1}
\]
We shall see presently that the $H$-ring \eqref{resindx} is proper
over  $H/H_\theta$. Consider the subgroup
\[
K\supset K_{\theta^{-1}}=\theta^{-1}H\theta\cap K
\]
Conjugation by $\theta^{-1}$ defines an isomorphism
\[
c_{\theta^{-1}}:H_\theta\to K_{\theta^{-1}},\quad c_{\theta^{-1}}(h)=\theta^{-1}h\theta
\]
Hence we may view $\res_K^{K_{\theta^{-1}}}A$ as an $H_\theta$-ring
via $c_{\theta^{-1}}$; we write
$c_{\theta^{-1}}^*(\res_K^{K_{\theta^{-1}}}A)$ for the resulting
$H_\theta$-ring.
\bigskip
\begin{lem}\label{lem:indxtheta}
The map
\begin{gather*}
\alpha:\res^H_G\ind_K^G(A)[H\theta K]\to \ind_{H_\theta}^H(c_{\theta^{-1}}^*(\res_K^{K_{\theta^{-1}}}(A))\\
\alpha(f)(h)=f(h\theta)
\end{gather*}
is an isomorphism of $H$-rings.
\end{lem}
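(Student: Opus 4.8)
The plan is to verify directly that the stated map $\alpha$ is a well-defined $H$-equivariant ring homomorphism and then exhibit an explicit inverse. First I would unwind what the data are: an element $f\in\ind_K^G(A)$ with $\supp(f)\subset H\theta K$ is a function $f:G\to A$ supported on the double coset $H\theta K$, satisfying $f(gk)=k^{-1}f(g)$ for $k\in K$ and with $\#\pi(\supp f)<\infty$; and the target $\ind_{H_\theta}^H(c_{\theta^{-1}}^*(\res_K^{K_{\theta^{-1}}}A))$ consists of functions $u:H\to A$ with $u(h h')=c_{\theta^{-1}}(h')^{-1}\!\cdot u(h)=(\theta^{-1}h'^{-1}\theta)\cdot u(h)$ for $h'\in H_\theta$, and finitely supported modulo $H_\theta$. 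So the first step is to check that $\alpha(f)(h):=f(h\theta)$ really lands in the target: if $h'\in H_\theta$ then $h'=\theta k'\theta^{-1}$ for a unique $k'\in K_{\theta^{-1}}\subset K$, hence $\alpha(f)(hh')=f(hh'\theta)=f(h\theta k')=k'^{-1}f(h\theta)=(\theta^{-1}h'^{-1}\theta)f(h\theta)=c_{\theta^{-1}}(h')^{-1}\alpha(f)(h)$, as required; and the finiteness condition transfers because $h\mapsto h\theta$ carries $H$-cosets in $H\theta K$ bijectively to $H_\theta$-cosets in $H$ (this uses $H\cap\theta K\theta^{-1}=H_\theta$).

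Next I would check that $\alpha$ is $H$-equivariant: for $h_0\in H$, $(h_0\cdot f)(g)=f(h_0^{-1}g)$, so $\alpha(h_0\cdot f)(h)=f(h_0^{-1}h\theta)=\alpha(f)(h_0^{-1}h)=(h_0\cdot\alpha(f))(h)$. That it is additive is immediate since the operations on both $\ind$-rings are pointwise. For multiplicativity, it is cleanest to test on the generators $\xi_K(s,a)$ with $s\in H\theta K$: using the product rule \eqref{prodxi} in $\ind_K^G(A)$ and the analogous rule in $\ind_{H_\theta}^H(-)$, one computes both $\alpha(\xi_K(s,a)\xi_K(t,b))$ and $\alpha(\xi_K(s,a))\alpha(\xi_K(t,b))$ and sees they agree, the key point being that $sK=tK$ in $G/K$ corresponds to equality of the relevant cosets $s\theta^{-1}$-orbit in $H/H_\theta$, and that $\xi_K(s,a)$ evaluated appropriately matches the value $\theta^{-1}$-conjugated; I expect this to be a short but slightly fiddly bookkeeping computation that I would present compactly rather than in full.

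Finally I would write down the inverse explicitly. Given $u:H\to A$ in the target, define $\beta(u):G\to A$ by $\beta(u)(g)=k^{-1}\cdot u(h)$ whenever $g=h\theta k$ with $h\in H$, $k\in K$ (and $\beta(u)(g)=0$ if $g\notin H\theta K$). The only nontrivial point is well-definedness: if $h\theta k=h'\theta k'$ then $h'^{-1}h=\theta k'k^{-1}\theta^{-1}\in H\cap\theta K\theta^{-1}=H_\theta$, so writing this common element as $h''$ one has $k'k^{-1}=\theta^{-1}h''\theta=c_{\theta^{-1}}^{-1}(h'')$, and then $k'^{-1}u(h')=k'^{-1}u(h h''^{-1})=k'^{-1}c_{\theta^{-1}}(h''^{-1})^{-1}u(h)=k'^{-1}(c_{\theta^{-1}}^{-1}(h''))u(h)=k'^{-1}(k'k^{-1})u(h)=k^{-1}u(h)$, using the defining relation for $u$. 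One then checks $\beta(u)$ satisfies the $K$-equivariance relation and the finiteness condition, so $\beta(u)\in\res_G^H\ind_K^G(A)[H\theta K]$, and that $\alpha\beta=\mathrm{id}$, $\beta\alpha=\mathrm{id}$ by direct substitution. The main obstacle — such as it is — is purely organizational: keeping the three interlocking coset identifications ($H\theta K$ versus $G/K$, $H/H_\theta$, and the conjugation isomorphism $c_{\theta^{-1}}:H_\theta\to K_{\theta^{-1}}$) straight so that the equivariance twist lands on the correct side; once the well-definedness of $\beta$ is settled everything else is a routine verification.
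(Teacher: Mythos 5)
Your proof is correct and follows essentially the same direct-verification route as the paper, which simply records that $\alpha(\xi_K(h\theta,a))=\xi_{H_\theta}(h,a)$ and deduces bijectivity from the expansion \eqref{uniquexp} rather than writing out the inverse $\beta$. (One cosmetic slip: in the well-definedness check for $\beta$ you write $c_{\theta^{-1}}^{-1}(h'')$ where you mean $c_{\theta^{-1}}(h'')=\theta^{-1}h''\theta=k'k^{-1}$; the surrounding computation is nonetheless right.)
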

\begin{proof} One checks that if $m\in H_\theta$, then $\alpha(f)(hm)=m^{-1}\alpha(f)(h)$. It is clear that
$\alpha$ is $H$-equivariant. A calculation shows that $\alpha(\xi_K(h\theta,a))=\xi_{H_\theta}(h,a)$. It follows that $\alpha$ is an isomorphism.
\end{proof}
\section{Induction and equivariant homology}\label{sec:ind}

\numberwithin{equation}{section}

\begin{lem}\label{lem:indexci}
Let $G$ be a group, $K\subset G$ a subgroup, $A$ a $K$-ring, and
$E:\Z-\cat\to\spt$ a functor satisfying the standing assumptions.
Then $A$ is $E$-excisive if and only if $\ind_K^G(A)$ is
$E$-excisive.
\end{lem}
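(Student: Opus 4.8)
The plan is to reduce the statement to a known permanence property by exhibiting $\ind_K^G(A)$ as a matrix ring over $A\rtimes K$, or rather over a crossed product that the standing assumptions already handle. The key tool is the discrete Green imprimitivity isomorphism of Theorem \ref{thm:git}, which gives
\[
\ind_K^G(A)\rtimes G\cong M_{G/K}(A\rtimes K).
\]
Since this isomorphism is natural in $A$ and preserves ideals (it comes from a bijective ring homomorphism built coordinatewise), it carries any embedding $A\triqui R$ of $A$ as an ideal in a unital $K$-ring to the corresponding embedding $\ind_K^G(A)\rtimes G\triqui \ind_K^G(R)\rtimes G\cong M_{G/K}(R\rtimes K)$, compatibly with the quotients. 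Thus $\ind_K^G(A)\rtimes G$ is $E$-excisive if and only if $M_{G/K}(A\rtimes K)$ is.

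First I would record the elementary implications coming from the standing assumptions. If $\ind_K^G(A)$ is $E$-excisive, then by assumption \ref{stan} ii), $\ind_K^G(A)\rtimes G$ is $E$-excisive, hence so is $M_{G/K}(A\rtimes K)$ by the isomorphism above; by assumption iii) applied to $A\rtimes K\to M_{G/K}(A\rtimes K)$ (together with the characterization of $E$-excision in terms of homotopy fibers of Milnor squares, which is stable under the equivalences of iii)), it follows that $A\rtimes K$ is $E$-excisive. Conversely, if $A$ is $E$-excisive, assumption ii) gives that $A\rtimes K$ is $E$-excisive, assumption iii) gives that $M_{G/K}(A\rtimes K)$ is $E$-excisive, and hence $\ind_K^G(A)\rtimes G$ is $E$-excisive.

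The remaining and more delicate point is to pass from $E$-excision of the crossed product $\ind_K^G(A)\rtimes G$ back to $E$-excision of $\ind_K^G(A)$ itself. Here I would use the standard device: $\ind_K^G(A)$ is a $G$-subring of $\ind_K^G(R)$ with quotient $\ind_K^G(R/A)$ (induction is exact, as it is $\Z[G]\otimes_{\Z[K]}(-)$ on underlying modules with a compatible ring structure), and $\ind_K^G(-)\rtimes G$ preserves this exact sequence. Given a Milnor square with kernel $\ind_K^G(A)$, one forms the associated square after crossing with $G$; applying $M_{G/K}$-invariance and the known $E$-excision, the relevant homotopy fiber $E(R',R:\ind_K^G(A))$ is seen to be contractible after crossing with $G$, and then one must descend along the map $E(\ind_K^G(A))\to E(\ind_K^G(A)\rtimes G)$. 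The cleanest route is to invoke Proposition \ref{prop:excimoron} and assumptions ii)--iv): these identify $E(\ind_K^G(A)\rtimes\cG^G(-))$ with $E(\cA(\ind_K^G(A)\rtimes\cG^G(-)))$, and matrix invariance lets one strip the transport groupoid down to the point, reducing the $E$-excision of $\ind_K^G(A)$ to that of $\ind_K^G(A)\rtimes G$ which we have already established.

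The main obstacle I expect is precisely this last descent step: $E$-excision is a property of the ring $\ind_K^G(A)$, not of any of its crossed products, so one cannot simply quote assumption ii) in the reverse direction. The argument must genuinely use that the standing assumptions force $E$ to behave well under the full package $\cA(-\rtimes\cG^G(-))$, and one should be careful that the isomorphism of Theorem \ref{thm:git}, while not natural in $(G,K)$, is natural enough in $A$ for all these identifications to be compatible. I would isolate this as the technical heart of the proof and handle the two easy implications first.
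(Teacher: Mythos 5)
Your proposed route has a genuine gap, and you have in fact located it yourself: the descent from $E$-excisiveness of $\ind_K^G(A)\rtimes G$ back to $E$-excisiveness of the ring $\ind_K^G(A)$ is not available. The Standing Assumptions only go one way: ii) says that if $A$ is $E$-excisive then $A\rtimes H$ is, and iii) says that if $A$ is $E$-excisive then $M_XA$ is and the corner inclusion is an $E$-equivalence; neither gives the converse. Proposition \ref{prop:excimoron} cannot help either, since its hypothesis is precisely that the ring in question is already $E$-excisive. The same defect already infects your ``easy'' implication: from $E$-excisiveness of $M_{G/K}(A\rtimes K)$ you cannot conclude, via iii), that $A\rtimes K$ is $E$-excisive, and even if you could, there is no reverse of ii) taking you from $A\rtimes K$ to $A$. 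So both directions of your argument rest on implications the standing assumptions do not provide, and the Green imprimitivity isomorphism of Theorem \ref{thm:git}, while correct, is simply not the right tool here.

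The missing idea is much more elementary. $E$-excisiveness is a property of the underlying ring, so the $G$-action is irrelevant; and the map \eqref{map:indinonequi} (choose a pointed section $r:G/K\to G$ and send $\chi_x\otimes a\mapsto\xi_K(r(x),a)$) is a \emph{nonequivariant} ring isomorphism
\[
\Z^{(G/K)}\otimes A=\bigoplus_{x\in G/K}A\;\iso\;\ind_K^G(A),
\]
where the left-hand side is the direct sum of copies of $A$ with coordinate-wise multiplication. Standing Assumption v) states exactly that such a direct sum is $E$-excisive if and only if each summand is, which gives both implications at once. This is the paper's entire proof; no crossed products are needed.
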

\begin{proof}
The map \eqref{map:indinonequi} gives a nonequivariant isomorphism
\[
\ind_K^G(A)\cong \Z^{(G/K)}\otimes A=\bigoplus_{x\in G/K} A
\]
The equivalence of the lemma follows from Standing Assumption v).
\end{proof}

Let $G$, $K$ and $A$ be as in Lemma \ref{lem:indexci}, and let $X$
be a $G$-simplicial set. If $A$ is unital, then for each subgroup
$S\subset K$ we have a functor
\begin{gather*}
A\rtimes \cG^K(K/S)\to \ind_K^G(A)\rtimes \cG^G(G/S)\\
kS\mapsto kS,\\
a\rtimes k\mapsto \xi_K(1,a)\rtimes k
\end{gather*}
If $A$ is any $E$-excisive ring, the map above is defined for the unitalization $\tilde{A}$; applying $E$,
 taking fibers relative to the augmentation $\tilde{A}\to \Z$, and using the standing assumptions, we get a map
 $E(A\rtimes \cG^K(K/S))\to E(\ind_K^G(A)\rtimes \cG^G(G/S))$.
The maps
\[
X^S_+\land E(A\rtimes \cG^K(K/S))\to X^S_+\land E(\ind_K^G(A)\rtimes\cG^G(G/S))\to H^G(X,E(\ind_K^G A))
\]
assemble to
\begin{equation}\label{map:ind}
\ind: H^K(X,E(A))\to H^G(X,E(\ind_K^G(A)))
\end{equation}

\begin{prop}\label{prop:equindiso}(Compare \cite[Proposition 12.9]{ght})
Let $A$ be an $E$-excisive $G$-ring. Then the map \eqref{map:ind} is
an equivalence.
\end{prop}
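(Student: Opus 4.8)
The plan is to reduce the statement to a check on cells and then to the case of a single orbit $G/S$, where it follows from Green's imprimitivity theorem (Theorem~\ref{thm:git}) together with the standing assumptions. Both source and target of \eqref{map:ind} are equivariant homology theories of $G$-simplicial sets (in the variable $X$), so both send pushouts along cofibrations to homotopy pushouts and commute with filtering colimits. Since every $G$-simplicial set is built cellularly from cells of the form $\Delta^n\times G/H$, and $\Delta^n\times G/H\to G/H$ is an equivalence, a standard induction on skeleta reduces us to proving that \eqref{map:ind} is an equivalence when $X=G/S$ for an arbitrary subgroup $S\subset G$. By the construction of equivariant homology via $(?)_\%$ applied to the $\org$-spectra of \eqref{fun:orgnuni}, evaluating at $X=G/S$ gives, on the source, $E(A\rtimes\cG^K(K/S))$ wait---more precisely one must account for the double-coset decomposition, so the source becomes a wedge over $K\backslash G/S$ and the target is $E(\ind_K^G(A)\rtimes\cG^G(G/S))$.

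Concretely, I would argue as follows. First, $H^K(G/S,E(A))\simeq E(A\rtimes\cG^K(\res_G^K(G/S)))$, and $\res_G^K(G/S)=\coprod_{x\in K\backslash G/S}K/S_x$ where $S_x$ is the appropriate conjugate-intersection stabilizer; by Standing Assumption~v) together with the fact that $\cA$ turns such a disjoint union of transport groupoids into a direct sum of rings, this is $\bigoplus_{x}E(A\rtimes \cG^K(K/S_x))$, i.e. $\bigoplus_x E(A\rtimes S_x)$ after passing to $\cA$ and using iv). On the other side, $H^G(G/S,E(\ind_K^G A))\simeq E(\ind_K^G(A)\rtimes\cG^G(G/S))$, and via Lemma~\ref{lem:across=cross} this is $E$ of a matrix ring over $\ind_K^G(A)\rtimes S$, so by Standing Assumption~iii) it is $E(\ind_K^G(A)\rtimes S)$. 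Now $\res_G^S\ind_K^G(A)$ decomposes by \eqref{decompx} as $\bigoplus_{x\in S\backslash G/K}\res_G^S\ind_K^G(A)[x]$, and Lemma~\ref{lem:indxtheta} identifies each summand with an induced ring $\ind_{S_\theta}^S(c_{\theta^{-1}}^*\res_K^{K_{\theta^{-1}}}A)$. Applying Green's imprimitivity theorem \eqref{intro:git} to each, we get $\ind_{S_\theta}^S(\cdots)\rtimes S\cong M_{S/S_\theta}((\cdots)\rtimes S_\theta)$, and the identification $S_\theta\cong K_{\theta^{-1}}$ via $c_{\theta^{-1}}$ together with matrix invariance (iii) and v) shows $E(\ind_K^G(A)\rtimes S)\simeq\bigoplus E(A\rtimes(\text{conjugate stabilizers}))$, matching the source term by term.

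The final step is to check that the map \eqref{map:ind} itself realizes this identification---not merely that source and target are abstractly equivalent. For this I would trace through the definition: \eqref{map:ind} is induced by the functor $A\rtimes\cG^K(K/S)\to\ind_K^G(A)\rtimes\cG^G(G/S)$ sending $a\rtimes k\mapsto\xi_K(1,a)\rtimes k$, and one checks that under the isomorphisms $\alpha$ of Lemma~\ref{lem:across=cross}, Theorem~\ref{thm:git}, and Lemma~\ref{lem:indxtheta} this functor becomes, on each double-coset summand, exactly the corner-inclusion into a matrix ring that Standing Assumption~iii) declares to be an equivalence. The second commuting diagram in Theorem~\ref{thm:git} is precisely what makes this bookkeeping work out on the nose.

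The main obstacle I anticipate is the double-coset bookkeeping: keeping track of which stabilizer subgroup and which conjugating element $\theta$ appears in each summand on each side, and verifying that the map \eqref{map:ind} sends the $x$-summand on the source to the $x$-summand on the target (rather than permuting them), requires carefully combining Lemmas~\ref{lem:indtriv}, \ref{lem:indxtheta} and the section-dependent isomorphisms of Lemma~\ref{lem:across=cross} and Theorem~\ref{thm:git}. The homotopy-theoretic part---reduction to orbits via the cellular induction and the exactness/colimit properties of $H^G(-,E(?))$ from Proposition~\ref{prop:exciequi1}---is routine by comparison. One should also note that since $A$ need only be $E$-excisive, all uses of $\cG^G(G/S)$ and crossed products must go through the unitalization $\tilde A$ and the fiber construction, but Lemma~\ref{lem:indexci} guarantees $\ind_K^G(A)$ is again $E$-excisive so no difficulty arises there.
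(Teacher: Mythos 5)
Your proposal is correct and follows essentially the same route as the paper's proof: reduction to a single orbit (the paper keeps the cell as $\ind_H^G(T)=T\times G/H$ with $T$ trivial rather than contracting $\Delta^n$), the double-coset decomposition of the orbit as a $K$-set, and then the identification of the induced map on each summand as a corner inclusion into a matrix ring via Lemma \ref{lem:across=cross}, Theorem \ref{thm:git}, Lemma \ref{lem:indxtheta} and matrix stability. The "bookkeeping" you flag as the main obstacle is exactly the content of the large commutative diagram in the paper's proof, including the final application of \cite[Prop.\ 2.2.6]{friendly} to compare the non-corner embedding $\sum_\theta e_{\theta H,\theta H}$ with the corner inclusion $e_{H,H}$.
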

\begin{proof}
As a functor of $G$-simplicial sets, equivariant homology satisfies excision and commutes with filtering colimits (see \cite{dl}). Because of this, and because $X$ is obtained by gluing together cells of the form
$\ind_H^G(\Delta^n)$, $H\in\fall$,  it suffices to prove the
proposition for $X=\ind_H^G(T)$ where $H$ acts trivially on $T$. Let
$\cR$ be a full set of representatives of $K\backslash G/H$. We have
\begin{align*}
\ind_H^G(T)=&T\times G/H\\ =&\coprod_{\theta\in \cR}T\times K\theta H\\
\cong&\coprod_{\theta\in \cR}T\times K/K_\theta
\end{align*}
Here as in Subsection \ref{subsec:restri}, $K_\theta=c_\theta(H)\cap K$.
Thus
\[
H^K(\ind_H^G(T),E(A))=T_+\land\bigvee_{\theta\in\cR}E(A\rtimes \cG^K(K/K_\theta))
\]
On the other hand,
\begin{align*}
H^G(\ind_H^G(T), E(\ind_K^G(A))=&T_+\land E(\ind_K^G(A)\rtimes\cG^G(G/H))
\end{align*}
We have to show that
\[
\bigvee_{\theta\in\cR}E(A\rtimes \cG^K(K/K_\theta))\to E(\ind_K^G(A)\rtimes\cG^G(G/H))
\]
is an equivalence. By standing assumptions iv) and v) we may replace
the map above by that induced by the corresponding ring homomorphism
\begin{equation}\label{map:sumind}
\bigoplus_{\theta\in\cR}\cA(A\rtimes \cG^K(K/K_\theta))\to
\cA(\ind_K^G(A)\rtimes\cG^G(G/H))
\end{equation}
Here $\cA(A\rtimes \cG^K(K/K_\theta))\to
\cA(\ind_K^G(A)\rtimes\cG^G(G/H))$ is induced by $\xi_K(1,-):A\to \ind_K^G(A)$ and by the inclusions $K\subset G$ and $K/K_\theta\to G/H$, $kK_\theta\mapsto k\theta H$.
One checks that the following diagram commutes

\[\xymatrix{&\cA(\ind_K^G(A)\rtimes\cG^G(G/H))\ar[dr]^{\ref{lem:across=cross}}_\sim
&\\
\cA(A\rtimes \cG^K(K/K_\theta))\ar[ur]^{\xi_K(1,-)\rtimes \inc}& & M_{G/H}(\ind_K^G(A)\rtimes H)\\
A\rtimes K_\theta\ar[u]\ar[d]_{\wr}^(.4){1\rtimes
c_{\theta^{-1}}}\ar[rr]^{\xi_K(\theta^{-1},-)\rtimes
c_{\theta^{-1}}}&&\ind_K^G(A)[H\theta^{-1} K]\rtimes
H\ar[u]_{e_{\theta H,\theta H}}&\\
c_{\theta}^*(A)\rtimes
H_{\theta^{-1}}\ar[r]^(.4){e_{H_{\theta^{-1}},H_{\theta^{-1}}}} &
M_{H/H_{\theta^{-1}}}\left(c_{\theta}^*(A)\rtimes
H_{\theta^{-1}}\right)\ar[r]_{\ref{thm:git}}^\sim &
\ind_{H_{\theta^{-1}}}^H (c_{\theta}^*(A))\rtimes
H\ar[u]^\sim_{\ref{lem:indxtheta}}}
\]

Because the lower rectangle commutes,  $E(A\rtimes K_\theta\to
\ind_K^G(A)[H\theta^{-1} K]\rtimes H)$ is an equivalence, by matrix
stability. Again by matrix stability and by Lemma
\ref{lem:across=cross}, applying $E$ to the top left vertical arrow
is an equivalence. Hence to prove that $E$ applied to
\eqref{map:sumind} is an equivalence, it suffices to show that $E$
applied to
\begin{equation}\label{map:sumind2}
\xymatrix{\ind_K^G(A)\rtimes
H=\bigoplus_{\theta\in\cR}\ind_K^G(A)[H\theta K]\rtimes H
\ar[rr]^(.6){\sum_\theta e_{\theta H,\theta H}}
&&M_{G/H}(\ind_K^G(A)\rtimes H)}
\end{equation}
is one. But another application of matrix stability (using
\cite[Prop. 2.2.6]{friendly}) shows that $E$ applied to \eqref{map:sumind2} gives
the same map in $\ho\spt$ as $E$ applied to the inclusion
\[e_{H,H}:\ind_K^G(A)\rtimes H\to M_{G/H}(\ind_K^G(A)\rtimes H).\]
This concludes the proof.
\end{proof}

\begin{thm}\label{thm:proper0}
Let $E:\Z-\cat\to\spt$ be a functor satisfying the standing
assumptions \ref{stan}. Also let $G$ be a group, $\cF$ a family of
subgroups of $G$ and $B$ an $E$-excisive ring, proper over a
$0$-dimensional $(G,\cF)$-complex $X$. Then $H^G(-,E(B))$ maps
$(G,\cF)$-equivalences to equivalences. In particular, the assembly
map
\[
H^G(\cE(G,\cF),E(B))\to E(B\rtimes G)
\]
is an equivalence.
\end{thm}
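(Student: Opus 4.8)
The plan is to reduce the statement for a general $0$-dimensional $(G,\cF)$-complex $X$ to the case $X = G/H$ with $H\in\cF$, where it follows from the induction isomorphism of Proposition \ref{prop:equindiso}. A $0$-dimensional $(G,\cF)$-complex is a disjoint union of orbits $\coprod_{j} G/H_j$ with each $H_j\in\cF$; since $B$ is proper over $X$, the decomposition $\Z^{(X)} = \bigoplus_j \Z^{(G/H_j)}$ induces a $G$-equivariant decomposition $B = \bigoplus_j B(G/H_j)$ where each $B(G/H_j) = I(G/H_j)\cdot B$ is proper over the single orbit $G/H_j$ (using \eqref{eq:aye} and Lemma \ref{lem:properfil}; more precisely the finite subcomplexes of $X$ are finite disjoint unions of orbits, and one passes to the colimit). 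By Standing Assumption v), each $B(G/H_j)$ is $E$-excisive, and $\bigoplus_j E(B(G/H_j))\weq E(B)$; since equivariant homology commutes with this wedge (it commutes with filtering colimits, and $E(?\rtimes\cG^G(-))$ sends the direct sum decomposition to a wedge by assumption v)), it suffices to treat each summand separately. So I may assume $B$ is proper over a single orbit $G/H$, $H\in\cF$.

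Now I invoke the compression/induction equivalence of Proposition \ref{prop:indcomp}: for $B$ proper over $G/H$ there is a $G$-ring isomorphism $B \cong \ind_H^G(\comp_H^G B)$, where $A:=\comp_H^G B$ is an $H$-ring. By Lemma \ref{lem:indexci}, $A$ is $E$-excisive (since $\ind_H^G A \cong B$ is). Then for any $G$-simplicial set $Z$, Proposition \ref{prop:equindiso} gives a natural equivalence
\[
\ind: H^H(Z, E(A)) \weq H^G(Z, E(\ind_H^G A)) = H^G(Z, E(B)).
\]
Naturality of this equivalence in $Z$ is the crucial point: given a $(G,\cF)$-equivalence $Z\to W$, we get a commuting square relating $H^G(Z,E(B))\to H^G(W,E(B))$ to $H^H(Z,E(A))\to H^H(W,E(A))$. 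Restriction along $H\hookrightarrow G$ carries $(G,\cF)$-equivalences to $(H,\res\cF)$-equivalences, and in fact carries them to ordinary (i.e.\ $(H,\fall)$-) equivalences is not what we need — rather, observe that $Z\to W$ being a $(G,\cF)$-equivalence means $Z^K\to W^K$ is an equivalence for all $K\in\cF$; in particular this holds for all subgroups $K\subset H$ (as $H\in\cF$ and $\cF$ is closed under subgroups), so $Z\to W$ is an $(H,\mathrm{all})$-equivalence, hence an ordinary equivalence of $H$-simplicial sets after restriction, and therefore $H^H(-,E(A))$ sends it to an equivalence because $H^H(-,E(A))$ is an equivariant homology theory and any equivariant homology theory sends ordinary equivalences to equivalences. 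Chasing the square shows $H^G(-,E(B))$ sends $Z\to W$ to an equivalence.

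The final assertion follows by taking $Z = \cE(G,\cF)$ and $W = pt$: since $\cE(G,\cF)\to pt$ is a $(G,\cF)$-equivalence, $H^G(\cE(G,\cF),E(B))\to H^G(pt,E(B)) = E(B\rtimes G)$ is an equivalence, using \eqref{intro:cross} (i.e.\ $H^G_*(G/G, E(B)) = E_*(B\rtimes G)$, which here is the value at the one-point space) together with Proposition \ref{prop:exciequi1} to identify $H^G(pt, E(B))$ with $E(B\rtimes G)$ for the $E$-excisive ring $B$. The main obstacle I anticipate is bookkeeping in the first reduction: checking carefully that the properness structure restricts to each orbit summand and that the finite-complex filtration of Lemma \ref{lem:properfil} is compatible with the orbit decomposition, so that the colimit argument goes through cleanly; the rest is a formal consequence of Proposition \ref{prop:equindiso} and the naturality of induction.
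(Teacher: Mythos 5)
Your proposal is correct and follows essentially the same route as the paper's proof: decompose $X$ into orbits and use Standing Assumption v) to reduce to $X=G/H$, then identify $B\cong\ind_H^G(\comp_H^GB)$ via Proposition \ref{prop:indcomp} and conclude from the naturality of the induction equivalence of Proposition \ref{prop:equindiso}, using that a $(G,\cF)$-equivalence restricts to an $(H,\fall)$-equivalence since $H\in\cF$ and $\cF$ is closed under subgroups. The extra care you flag about the orbit decomposition is not needed beyond what Standing Assumption v) already provides.
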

\begin{proof}
We have $X=\coprod_i G/K_i$ for some $K_i\in\cF$, and
$\Z^{(X)}=\bigoplus_i\Z^{(G/K_i)}$. The ring $B_i=\Z^{(G/K_i)}\cdot
B$ is proper over $G/K_i$, and is excisive by Standing assumption v).
Again by Standing assumption v), it suffices
to prove the assertion of the theorem individually for each $B_i$;
in other words, we may assume $X=G/K$ for some $K\in\cF$. Hence for
$A=\comp_G^KB$ we have $B=\ind_K^GA$, by Proposition
\ref{prop:indcomp}. Moreover, by Lemma \ref{lem:indexci}, $A$ is
$E$-excisive. Let $Y\to Z$ be a $(G,\cF)$-equivalence. We have a
commutative diagram
\[
\xymatrix{H^G(Y,E(B))\ar[rr]&& H^G(Z,E(B))\\
           H^K(Y, E(A))\ar[u]^{\ind}\ar[rr] && H^K(Z, E(A))\ar[u]^\ind}
\]
The bottom horizontal arrow is an equivalence because $K\in\cF$. The two
vertical arrows are equivalences by Proposition \ref{prop:equindiso}.
It follows that the top horizontal arrow is an equivalence too.
\end{proof}

\section{Assembly as a connecting map}\label{sec:dirac}

Throughout this section, we consider a fixed functor
$E:\Z-\cat\to\spt$, and --except when otherwise stated-- we assume
that, in addition to the standing assumptions, it satisfies the
following:
\begin{sect}\label{secstan}
\item[vi)]$E_*$ commutes with filtering colimits.
\item[vii)] If $A$ is $E$-excisive and $L$ has local units and is flat as a
$\Z$-module, then $L\otimes A$ is $E$-excisive.
\end{sect}
\numberwithin{equation}{subsection}
\subsection{Preliminaries}\label{subsec:prelidirac}

\goodbreak

\smallskip

\paragraph{\em Mapping cones}

Let $f:A\to B$ be a ring homomorphism; the {\em mapping cone} of $f$ is defined as the pullback
\[
\xymatrix{\Gamma_f\ar[d]\ar[r]& \Gamma B\ar[d]\\ \Sigma A\ar[r]_{\Sigma f}&\Sigma B }
\]

\begin{lem}\label{lem:mapcone}
Let $E:\Z-\cat\to\spt$ be a functor satisfying both the standing and
the sectional assumptions, and $f:A\to B$ a homomorphism of
$E$-excisive rings. Then
\item[i)] $E(\Gamma B)$ is weakly contractible.
\item[ii)] $E(\Sigma B)\weq \Sigma E(B)$.
\item[iii)] The following is a distinguished triangle in $\ho\spt$
\[
E(B)\to E(\Gamma_f)\to \Sigma E(A)\overset{\Sigma E(f)}\longrightarrow\Sigma E(B)
\]
\end{lem}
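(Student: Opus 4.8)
The strategy is to reduce the three claims to the behaviour of $E$ on $\cA(\Gamma\cC)$, $\cA(\Sigma\cC)$ and $\cA(\Gamma_f)$, and then invoke the Karoubi cone/suspension machinery already set up. For parts i) and ii) I would first argue that if $B$ is $E$-excisive, then $\Gamma B$ and $\Sigma B$ are $E$-excisive. Indeed $\Gamma B = \Gamma\Z\otimes B$ and $\Sigma B$ is a quotient of it by $M_\infty B = M_\infty\Z\otimes B$; since $\Gamma\Z$ and $M_\infty\Z$ are rings with local units which are free (hence flat) as $\Z$-modules, Sectional Assumption vii) gives that $M_\infty B$ and $\Gamma B$ are $E$-excisive, and then the extension $0\to M_\infty B\to\Gamma B\to\Sigma B\to 0$ together with the permanence of $E$-excisiveness under quotients by $E$-excisive ideals (as recorded after the definition of $E$-excisive) shows $\Sigma B$ is $E$-excisive too. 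Consequently $E(\Gamma B)$, $E(\Sigma B)$ may be computed through the exact sequence, and the fibration $E(M_\infty B)\to E(\Gamma B)\to E(\Sigma B)$ identifies $E(\Sigma B)$ with the cofiber. Now $E(M_\infty B)\weq E(B)$ by Standing Assumption iii), while $E(\Gamma B)$ is contractible: $\Gamma B$ is a flasque ring (there is an endofunctor, or here an endomorphism-level infinite-sum trick, with $\tau\oplus 1\cong\tau$), so by the usual Eilenberg swindle argument — formally, by applying Lemma \ref{lem:enat} to the naturally isomorphic functors $\tau$ and $\tau\oplus\mathrm{id}$ on $\Gamma B$, which forces $E(\mathrm{id})\simeq 0$ — we get $E(\Gamma B)\simeq *$. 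This proves i), and then ii) follows since the cofiber of a contractible spectrum over $E(\Gamma B)$ maps to $E(\Sigma B)$, i.e. $\Sigma E(B)\weq E(\Sigma B)$, the connecting map being natural.

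\textbf{Part iii).} For the mapping cone, I would use that $\Gamma_f$ sits in a pullback square along the surjection $\Gamma B\onto\Sigma B$, hence in a Milnor square; its kernel is $\ker(\Gamma B\to\Sigma B)=M_\infty B$. Thus there is an exact sequence $0\to M_\infty B\to \Gamma_f\to \Sigma A\to 0$ (the map $\Gamma_f\to\Sigma A$ being surjective since $\Gamma B\to\Sigma B$ is). Both $M_\infty B$ and $\Sigma A$ are $E$-excisive by the above, so $\Gamma_f$ is $E$-excisive and the sequence yields a homotopy fibration $E(M_\infty B)\to E(\Gamma_f)\to E(\Sigma A)$. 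Using $E(M_\infty B)\weq E(B)$ and $E(\Sigma A)\weq\Sigma E(A)$ from parts i)–ii), and tracking that the connecting map $\Omega E(\Sigma A)\to E(M_\infty B)$ corresponds under these equivalences to $\Sigma^{-1}\Sigma E(f)=E(f)$ up to the relevant shift, one obtains the distinguished triangle
\[
E(B)\to E(\Gamma_f)\to \Sigma E(A)\overset{\Sigma E(f)}\longrightarrow\Sigma E(B).
\]

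\textbf{Main obstacle.} The routine verifications (that the cited rings have local units, are flat, and that the squares are Milnor squares) are straightforward; the delicate point is the identification of the connecting map with $\Sigma E(f)$. One must compare the mapping-cone square with the commutative ladder relating $\Gamma_f$, $\Gamma B$, $\Sigma A$, $\Sigma B$ and chase the boundary maps in the two long exact sequences, using naturality of the equivalences $E(M_\infty B)\weq E(B)$ and $E(\Sigma(-))\weq\Sigma E(-)$ established in i) and ii). This is a diagram chase in $\ho\spt$ rather than a hard argument, but it is where the real content lies and where care is needed to get the map on the nose rather than merely up to sign.
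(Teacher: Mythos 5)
Your argument is correct and follows essentially the same route as the paper's (very terse) proof: $\Gamma B=\Gamma\Z\otimes B$ is $E$-excisive by sectional assumption vii), $E(\Gamma B)\simeq *$ by the infinite-sum swindle together with matrix stability, and parts ii) and iii) follow by excision applied to the sequences $0\to M_\infty B\to\Gamma B\to\Sigma B\to 0$ and $0\to M_\infty B\to\Gamma_f\to\Sigma A\to 0$, with the connecting map identified by naturality. One small correction: the paper does not record (and you do not need) that a quotient of an $E$-excisive ring by an $E$-excisive ideal is again $E$-excisive --- the homotopy fibrations above only require the ideal $M_\infty B$ to be $E$-excisive, so your claim about $\Sigma B$ being $E$-excisive should simply be dropped.
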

\begin{proof}
By Lemma \ref{lem:gamatenso}, $\Gamma B=\Gamma\Z\otimes B$, whence
it is $E$-excisive, by sectional assumption \ref{secstan} vii). Part i) follows from matrix stability and the
fact that $\Gamma\Z$ is a ring with infinite sums (see e.g.
\cite[Prop. 2.3.1]{friendly}). Parts ii) and iii) follow from i) and
excision.
\end{proof}

\paragraph{\em Matrix rings and group actions}
\begin{lem}\label{lem:mxg}
Let $G$ be a group, $A$ a $G$-ring and $X$ a $G$-set. Write
$M_{\ul{X}}$ for the ring $M_{X}$ equipped with the $G$-action
\[
g(e_{x,y})=e_{gx,gy}
\]
The map
\[
(M_{\ul{X}} A)\rtimes G\to M_{X}(A\rtimes G), (e_{x,y}\otimes
a)\rtimes g\mapsto e_{x,g^{-1}y}\otimes(a\rtimes g)
\]
is a $G$-equivariant isomorphism of rings.
\end{lem}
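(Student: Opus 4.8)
The plan is to exhibit the given map as an isomorphism of abelian groups coming from a reindexing bijection, check multiplicativity by a short computation, and then treat $G$-equivariance by transport of structure.

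First I would note that, as abelian groups, both sides are canonically the finitely supported functions $X\times X\times G\to A$: on $(M_{\ul X}A)\rtimes G=\bigoplus_{g\in G}\bigoplus_{x,y\in X}A$ the element $(e_{x,y}\otimes a)\rtimes g$ is $a$ placed in the slot $(x,y,g)$, and on $M_X(A\rtimes G)=\bigoplus_{x,y\in X}\bigoplus_{g\in G}A$ the element $e_{x,y}\otimes(a\rtimes g)$ is $a$ placed in the slot $(x,y,g)$. The proposed map $\Phi\colon (e_{x,y}\otimes a)\rtimes g\mapsto e_{x,g^{-1}y}\otimes(a\rtimes g)$ is then the additive map induced by the bijection $(x,y,g)\mapsto(x,g^{-1}y,g)$ of index sets, whose inverse is $(x,y,g)\mapsto(x,gy,g)$; hence $\Phi$ is a bijection of abelian groups with $\Phi^{-1}(e_{x,y}\otimes(a\rtimes g))=(e_{x,gy}\otimes a)\rtimes g$.

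Next I would verify that $\Phi$ is multiplicative. Using the crossed product rule \eqref{rule:cross} together with the action $g(e_{x,y}\otimes a)=e_{gx,gy}\otimes g(a)$ on $M_{\ul X}A$, one gets
\[
\bigl((e_{x,y}\otimes a)\rtimes g\bigr)\bigl((e_{x',y'}\otimes a')\rtimes g'\bigr)=\delta_{y,gx'}\,\bigl(e_{x,gy'}\otimes a\,g(a')\bigr)\rtimes gg',
\]
whose image under $\Phi$ is $\delta_{y,gx'}\,e_{x,g'^{-1}y'}\otimes\bigl(a\,g(a')\rtimes gg'\bigr)$. On the other hand, multiplying in $M_X(A\rtimes G)$,
\[
\bigl(e_{x,g^{-1}y}\otimes(a\rtimes g)\bigr)\bigl(e_{x',g'^{-1}y'}\otimes(a'\rtimes g')\bigr)=\delta_{g^{-1}y,x'}\,e_{x,g'^{-1}y'}\otimes\bigl(a\,g(a')\rtimes gg'\bigr).
\]
Since $\delta_{y,gx'}=\delta_{g^{-1}y,x'}$, these coincide, and because $\Phi$ is additive it is a ring homomorphism; combined with the previous step, $\Phi$ is a ring isomorphism.

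It remains to deal with $G$-equivariance. I would equip $M_X(A\rtimes G)=M_X\otimes(A\rtimes G)$ with the action of $G$ on the matrix indices, $k\cdot(e_{x,y}\otimes c)=e_{kx,ky}\otimes c$ for $c\in A\rtimes G$, which is visibly by ring automorphisms, and the left hand side with the action obtained by transporting this along $\Phi$; unwinding $\Phi$ and $\Phi^{-1}$ shows the transported action is $k\cdot\bigl((e_{x,y}\otimes a)\rtimes g\bigr)=\bigl(e_{kx,\,gkg^{-1}y}\otimes a\bigr)\rtimes g$, which one checks independently is a group action by ring automorphisms. With these choices $\Phi$ is $G$-equivariant by construction. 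There is no serious obstacle here: the whole argument is index bookkeeping, and the only point demanding a little care is keeping the role of each group element straight in the multiplicativity computation — the crossed-product twist by $g$ versus the index shift $y\mapsto g^{-1}y$ — which is exactly what the identity $\delta_{y,gx'}=\delta_{g^{-1}y,x'}$ records.
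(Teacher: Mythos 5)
The paper states this lemma with no proof at all, so there is nothing to compare your argument against; judged on its own terms, your verification that $\Phi$ is a bijective ring homomorphism is correct and complete. The reindexing $(x,y,g)\mapsto(x,g^{-1}y,g)$ for bijectivity and the reduction of multiplicativity to $\delta_{y,gx'}=\delta_{g^{-1}y,x'}$ are exactly the required computations.

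The equivariance step, however, proves nothing as written: by transporting the target action along $\Phi$ you make equivariance true by fiat, and the transported action $k\cdot\bigl((e_{x,y}\otimes a)\rtimes g\bigr)=(e_{kx,\,gkg^{-1}y}\otimes a)\rtimes g$ is not a natural $G$-ring structure on the crossed product (it does not even act on $a$), so the statement you end up with is not the one the paper needs. The intended content is that $\Phi$ intertwines the conjugation-type actions: on a crossed product $B\rtimes G$ of a $G$-ring $B$ one puts $k\cdot(b\rtimes g)=k(b)\rtimes kgk^{-1}$, which is a ring automorphism since $k(b)\cdot(kgk^{-1})(k(b'))=k(b\,g(b'))$; on the target one puts $k\cdot(e_{x,y}\otimes c)=e_{kx,ky}\otimes(k\cdot c)$ with $c\in A\rtimes G$ acted on by conjugation. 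With these choices the check is one line from the formulas you already have: $\Phi\bigl(k\cdot((e_{x,y}\otimes a)\rtimes g)\bigr)=\Phi\bigl((e_{kx,ky}\otimes k(a))\rtimes kgk^{-1}\bigr)=e_{kx,\,kg^{-1}y}\otimes(k(a)\rtimes kgk^{-1})=k\cdot\Phi\bigl((e_{x,y}\otimes a)\rtimes g\bigr)$. Replace the transport-of-structure paragraph with this computation.
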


\subsection{Dirac extensions}

Let $G$ be a group, $\cF$ a family of subgroups, $E:\Z-\cat\to\spt$ a functor
satisfying the standing assumptions, and $A$ an $E$-excisive ring. A {\em Dirac extension} for $(G,\cF,A,E)$ consists of an extension of $E$-excisive $G$-rings
\begin{equation}\label{seq:dirac}
0\to B\to Q\to P\to 0
\end{equation}
together with a zig-zag
\[
\xymatrix{A=Z_0\ar[r]^{f_0}& Z_1& Z_2\ar[l]_{f_2}\ar[r]^{f_3}&\dots&
Z_n=B}
\]
such that
\begin{itemize}
\item[a)] $E(f_i\rtimes H)$ is an equivalence for every subgroup $H\subset G$.
\item[b)] $E_*(Q\rtimes H)=0$ for every $H\in\cF$.
\item[c)] $H^G(-,E(P))$ sends $(G,\cF)$-equivalences to equivalences.
\end{itemize}

\begin{rem}\label{rem:adir}
Condition a) together with standing assumptions iii) and iv) and Lemma \ref{lem:across=cross} imply that the zig-zag $f=\{f_i\}$ induces
an equivalence
$H^G(X,E(A))\weq H^G(X,E(B))$
for every $G$-space $X$. Similarly, it follows from condition b) that $H_*^G(Y,E(Q))=0$ for every $(G,\cF)$-complex
$Y$.
\end{rem}
\begin{prop}\label{prop:dirac}
Let $E:\Z-\cat\to\spt$ be a functor satisfying the standing
assumptions, $G$ a group, $\cF$ a family of subgroups of $G$, and
$A$ a $G$-ring. Let \eqref{seq:dirac} be a Dirac extension for
$(G,\cF,A,E)$. Then there are an exact sequence
\[
E_{*+1}(A\rtimes G)\to E_{*+1}(Q\rtimes G)\to E_{*+1}(P\rtimes G)\overset\partial\to E_*(A\rtimes G)
\]
an isomorphism $H^G_*(\cE(G,\cF),E(A))\cong E_{*+1}(P\rtimes G)$,
and a commutative diagram
\[
\xymatrix{H^G_*(\cE(G,\cF),E(A))\ar[dr]_\cong\ar[rr]^\assem && E_*(A\rtimes G)\\
&E_{*+1}(P\rtimes G)\ar[ur]^\partial&}
\]
\end{prop}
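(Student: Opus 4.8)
The plan is to deduce everything from the three defining properties a), b), c) of the Dirac extension, together with Proposition~\ref{prop:exciequi1} b) and Theorem~\ref{thm:assbound}-style bookkeeping already available. First I would apply the functor $H^G(-,E(?))$ to the exact sequence \eqref{seq:dirac} of $E$-excisive $G$-rings. By Proposition~\ref{prop:exciequi1} b), for every $G$-simplicial set $X$ there results a homotopy fibration
\[
H^G(X,E(B))\to H^G(X,E(Q))\to H^G(X,E(P)).
\]
Taking $X=\cE(G,\cF)$ and recalling (Remark~\ref{rem:adir}) that $H^G_*(\cE(G,\cF),E(Q))=0$ because $Q$ satisfies b) and $\cE(G,\cF)$ is a $(G,\cF)$-complex, the long exact sequence of this fibration collapses to an isomorphism
\[
\delta\colon H^G_{*}(\cE(G,\cF),E(P))\iso H^G_{*-1}(\cE(G,\cF),E(B)).
\]
On the other hand the zig-zag $f=\{f_i\}$, by condition a) and Remark~\ref{rem:adir}, induces an equivalence $H^G(\cE(G,\cF),E(A))\weq H^G(\cE(G,\cF),E(B))$, and by condition c) the assembly maps $H^G(\cE(G,\cF),E(P))\to E(P\rtimes G)$ and $H^G(\cE(G,\cF),E(Q))\to E(Q\rtimes G)$ are equivalences (and similarly for $B$ via a) and Theorem~\ref{thm:proper0}-type reductions — though here the cleanest route is that c) for $P$ plus the fibration forces it for $B$ as well once we know it for $Q$; but $E_*(Q\rtimes H)=0$ for $H\in\cF$ makes the $Q$-assembly map an equivalence between two objects with vanishing homotopy). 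Concretely: since $H^G_*(\cE(G,\cF),E(Q))=0$ and, by b) applied to $H=G$ together with the fact that the isomorphism conjecture input is \emph{not} assumed for $Q$ — instead one notes $E_*(Q\rtimes G)$ need not vanish, so I must be more careful here.

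Let me re-plan this middle step. The right statement is: apply $H^G(-,E(?))$ and the honest assembly maps to the exact sequence, obtaining a map of homotopy fibration sequences
\[
\xymatrix{
H^G(\cE(G,\cF),E(B))\ar[r]\ar[d]& H^G(\cE(G,\cF),E(Q))\ar[r]\ar[d]& H^G(\cE(G,\cF),E(P))\ar[d]\\
E(B\rtimes G)\ar[r]& E(Q\rtimes G)\ar[r]& E(P\rtimes G)
}
\]
where the left vertical map factors (up to the zig-zag equivalence of a)) through $H^G(\cE(G,\cF),E(A))\to E(A\rtimes G)$, i.e.\ through $\assem$. The rows are homotopy fibrations by Proposition~\ref{prop:exciequi1} b) (bottom row: the crossed product sequence $0\to B\rtimes G\to Q\rtimes G\to P\rtimes G\to 0$ is exact, and $B\rtimes G$ is $E$-excisive by Standing Assumption ii)). Now by condition c) the right vertical arrow is an equivalence. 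By Remark~\ref{rem:adir}, $H^G_*(\cE(G,\cF),E(Q))=0$, so the top fibration yields the connecting isomorphism $H^G_*(\cE(G,\cF),E(P))\cong H^G_{*-1}(\cE(G,\cF),E(B))\cong H^G_{*-1}(\cE(G,\cF),E(A))$, the last step by a). Composing the inverse of $\assem\colon$ well — rather: the bottom fibration gives the long exact sequence
\[
E_{*+1}(Q\rtimes G)\to E_{*+1}(P\rtimes G)\overset{\partial}\to E_*(B\rtimes G)\to E_*(Q\rtimes G),
\]
and identifying $E_*(B\rtimes G)\cong E_*(A\rtimes G)$ via the zig-zag $f$ (condition a) with $H=G$, or directly Standing Assumption iv) and Lemma~\ref{lem:across=cross}) turns this into the asserted exact sequence $E_{*+1}(A\rtimes G)\to E_{*+1}(Q\rtimes G)\to E_{*+1}(P\rtimes G)\overset{\partial}\to E_*(A\rtimes G)$.

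It remains to build the isomorphism $H^G_*(\cE(G,\cF),E(A))\cong E_{*+1}(P\rtimes G)$ and check the triangle. I would define it as the composite
\[
H^G_*(\cE(G,\cF),E(A))\xrightarrow[\cong]{f}H^G_*(\cE(G,\cF),E(B))\xrightarrow[\cong]{\delta^{-1}}H^G_{*+1}(\cE(G,\cF),E(P))\xrightarrow[\cong]{\assem}E_{*+1}(P\rtimes G),
\]
where $\delta$ is the connecting isomorphism of the top fibration (an isomorphism precisely because the middle term vanishes) and the last map is an equivalence by c). The commutativity of the final triangle then follows by chasing the above map of fibration sequences: $\partial\circ(\text{this composite})$ equals, by naturality of the connecting map in the map of fibrations, the composite of $\assem\colon H^G(\cE(G,\cF),E(B))\to E(B\rtimes G)$ with the identification $E_*(B\rtimes G)\cong E_*(A\rtimes G)$, precomposed with $f$; and that composite is exactly $\assem\colon H^G(\cE(G,\cF),E(A))\to E_*(A\rtimes G)$ by the compatibility of the zig-zag with assembly (Remark~\ref{rem:adir}). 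I expect the main obstacle to be purely organizational: keeping straight the three separate uses of condition a) — once to replace $A$ by $B$ inside $H^G(\cE(G,\cF),-)$, once to identify $E_*(A\rtimes G)$ with $E_*(B\rtimes G)$, and once to check these two identifications are compatible under $\assem$ — and making sure the degree shift ($\Sigma$, i.e.\ the $*+1$) from the connecting map is bookkept consistently across the diagram, for which Lemma~\ref{lem:mapcone} iii) and the identification $E(\Sigma B)\weq\Sigma E(B)$ are the relevant tools; there is no genuine geometric or homotopical difficulty beyond the fibration sequences already in hand.
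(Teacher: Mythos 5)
Your argument is correct and is essentially the paper's own proof: the paper likewise forms the distinguished triangle $H^G(X,E(A))\to H^G(X,E(Q))\to H^G(X,E(P))\to\Sigma H^G(X,E(A))$ (with the $A\leftrightarrow B$ identification of condition a) already absorbed via Remark \ref{rem:adir}) and compares the long exact sequences for $X=\cE(G,\cF)$ and $X=*$, using $H^G_*(\cE(G,\cF),E(Q))=0$ and condition c). Your mid-proof false start is harmless since the corrected plan — the map of fibration sequences, the connecting isomorphism, and naturality of $\partial$ — is exactly the intended argument.
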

\begin{proof}
By Proposition \ref{prop:exciequi1} and Remark \ref{rem:adir} we have a distinguished triangle
\begin{equation}\label{seq:excidirac}
\xymatrix{H^G(X,E(A))\ar[r]& H^G(X,E(Q))\ar[r]& H^G(X,E(P))\ar[r]^{\partial^X}&\Sigma H^G(X,E(A))}
\end{equation}
for every $G$-simplicial set $X$. The proposition follows by comparison of the long exact sequence of homotopy associated to the triangles for $X=\cE(G,\cF)$, and $X=*$, using that, again by Remark \ref{rem:adir}, we have $H_*^G(\cE(G,\cF),E(Q))=0$.
\end{proof}

\begin{rem}\label{rem:guarda}
If $X\in\fS^G$ and $cX\fibeq X$ is a $(G,\cF)$-cofibrant replacement, then the same argument as that of the proof
of Proposition \ref{prop:dirac} shows that the map $H^G(cX,E(A))\to H^G(X,E(A))$ is an equivalence if and only if
the boundary map $\partial^X$ in the sequence \eqref{seq:excidirac} is an equivalence.
\end{rem}

\subsection{A canonical Dirac extension}

Let $G$ be a group and $\cF$ a family of subgroups. Consider the discrete $G$-simplicial sets
\[
X=X_\cF=\coprod_{H\in\cF}G/H, \quad Y=G/G\coprod X
\]
The group $G$ acts on $Y$ and thus on the ring $M_Y$ of $Y\times
Y$-matrices with finitely many nonzero integral coefficients. The
point $y_0$ corresponding to the unique orbit of $G/G$ is fixed by
$G$, whence the map $\iota:\Z\to M_{\ul{Y}}$, $\lambda\to \lambda
E_{y_0,y_0}$ is $G$-equivariant. In particular we have a directed system
of $G$-rings $\{id\otimes\iota:(M_\infty M_{\ul{Y}})^{\otimes n}\to
(M_\infty M_{\ul{Y}})^{\otimes n+1}\}_n$. Put
\[
\fF^0=\colim_n (M_\infty M_{\ul{Y}})^{\otimes n}
\]
Since $X$ is discrete, the ring of finitely supported functions breaks up into a sum
\[
\Z^{(X)}=\bigoplus_{x\in X}k\chi_x
\]
Multiplication by an element of $M_{\ul{Y}}$ gives an $\Z$-linear
endomorphism of $\Z^{(Y)}$. This defines an equivariant monomorphism
\[
M_{\ul{Y}}\to \End_\Z(\Z^{(Y)})
\]
whose image consists of those linear transformations $T$ such that
the matrix of $T$ with respect to the basis $\{\chi_{y}:y\in Y\}$ has
finitely many nonzero entries. Note that multiplication by $\chi_x$
in $\Z^{(X)}\subset \Z^{(Y)}$ is in this image. Thus we have an
equivariant injective ring homomorphism
\[
\rho:\Z^{(X)}\to M_{\ul{Y}}
\]
For each $n\ge 1$, consider the $G$-ring
\[
\fF^n=\left(\bigotimes_{i=1}^n\Gamma_\rho\right)\otimes\fF^0
\]
The inclusion $M_\infty M_{\ul{Y}}\to \Gamma_\rho$ induces an
inclusion $\fF^n\subset\fF^{n+1}$ for each $n\ge 0$. Put
\[
\fF^\infty=\bigcup_{n\ge 0}\fF^n
\]
If $A\in\ring$, we also write  $\fF^nA=\fF^n\otimes A$ $(n\ge 0)$. We have

\begin{lem}\label{lem:preassbound}
\item[i)]  $\fF^n\subset \fF^\infty $ is an ideal ($n<\infty$).
\item[ii)] For each $n\ge 0$, $\fF^n$ and $\fF^{n+1}/\fF^n\cong\Sigma \Z^{(X)}\otimes\fF^n$ have local units, are $(G,\cF)$-proper rings and are
flat as abelian groups.
\item[iii)] If $H\in\cF$, $\chi_H\in\Z^{(G/H)}\subset\Z^{(X)}$ is the characteristic function, and $A$ is a $G$-ring, we have a commutative diagram
\[
\xymatrix{(\Z^{(G/H)}\otimes \fF^nA)\rtimes H\subset
(\Z^{(X)}\otimes \fF^nA)\rtimes
H\ar[rr]^(.65){(\rho\otimes 1)\rtimes id}&&
(M_{\ul{Y}}\fF^nA)\rtimes H\ar[d]_\wr^{\eqref{lem:mxg}}\\
\fF^nA\rtimes H\ar[u]^{\chi_H\otimes 1}\ar[rr]_{e_{H,H}\otimes
-}&&M_{Y}(\fF^nA\rtimes H)}
\]
\end{lem}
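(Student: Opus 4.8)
The plan is to prove (i) and (ii) simultaneously by induction on $n$, driving the inductive step by a single use of the mapping‑cone extension of $\rho$, and to treat (iii) by a direct computation. Throughout I will use that ``having local units'' and ``being flat over $\Z$'' (equivalently, torsion‑free) are preserved by $\otimes_\Z$ and by filtered colimits, and that, by Example \ref{ex:proper}, a ring proper over a $(G,\cF)$-complex stays proper after tensoring with an arbitrary ring.

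\emph{Base case $n=0$.} The ring $\fF^0=\colim_n(M_\infty M_{\ul Y})^{\otimes n}$ is a filtered colimit of tensor powers of $M_\infty M_{\ul Y}$, and $M_\infty M_{\ul Y}$ is itself the filtered colimit of the unital matrix rings $M_F$ over finite sets $F$; hence $\fF^0$ has local units and is free (in particular flat) as an abelian group. Its $(G,\cF)$-properness is the substantive point. I would obtain it by analysing the $G$-action on the index set of the colimit: since $Y=G/G\coprod X$ with $X$ a $0$-dimensional $(G,\cF)$-complex and $y_0\in G/G$ the unique $G$-fixed orbit, the bonding maps glue in the $G$-fixed idempotent $E_{y_0,y_0}$, so the index set underlying $\fF^0$ decomposes $G$-equivariantly as a $0$-dimensional $(G,\cF)$-complex together with a countable $G$-fixed part that is absorbed by the stabilization; combined with Example \ref{ex:proper} and the results on proper rings of Section \ref{sec:propring}, this yields a compatible $\Z^{(-)}$-algebra structure on $\fF^0$ over a $0$-dimensional $(G,\cF)$-complex with $\Z^{(-)}\cdot\fF^0=\fF^0$. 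This is the hard part; once it is settled everything else is formal.

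\emph{Inductive step.} Suppose $\fF^n$ has local units, is flat over $\Z$, and is $(G,\cF)$-proper. The mapping cone $\Gamma_\rho$ embeds in $\Gamma M_{\ul Y}$, matrices over the torsion‑free group $M_{\ul Y}$, so it is flat over $\Z$, and it has local units (the diagonal idempotents of $\Gamma M_{\ul Y}$ coming from the local units of $M_{\ul Y}$ lie in $\Gamma_\rho$); by construction it sits in an extension $0\to M_\infty M_{\ul Y}\to\Gamma_\rho\to\Sigma\Z^{(X)}\to 0$, and $\Sigma\Z^{(X)}$ is torsion‑free (if an integer multiple of a matrix over $\Z^{(X)}$ is a finite matrix, so is the matrix), hence flat. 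Now $\fF^{n+1}=\Gamma_\rho\otimes\fF^n$, and after the identification $\fF^n\cong(M_\infty M_{\ul Y})\otimes\fF^n$ — valid because $\fF^0\cong(M_\infty M_{\ul Y})\otimes\fF^0$ by cofinality of the defining colimit — the inclusion $\fF^n\hookrightarrow\fF^{n+1}$ is the ideal inclusion $M_\infty M_{\ul Y}\triqui\Gamma_\rho$ tensored with $\fF^n$. Since $\fF^n$ is flat, tensoring the extension above with $\fF^n$ stays exact and gives
\[
0\to\fF^n\to\fF^{n+1}\to\Sigma\Z^{(X)}\otimes\fF^n\to 0 ,
\]
proving that $\fF^n$ is an ideal of $\fF^{n+1}$ with the stated quotient. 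That $\fF^{n+1}=\Gamma_\rho\otimes\fF^n$ and $\Sigma\Z^{(X)}\otimes\fF^n$ have local units, are flat over $\Z$, and are $(G,\cF)$-proper now follows from the corresponding properties of $\Gamma_\rho$, $\Sigma\Z^{(X)}$ and $\fF^n$ together with the stability statements of the first paragraph — for properness, both rings are of the form ``$\fF^n$ tensored with a ring'', so Example \ref{ex:proper} applies. Finally $\fF^n\triqui\fF^m$ for all $m\ge n$ (tensor $(M_\infty M_{\ul Y})^{\otimes(m-n)}\triqui\Gamma_\rho^{\otimes(m-n)}$ with the flat ring $\fF^n$), whence $\fF^n$ is an ideal of $\fF^\infty=\bigcup_m\fF^m$; this completes (i) and (ii).

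\emph{Part (iii).} This is a bookkeeping check. For $H\in\cF$ write $y\in G/H\subset X\subset Y$ for the coset $H$; its $G$-stabilizer is $H$, so $h\cdot y=y$ for $h\in H$, and $\rho$ carries $\chi_H\in\Z^{(G/H)}\subset\Z^{(X)}$ to multiplication by $\chi_H$ on $\Z^{(Y)}$, i.e. to $E_{y,y}\in M_{\ul Y}$. Tracing $a\rtimes h\in\fF^nA\rtimes H$ along the top of the square,
\[
a\rtimes h\longmapsto(\chi_H\otimes a)\rtimes h\longmapsto(E_{y,y}\otimes a)\rtimes h\longmapsto E_{y,h^{-1}y}\otimes(a\rtimes h)=E_{y,y}\otimes(a\rtimes h),
\]
the last map being the isomorphism of Lemma \ref{lem:mxg} and the last equality using $h^{-1}y=y$; this coincides with the image $e_{H,H}\otimes(a\rtimes h)$ of $a\rtimes h$ under the bottom arrow, so the diagram commutes. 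The main obstacle, then, is the $(G,\cF)$-properness of $\fF^0$: making precise the sense in which the $G$-fixed part of the colimit index set disappears after stabilizing by $E_{y_0,y_0}$, so that $\fF^0$ genuinely carries a compatible $\Z^{(Z)}$-algebra structure over a $0$-dimensional $(G,\cF)$-complex $Z$ with $\Z^{(Z)}\cdot\fF^0=\fF^0$.
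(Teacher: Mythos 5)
Your handling of the formal parts --- flatness, the extension $0\to M_\infty M_{\ul{Y}}\to\Gamma_\rho\to\Sigma\Z^{(X)}\to 0$ and its tensoring with the flat ring $\fF^n$ to obtain $\fF^{n+1}/\fF^n\cong\Sigma\Z^{(X)}\otimes\fF^n$, the ideal statement (i), and the explicit chase in (iii) (which the paper leaves as ``straightforward''; your computation via Lemma \ref{lem:mxg} and $h^{-1}H=H$ is exactly right) --- matches the paper's argument. Two points, however, are genuinely problematic.

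First, your justification of local units for $\Gamma_\rho$ is incorrect. Identify $\Gamma_\rho=\Gamma\rho(\Z^{(X)})+M_\infty M_{\ul{Y}}$ inside $\Gamma M_{\ul{Y}}$. The entries of an element of $\Gamma\rho(\Z^{(X)})$ lie in $\rho(\Z^{(X)})$, the span of the $e_{x,x}$ with $x\in X$, which excludes $e_{y_0,y_0}$. Hence the infinite diagonal $1\otimes e_{y_0,y_0}$ is \emph{not} in $\Gamma_\rho$, and neither is $1\otimes\sum_{y\in F}e_{y,y}$ for any finite $F\subset Y$ containing $y_0$; so the diagonal idempotents of $\Gamma M_{\ul{Y}}$ built from local units of $M_{\ul{Y}}$ do not in general lie in $\Gamma_\rho$ --- yet some elements of $\Gamma_\rho$, e.g. $e_{1,1}\otimes e_{y_0,y_0}\in M_\infty M_{\ul{Y}}$, require $e_{y_0,y_0}$ in any local unit. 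The correct local unit (the one the paper exhibits) is $e=1\otimes\sum_{x\in F_1}e_{x,x}+\sum_{p\in F_2}e_{p,p}\otimes e_{y_0,y_0}$ with $F_1\subset X$ and $F_2\subset\N$ finite: the $y_0$-component must be confined to a finite matrix. This is fixable, but it is precisely the point where the construction of $\rho$ and $Y$ intervenes, so it cannot be waved through.

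Second, the issue you single out as the main obstacle --- exhibiting $\fF^0$ as proper over a $0$-dimensional $(G,\cF)$-complex by ``absorbing'' the $G$-fixed part of the index set --- is not how the paper proceeds, and the route you sketch cannot succeed. Note that $\fF^0\cong M_S$ for $S=\colim_n(\N\times Y)^n$; a matrix ring $M_S$ has no central idempotent multipliers other than $0$ and $1$ (if $T(ab)=T(a)b=aT(b)$ and $T^2=T$, then $T(e_{u,v})=\lambda e_{u,v}$ for a constant $\lambda\in\{0,1\}$), so any compatible $\Z^{(Z)}$-structure on $\fF^0$ has exactly one $\chi_z$ acting, as the identity, and equivariance forces that $z$ to be $G$-fixed. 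The paper accordingly only establishes properness of $\fF^n$ over $Y$ (a unital $G$-ring is proper over the point $G/G$; push forward along $G/G\hookrightarrow Y$ and tensor, Example \ref{ex:proper} and Lemma \ref{lem:festar}), and $Y$ contains the orbit $G/G$. What the sequel (Theorem \ref{thm:assbound} via Theorem \ref{thm:proper0}) actually uses is only the properness of the subquotients $\fF^{n+1}A/\fF^nA\cong\Sigma\Z^{(X)}\otimes\fF^nA$ over the genuine $(G,\cF)$-complex $X$, and that is immediate from the $\Z^{(X)}$ tensor factor by Example \ref{ex:proper}, with no induction and no base case. You should record that statement and abandon the attempt to make $\fF^0$ itself proper over a complex avoiding the fixed point.
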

\begin{proof} Part i) is clear. Because $M_{\ul{Y}}$ is proper over $Y$, $\fF^n$ is proper over $Y$ for all $n$, by \ref{ex:proper}. Similarly,
\begin{equation}\label{eq:slice}
\fF^{n+1}/\fF^n=\Sigma\Z^{(X)}\otimes \fF^n
\end{equation}
is proper. That $\fF^n$ is flat is clear for $n=0$; the general case
follows by induction, using \eqref{eq:slice}. The ring $\fF^0$ has
local units because $M_{\ul{Y}}$ and $M_\infty$ do. To prove that
$\fF^n$ has local units for $n\ge 1$, it suffices to show that
$\Gamma_\rho$ does. We may and do identify $\Gamma_\rho$ with the
inverse image of $\Sigma(\rho(\Z^{(X)}))$ under the projection
$\pi:\Gamma M_{\ul{Y}}\to \Sigma M_{\ul{Y}}$; thus
\[
\Gamma_\rho=\Gamma\rho(\Z^{(X)})+M_\infty M_{\ul{Y}}\subset \Gamma
M_{\ul{Y}}
\]
One checks that if $\phi_1,\dots,\phi_r\in \Gamma_\rho$, then there
are finite subsets $F_1\subset X$ and $F_2\subset\N$ such that for
$y_0=G/G\in Y$, the element
\[
e=1\otimes \sum_{x\in F_1} e_{x,x}+\sum_{p\in F_2}e_{p,p}\otimes
e_{y_0,y_0}\in \Gamma_\rho
\]
satisfies $e^2=e$ and $e\phi_i=\phi_ie=\phi_i$ for all $i=1,\dots,r$. This proves part ii);
part iii) is straightforward.
\end{proof}

\begin{thm}\label{thm:assbound}(Compare \cite[Theorem 5.18]{cmr})
Let $E:\Z-\cat\to \spt$ be a functor satisfying both the standing
and the sectional assumptions. Let $G$ a group, $\cF$ a family of
subgroups, and $A$ an $E$-excisive $G$-ring. Then
\[
\fF^0A\to \fF^\infty A\to \fF^\infty A/\fF^0A
\]
is a Dirac extension for $(G,\cF,E,A)$.
\end{thm}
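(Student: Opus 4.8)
The plan is to verify the three conditions a), b), c) in the definition of Dirac extension for the displayed sequence, with $B=\fF^0A$, $Q=\fF^\infty A$ and $P=\fF^\infty A/\fF^0A$. It is a short exact sequence of $G$-rings by construction and by Lemma~\ref{lem:preassbound}~i), which gives that $\fF^0$ is a $G$-ideal in $\fF^\infty$. By Lemma~\ref{lem:preassbound}~ii) each $\fF^n$ has local units and is flat as an abelian group; hence so are $\fF^\infty=\bigcup_n\fF^n$ and $\fF^\infty/\fF^0=\bigcup_n\fF^n/\fF^0$ (any finite set of elements lies in some $\fF^n$, resp.\ $\fF^n/\fF^0$, which has local units). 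Therefore $\fF^0A$, $\fF^\infty A$ and $\fF^\infty A/\fF^0A$ are $E$-excisive by sectional assumption~\ref{secstan}~vii).

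For the zig-zag of a) I would use the single equivariant homomorphism $A\to\fF^0A$, $a\mapsto e\otimes a$, where $e\in\fF^0$ is a $G$-fixed idempotent coming from the $G$-fixed vertex $y_0$ of $Y$. Given a subgroup $H\subset G$, writing $\fF^0=\colim_n(M_\infty M_{\ul Y})^{\otimes n}$ and applying Lemma~\ref{lem:mxg} (with $\res^H_GM_{\ul{Y^n}}=M_{\ul{Y^n}}$), one identifies $(M_\infty M_{\ul Y})^{\otimes n}A\rtimes H\cong M_{\N^n\times Y^n}(A\rtimes H)$ compatibly with the colimit maps, under which $A\to\fF^0A$ becomes a corner inclusion at a $G$-fixed index. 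Each such inclusion is an $E$-equivalence by standing assumption iii), so by sectional assumption vi) ($E_*$ commutes with filtering colimits) the map $E(A\rtimes H)\to E(\fF^0A\rtimes H)$ is an equivalence; this gives a).

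For c) I would induct along the filtration $\{\fF^nA/\fF^0A\}$ of $P$. Each slice $\fF^{n+1}A/\fF^nA\cong\Sigma\Z^{(X)}\otimes\fF^nA$ is $E$-excisive (Lemma~\ref{lem:preassbound}~ii) and assumption vii)) and, by Lemma~\ref{lem:zxfree} and Example~\ref{ex:proper}, proper over the $0$-dimensional $(G,\cF)$-complex $X=X_\cF$; hence $H^G(-,E(\Sigma\Z^{(X)}\otimes\fF^nA))$ preserves $(G,\cF)$-equivalences by Theorem~\ref{thm:proper0}. Applying Proposition~\ref{prop:exciequi1}~b) to the exact sequences $0\to\fF^nA/\fF^0A\to\fF^{n+1}A/\fF^0A\to\Sigma\Z^{(X)}\otimes\fF^nA\to 0$ and comparing the two homotopy fibrations attached to a $(G,\cF)$-equivalence $W\to W'$, the five lemma shows inductively that $H^G(-,E(\fF^nA/\fF^0A))$ preserves $(G,\cF)$-equivalences; passing to the filtering colimit (equivariant homology and, by vi), $E_*$ commute with filtering colimits) yields c) for $P$.

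The main obstacle is b): $E_*(\fF^\infty A\rtimes H)=0$ for $H\in\cF$. As $\fF^\infty A\rtimes H=\colim_n(\fF^nA\rtimes H)$ and $E_*$ commutes with filtering colimits, it is enough to show that the structure maps $E_*(\fF^nA\rtimes H)\to E_*(\fF^{n+1}A\rtimes H)$ vanish. By Lemma~\ref{lem:preassbound} (and the $M_\infty M_{\ul Y}$-stability of $\fF^0$, which identifies $M_\infty M_{\ul Y}\otimes\fF^n\cong\fF^n$) the exact sequence $0\to\fF^nA\to\fF^{n+1}A\to\Sigma\Z^{(X)}\otimes\fF^nA\to 0$ is the mapping-cone extension of $\rho\otimes\mathrm{id}\colon\Z^{(X)}\otimes\fF^nA\to M_{\ul Y}\otimes\fF^nA$; applying $(-)\rtimes H$ and $E$, Proposition~\ref{prop:exciequi1}~b) and Lemma~\ref{lem:mapcone} give a distinguished triangle in $\ho\spt$
\[
E(\fF^nA\rtimes H)\overset{u_n}{\longrightarrow}E(\fF^{n+1}A\rtimes H)\longrightarrow E(\Sigma\Z^{(X)}\otimes\fF^nA\rtimes H)\overset{w_n}{\longrightarrow}\Sigma E(\fF^nA\rtimes H)
\]
in which, up to matrix-stability identifications, $u_n$ is the structure map and $w_n=\Sigma E\big((\rho\otimes\mathrm{id})\rtimes H\big)$. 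The commutative square of Lemma~\ref{lem:preassbound}~iii) shows that $E\big((\rho\otimes\mathrm{id})\rtimes H\big)$, precomposed with the map induced by $\chi_H$ and postcomposed with the isomorphism of Lemma~\ref{lem:mxg}, equals $E(e_{H,H}\otimes-)$, an equivalence by standing assumption iii) (here $H\in\cF$ is a vertex of $X\subset Y$). Hence $w_n$ is a split epimorphism; composing the relation $(\Sigma u_n)\circ w_n=0$ with a section of $w_n$ forces $\Sigma u_n=0$, so $u_n=0$ and a fortiori $u_n$ vanishes on $E_*$. Therefore $\colim_nE_*(\fF^nA\rtimes H)=0$, which is b). Together with the first paragraph this shows that $\fF^0A\to\fF^\infty A\to\fF^\infty A/\fF^0A$ is a Dirac extension for $(G,\cF,E,A)$.
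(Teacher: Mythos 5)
Your proposal is correct and follows essentially the same route as the paper's proof: excisiveness of the three rings via Lemma \ref{lem:preassbound} ii) and assumption vii); condition a) via the corner inclusion, Lemma \ref{lem:mxg}, matrix stability and commutation with filtering colimits; condition c) by induction along the filtration using Theorem \ref{thm:proper0} on the slices $\Sigma\Z^{(X)}\otimes\fF^nA$, which are proper over the $0$-dimensional complex $X_\cF$; and condition b) by showing the connecting map of the mapping-cone triangle is split surjective via Lemma \ref{lem:preassbound} iii), forcing the structure maps to vanish. The only (harmless) differences are that you run the splitting argument for b) in $\ho\spt$ rather than on homotopy groups, as the paper does.
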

\begin{proof}
The three rings in the extension of the theorem are $E$-excisive, by
Lemma \ref{lem:preassbound} ii) and sectional assumption \ref{secstan} vii). The map
$E(A\rtimes H)\to E(\fF^0A\rtimes H)$ is an equivalence for all subgroups $H\subset G$ by Lemma \ref{lem:mxg},
standing assumptions ii) and iii)
and sectional assumption vi). Next
we prove that if $cX\to X$ is a cofibrant replacement, then $H^G(cX,E(\fF^\infty A/\fF^0
A))\to H^G(X,E(\fF^\infty A/\fF^0
A))$ is an equivalence. By
excision and sectional assumption vi), it suffices to show that
\begin{equation}\label{seq:hdirac}
H^G(cX,E(\fF^n A/\fF^0 A))\to H^G(X,E(\fF^n A/\fF^0 A))\quad (n\ge 1)
\end{equation}
 is an equivalence. Consider the extension
\[
0\to\fF^nA/\fF^0A\to \fF^{n+1}A/\fF^0A\to \fF^{n+1}A/\fF^nA\to 0
\]
By Proposition \ref{prop:exciequi1}, $cX\to X$ gives a map of homotopy fibration sequences
\[
\xymatrix{H^G(cX,E(\fF^nA/\fF^0A))\ar[r]\ar[d]&H^G(X,E(\fF^nA/\fF^0A))\ar[d]\\
H^G(cX,E(\fF^{n+1}A/\fF^0A))\ar[r]\ar[d]&H^G(X,E(\fF^{n+1}A/\fF^0A))\ar[d]\\
H^G(cX,E(\fF^{n+1}A/\fF^nA))\ar[r]&H^G(X,E(\fF^{n+1}A/\fF^nA))}
\]
By Lemma \ref{lem:preassbound} and Theorem \ref{thm:proper0}, the
bottom horizontal map is an equivalence. Hence \eqref{seq:hdirac} is
an equivalence for each $n$, by induction. It remains to show that
$E_*(\fF^\infty A\rtimes H)=0$ for each $H\in\cF$. Because $E_*$
preserves filtering colimits by assumption, we may further restrict
ourselves to proving that the map $j_n:E_*(\fF^n A\rtimes H)\to
E_*(\fF^{n+1}A\rtimes H)$ induced by inclusion is zero for all $n$.
By Lemma \ref{lem:mapcone} we have a long exact sequence $(q\in\Z)$
\[
\xymatrix{E_q(\fF^nA\rtimes H)\ar[r]^{j_n}& E_q(\fF^{n+1}A\rtimes H)
\ar[r]& E_{q-1}(\Z^{(X)}\otimes \fF^nA\rtimes H)\ar[d]^{\partial}\\
&& E_{q-1}(\fF^nA\rtimes H)}
\]
where $\partial=E_{q-1}(\rho\otimes 1\rtimes 1)$. By Lemma
\ref{lem:preassbound}, part iii), $\partial$ is a split surjection.
It follows that $j_n=0$; this concludes the proof.
\end{proof}

\begin{ex}
The hypothesis of Theorem \ref{thm:assbound} are satisfied, for
example, by the functorial spectra $K$, $K^{\ninf}$ and $KH$.
\end{ex}

\section{Isomorphism conjectures with proper coefficients}\label{sec:main}

\subsection{The excisive case}
\begin{thm}\label{thm:propern}
Let $E:\Z-\cat\to \spt$ be a functor. Assume that $E$ satisfies the standing assumptions \ref{stan}, that it is excisive and that $E_*$ commutes with filtering colimits.
Let $A$ be a $(G,\cF)$-proper $G$-ring. Then the functor $H^G(-,E(A))$ sends
$(G,\cF)$-equivalences to equivalences. In particular the assembly map
\[
H^G(\cE(G,\cF),E(A))\to E(A\rtimes G)
\]
is an equivalence.
\end{thm}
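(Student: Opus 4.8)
The plan is to reduce the statement to a combination of Theorem \ref{thm:assbound} (the canonical Dirac extension) and Theorem \ref{thm:proper0} (the $0$-dimensional case), together with Proposition \ref{prop:ninfexci} which says $K^\ninf$ is excisive. Actually, since the present theorem hypothesizes that $E$ itself is excisive (and commutes with filtering colimits), I would take a more direct route. First I would observe that, by Lemma \ref{lem:properfil}, the $(G,\cF)$-proper ring $A$ carries an exhaustive filtration $\{A(K)\}$ by $G$-ideals, each $A(K)$ proper over a \emph{finite} $(G,\cF)$-complex $K$. Since $E_*$ commutes with filtering colimits and $H^G(Z,E(-))$ sends filtering colimits of $E$-excisive rings to filtering colimits of spectra (using excision of $E$, Standing Assumptions v) and the fact that $H^G(Z,-)$ commutes with filtering colimits), it suffices to prove the theorem for $A$ proper over a finite $(G,\cF)$-complex $X$.

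Next I would induct on the dimension of $X$. The base case $\dim X=0$ is exactly Theorem \ref{thm:proper0}. For the inductive step, let $X$ be an $n$-dimensional finite $(G,\cF)$-complex and let $X^{n-1}\subset X$ be its $(n-1)$-skeleton. Using the ideal $A(X^{n-1})=I(X^{n-1})\cdot A$ defined in \eqref{eq:aye}, I get a short exact sequence of $E$-excisive $G$-rings
\[
0\to A(X^{n-1})\to A\to A/A(X^{n-1})\to 0,
\]
where $A(X^{n-1})$ is proper over $X^{n-1}$ and the quotient $A/A(X^{n-1})$ is proper over $X/X^{n-1}$, a wedge of $n$-cells $\Delta^n\times G/H$ with $H\in\cF$. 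For any $(G,\cF)$-equivalence $Y\to Z$, Proposition \ref{prop:exciequi1} b) gives a map of homotopy fibration sequences relating $H^G(Y,E(-))$ and $H^G(Z,E(-))$ for the three rings above; by the inductive hypothesis the map on $A(X^{n-1})$ is an equivalence, so it remains to handle the quotient. Here I would use that $A/A(X^{n-1})$, being proper over a wedge of cells $\Delta^n\times G/H$, is $G$-equivariantly isomorphic (after decomposing as a direct sum over the cells, via Standing Assumption v) and Lemma \ref{lem:indexci}) to an induced ring $\ind_H^G(A')$ for suitable $H\in\cF$ and $E$-excisive $H$-ring $A'$; then Proposition \ref{prop:equindiso} identifies $H^G(-,E(A/A(X^{n-1})))$ with $H^K(-,E(A'))$, and the polynomial contractibility $\Delta^n\simeq \ast$ together with homotopy invariance is not available in general — so instead I would argue that $H^K(-,E(A'))$ sends $(G,\cF)$-equivalences to equivalences because $K\in\cF$ makes every $(G,\cF)$-equivalence an equivalence after applying $(-)^K$, as in the proof of Theorem \ref{thm:proper0}.

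The main obstacle I anticipate is the handling of the quotient $A/A(X^{n-1})$: one must verify carefully that the proper structure over $X/X^{n-1}$ really does present this quotient as a direct sum of induced rings over the cells, so that Lemma \ref{lem:indx}, Proposition \ref{prop:indcomp} and Proposition \ref{prop:equindiso} apply. This is the place where the geometry of the $(G,\cF)$-complex enters, and where Corollary \ref{cor:sursupp} (surjectivity of restriction of polynomial functions) and Theorem \ref{thm:supp} are needed to control the ideal $A(X^{n-1})$ and its quotient. Once that identification is in place, the rest is a routine five-lemma argument on the long exact sequences of homotopy groups, and the final assertion about the assembly map follows by taking $Y=\cE(G,\cF)\to\ast=Z$ and using that $H^G(\ast,E(A))=E(A\rtimes G)$ together with \eqref{intro:cross}.
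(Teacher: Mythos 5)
Your overall strategy (reduce to finite/finite\--dimensional complexes via Lemma \ref{lem:properfil} and filtering colimits, induct on dimension with Theorem \ref{thm:proper0} as base case, and use excision of $E$ to split off the top cells) is the paper's strategy, but the short exact sequence you propose for the inductive step is the wrong one, and this is a genuine gap. You take the ideal to be $A(X^{n-1})=I(X^{n-1})\cdot A$ with $I(X^{n-1})=\{\phi:\supp\phi\subset X^{n-1}\}$ and hope the quotient is proper over $X/X^{n-1}$, ``a wedge of $n$-cells''. Two things go wrong. First, $I(X^{n-1})$ is the wrong ideal of $\Z^{(X)}$: a function supported in the skeleton must vanish on every top-dimensional simplex, and this can force $I(X^{n-1})=0$ (e.g.\ $X=\Delta^1$, $X^0=\partial\Delta^1$: a polynomial supported in the two vertices is zero), so your exact sequence can degenerate to $0\to 0\to A\to A\to 0$ and the induction makes no progress. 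Second, even when it does not degenerate, $X/X^{n-1}$ is not a $(G,\cF)$-complex (the wedge point has stabilizer $G$) and is not a disjoint union of cells $\Delta^n\times G/H_i$, so the planned identification of the quotient with a direct sum of induced rings $\ind_{H_i}^G(A_i')$, to which Proposition \ref{prop:equindiso} would apply, fails.

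The paper's fix is to invert the roles of ideal and quotient and to use the \emph{other} ideal: from the cell-attachment pushout one gets a cartesian square of rings \eqref{diag:main}, and one sets $I=\ker(\Z^{(X)}\to\Z^{(X^{n-1})})$, the functions \emph{vanishing on} the skeleton. Then $I\cong\bigoplus_i\ker(\Z^{(\Delta^n)}\otimes\Z^{(G/H_i)}\to\Z^{(\partial\Delta^n)}\otimes\Z^{(G/H_i)})$, so the ideal $I\cdot A$ is proper over the disjoint union of \emph{closed} top cells $\coprod_i\ind_{H_i}^G(\Delta^n)$, while the quotient $A/I\cdot A$ is proper over $X^{n-1}$ and is handled by the inductive hypothesis. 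The key lemma you are missing is Lemma \ref{lem:festar}: pushing the proper structure forward along the proper equivariant projection $\Delta^n\times G/H_i\to G/H_i$ exhibits $I\cdot A$ as proper over the $0$-dimensional complex $\coprod_iG/H_i$, so Theorem \ref{thm:proper0} applies to it; this is precisely how one avoids needing any homotopy invariance of $E$ to ``contract'' $\Delta^n$. With that substitution your argument goes through; note also that local finiteness of $X$ is what makes the attaching maps proper, hence the square \eqref{diag:main} cartesian, and that excision of $E$ is needed because the common kernel in \eqref{diag:main} is not $E$-excisive under the standing assumptions alone (cf.\ Remark \ref{rem:kinf}).
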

\begin{proof}
By definition of properness, there is a locally finite $(G,\cF)$-complex $X$ such that $A$ is proper over $X$. We consider first the case when $X$ is finite dimensional.
If $\dim X=0$, the theorem follows from Theorem \ref{thm:proper0}. Let
$n>0$ and assume the theorem true in dimensions $<n$. If $\dim X=n$,
and $Y\subset X$ is the $n-1$-skeleton, we have a pushout diagram
\[
\xymatrix{\coprod_i\ind_{H_i}^G(\Delta^n)\ar[r]& X\\
\coprod_i\ind_{H_i}^G(\partial\Delta^n)\ar[r]\ar[u]& Y\ar[u]}
\]
Here $H_i\in\cF$ and the horizontal arrows are proper, since $X$ is
assumed locally finite. Hence we obtain a pullback diagram
\begin{equation}\label{diag:main}
\xymatrix{\bigoplus_i\Z^{(\Delta^n)}\otimes
\Z^{(G/H_i)}\ar[d]&\Z^{(X)}\ar[l]\ar[d]\\
\bigoplus_i\Z^{(\partial\Delta^n)}\otimes \Z^{(G/H_i)}&
\Z^{(Y)}\ar[l]}
\end{equation}
Let $I=\ker(\Z^{(X)}\to\Z^{(Y)})$ be the kernel of the restriction map; because the diagram above is cartesian, $I\cong\bigoplus_i\ker(\Z^{(\Delta^n)}\otimes\Z^{(G/H_i)}\to \bigoplus_i\Z^{(\partial\Delta^n)}\otimes\Z^{(G/H_i)})$. The quotient
$A/I\cdot A$ is proper over $Y$, and $I\cdot A$ is proper over $\coprod_i\ind^G_{H_i}(\Delta^n)$, whence also over the zero-dimensional  $\coprod_i G/H_i$, by
Lemma \ref{lem:festar}. Thus the theorem is true for both $A/I\cdot A$ and $I\cdot A$; because $E$ is excisive
by hypothesis, this implies that the theorem is also true for $A$. This proves the theorem for $X$ finite dimensional.
The general case follows from this using Lemma \ref{lem:properfil} and the hypothesis that $E_*$ commutes with filtering colimits.
\end{proof}

\begin{ex}\label{ex:khkninfkinf}
Both $KH$ and $K^\ninf$ satisfy the hypothesis of Theorem \ref{thm:propern}.
\end{ex}

\begin{rem}\label{rem:kinf}
The proof of Theorem \ref{thm:propern} makes clear that if the hypothesis that $E_*$ commutes with filtering colimits is dropped, then the theorem remains true for $A$ proper over a finite dimensional $(G,\cF)$-complex. On the other hand, the hypothesis that $E$ be excisive is key, since the standing assumptions alone do not guarantee that the excision arguments of the proof go through, not even
for $A=\Z$. The argument uses that the common kernel of the vertical maps of \eqref{diag:main} be $E$-excisive; by standing assumption \ref{stan} v) this is equivalent to saying that $I_n=\ker(\Z^{\Delta^n}\to \Z^{\partial\Delta^n})$
is $E$-excisive. However $I_n$ is not $K$-excisive, because $\tor_1^{\tilde{I}_n}(\Z,I_n)=I_n/I_n^2\neq 0$ (see Subsection \ref{subsec:appsusli}). 
\end{rem}
\subsection{The $K$-theory isomorphism conjecture with proper coefficients}

\begin{thm}\label{thm:main}
Let $G$ be a group, $\cF$ a family of subgroups of $G$, and $A$ a
$G$-ring. Assume that $\cF$ contains all the cyclic
subgroups, and that $A$ is proper over a locally finite
$(G,\cF)$-complex. Also assume that $A\otimes\Q$ is $K$-excisive. Then $H^G(-,K(A))$ sends
$(G,\cF)$-equivalences to rational equivalences. If moreover $A$ is a $\Q$-algebra, then
$H^G(-,K(A))$ sends $(G,\cF)$-equivalences to integral equivalences. In particular the assembly map
\[
H_*^G(\cE(G,\cF),K(A))\to K_*(A\rtimes G)
\]
is a rational isomorphism if $A$ is a $(G,\cF)$-proper ring, and an integral isomorphism if in addition $A$ is a $\Q$-algebra.
\end{thm}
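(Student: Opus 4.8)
The plan is to reduce the statement to known facts about the functors $KH$, $K^{\ninf}$ and $HC$ by means of the two structural homotopy fibrations of functors on $\Z-\cat$: the fibration $K^{\nil}(\cC)\to K(\cC)\to KH(\cC)$ coming from the definition of $KH$, and the fibration $K^{\ninf}(\cC)\to K^{\nil}(\cC)\otimes\Q\overset{\nu}{\longrightarrow}\Omega^{-1}|HC(\cC)|\otimes\Q$ coming from \eqref{map:nu}. It suffices to prove the functorial assertions: evaluating at the $(G,\cF)$-equivalence $\cE(G,\cF)\to pt$ and using that $H^G(pt,K(A))=K(A\rtimes G)$ then yields the statements about the assembly map.

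The three inputs are: (a) $H^G(-,HC(A))$ sends $(G,\cF)$-equivalences to equivalences, by part i) of Theorem \ref{intro:main} -- that is, Proposition \ref{prop:asshh} together with Corollary \ref{cor:assnuni} -- using that $\cF$ contains all cyclic subgroups and that $A$, proper over a locally finite complex, is $HC$-excisive; (b) $H^G(-,KH(A))$ sends $(G,\cF)$-equivalences to equivalences, by Theorem \ref{thm:propern} applied to $KH$ (Example \ref{ex:khkninfkinf}), since $A$ is $(G,\cF)$-proper; (c) $H^G(-,K^{\ninf}(A))$ sends $(G,\cF)$-equivalences to equivalences, by Theorem \ref{thm:propern} applied to $K^{\ninf}$, which is legitimate because, by Proposition \ref{prop:ninfexci}, $K^{\ninf}$ is excisive, satisfies the standing assumptions, and $K^{\ninf}_*$ commutes with filtering colimits.

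Now I would chase the fibrations. Applying the construction $E\mapsto H^G(X,E(A))$ to the $K^{\ninf}$-fibration -- using that the coend $(?)_\%(X)$ carries objectwise homotopy fibrations to homotopy fibrations and commutes with $\otimes\Q$ and with $\Omega^{-1}$ -- produces a natural homotopy fibration $H^G(X,K^{\ninf}(A))\to H^G(X,K^{\nil}(A))\otimes\Q\to\Omega^{-1}H^G(X,HC(A))\otimes\Q$; by (a) and (c) its outer terms preserve $(G,\cF)$-equivalences, hence so does $H^G(-,K^{\nil}(A))\otimes\Q$. Rationalizing the fibration $H^G(X,K^{\nil}(A))\to H^G(X,K(A))\to H^G(X,KH(A))$ obtained likewise from the $K^{\nil}$-fibration, and combining this with (b), one concludes that $H^G(-,K(A))\otimes\Q$ preserves $(G,\cF)$-equivalences -- the rational statement. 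For the integral statement, assume $A$ is a $\Q$-algebra; then $A=A\otimes\Q$ is $K$-excisive, hence $K^{\nil}$-excisive, so by Proposition \ref{prop:exciequi1} the $\org$-spectrum underlying $H^G(X,K^{\nil}(A))$ is equivalent to $G/H\mapsto K^{\nil}(\cA(A\rtimes\cG^G(G/H)))$, whose entries are $\Q$-algebras and hence have rational $K^{\nil}$-groups by \cite[Prop. 1.6]{kh}; so $H^G(X,K^{\nil}(A))$ is already a rational spectrum. Consequently the first fibration shows $H^G(-,K^{\nil}(A))$ preserves $(G,\cF)$-equivalences on the nose, and feeding this together with (b) into the unrationalized fibration $H^G(X,K^{\nil}(A))\to H^G(X,K(A))\to H^G(X,KH(A))$ gives the integral statement.

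The main obstacle is not a single deep step -- the substance lies in Proposition \ref{prop:asshh}, Proposition \ref{prop:ninfexci} and Theorem \ref{thm:propern}, all established earlier -- but rather the bookkeeping needed to justify the chase: one must check that the unitalization-based definitions of $H^G(X,E(A))$ for the coefficient $A$ (not in general $K$-excisive) remain compatible with the structural fibrations defining $K^{\nil}$ and $K^{\ninf}$, so that $H^G(X,-)$ does carry those fibrations to fibrations; and, in the place where $A$ need not literally be $HC$-excisive, one must replace (a) by a dévissage over the cells of the proper complex, in the style of the proof of Theorem \ref{thm:propern}, reducing to situations in which part i) of Theorem \ref{intro:main} applies. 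The integral refinement rests additionally on the rationality of $H^G(X,K^{\nil}(A))$, which is precisely where the hypothesis that $A\otimes\Q$ be $K$-excisive is used.
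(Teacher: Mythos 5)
You follow the paper's route exactly: the same two fibrations $K^{\nil}\to K\to KH$ and $K^{\ninf}\to K^{\nil}\otimes\Q\to\Omega^{-1}|HC(-\otimes\Q)|$, the same three inputs (a)--(c), and the same observation for the integral case (your rationality argument for $H^G(X,K^{\nil}(A))$ when $A$ is a $\Q$-algebra is the paper's use of the equivalence \eqref{map:q}). Inputs (b) and (c) are justified exactly as in the paper.

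However, your justification of input (a) has a genuine gap, and it sits exactly where the hypothesis ``$A\otimes\Q$ is $K$-excisive'' must enter --- not, as you claim, only in the integral refinement. Properness over a locally finite complex does \emph{not} make $A$ $HC$-excisive: $H$-unitality requires acyclicity of the bar complex (so in particular $A=A^2$), and the condition $\Z^{(X)}\cdot A=A$ gives neither, since the local units of $\Z^{(X)}$ need not lie in $A$. Your fallback --- a d\'evissage over the cells of $X$ in the style of the proof of Theorem \ref{thm:propern} applied to $HC$ --- fails for precisely the reason recorded in Remark \ref{rem:kinf}: the kernels $I_n=\ker(\Z^{\Delta^n}\to\Z^{\partial\Delta^n})$ arising in that induction satisfy $I_n/I_n^2\neq 0$, hence are not $H$-unital, and $HC$ is not excisive, so the cell-by-cell reduction does not go through. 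The correct repair is the paper's: the third term of the fibration defining $K^{\ninf}$ in \eqref{map:nu} is $\Omega^{-1}|HC(-\otimes\Q)|$, so the coefficient that matters is $A\otimes\Q$; this is a $\Q$-algebra which is $K$-excisive by hypothesis, hence $H$-unital (Example \ref{ex:pure}), hence $HC$-excisive, and Proposition \ref{prop:asshh} together with Corollary \ref{cor:assnuni} then yields (a) for $HC(A\otimes\Q)$, which is all the fibration chase requires.
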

\begin{proof}
By Theorem \ref{thm:propern}, $H^G(-,KH(A))$ maps $(G,\cF)$-equivalences to equivalences. Hence using the fibration
\[
K^{\nil}\to K\to KH
\]
we see that it suffices
to show that the statement of the theorem is true with $K^{\nil}$ substituted for $K$. Because the map
\eqref{map:q} is an equivalence for $\Q$-algebras, it suffices to prove that if $A$ is a $(G,\cF)$-proper ring, then $H^G(-,K^\nil(A))$ sends $(G,\cF)$-equivalences to rational equivalences. Consider the fibration
\[
K^{\ninf}\to K^{\nil}\otimes\Q\to \Omega^{-1}|HC(-\otimes\Q)|
\]
Because $\cF$ contains all cyclic subgroups and $A\otimes\Q$ is $H$-unital, $H^G(-,HC(A\otimes\Q))$ sends $(G,\cF)$-equivalences to equivalences, by Proposition \ref{prop:asshh} and Corollary \ref{cor:assnuni}. Similarly,
$H^G(-,K^{\ninf}A))$ sends $(G,\cF)$-equivalences to equivalences, by Theorem \ref{thm:propern} and Proposition \ref{prop:ninfexci}. It follows that the same is true of  $H^G(-,K^{\nil}(A)\otimes\Q)$. This completes the proof.
\end{proof}

\begin{ex}\label{ex:main}
If $X$ is a $(G,\cF)$-complex locally finite as a simplicial set and $B$ is $K$-excisive, then $\Z^{(X)}\otimes B$ is $(G,\cF)$-proper by Example \ref{ex:proper} and is $K$-excisive by Proposition \ref{prop:exiax}. If $T$ is the geometric realization of $X$ and $\F=\R,\C$, then the ring $\cC_{\rm comp}(T)$ of $\F$-valued compactly supported continuous functions is proper over $X$, again by Example \ref{ex:proper}, and therefore it $(G,\cF)$-proper. In fact the argument given in \ref{ex:proper} to show that $\Z^{(X)}\cdot\cC_{\rm comp}(T)=\cC_{\rm comp}(T)$ shows that $\cC_{\rm comp}(T)$ is $s$-unital and therefore $K$-excisive, by Example \ref{ex:sunihuni}. Hence $\cC_{\rm comp}(T)\otimes B$ is $K$-excisive if $B$ is, by Proposition \ref{prop:tenso}.
\end{ex}
\addtocounter{section}{-13}
\renewcommand{\thesection}{\Alph{section}}
\section{Appendix: $K$-excisive and $H$-unital rings}\label{app:A}

\subsection{The groups $\tor^{\tilde{A}}_*(-,A)$}\label{subsec:appsusli}

\bigskip

Let $M=\Z,\Z/n\Z, \Q$. Theorems of Suslin \cite{sus} (for $M=\Z,\Z/n\Z$) and Suslin-Wodzicki \cite{qs} (for $M=\Q$)
establish that a ring $A$ is excisive for $K$-theory with coefficients in $M$ if and only if
\[
\tor^{\tilde{A}}_*(M,A)=0
\]

\begin{ex}\label{ex:tfp}
A ring $A$ is said to have the (right) \emph{triple factorization property}
if for every finite family $a_1,\dots,a_n\in A$ there exist
$b_1,\dots,b_n,c,d\in A$ such that
\[
a_i=b_icd \text{ and } \{a\in A:ad=0\}=\{a\in A:acd=0\}
\]
It was proved in \cite[Theorem C]{qs} that rings having the triple
factorization property are $K$-excisive. In particular, rings with local units
are $K$-excisive.
\end{ex}

Let $M$ be an abelian group; regard $M$ as an $\tilde{A}$-module through the augmentation $\tilde{A}\to\Z$. We shall introduce a functorial abelian group $\bar{Q}(A,M)$ which computes $\tor_*^{\tilde{A}}(M,A)$. Consider
the functor $\perp:\tilde{A}-mod\to\tilde{A}-mod$,
\[
\perp N=\bigoplus_{x\in N}\tilde{A}.
\]
The functor $\perp$ is the free $\tilde{A}$-module cotriple \cite[8.6.6]{chubu}.
Let $Q(A)\to A$ be the canonical simplicial resolution by free
$\tilde{A}$-modules associated
to $\perp$ \cite[8.7.2]{chubu};
by definition, its $n$-th term is $Q_n(A)=\perp^{n+1} A$. Put
\[
\bar{Q}(A,M)=M\otimes_{\tilde{A}}Q(A).
\]
We have
\[
\pi_*(\bar{Q}(A,M))=\tor_*^{\tilde{A}}(M,A)
\]
We abbreviate $\bar{Q}(A)=\bar{Q}(A,\Z)$. Note that
\[
\bar{Q}(A,M)=M\otimes\bar{Q}(A)
\]
We have
\[
\bar{Q}_0(A)=\Z[A],\quad \bar{Q}_{n+1}(A)=\Z[Q_n(A)].
\]
\begin{lem}\label{lem:flatres}
Let $F\fibeq A$ be a simplicial resolution in $\ring$ and $M$ an abelian group. 
Let $\diag\bar{Q}(F)$ be the diagonal of the bisimplicial abelian group $\bar{Q}(F)$. Then
\[
\tor_*^{\tilde{A}}(M,A)=\pi_*(M\otimes\diag\bar{Q}(F))
\]
\end{lem}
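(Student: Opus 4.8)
The plan is to reduce the statement about an arbitrary simplicial resolution $F\fibeq A$ to the particular resolution $Q(A)$ by comparing two bisimplicial objects. First I would set up the double cotriple resolution: form the bisimplicial abelian group $([p],[q])\mapsto \bar Q_p(F_q)=M\otimes_{\widetilde{F_q}}\perp^{p+1}F_q$, whose $p$-direction is the free $\widetilde{F_q}$-module cotriple resolution of $F_q$ and whose $q$-direction comes from the simplicial structure of $F$. The point is that this bisimplicial object admits two ``edge'' comparisons: collapsing the $q$-direction and collapsing the $p$-direction.

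The key steps, in order: (1) Fix $q$. Since $F_q$ is a ring, $\bar Q(F_q)=M\otimes_{\widetilde{F_q}}Q(F_q)\fibeq M\otimes_{\widetilde{F_q}}F_q$ computes $\tor_*^{\widetilde{F_q}}(M,F_q)$, but more importantly, for each fixed $p$ the simplicial abelian group $[q]\mapsto \bar Q_p(F_q)=\Z[\perp^{p}F_q]$ (with $\Z[-]$ the free abelian group on the underlying set, for $p\ge 1$, and $\Z[F_q]$ for $p=0$) is obtained by applying the free-abelian-group functor to a simplicial resolution of a set-level object; one checks using that $F\fibeq A$ is a weak equivalence of simplicial rings (hence of underlying simplicial sets, level-wise free module functors preserving this) that $\diag_q \bar Q_p(F)\fibeq \bar Q_p(A)$ for every $p$, so that by the Eilenberg--Zilber/realization lemma for bisimplicial abelian groups, $\diag\bar Q(F)\weq \bar Q(A)$. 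Then $\pi_*(M\otimes\diag\bar Q(F))=\pi_*\bar Q(A,M)=\tor_*^{\tilde A}(M,A)$ by the definition recalled just above the lemma. (2) Alternatively, and this is the cleaner route I would actually write: fix $p$ and vary $q$; the resolution $[q]\mapsto \perp^{p+1}F_q$ is, level-wise in $p$, a simplicial resolution of $A$ by free $\widetilde{F_\bullet}$-modules, and one invokes the standard comparison theorem for projective resolutions together with the fact that $\diag$ of a bisimplicial abelian group computes the homotopy colimit, to conclude that $\diag\bar Q(F)$ is weakly equivalent to any other flat resolution of $A$ tensored with $M$ over $\tilde A$ --- in particular to $\bar Q(A,M)$.

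The main obstacle will be step (1)'s verification that $\diag_q\bar Q_p(F)\to\bar Q_p(A)$ is a weak equivalence for each $p$: one must track that $F\fibeq A$ being a level-wise weak equivalence of simplicial sets is preserved by the composite ``take underlying set of $\widetilde{(-)}$-module, apply $\perp$, take free abelian group'', i.e. that $\perp^{p+1}$ applied dimensionwise to a resolution $F\fibeq A$ still resolves $A$; this is where one uses that a resolution in $\ring$ in the sense of the paper is a level-wise weak equivalence and that the relevant functors preserve such, together with the Bousfield--Friedlander type realization lemma that $\diag$ of a bisimplicial abelian group with level-wise equivalences in one variable computes the homotopy type of the collapsed object. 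I expect the rest --- identifying $\pi_*$ of the diagonal with the $\tor$ groups via the already-recalled formula $\pi_*\bar Q(A,M)=\tor_*^{\tilde A}(M,A)$ --- to be routine. I would therefore organize the write-up around the bisimplicial object $\bar Q(F)$, prove the two edge statements, and conclude by the diagonal lemma.
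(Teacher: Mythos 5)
Your route (1) is essentially the paper's proof: the paper forms the bisimplicial group $([p],[q])\mapsto\bar{Q}_p(F_q)$, shows by induction that each column $\bar{Q}_p(F)\to\bar{Q}_p(A)$ is a free simplicial resolution of the free abelian group $\bar{Q}_p(A)$ --- hence remains a resolution after applying $M\otimes-$ --- and concludes by passing to the diagonal. The one point you should make explicit is why the equivalence survives tensoring with the arbitrary (not necessarily flat) $M$; the paper's freeness observation handles this columnwise, and in your ordering it also follows because the mapping cone of $\diag\bar{Q}(F)\to\bar{Q}(A)$ is a bounded-below acyclic complex of free abelian groups, hence split exact.
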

\begin{proof} Because $F\to A$ is a simplicial resolution in $\ring$, $\bar{Q}_0(F)=\Z[F]\to \Z[A]=\bar{Q}_0(A)$ is a free simplicial resolution in $\ab$
of the free abelian group $\Z[A]$. Observe that if $G\to N$ is a free resolution of a free abelian group $N$, then $\tilde{A}\otimes G\to \tilde{A}\otimes N$
is a free simplicial $\tilde{A}$-module resolution, and $\Z[\tilde{A}\otimes G]\to\Z[\tilde{A}\otimes N]$ is a free simplicial $\Z$-module resolution.
Thus for each $n$, $\bar{Q}_n(F)\to\bar{Q}_n(A)$ is a free resolution of the free abelian group $\bar{Q}_n(A)$, and thus it remains a resolution after
tensoring by $M$. It follows that $M\otimes \diag\bar{Q}(F)$ computes $\tor_*^{\tilde{A}}(M,A)$.
\end{proof}

\begin{prop}\label{prop:specseq}
Let $F\fibeq A$ be a simplicial resolution and $M$ an abelian group.
Then there is a first quadrant spectral sequence
\[
E^2_{p,q}=\pi_q(\tor_p^{\tilde{F}}(M,F))\Rightarrow \tor_{p+q}^{\tilde{A}}(M,A)
\]
\end{prop}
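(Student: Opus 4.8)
The plan is to realize both sides as invariants of the bisimplicial abelian group $M\otimes\bar Q(F)$ and to run the standard spectral sequence attached to it. First note that $\bar Q(F)$ is naturally bisimplicial: one direction is the simplicial direction of $F$, $[n]\mapsto \bar Q(F_n)$, and for each fixed $n$ the abelian group $\bar Q(F_n)$ is the simplicial abelian group built from the free $\tilde F_n$-module cotriple resolution $Q(F_n)\fibeq F_n$ as in \ref{subsec:appsusli}. The simplicial operators of $F$ act here because a face or degeneracy map is a ring homomorphism $F_m\to F_n$, hence induces compatible maps $\tilde F_m\to\tilde F_n$ (respecting augmentations) and $Q(F_m)\to Q(F_n)$. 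Since $M$ carries the $\tilde F_n$-module structure coming from the augmentation, uniformly in $n$, we have $M\otimes_{\tilde F_n}Q(F_n)=M\otimes\bar Q(F_n)$, and these assemble into the bisimplicial abelian group $M\otimes\bar Q(F)$.

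By the Eilenberg--Zilber theorem, $\pi_*\diag(M\otimes\bar Q(F))$ is the homology of the total complex of the normalized double chain complex of $M\otimes\bar Q(F)$; by Lemma \ref{lem:flatres} (using that $\otimes M$ commutes with $\diag$) this is exactly $\tor_*^{\tilde A}(M,A)$, which will be the abutment. A first-quadrant double complex carries two convergent spectral sequences; I would take the one whose $E^1$-page is obtained by taking homology first in the cotriple-resolution direction. With the simplicial degree $n$ of $F$ fixed, this homology is $\pi_p$ of the simplicial abelian group $M\otimes_{\tilde F_n}Q(F_n)$, which by definition is $\tor_p^{\tilde F_n}(M,F_n)$; this computation is compatible with the simplicial operators of $F$, so the $E^1$-page is the normalized chain complex of the simplicial graded abelian group $[n]\mapsto\tor_p^{\tilde F_n}(M,F_n)=\tor_p^{\tilde F}(M,F)$. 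Taking homology once more, now in the $F$-direction, yields $E^2_{p,q}=\pi_q(\tor_p^{\tilde F}(M,F))$, as claimed. All objects in sight are non-negatively graded, so the spectral sequence lives in the first quadrant and converges to $\tor_{p+q}^{\tilde A}(M,A)$.

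The only genuine point to verify is that the two filtration directions are identified as above and that one reaches the asserted formula on $E^2$ (not already on $E^1$); this is routine once one observes that the homology of $M\otimes\bar Q(F)$ in the cotriple direction is, simplicial-degreewise in $F$, precisely $\tor_*^{\tilde F_n}(M,F_n)$, so that a second application of homology, in the $F$-direction, is exactly what produces $\pi_q(\tor_p^{\tilde F}(M,F))$.
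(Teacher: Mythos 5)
Your proof is correct and is exactly the paper's argument, spelled out in detail: the paper's proof consists of the single sentence that this is the spectral sequence of the bisimplicial abelian group $([p],[q])\mapsto\bar{Q}_p(F_q,M)$, with the abutment identified via Lemma \ref{lem:flatres}. Your elaboration of the $E^1$- and $E^2$-page computations and the role of Eilenberg--Zilber is a faithful unpacking of that one-liner.
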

\begin{proof}
This is just the spectral sequence of the bisimplicial
abelian group $([p],[q])\mapsto\bar{Q}_p(F_q,M)$.
\end{proof}
\begin{cor}\label{cor:freeres}
Let $F\fibeq A$ be free simplicial resolution in $\ring$. Then
\[
\pi_*(M\otimes (F/F^2))=\tor_*^{\tilde{A}}(M,A)
\]
\end{cor}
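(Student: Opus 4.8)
The plan is to combine the spectral sequence of Proposition \ref{prop:specseq} with the observation that for a \emph{free} ring the relevant $\tor$-groups are concentrated in degree zero. First I would record the key fact about free rings: if $V$ is a free abelian group and $F=TV$ is the associated free nonunital ring, then $F$ is free as a left $\widetilde{F}$-module. Indeed, concatenation defines an isomorphism of left $\widetilde{F}$-modules $\widetilde{F}\otimes_\Z V\iso F$, $a\otimes v\mapsto a\cdot v$, whose inverse uses the inclusions $V^{\otimes n}\subset\widetilde{F}$ to identify $V^{\otimes(n+1)}=V^{\otimes n}\otimes V$ inside $F$ with a summand of $\widetilde{F}\otimes_\Z V$; the right-hand side is $\widetilde{F}$-free because $V$ is $\Z$-free. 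Hence $\tor^{\widetilde{F}}_p(M,F)=0$ for $p>0$, while
\[
\tor^{\widetilde{F}}_0(M,F)=M\otimes_{\widetilde{F}}F=M\otimes_\Z(F/F^2).
\]
The middle equality holds because $M$ carries the $\widetilde{F}$-module structure coming from the augmentation, so $m\cdot a=0$ for $a\in F$; thus in $M\otimes_{\widetilde{F}}F$ the only relations beyond $\Z$-bilinearity kill $M\otimes_\Z F^2$, giving $M\otimes_{\widetilde F}F=M\otimes_\Z(F/F^2)$.

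Next I would apply this levelwise. Let $F\fibeq A$ be a free simplicial resolution, so each $F_q$ is a free ring $TV_q$ on a free abelian group $V_q$, and consider the first-quadrant spectral sequence
\[
E^2_{p,q}=\pi_q\bigl(\tor^{\widetilde{F}}_p(M,F)\bigr)\Rightarrow\tor^{\widetilde{A}}_{p+q}(M,A)
\]
of Proposition \ref{prop:specseq}, where $\tor^{\widetilde{F}}_p(M,F)$ denotes the simplicial abelian group $[q]\mapsto\tor^{\widetilde{F_q}}_p(M,F_q)$. By the previous paragraph this simplicial abelian group is zero in every degree when $p>0$, so $E^2_{p,q}=0$ for $p>0$; and for $p=0$ it is identified with $M\otimes(F/F^2)$, whence $E^2_{0,q}=\pi_q\bigl(M\otimes(F/F^2)\bigr)$.

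Finally, since $E^2$ is concentrated in the single column $p=0$, every differential $d^r$ ($r\ge 2$) vanishes — those emanating from the column land in negative filtration degree, and those arriving in it come from $E^r_{r,\ast}$, a subquotient of $E^2_{r,\ast}=0$ — so the spectral sequence degenerates at $E^2$ and its abutment has a one-step filtration. Therefore $\tor^{\widetilde{A}}_n(M,A)\cong E^2_{0,n}=\pi_n\bigl(M\otimes(F/F^2)\bigr)$, which is the claim. I do not expect a serious obstacle here: the only points needing care are the identification of $F_q$ as a free $\widetilde{F_q}$-module and the resulting identification $M\otimes_{\widetilde{F_q}}F_q=M\otimes(F_q/F_q^2)$; with these in hand the corollary follows formally from Proposition \ref{prop:specseq}.
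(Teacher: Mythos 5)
Your argument is correct and is essentially the paper's own proof: both reduce via the spectral sequence of Proposition \ref{prop:specseq} to the identity $\tor_0^{\tilde{B}}(M,B)=M\otimes B/B^2$ together with the vanishing of $\tor_n^{\widetilde{TV}}(M,TV)$ for $n\ge 1$, the latter because the multiplication map $\widetilde{TV}\otimes V\to TV$ exhibits $TV$ as a free $\widetilde{TV}$-module. You simply spell out the collapse of the spectral sequence and the identification $M\otimes_{\widetilde F}F=M\otimes(F/F^2)$ in more detail than the paper does.
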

\begin{proof}
In view of the previous proposition, and of the fact that $\tor_0^{\tilde{B}}(M,B)=M\otimes B/B^2$ for every ring $B$, it suffices
to show that if $V$ is a free abelian group, and $TV$ the tensor algebra, then $\tor_n^{\tilde{TV}}(M,TV)=0$ for
$n\ge 1$. But this is clear, since $TV$ is free as a $\tilde{TV}$-module; indeed, the multiplication map $\tilde{TV}\otimes V\to TV$ is an isomorphism.
\end{proof}

\subsection{Bar complex}\label{subsec:barcomp}

\bigskip

Let $A$ be a ring. Consider the complex $P(A)$ given by $P_n(A)=\tilde{A}\otimes A^{\otimes n+1}$ $(n\ge 0)$, with boundary map
\[
b"(a_{-1}\otimes a_0\otimes a_1\otimes\dots\otimes a_n)=\sum_{i=-1}^{n-1}(-1)^ia_{-1}\otimes\dots\otimes a_ia_{i+1}\otimes\dots\otimes a_n
\]
The multiplication map $\mu:P_0(A)=\tilde{A}\otimes A\to A$ gives a surjective
quasi-isomorphism $\mu:P(A)\onto A$ \cite[8.6.12]{chubu}. A canonical $\Z$-linear section of $\mu$ is $j=1\otimes -:A\to \tilde{A}\otimes A$.
Let $\epsilon:\tilde{A}\to A$, $\epsilon(a,n)=a$. A $\Z$-linear homotopy $j\mu\to 1$ is defined by
\[
s:P_n(A)\to P_{n+1}(A),\quad s(a_{-1}\otimes\dots\otimes a_n)=1\otimes\epsilon(a_{-1})\otimes a_0\otimes\dots\otimes a_n
\]
Thus $P(A)$ is a resolution of $A$ by $\tilde{A}$-modules, and moreover these
$\tilde{A}$-modules are scalar extensions of $\Z$-modules. Put
\[
C^{bar}(A)=\Z\otimes_{\tilde{A}}P(A),\quad b'=\Z\otimes_{\tilde{A}}b"
\]
If
$A$ is flat as $\Z$-module, then $C^{bar}(A)$
 computes
$\tor^{\tilde{A}}_*(\Z,A)$ and $C^{bar}(A,M)=M\otimes C^{bar}(A)$ computes
$\tor^{\tilde{A}}_*(M,A)$. In general, the homology of $C^{bar}(A)$
can be interpreted as the $\tor$ groups relative to the extension
$\Z\to\tilde{A}$. For an arbitrary ring $A$, one can use the natural homotopy $s$ to give a natural
map
\[
Q(A)\to P(A)
\]
The induced map $M\otimes\bar{Q}(A)\to M\otimes C^{bar}(A)$ is a
quasi-homomorphism if $A$ is flat as a $\Z$-module.  In particular,
we have the following.

\begin{lem}\label{lem:barflatres}
Let $F\fibeq A$ be a simplicial resolution by flat rings, and $M$ an abelian group. Then
\[
\tor_*^{\tilde{A}}(M,A)=H_*(\tot (M\otimes C^{bar}(F)))
\]
\end{lem}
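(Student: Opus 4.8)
The plan is to combine Lemma \ref{lem:flatres} with the levelwise comparison between $\bar Q$ and $C^{bar}$ for flat rings. First I would invoke Lemma \ref{lem:flatres}: since $F\fibeq A$ is a simplicial resolution in $\ring$, it gives $\tor_*^{\tilde A}(M,A)=\pi_*(M\otimes \diag\bar Q(F))$, where $\bar Q(F)$ is the bisimplicial abelian group $([p],[q])\mapsto \bar Q_p(F_q)$. So it suffices to produce a natural quasi-isomorphism relating $M\otimes\diag\bar Q(F)$ with $\tot(M\otimes C^{bar}(F))$. I would first rewrite the left side: since $\pi_*$ of a simplicial abelian group is the homology of its associated (normalized) complex, and by the Eilenberg--Zilber theorem for bisimplicial abelian groups the normalized complex of $\diag(M\otimes\bar Q(F))$ is naturally quasi-isomorphic to the total complex of the double complex associated to $M\otimes\bar Q(F)$, we get $\pi_*(M\otimes\diag\bar Q(F))\cong H_*(\tot D)$, where $D$ is that first-quadrant double complex.

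Second, I would use the comparison map $Q(B)\to P(B)$ built from the contracting homotopy $s$ (described just before the lemma), which is natural in the ring $B$ and induces a quasi-isomorphism $M\otimes\bar Q(B)\to M\otimes C^{bar}(B)$ whenever $B$ is flat over $\Z$, as recorded in the discussion preceding the lemma. Applying this to each $F_q$ — which is flat by hypothesis — and using naturality in $q$, I obtain a morphism of first-quadrant double complexes $D\to D'$, where $D'$ is the double complex $M\otimes C^{bar}(F)$ with one direction the bar degree and the other the (normalized) simplicial degree $q$. By construction this map is a quasi-isomorphism in each fixed-$q$ column. Since both double complexes are concentrated in the first quadrant, the spectral sequence of a double complex (equivalently, an acyclic-assembly/skeletal-induction argument) shows the induced map on total complexes is a quasi-isomorphism, so $H_*(\tot D)\cong H_*(\tot D')=H_*(\tot(M\otimes C^{bar}(F)))$.

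Chaining the three identifications yields $\tor_*^{\tilde A}(M,A)=\pi_*(M\otimes\diag\bar Q(F))=H_*(\tot(M\otimes C^{bar}(F)))$, which is the assertion. The main obstacle I expect is purely organizational: keeping straight the two bigraded pictures — the genuinely bisimplicial $\bar Q(F)$ versus the ``mixed'' object $C^{bar}(F)$, which is a chain complex in the bar direction but simplicial in $q$ — and verifying that the levelwise comparison map of the previous paragraph is compatible with the $q$-simplicial structure, so that it genuinely assembles into a map of double complexes. Once this bookkeeping is in place, the homological inputs (Eilenberg--Zilber, and the column-wise quasi-isomorphism criterion for maps of first-quadrant double complexes) are entirely standard.
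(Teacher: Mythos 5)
Your argument is correct and is exactly the one the paper intends: the lemma is stated there without a separate proof, as an immediate consequence (``in particular'') of Lemma \ref{lem:flatres} together with the natural map $Q(B)\to P(B)$ being a quasi-isomorphism after tensoring with $M$ for flat $B$. Your write-up simply makes explicit the standard Eilenberg--Zilber and column-wise double-complex steps that the paper leaves implicit, and these are handled correctly.
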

\subsection{$H$-unital rings}\label{subsec:huni}
A ring $A$ is called \emph{$H$-unital} if for every abelian group $V$, the complex $C^{bar}(A)\otimes V$
is acyclic.

\begin{rem}\label{rem:hunikexci}
Note that for $A$ flat as a $\Z$-module, $H$-unitality is equivalent to the acyclicity of $C^{bar}(A)$,
that is, to the vanishing of the groups $\tor_*^{\tilde{A}}(\Z,A)$. Thus for a flat ring $H$-unitality
equals $K$-excisiveness.
\end{rem}

\paragraph{\em Pure exact sequences}
Let
\begin{equation}\label{seq:pure}
0\to A\to B\to C\to 0
\end{equation}
be an exact sequence of rings. We say that \eqref{seq:pure} is \emph{pure} if for every abelian group $V$,
the sequence of abelian groups
\[
0\to A\otimes V\to B\otimes V\to C\otimes V\to 0
\]
is exact. Pure injective and pure surjective maps, and pure acyclic complexes are defined in the obvious way.
If $X(-)$ is a functorial chain complex, then we say that $A$ is \emph{pure $X$-excisive} if
for every pure exact sequence \eqref{seq:pure},
\[
X(A)\to X(B)\to X(C)
\]
is a distinguished triangle. The following theorem was proved by M. Wodzicki in \cite{wodex}.

\begin{thm}\label{thm:pure}(Wodzicki)
The following conditions are equivalent for a ring $A$.
\item[i)] $A$ is $H$-unital.
\item[ii)] $A$ is pure $C^{bar}$-excisive.
\item[iii)] $A$ is pure $HH$-excisive.
\item[iv)] $A$ is pure $HC$-excisive.
\end{thm}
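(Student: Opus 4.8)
The statement is Wodzicki's excision theorem \cite{wodex}, and the plan is to reproduce the structure of its proof, the backbone of which is a filtration analysis of the bar complex $C^{bar}(-)$. One runs the cycle of implications $(i)\Rightarrow(ii)\Rightarrow(iii)\Rightarrow(iv)\Rightarrow(i)$, using throughout the elementary fact that for a \emph{unital} ring $R$ the bar complex $C^{bar}(R)$ (degree $n$ term $R^{\otimes n+1}$, differential $b'$) is contractible via the $\Z$-linear homotopy $r_0\otimes\dots\otimes r_n\mapsto 1\otimes r_0\otimes\dots\otimes r_n$, so that $C^{bar}(R,M)=M\otimes C^{bar}(R)$ is contractible for every abelian group $M$; it is precisely the absence of such a homotopy in the non-unital case that makes $H$-unitality a nontrivial condition.

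The core is $(i)\Rightarrow(ii)$. Given a pure exact sequence $0\to A\to B\to C\to 0$ and an abelian group $M$, right exactness of $\otimes$ makes $C^{bar}(B,M)\to C^{bar}(C,M)$ a degreewise surjection; let $K_M$ be its kernel complex. Purity is used to split each tensor power $B^{\otimes n+1}\otimes M$ according to which tensor slots lie in the ideal $A$, and this produces a finite, natural filtration of $K_M$ whose associated graded pieces are, up to tensoring with abelian groups built out of $B$, $C$ and $M$, the bar complex $C^{bar}(A)$ together with its iterated tensor analogues $C^{bar}(A)^{\otimes p}$ for $p\ge 1$. If $A$ is $H$-unital then $C^{bar}(A)\otimes V$ is acyclic for \emph{every} $V$; since each $C^{bar}(A)^{\otimes p}$ with $p\ge 2$ is then the total complex of a bounded-below bicomplex with acyclic columns, all of these subquotients are acyclic except the bottom one, which is identified with $C^{bar}(A,M)$. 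Hence $C^{bar}(A,M)\weq K_M$ and $C^{bar}(A)\to C^{bar}(B)\to C^{bar}(C)$ is a distinguished triangle with arbitrary coefficients, which is $(ii)$. Constructing this filtration and identifying its associated graded explicitly is the main obstacle, and is the heart of \cite{wodex}.

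For $(ii)\Rightarrow(iii)$ one compares the bar complex with the Hochschild complex: after unitalization as in \ref{sec:hc}, the relative Hochschild complex $\hofi(HH(B)\to HH(C))$ of a pure ideal $A\triqui B$, $C=B/A$, is computed by a convergent spectral sequence built from $C^{bar}(A,-)$ (tensored with modules over $B$ and $C$) and from $M(A/\Z)$ itself, and the universal acyclicity of $C^{bar}(A)\otimes V$ behind $(ii)$ collapses it to a quasi-isomorphism $HH(A)\weq\hofi(HH(B)\to HH(C))$, and likewise with coefficients, which is $(iii)$. Then $(iii)\Rightarrow(iv)$ is formal: $HC(-)$ is the homology of the total complex of the Connes $(b,B)$-bicomplex, each column of which is a shift of the Hochschild complex, so a short sequence of such first-quadrant bicomplexes that is a distinguished triangle in every column induces a distinguished triangle of total complexes; thus pure $HH$-excision upgrades to pure $HC$-excision. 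Finally, for $(iv)\Rightarrow(i)$: since Connes' periodicity triangle $HH(X)\to HC(X)\overset{S}{\to}HC(X)[-2]$ is natural, $HH(X)=\hofi(S\colon HC(X)\to HC(X)[-2])$ naturally in $X$, and commuting homotopy fibres shows that pure $HC$-excision implies pure $HH$-excision; and pure $HH$-excision, applied to a test extension built from $A$ and a free abelian group $V$ whose relative Hochschild complex computes $H_*(C^{bar}(A)\otimes V)$ (the explicit extension is in \cite{wodex}), forces $C^{bar}(A)\otimes V$ to be acyclic for every $V$, which is $(i)$. The bar version $(ii)\Rightarrow(i)$ is quicker still: apply pure $C^{bar}$-excision, with arbitrary coefficients, to the split-pure sequence $0\to A\to\tilde A\to\Z\to 0$, whose outer terms have contractible bar complexes.
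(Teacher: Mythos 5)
The paper does not prove this theorem: it is stated as a quotation of Wodzicki's result and immediately attributed to \cite{wodex}, so there is no internal proof to compare yours against. Your outline does reproduce the architecture of Wodzicki's argument: the filtration of $\ker(C^{bar}(B,M)\to C^{bar}(C,M))$ by which tensor slots lie in $A$ for (i)$\Rightarrow$(ii), the column filtration of the $(b,B)$-bicomplex for (iii)$\Rightarrow$(iv), and the periodicity triangle $HH\to HC\overset{S}{\to} HC[-2]$ to pass back from $HC$ to $HH$. The logic also closes up, because you separately supply (ii)$\Rightarrow$(i), which licenses your use of the universal acyclicity of $C^{bar}(A)\otimes V$ inside the step (ii)$\Rightarrow$(iii). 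But be aware that the two steps you defer to \cite{wodex} --- the explicit identification of the associated graded of that filtration (the graded pieces are interleavings of bar complexes of $A$ with tensor factors from $C$, not literally $C^{bar}(A)^{\otimes p}$, and making this precise is delicate) and the construction of the test extension showing that $HH$-excision forces $H$-unitality --- are precisely the substantive content of the theorem. What you have is a faithful summary of the cited proof rather than an independent one, which puts it on the same footing as the paper itself.

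One concrete point needs repair. The paper's definition of ``pure $X$-excisive'' involves only the functorial complex $X(-)$, with no coefficient modules, whereas your converse implications invoke excision ``with arbitrary coefficients.'' In particular, applying (ii) to $0\to A\to\tilde{A}\to\Z\to 0$ yields only the acyclicity of $C^{bar}(A)$ itself; since the terms $A^{\otimes n+1}$ need not be flat over $\Z$, this does not formally imply acyclicity of $C^{bar}(A)\otimes V$ for arbitrary $V$, which is what condition (i) demands. To extract full $H$-unitality from the coefficient-free hypotheses (ii)--(iv) one must vary the test extension with $V$ (for instance, using split-pure extensions built from $A$ and a square-zero or zero-multiplication summand encoding $V$, as Wodzicki does), or else prove the coefficient version of (ii)--(iv) from the coefficient-free one. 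As written, your final step (ii)$\Rightarrow$(i), and likewise the appeal to ``coefficients'' in (iv)$\Rightarrow$(i), assumes more than the stated conditions give.
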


\begin{ex}\label{ex:pure}
Any linearly split sequence \eqref{seq:pure} is pure. In particular, any sequence \eqref{seq:pure} with
$A$ a $\Q$-algebra is pure, since any $\Q$-vectorspace is injective as an abelian group. Thus for
a $\Q$-algebra $A$, Wodzicki's theorem remains valid if we omit the word ``pure" everywhere. Furthermore,
by the Suslin-Wodzicki theorem cited above, for $A$ a $\Q$-algebra the conditions of Theorem \ref{thm:pure} are also equivalent
to $A$ being $K^\Q$-excisive. In fact it is well-known that for a $\Q$-algebra $A$, being $K^\Q$-excisive is equivalent
to being $K$-excisive; as explained in \cite[Lemma 4.1]{kabi} this well-known fact follows from the main
result of \cite{chup}. See \cite[Lemma 1.9]{qs} for a different proof.
\end{ex}

\begin{ex}\label{ex:sunihuni}
Each $s$-unital ring is $H$-unital, by \cite[Cor. 4.5]{wodex}. Thus any
$s$-unital ring which is flat as a $\Z$-module is $K$-excisive, by Remark \ref{rem:hunikexci}.
\end{ex}

\subsection{Colimits}

\bigskip
The bar complex manifestly commutes with filtering colimits, and thus $H$-unital rings are closed under them.
The next proposition establishes the analogue of this property for $K$-excisive rings.

\begin{prop}\label{prop:filtcoli}
Let $\{A_i\}$ be a filtering system of rings, and let  $M$ be an
abelian group. Write $A=\colim A_i$. Then
\[
\tor_*^{\tilde{A}}(M,A)=\colim_i \tor_*^{\tilde{A_i}}(M,A_i)
\]
\end{prop}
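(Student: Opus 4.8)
The plan is to observe that the functorial construction $A\mapsto\bar{Q}(A,M)$ preserves filtering colimits of rings, and then to pass to homotopy groups. Throughout, $\{A_i\}$ is the given filtering system, $A=\colim_iA_i$, and $M$ is regarded as a $\tilde{A}$-module (resp. $\tilde{A_i}$-module) via the augmentation; note $\tilde{A}=\colim_i\tilde{A_i}$, since unitalization commutes with filtering colimits. Recall from the construction of $\bar{Q}$ that $\bar{Q}_n(A,M)=M\otimes\bar{Q}_n(A)=M\otimes\Z[\perp^{n}A]$ (with $\perp^{0}A=A$), and that $\pi_*\bar{Q}(-,M)=\tor_*^{\widetilde{(-)}}(M,-)$.

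The first and only substantial step is to check that $A\mapsto\perp^{n}A$ commutes with filtering colimits at the level of underlying sets, compatibly with all simplicial structure maps of $Q(-)$. For a set $S$ one has $\perp S=\bigoplus_{x\in S}\tilde{A}=\colim_{T\subseteq S\ \mathrm{finite}}\tilde{A}^{T}$; since finite products of abelian groups commute with filtering colimits, and since every finite subset of a filtered colimit $S=\colim_iS_i$ is the image of a finite subset of some $S_i$, interchanging the two filtered colimits gives a natural isomorphism $\colim_i\bigoplus_{x\in S_i}\tilde{A_i}\iso\bigoplus_{x\in S}\tilde{A}$ whenever $\{S_i\}$ is a compatible filtering system of sets. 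Taking $S=\perp^{n-1}A$, $S_i=\perp^{n-1}A_i$ and inducting on $n$ yields $\perp^{n}A=\colim_i\perp^{n}A_i$ as sets. Because the faces and degeneracies of $Q(-)$ are obtained by whiskering the counit and comultiplication of the cotriple $\perp$, which are natural, these identifications are compatible with the simplicial structure.

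Now the free abelian group functor $\Z[-]$ and the functor $M\otimes-$ are both left adjoints, hence commute with all colimits; therefore $\bar{Q}_n(A,M)=M\otimes\Z[\perp^{n}A]=\colim_i\bar{Q}_n(A_i,M)$ for each $n$, compatibly in the simplicial direction, i.e. $\bar{Q}(A,M)=\colim_i\bar{Q}(A_i,M)$ as simplicial abelian groups. Finally, filtering colimits of abelian groups are exact and so commute with the homology of the associated chain complex, i.e. with $\pi_*$. Combining everything,
\[
\tor_*^{\tilde{A}}(M,A)=\pi_*\bar{Q}(A,M)=\pi_*\big(\colim_i\bar{Q}(A_i,M)\big)=\colim_i\pi_*\bar{Q}(A_i,M)=\colim_i\tor_*^{\tilde{A_i}}(M,A_i),
\]
which is the assertion. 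The main point requiring attention is the commutation $\colim_i\perp S_i\cong\perp S$ above together with its compatibility with the structure maps of $Q(-)$; everything else is formal, using that $\Z[-]$, $M\otimes-$ and $\pi_*$ preserve filtering colimits.
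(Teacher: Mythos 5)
Your proof is correct, but it follows a genuinely different route from the paper's. The paper resolves $A$ in the category of \emph{rings} by the cotriple resolution $F(A)_n=\perp^{n+1}A$ attached to the free-ring cotriple $\perp B=T(\Z[B])$; since that cotriple is a composite of left adjoints of forgetful functors, the identity $F(A)=\colim_iF(A_i)$ is immediate, and the proposition then follows from Lemma \ref{lem:barflatres}, which identifies $\tor_*^{\tilde{A}}(M,A)$ with the homology of $\tot(M\otimes C^{bar}(F(A)))$ --- the point being that $C^{bar}$ visibly commutes with filtering colimits. You instead work directly with the canonical free $\tilde{A}$-module resolution $Q(A)$ and the complex $\bar{Q}(A,M)$ of Subsection \ref{subsec:appsusli}, and verify by hand that $\perp^{n}A=\colim_i\perp^{n}A_i$. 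This requires the (correct, but not entirely formal) interchange of a direct sum over a varying index set with the colimit of the varying base rings $\tilde{A_i}$, together with the fact that filtering colimits of modules are computed on underlying sets; your justification of both points is sound, and the compatibility with the simplicial structure maps follows from naturality of the counit and comultiplication, as you say. What the paper's choice buys is that commutation with filtering colimits is automatic, at the cost of introducing a flat ring resolution, a bisimplicial object and Lemma \ref{lem:barflatres}; what your choice buys is a more self-contained argument that never leaves the module category and needs no totalization. Both arguments are valid.
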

\begin{proof}
Write $\perp:\rings\to \rings$, $\perp B=T(\Z[B])$ for the cotriple
associated with  the forgetful functor $\rings\to\sets$ and its
adjoint. Write $F(A)\fibeq A$ for the cotriple resolution
$F(A)_n=\perp^{n+1}A$ (\cite[\S 8/6]{chubu}). We have
$F(A)=\colim_iF(A_i)$. Thus $\tot (M\otimes C^{bar}F(A))=\colim_i
M\otimes C^{bar}F(A_i)$. Hence we are done by Lemma
\ref{lem:barflatres}.
\end{proof}

\begin{cor}\label{cor:filtcoli}
$K$-excisive rings are closed under filtering colimits.
\end{cor}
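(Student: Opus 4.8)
The plan is to deduce Corollary \ref{cor:filtcoli} directly from Proposition \ref{prop:filtcoli} together with the Suslin--Suslin--Wodzicki characterization of $K$-excisiveness recalled in Subsection \ref{subsec:appsusli}. Recall that, by the cited theorems of Suslin and Suslin--Wodzicki, a ring $B$ is $K$-excisive if and only if $\tor_*^{\tilde B}(M,B)=0$ for $M=\Z$ and $M=\Z/n\Z$ for all $n$ (equivalently, for all $n\ge 1$ one needs the mod-$n$ statement, and $K$-excisiveness with integral coefficients is what concerns us; but the characterization is phrased with $M$ ranging over $\Z$ and the $\Z/n\Z$). So the statement to prove is: if each $A_i$ is $K$-excisive, then $A=\colim_i A_i$ is $K$-excisive.

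First I would fix $M=\Z$ or $M=\Z/n\Z$. By Proposition \ref{prop:filtcoli},
\[
\tor_*^{\tilde A}(M,A)=\colim_i\tor_*^{\tilde{A_i}}(M,A_i).
\]
By hypothesis each $A_i$ is $K$-excisive, so for every such $M$ we have $\tor_*^{\tilde{A_i}}(M,A_i)=0$, and hence the filtering colimit on the right vanishes in every degree. Therefore $\tor_*^{\tilde A}(M,A)=0$ for all the relevant $M$, and by the Suslin--Wodzicki criterion $A$ is $K$-excisive. That is the whole argument.

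There is essentially no obstacle here: the real content has already been isolated in Proposition \ref{prop:filtcoli} (the commutation of the relative $\tor$-groups with filtering colimits, itself reduced via Lemma \ref{lem:barflatres} to the fact that the bar complex of a cotriple resolution commutes with filtering colimits). The only point requiring a word of care is which coefficient groups $M$ one must test: one should invoke the criterion in the form appropriate to the flavor of $K$-theory one wants excision for — integral $K$-theory requires the vanishing of $\tor_*^{\tilde A}(\Z,A)$ together with $\tor_*^{\tilde A}(\Z/n\Z,A)$ for all $n$, all of which are handled uniformly by the displayed colimit formula since each is a colimit of zero groups. No new estimates, diagram chases, or constructions are needed beyond citing Proposition \ref{prop:filtcoli} and the results of Subsection \ref{subsec:appsusli}.
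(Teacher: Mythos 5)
Your argument is correct and is exactly the paper's (implicit) proof: the corollary is stated immediately after Proposition \ref{prop:filtcoli} with no further argument, precisely because the Suslin criterion $\tor_*^{\tilde A}(M,A)=0$ combined with the colimit formula gives it at once. Your one point of caution about which coefficient groups $M$ to test is harmless and in fact moot, since $\bar Q(A)$ is a complex of free abelian groups, so the vanishing for $M=\Z$ already forces it for all $M$; in any case Proposition \ref{prop:filtcoli} handles every $M$ uniformly.
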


Let $M^0$ and $M^1$ be chain complexes of abelian groups, and let $f\in [1]^n$. Put
\[
T^f(M^0,M^1)=M^{f(1)}\otimes\dots\otimes M^{f(n)}
\]
Let
\[
M^0\star M^1=\bigoplus_{n\ge 0}\bigoplus_{f\in\map([n],[1])}T^f(M^0,M^1)
\]
\begin{lem}\label{lem:barsum}
Let $A$ and $B$ be rings. Then
\[
C^{bar}(A\oplus B)=(C^{bar}(A)[-1]\star C^{bar}(B)[-1])[+1]
\]
\end{lem}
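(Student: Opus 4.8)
The statement is purely a description of the bar complex of a direct sum of rings as a "twisted tensor product" of the shifted bar complexes of the summands. The plan is to write out both sides degree by degree and exhibit a natural isomorphism of complexes. First I would unwind the definitions. By definition $C^{bar}(A)_n = A^{\otimes n+1}$, so $C^{bar}(A)[-1]$ has $(C^{bar}(A)[-1])_n = A^{\otimes n}$ for $n\ge 1$ and is $0$ in degree $0$ (one must be careful with the shift convention; with the convention used in Subsection \ref{subsec:barcomp} the shift simply reindexes so that the $n$-fold tensor power sits in degree $n$). Then for chain complexes $M^0, M^1$ the complex $M^0\star M^1 = \bigoplus_{n\ge 0}\bigoplus_{f\in\map([n],[1])} M^{f(1)}\otimes\dots\otimes M^{f(n)}$, with $M^i = C^{bar}(A)[-1]$ or $C^{bar}(B)[-1]$, picks out in "word-degree" $n$ all tensor words of length $n$ in the letters $C^{bar}(A)[-1]$ and $C^{bar}(B)[-1]$; after the overall shift $[+1]$, the term indexed by $f$ contributes tensor powers of $A$'s and $B$'s summing to the appropriate internal degree.

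The key computational step is the identification of the underlying graded groups. One has $C^{bar}(A\oplus B)_n = (A\oplus B)^{\otimes n+1}$, and expanding the tensor power of a direct sum distributes it as a sum over all functions $[n]\to\{A,B\}$ (i.e. over all words of length $n+1$ in the two letters). The content of the lemma is that this decomposition is exactly the one encoded by the $\star$ construction once one accounts for the shifts: the "extra" slot (the $+1$ in $n+1$, undone by the outer $[+1]$) corresponds to breaking a word of length $n+1$ into a first letter plus a word of length $n$; more precisely, since multiplication in $A\oplus B$ is coordinatewise, a tensor $x_0\otimes\dots\otimes x_n$ with the $x_i$ in mixed summands is killed unless consecutive letters agree, which is precisely the mechanism that forces the $\star$-summands to reassemble into maximal blocks of $A$'s and $B$'s. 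I would then check that under this identification the bar differential $b'$ on $C^{bar}(A\oplus B)$ matches the differential on $(C^{bar}(A)[-1]\star C^{bar}(B)[-1])[+1]$: the face maps $d_i$ within a single block of $A$'s (or $B$'s) give the internal bar differentials on the factors $C^{bar}(A)[-1]$, while the face map at a junction between an $A$-block and a $B$-block sends $a\otimes b\mapsto ab$ which is zero in $A\oplus B$, matching the fact that in $M^0\star M^1$ there is no differential mixing the tensor factors $M^{f(i)}$ and $M^{f(i+1)}$ across a change of value of $f$ — these are the "external" faces that are omitted. Signs have to be tracked through the shifts using the Koszul convention, and this is where I expect most of the bookkeeping to live.

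The main obstacle is purely one of sign and indexing conventions: making the shifts $[-1]$ and $[+1]$, the reindexing $n\leftrightarrow n+1$, and the Koszul signs in the tensor differential all line up so that the differentials agree on the nose rather than merely up to an automorphism. There is no conceptual difficulty — the underlying graded isomorphism is forced by distributivity of $\otimes$ over $\oplus$ together with coordinatewise multiplication — but one must fix the conventions for $M^0\star M^1$ (in particular whether the differential on a tensor word is the graded tensor-product differential of its factors, and what sign is attached to the face maps at the ends of each factor after shifting) and verify compatibility. I would therefore present the proof as: (i) describe the graded isomorphism explicitly on generators, grouping $x_0\otimes\dots\otimes x_n\in(A\oplus B)^{\otimes n+1}$ by its pattern of $A$'s and $B$'s and discarding patterns that die under the differential is not needed at the level of the underlying group — all patterns survive as summands — but observing that the differential respects the block decomposition; (ii) verify that $b'$ acts as the tensor differential of the blocks, with junction faces vanishing; (iii) conclude that this is exactly the differential on $(C^{bar}(A)[-1]\star C^{bar}(B)[-1])[+1]$, with the outer $[+1]$ absorbing the unshifted first tensor slot. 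Naturality in $A$ and $B$ is immediate since every map in sight is built from the ring structure maps.
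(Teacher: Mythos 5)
Your proposal is correct and is essentially the paper's argument in unpacked form: the paper identifies $C^{bar}(D)=T(D[-1])[+1]$ as graded groups and invokes $T(V\oplus W)=T(V)\coprod T(W)$, whose decomposition into words is exactly your distributivity-plus-maximal-blocks expansion of $(A\oplus B)^{\otimes n+1}$. You also correctly supply the point the paper leaves as "straightforward" -- that junction faces vanish because products of $A$- and $B$-components are zero under coordinatewise multiplication, so $b'$ restricts to the tensor differential of the block factors.
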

\begin{proof} If $D$ is a ring then $C^{bar}(D)=T(D[-1])[+1]$ as graded abelian groups.
Hence for $\coprod$ the coproduct of rings, we have
\begin{align*}
C^{bar}(A\oplus B)=&T(A[-1]\oplus B[-1])[+1]\\
=&(T(A[-1])\coprod T(B[-1]))[+1]\\
=&(C^{bar}(A)[-1]\star C^{bar}(B)[-1])[+1]
\end{align*}
It is is straightforward to check that the identifications above are compatible with boundary maps.
\end{proof}

\begin{prop}\label{prop:barsum}
Let $\{A_i\}$ be a family of rings and $A=\bigoplus_i A_i$. Then $A$ is $K$-excisive if and only if each $A_i$ is,
and in that case $\bigoplus_i K(A_i)\to K(A)$ is an equivalence.
\end{prop}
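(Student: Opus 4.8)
```latex
The plan is to deduce everything from the chain-complex identity of Lemma~\ref{lem:barsum} together with the Suslin--Wodzicki characterization of $K$-excision recorded in Subsection~\ref{subsec:appsusli}: a ring $B$ is $K$-excisive if and only if $\tor_*^{\tilde B}(\Z,B)=0$, and more generally (by Lemma~\ref{lem:barflatres}, after passing to a flat simplicial resolution $F\fibeq B$) these groups are computed by $H_*(\tot(C^{bar}(F)))$. So first I would reduce to the case where every $A_i$ is flat over $\Z$. To do this, choose for each $i$ a free simplicial resolution $F_i\fibeq A_i$ in $\ring$; then $\bigoplus_i F_i\fibeq \bigoplus_i A_i=A$ is a simplicial resolution by flat rings (a direct sum of free nonunital rings is flat), so by Lemma~\ref{lem:barflatres} it suffices to prove the statement at the level of bar complexes, i.e. to understand $\tot\big(C^{bar}(\bigoplus_i F_i)\big)$ in terms of the $\tot(C^{bar}(F_i))$.

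The key step is then a decomposition of $C^{bar}(\bigoplus_i A_i)$ generalizing Lemma~\ref{lem:barsum} from two summands to an arbitrary family. Writing $C^{bar}(D)=T(D[-1])[+1]$ as graded abelian groups, and using that the tensor algebra of a direct sum $\bigoplus_i V_i$ decomposes as the coproduct $\coprod_i T(V_i)$ of nonunital rings, one gets a direct sum decomposition of $C^{bar}(A)[-1]=T(\bigoplus_i A_i[-1])$ indexed by nonempty finite ``words'' $w=(i_1,\dots,i_n)$ in the index set with no two consecutive letters equal, the $w$-summand being $A_{i_1}[-1]\otimes\cdots\otimes A_{i_n}[-1]$; this is the multi-summand analogue of the $\star$-product appearing in Lemma~\ref{lem:barsum}. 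Among these words, the ones of length $1$ reassemble to $\bigoplus_i C^{bar}(A_i)[-1]$, while the ``mixed'' words (those involving at least two distinct indices) form a complementary summand $\cM$. The claim I would isolate and prove is that $\cM$ is acyclic \emph{whenever each $A_i$ is flat}; granting this, $C^{bar}(A)\simeq \bigoplus_i C^{bar}(A_i)$ as complexes (up to the degree shift), which yields simultaneously that $\tor_*^{\tilde A}(\Z,A)=\bigoplus_i\tor_*^{\tilde A_i}(\Z,A_i)$ — giving the if-and-only-if for $K$-excision via Suslin's criterion — and, applying the same bar-complex identification inside the $K$-theory spectrum (exactly as excision is used to produce $K$ from $C^{bar}$ in the Suslin--Wodzicki machinery), that $\bigoplus_i K(A_i)\to K(A)$ is an equivalence.

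The main obstacle will be proving acyclicity of the mixed part $\cM$, and this is where flatness is essential (the $H$-unital statement of \cite[Thm.~7.10]{qs} holds without it, but the $K$-excisive argument genuinely needs it, as Remark~\ref{rem:kinf} already warns). The natural strategy is to exhibit an explicit contracting homotopy on each ``mixed block'': fix two distinct indices $j\ne k$ occurring in a word, and on the subcomplex of tensors whose letters all lie in $\{j,k\}$ and which genuinely involve both, build a homotopy from the bar differential of $C^{bar}(A_j\oplus A_k)$ restricted to the mixed part, using the two-summand case Lemma~\ref{lem:barsum} together with the classical fact that the reduced bar complex of a coproduct $T(V)\coprod T(W)$ modulo its two ``diagonal'' subalgebras is contractible when $V,W$ are flat (this is the chain-level incarnation of the excision exact sequence for a free product; it is essentially \cite[\S7]{qs}). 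One then bootstraps from two indices to finitely many by filtering $\cM$ by word length and inducting, and from finitely many to arbitrary families by the observation that $C^{bar}(-)$ commutes with filtering colimits, so $\cM$ for the whole family is the filtered colimit of the $\cM$'s for finite subfamilies, each acyclic. Assembling: flatness reduction via Lemma~\ref{lem:barflatres}, the word decomposition of the bar complex, acyclicity of the mixed part (the hard step), and the colimit argument of Corollary~\ref{cor:filtcoli} for the infinite case together give the proposition.
```
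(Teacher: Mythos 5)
Your overall framework (flat resolution, word decomposition of the bar complex of a direct sum, filtering-colimit reduction to finitely many summands) matches the paper's, but the step you isolate as the heart of the argument is false as stated. You claim that the ``mixed'' part $\cM$ of $C^{bar}(\bigoplus_i A_i)$ is acyclic whenever each $A_i$ is flat, so that $C^{bar}(A)\simeq\bigoplus_i C^{bar}(A_i)$ and hence $\tor_*^{\tilde{A}}(\Z,A)=\bigoplus_i\tor_*^{\widetilde{A_i}}(\Z,A_i)$ for flat rings. This cannot be right: by the K\"unneth formula the homology of the word-summand indexed by $f=(i_1,\dots,i_p)$ is built from the groups $H_*(C^{bar}(A_{i_j}))=\tor_*^{\widetilde{A_{i_j}}}(\Z,A_{i_j})$, and already in degree $0$ these equal $A_{i_j}/A_{i_j}^2$, which need not vanish for a flat ring (take $A_{i}=2\Z$, or a free ring $TV$, for which $TV/(TV)^2=V$). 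So the mixed summands carry nontrivial homology, the asserted splitting of $\tor$ fails in general, and no contracting homotopy on $\cM$ can exist. The appeal to the contractibility of the bar complex of a coproduct $T(V)\coprod T(W)$ modulo its diagonal subalgebras concerns free products, not direct sums, and does not produce the homotopy you need.

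What actually makes the argument work is that after resolving the summands by \emph{free} simplicial rings $F\fibeq B$, $G\fibeq C$, the mixed homology in each simplicial degree $q$ is \emph{concentrated}: $H_n$ of the word-summand of length $p$ equals $T^f(F_q/F_q^2,G_q/G_q^2)$ for $n=p-1$ and vanishes otherwise. Feeding this into the spectral sequence of the bisimplicial object (Lemma \ref{lem:barflatres}) gives $E^2_{p,q}=\bigoplus_{f}\pi_q(T^f(F/F^2,G/G^2))$, and this vanishes precisely because $K$-excisiveness of $B$ and $C$ is equivalent to $\pi_*(F/F^2)=\pi_*(G/G^2)=0$ (Corollary \ref{cor:freeres}); flatness alone is not enough, and Remark \ref{rem:kinf} makes the same point ($I_n/I_n^2\neq0$ is exactly the obstruction there). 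Two further gaps: the converse direction (excision of $A$ implies excision of each $A_i$) does not follow from your splitting and requires a separate recursive analysis of the $E^2$-page starting from $E^2_{*,0}=0$; and the spectrum-level statement $\bigoplus_iK(A_i)\weq K(A)$ is not obtained by ``applying the bar-complex identification inside $K$-theory,'' but by comparing the fibration sequences attached to $0\to A\to\bigoplus_i\tilde{A}_i\to\bigoplus_i\Z\to0$ with the known unital case.
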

\begin{proof} Let $B$ and $C$ be rings, and let $F\to B$ and $G\to C$ be free simplicial resolutions in $\ring$. Then $F\oplus G\to B\oplus C$ is a flat
simplicial resolution. Fix $q\ge 0$, and put $C^0=C^{bar}(F_q)$, $C^1=C^{bar}(G_q)$. Let $p\ge 1$, and $f\in[1]^p$.  Then by the K\"unneth formula
\begin{multline*}
H_n(T^f(C^0[-1],C^1[-1])[+1])=\\
T^f(H_*(C^0),H_*(C^1))_{n+1}=
\left\{\begin{matrix} T^f(F_q/F_q^2,G_q/G_q^2)& p=n+1\\
0& p\neq n+1\end{matrix}\right.
\end{multline*}
Hence the second page of the spectral sequence for the double complex of Lemma \ref{lem:barflatres} is
\[
E^2_{p,q}=\bigoplus_{f\in[1]^{p+1}}\pi_q(T^f(F/F^2,G/G^2))
\]
If $B$ and $C$ are $K$-excisive, we have $E^2=0$, by the Eilenberg-Zilber theorem and the K\"unneth formula, and thus $B\oplus C$ is again $K$-excisive. It follows from this
and from Proposition \ref{prop:filtcoli} that if $\{A_i\}$ is a family of $K$-excisive rings as in the proposition, then $A$ is $K$-excisive. If
$B$ and $C$ are arbitrary, then
\begin{gather*}
E^2_{0,q}=\tor_q^{\tilde{B}}(\Z,B)\oplus \tor_q^{\tilde{C}}(\Z,C)\\
E^2_{p,0}=\bigoplus_{f\in[1]^{p+1}}T^f(B/B^2,C/C^2)
\end{gather*}
Hence if $B\oplus C$ is excisive, $E^2_{*,0}=0$. It follows that $E^2_{0,1}=0$, and therefore $\pi_1(T^f(F/F^2,G/G^2))$ involves direct summands of tensor products of the form $E^2_{p,0}\otimes E^2_{0,1}$ and its symmetric, and both of these are zero. Thus $E^2_{*,1}=0$. A recursive argument shows that
$E^2=0$, whence both $B$ and $C$ are $K$-excisive. If now $A$ and $\{A_i\}$ are as in the proposition, $A$ is excisive, and $j\in I$, then setting
$B=A_j$ and $C=\bigoplus_{i\ne j}A_i$ above, we obtain that $A_j$ is $K$-excisive. The last assertion of the proposition
is well-known if each $A_i$ is unital. More generally, assume all $A_i$ are $K$-
excisive, and consider the exact sequence
\begin{equation}\label{seq:ai}
0\to A\to \bigoplus_i\tilde{A}_i\to \bigoplus_i\Z\to 0
\end{equation}
We have a commutative diagram with homotopy fibration rows
\[
\xymatrix{\bigoplus_iK(A_i)\ar[r]\ar[d] &\bigoplus_iK(\tilde{A}_i)\ar[d]\ar[r]&\bigoplus_iK(\Z)\ar[d]\\
K(A)\ar[r]&K(\bigoplus_i\tilde{A}_i)\ar[r]&K(\bigoplus_i\Z)}
\]
Because the middle and right vertical arrows are equivalences, it follows that the left one is an equivalence
too.
\end{proof}

\begin{prop}\label{prop:barsumh}
Let $\{A_i\}$ be a family of rings and $A=\bigoplus_i A_i$. Then $A$ is $H$-unital if and only if each $A_i$ is,
and in that case $\bigoplus_i HH(A_i)\to HH(A)$ and $\bigoplus_i HC(A_i)\to HC(A)$ are quasi-isomorphisms.
\end{prop}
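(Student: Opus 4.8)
The plan is to reduce both directions to the two--summand case and Lemma~\ref{lem:barsum}. Recall that, by definition, a ring $D$ is $H$-unital exactly when the complex $C^{bar}(D)$ is \emph{pure acyclic}, meaning $C^{bar}(D)\otimes W$ is acyclic for every abelian group $W$. Since $C^{bar}(-)$ is a functor, this property is inherited by retracts of $C^{bar}(D)$; as each $A_i$ is a ring retract of $A=\bigoplus_iA_i$ (split by the inclusion and the projection), the implication ``$A$ $H$-unital $\Rightarrow$ each $A_i$ $H$-unital'' is immediate. For the converse, write $A$ as the filtered colimit over the finite subsets $F\subset I$ of the subrings $\bigoplus_{i\in F}A_i$ (the transition maps being the evident ring inclusions); since $C^{bar}(-)$ commutes with filtered colimits, it suffices to prove each $\bigoplus_{i\in F}A_i$ is $H$-unital, and by induction on $|F|$ this comes down to showing that $B\oplus C$ is $H$-unital whenever $B$ and $C$ are.

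For the two--summand step I would tensor the identity of Lemma~\ref{lem:barsum} with an abelian group $V$: since $-\otimes V$ commutes with direct sums and with shifts, $C^{bar}(B\oplus C)\otimes V$ becomes the direct sum, over the nonempty words $w=(w_1,\dots,w_k)$ in the alphabet $\{0,1\}$, of the complexes $\bigl(N^{w_1}\otimes\cdots\otimes N^{w_k}\otimes V\bigr)[+1]$, where $N^0=C^{bar}(B)[-1]$ and $N^1=C^{bar}(C)[-1]$; the words of length one recover $C^{bar}(B)$ and $C^{bar}(C)$ as direct summands. So it is enough to check that each such tensor product is acyclic, and this follows from the elementary lemma: \emph{a tensor product over $\Z$ of finitely many bounded-below pure acyclic complexes of abelian groups is again bounded below and pure acyclic.} One proves this by induction on the number of factors: given bounded-below pure acyclic $M,M'$, their tensor product is bounded below, and filtering $M\otimes M'$ by the $M$-degree gives a convergent (because everything is bounded below) spectral sequence whose $E^1$-term $H_*(M_p\otimes M')$ vanishes, since $M_p$ is an abelian group and $M'$ is pure acyclic; hence $M\otimes M'$ is acyclic, and applying the same to $M'\otimes W$ in place of $M'$ (still pure acyclic, as $(M'\otimes W)\otimes W'=M'\otimes(W\otimes W')$) shows $M\otimes M'$ is pure acyclic. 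Since each $N^0,N^1$ is pure acyclic and concentrated in degrees $\ge1$, the induction applies, so every summand of $C^{bar}(B\oplus C)\otimes V$ is acyclic; thus $B\oplus C$ is $H$-unital, completing the ``iff''.

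For the quasi-isomorphism statements I would now use that $A$ and every finite subsum $\bigoplus_{i\in F}A_i$ are $H$-unital. For finite $F$ and $j\in F$ the sequence $0\to A_j\to\bigoplus_{i\in F}A_i\to\bigoplus_{i\in F\setminus\{j\}}A_i\to0$ is $\Z$-linearly (indeed ring-) split, hence pure, and its ideal $A_j$ is $H$-unital; by Wodzicki's theorem (Theorem~\ref{thm:pure}) it therefore induces distinguished triangles for $HH$ and for $HC$. Because the sequence is split by a ring homomorphism, the connecting maps in these triangles vanish, yielding $HH(\bigoplus_{i\in F}A_i)\simeq HH(A_j)\oplus HH(\bigoplus_{i\in F\setminus\{j\}}A_i)$ and likewise for $HC$. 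Inducting on $|F|$ gives $HH(\bigoplus_{i\in F}A_i)\cong\bigoplus_{i\in F}HH(A_i)$, compatibly in $F$, and the same for $HC$; passing to the filtered colimit over the finite subsets $F$ --- using that $HH$, $HC$ and direct sums all commute with filtered colimits --- yields the two stated quasi-isomorphisms.

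The argument is essentially formal once Lemma~\ref{lem:barsum} is available; the step needing the most care is the bookkeeping in its decomposition and in the induction on word length, together with the routine check that the filtered complexes occurring in the spectral sequence are bounded below so the spectral sequences converge. This is the pure/$H$-unital counterpart of the recursive ``$E^2=0$'' computation in the proof of Proposition~\ref{prop:barsum}, but it is markedly simpler, because the ``for every abelian group'' clause built into $H$-unitality forces the relevant $E^1$-terms to vanish outright, with no $\mathrm{Tor}$ correction terms to track.
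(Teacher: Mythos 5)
Your proof is correct, and on the central ``if'' direction it coincides with the paper's: both reduce via filtered colimits to the case of two summands and then apply Lemma~\ref{lem:barsum}, the only difference being that you spell out the auxiliary fact (a tensor product of finitely many bounded-below pure acyclic complexes is pure acyclic) that the paper leaves implicit in its terse appeal to Theorem~\ref{thm:pure}. The other two parts genuinely diverge. For ``$A$ $H$-unital $\Rightarrow$ each $A_i$ $H$-unital'' you observe that $A_i$ is a ring retract of $A$, so $C^{bar}(A_i)\otimes W$ is a retract of the acyclic complex $C^{bar}(A)\otimes W$; this is shorter and more elementary than the paper's route, which takes an arbitrary pure extension $0\to A_j\to B\to C\to 0$, direct-sums it with the remaining $A_i$ and the unitalizations of $B$ and $C$ to produce a pure extension with kernel $A$, and then invokes Wodzicki's characterization (Theorem~\ref{thm:pure}) to conclude that $A_j$ is pure $HH$-excisive, hence $H$-unital. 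For the quasi-isomorphism statements the paper imitates the $K$-theoretic argument of Proposition~\ref{prop:barsum}, comparing $A$ with $\bigoplus_i\tilde A_i$ via the sequence~\eqref{seq:ai} and using the known unital/local-units case, whereas you use the ring-split (hence pure) extensions $0\to A_j\to\bigoplus_{i\in F}A_i\to\bigoplus_{i\in F\setminus\{j\}}A_i\to 0$ for finite $F$ together with Theorem~\ref{thm:pure}: the ring splitting kills the connecting maps, the triangles degenerate into direct sum decompositions induced by the inclusions $A_i\hookrightarrow\bigoplus_{i\in F}A_i$ (hence compatible with enlarging $F$), and the colimit over finite $F$ finishes the job. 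Both routes are sound; yours avoids needing the direct-sum formula for unitalized rings as an external input, while the paper's is more uniform with its $K$-theoretic counterpart.
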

\begin{proof}
The last assertion is proved by the same argument as its $K$-theoretic counterpart. By Theorem \ref{thm:pure} and Lemma \ref{lem:barsum},
if $B$ and $C$ are rings and $B$ is $H$-unital, then $C^{bar}(B\oplus C)\otimes V\to C^{bar}(C)\otimes V$
is a quasi-isomorphism for every abelian group $V$. Thus if also $C$ is $H$-unital, then so is $B\oplus C$.
Using this and the fact that $H$-unitality is preserved under filtering colimits, it follows that if
$\{A_i\}$ is a family of $H$-unital rings, then $A=\bigoplus_iA_i$ is $H$-unital. Suppose conversely that $A$
is $H$-unital, and consider the pure extension \eqref{seq:ai}. A similar argument as that of the proof of
Proposition \ref{prop:barsum} shows that $\bigoplus_iHH(A_i)\to HH(A)$ is a quasi-isomorphism. Next fix an
index $j$ and let
\[
0\to A_j\to B\to C\to 0
\]
be a pure extension. Then
\[
0\to A\to \bigoplus_{i\ne j}A_i\oplus\tilde{B}\to \bigoplus_{i\ne j}A_i\oplus\tilde{C}\to 0
\]
is a pure extension. Applying $HH$ yields a distinguished triangle quasi-isomorphic to
\[
\bigoplus_iHH(A_i)\to \bigoplus_{i\ne j}HH(A_i)\oplus HH(B)\oplus HH(\Z)\to \bigoplus_{i\ne j}HH(A_i)\oplus HH(C)\oplus HH(\Z)
\]
Removing summands, we obtain a triangle
\[
HH(A_j)\to HH(B)\to HH(C)
\]
We have shown that $A_j$ satisfies excision for pure extensions in Hochschild homology;
by Theorem \ref{thm:pure}, this implies that $A_j$ is $H$-unital.
\end{proof}

\subsection{Tensor products}

\bigskip
It was proved by Suslin and Wodzicki \cite[Theorem 7.10]{qs} that the tensor product of $H$-unital
rings is $H$-unital. Here we establish a weak analogue of this property for $K$-excisive rings.

\bigskip
Let $A$ be a ring. Put
\[
L_{-1}A=A,\qquad L_{n+1}A=\ker(A\otimes L_n(A)\overset{\mu}\to L_n(A)) \qquad (n\ge -1)
\]
Here $\mu$ is the multiplication map. 

\begin{lem}\label{lem:freetenso}
Let $A$ be a $K$-excisive ring, and $V$ an abelian group. Assume
both $A$ and $V$ are flat over $\Z$. Then $L_{n-1}A$ is flat as an abelian group and
\[
\tor^{\widetilde{A\otimes TV}}_n(\Z,A\otimes TV)=L_{n-1}A\otimes V^{\otimes n+1}\qquad (n\ge 0).
\]
\end{lem}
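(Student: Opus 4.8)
The plan is to prove the statement by induction on $n$, using the bar‐complex description of the $\tor$ groups together with the flatness that is being asserted, so that the two assertions (flatness of $L_{n-1}A$ and the formula for $\tor_n$) are carried along the induction simultaneously. First I would record that since $V$ is flat over $\Z$, $TV=\bigoplus_{k\ge 1}V^{\otimes k}$ is a free nonunital ring on a flat abelian group and in particular flat over $\Z$, and that $A\otimes TV$ is flat over $\Z$ as well (tensor product of flats). Then I would compute $\tor^{\widetilde{A\otimes TV}}_*(\Z,A\otimes TV)$ using the bar complex $C^{bar}(A\otimes TV)$, which is legitimate precisely because $A\otimes TV$ is flat as a $\Z$-module (see \ref{subsec:barcomp}). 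The key algebraic point is the identification $C^{bar}(A\otimes TV)_n=(A\otimes TV)^{\otimes n+1}$ with the $b'$-differential, and the fact that $TV$, being a free nonunital ring on the flat module $V$, has a very small bar resolution: $C^{bar}(TV)$ is quasi-isomorphic (after tensoring with any flat group) to the two-term complex $TV\otimes V\to TV$ given by multiplication, whose homology is $V$ in degree $0$ (this is essentially the computation in Corollary \ref{cor:freeres}, where $\tor_n^{\widetilde{TV}}(\Z,TV)=0$ for $n\ge 1$ and $=TV/(TV)^2=V$ for $n=0$).

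Next I would bring in the K\"unneth/Eilenberg--Zilber machinery. Since $A$ and $TV$ are both flat over $\Z$, $C^{bar}(A\otimes TV)$ is quasi-isomorphic to a total complex built out of $C^{bar}(A)$ and $C^{bar}(TV)$ in a way that, on homology, gives a K\"unneth spectral sequence (or, more concretely, a bicomplex whose rows/columns compute the respective bar homologies). Because $H_*(C^{bar}(TV))$ is concentrated in degree $0$ and flat there (it is $V$), the spectral sequence degenerates and one gets
\[
\tor^{\widetilde{A\otimes TV}}_n(\Z,A\otimes TV)\cong \bigoplus_{p+q=n}\tor^{\tilde A}_p(\Z,A)\otimes H_q(C^{bar}(TV))\otimes(\text{combinatorial factor}),
\]
where the combinatorial factor records the shuffle indexing $f\in\map([n],[1])$ exactly as in Lemma \ref{lem:barsum}. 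Here I should be careful: the correct bookkeeping is the one from Lemma \ref{lem:barsum}, and I expect the clean outcome to be that only the term coming from $H_0(C^{bar}(TV))=V$ in each tensor slot survives, producing $V^{\otimes n+1}$, while the $A$-side contributes $L_{n-1}A$ rather than $\tor_{n-1}^{\tilde A}(\Z,A)$. The appearance of $L_{n-1}A$ rather than a $\tor$ group is the subtle point: because $A$ is $K$-excisive, $\tor^{\tilde A}_m(\Z,A)=0$ for all $m\ge 1$, so the bar complex $C^{bar}(A)$ is a flat resolution of $A/A^2=L_0A$ shifted appropriately, and iterating the kernel construction $L_{n+1}A=\ker(A\otimes L_nA\to L_nA)$ is exactly what one reads off from the syzygies of this resolution; flatness of each $L_{n-1}A$ follows inductively because the kernel of a surjection of flat modules with flat image is flat.

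Concretely, the inductive step goes as follows. Assume $L_{n-2}A$ is flat and the formula holds in degree $n-1$. From the definition, $0\to L_{n-1}A\to A\otimes L_{n-2}A\to L_{n-2}A\to 0$ is a short exact sequence of flat abelian groups (the outer two are flat by hypothesis, hence the kernel $L_{n-1}A$ is flat), which establishes the flatness assertion. For the $\tor$ computation I would either push the short exact sequences $L_{k}A\hookrightarrow A\otimes L_{k-1}A\twoheadrightarrow L_{k-1}A$ through the long exact sequence to dimension-shift $\tor^{\tilde A}_*(\Z,A)$ into $L_{n-1}A$, or — cleaner — observe directly that the augmented complex $\cdots\to A^{\otimes 3}\to A^{\otimes 2}\to A$ has cycles in degree $n$ equal to $L_{n-1}A$ once all higher $\tor$ vanish (this is just unwinding that $K$-excisiveness of $A$ forces $C^{bar}(A)$ to be a resolution with $n$-cycles $=L_{n-1}A$), and then tensor this resolution with the two-term resolution of $V$ coming from $TV$ and take total homology. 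The main obstacle I anticipate is getting the combinatorial indexing exactly right: one must check that when forming the total complex of $C^{bar}(A)\otimes C^{bar}(TV)$ (with the $\star$-product structure of Lemma \ref{lem:barsum}) and passing to homology, the only surviving contribution in total degree $n$ is the single term $L_{n-1}A\otimes V^{\otimes n+1}$, i.e.\ that the shuffle sum collapses to one term because $H_*(C^{bar}(TV))$ sits in a single degree. Once that collapse is verified the formula drops out; everything else is flatness bookkeeping and the vanishing $\tor^{\tilde A}_{\ge 1}(\Z,A)=0$ supplied by $K$-excisiveness of $A$ (Suslin--Wodzicki, as recalled in \ref{subsec:appsusli}).
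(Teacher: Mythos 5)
Your proposal goes wrong at its central step. The K\"unneth/Eilenberg--Zilber decomposition you want to apply to $C^{bar}(A\otimes TV)$ does not exist in the form you describe: Lemma \ref{lem:barsum} and its $\star$-product concern the \emph{direct sum} $A\oplus B$ (where $C^{bar}$ becomes a coproduct of tensor algebras), whereas for a \emph{tensor product} one has $C^{bar}(A\otimes TV)_n=A^{\otimes n+1}\otimes (TV)^{\otimes n+1}$, a diagonal whose face maps merge adjacent slots in both factors simultaneously; this is not the total complex of $C^{bar}(A)\otimes C^{bar}(TV)$ and the $b'$-complexes are not simplicial objects to which Eilenberg--Zilber applies. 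More decisively, any K\"unneth-type formula whose inputs are $\tor_*^{\tilde A}(\Z,A)$ and $H_*(C^{bar}(TV))$ would be fatal to the statement you are trying to prove: both of these vanish in positive degrees (the first by $K$-excisiveness of $A$, the second because $TV$ is a free nonunital ring, cf.\ Corollary \ref{cor:freeres}), so such a formula would force $\tor_n^{\widetilde{A\otimes TV}}(\Z,A\otimes TV)=0$ for $n\ge 1$, whereas the lemma asserts it equals $L_{n-1}A\otimes V^{\otimes n+1}$, which is generically nonzero. Your hedge that ``the $A$-side contributes $L_{n-1}A$ rather than $\tor_{n-1}^{\tilde A}(\Z,A)$'' is precisely the point at which the argument has no mechanism: $L_{n-1}A$ is an iterated kernel of multiplication maps, not a homology group of $C^{bar}(A)$ and not its $n$-cycles (already $L_1A=\ker(A\otimes L_0A\to L_0A)$ differs from $\ker(b'\colon A^{\otimes 3}\to A^{\otimes 2})$), so no ``collapse of the shuffle sum'' can produce it.

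What the proof actually requires is a dimension shift carried out over $\widetilde{A\otimes TV}$ itself, exploiting the grading of $TV$: for a left $A$-module $M$ with $A\cdot M=M$ one has a short exact sequence
\[
0\to L(M)\otimes T^{\ge n+1}V\to \widetilde{A\otimes TV}\otimes M\otimes V^{\otimes n}\to M\otimes T^{\ge n}V\to 0,
\]
whose middle term is an extended, hence $\tor^{\widetilde{A\otimes TV}}(\Z,-)$-acyclic, module; iterating from $M=L_{-1}A=A$ gives $\tor_n(\Z,A\otimes TV)\cong\tor_0(\Z,L_{n-1}A\otimes T^{\ge n+1}V)=L_{n-1}A\otimes V^{\otimes n+1}$. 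This hinges on the Suslin--Wodzicki fact (\cite[Thm.\ 7.8, Lemma 7.6]{qs}) that $K$-excisiveness of $A$ forces $A\cdot L_nA=L_nA$ for all $n$ --- needed both for the surjectivity of the maps above and for the last identification of $\tor_0$. You also use this fact silently when you write $0\to L_{n-1}A\to A\otimes L_{n-2}A\to L_{n-2}A\to 0$ with a surjection on the right; the flatness induction itself survives over $\Z$ (a subgroup of a torsion-free group is torsion-free), but the $\tor$ computation does not go through without identifying these specific exact sequences of $\widetilde{A\otimes TV}$-modules and invoking $A\cdot L_nA=L_nA$.
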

\begin{proof}
If $M$ is a left $A$-module such that
\begin{equation}\label{am=m}
A\cdot M=M,
\end{equation}
 and $L(M)=\ker(A\otimes M\to M)$ is the kernel of the multiplication map, then we have a short exact sequence
\[
0\to L(M)\otimes T^{\ge n+1}V\to \widetilde{A\otimes TV}\otimes M\otimes V^{\otimes n}\to M\otimes T^{\ge n}V\to 0
\]
By definition, $L_nA=L^{n+1}A$. By \cite[Theorem 7.8 and Lemma 7.6]{qs}, $M=L_nA$ satisfies \eqref{am=m} for all $n$,
and moreover, it is a flat abelian group, by induction.  Thus for $n\ge 1$, the sequence
\[
0\to L_{n-1}(M)\otimes T^{\ge n+1}V\to \widetilde{A\otimes TV}\otimes L_{n-2}M\otimes V^{\otimes n}\to L_{n-2}M\otimes T^{\ge n}V\to 0
\]
is exact. Hence
\begin{align*}
\tor_i^{\widetilde{A\otimes TV}}(\Z,A\otimes TV)=& \tor_i^{\widetilde{A\otimes TV}}(\Z,L_{-1}A\otimes T^{\ge 1}V)\\
=& \tor_0^{\widetilde{A\otimes TV}}(\Z,L_{i-1}A\otimes T^{\ge i+1}V)\\
=& L_{i-1}A\otimes V^{\otimes i+1}
\end{align*}
\end{proof}
\begin{prop}\label{prop:tenso}
Let $A$ and $B$ be $K$-excisive rings, at least one of them flat as a $\Z$-module. Then $A\otimes B$
is $K$-excisive.
\end{prop}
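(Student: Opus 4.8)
The plan is to reduce the statement to a $\tor$-vanishing and then compute that $\tor$ by a spectral sequence that separates the two factors, along the lines that Lemmas \ref{lem:barflatres} and \ref{lem:freetenso} are tailored for. Since $\otimes$ is symmetric I may assume $B$ is flat over $\Z$, and by the theorem of Suslin recalled in Subsection \ref{subsec:appsusli} it suffices to prove $\tor^{\widetilde{A\otimes B}}_*(\Z, A\otimes B)=0$. First I would take the free cotriple resolution $F=F(A)\fibeq A$ of Subsection \ref{subsec:appsusli}, so that $F_n=T V_n$ with $V_n=\Z[\perp^n A]$ a free abelian group; then $F/F^2$ is the simplicial abelian group $V_\bullet$ (because $T V/(T V)^2=V$), and since $A$ is $K$-excisive, Corollary \ref{cor:freeres} gives $\pi_*(V_\bullet)=\tor^{\tilde A}_*(\Z,A)=0$, so $V_\bullet$ is an acyclic, bounded below complex of free abelian groups, hence contractible. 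As $B$ is flat, each $F_n\otimes B$ is a flat ring, and $F\otimes B\to A\otimes B$ is a weak equivalence (the complex $F$ consists of free abelian groups and $B$ is flat, so $\tor^\Z_{>0}(A,B)=0$); thus $F\otimes B\fibeq A\otimes B$ is a simplicial resolution by flat rings, and Lemma \ref{lem:barflatres} yields
\[
\tor^{\widetilde{A\otimes B}}_*(\Z, A\otimes B)=H_*\bigl(\tot C^{bar}(F\otimes B)\bigr).
\]

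Next I would run the spectral sequence of the double complex $([p],[q])\mapsto C^{bar}_q(F_p\otimes B)$, taking homology in the bar ($q$) direction first. Since $F_p\otimes B$ is flat, $H_q(C^{bar}(F_p\otimes B))=\tor^{\widetilde{F_p\otimes B}}_q(\Z,F_p\otimes B)$; and since $F_p\otimes B=B\otimes T V_p$ with $B$ both $K$-excisive and flat over $\Z$ and $V_p$ free, Lemma \ref{lem:freetenso} identifies this group, naturally in $V_p$, with $L_{q-1}B\otimes V_p^{\otimes q+1}$ (with the convention $L_{-1}B=B$). Hence $E^1_{\bullet,q}$ with its differential $d^1$ is the chain complex of the simplicial abelian group $[p]\mapsto L_{q-1}B\otimes V_p^{\otimes q+1}$, so that
\[
E^2_{p,q}=\pi_p\bigl(L_{q-1}B\otimes V_\bullet^{\otimes q+1}\bigr).
\]
By Eilenberg-Zilber the chain complex of $V_\bullet^{\otimes q+1}$ is quasi-isomorphic to the $(q+1)$-st tensor power of the chain complex of $V_\bullet$, which is contractible; hence $V_\bullet^{\otimes q+1}$ is acyclic, and therefore so is $L_{q-1}B\otimes V_\bullet^{\otimes q+1}$, since tensoring a contractible complex with anything remains contractible. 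Thus $E^2_{p,q}=0$ for all $p,q$; the double complex is first quadrant, so the spectral sequence converges and $H_*(\tot C^{bar}(F\otimes B))=0$. This gives $\tor^{\widetilde{A\otimes B}}_*(\Z,A\otimes B)=0$, so $A\otimes B$ is $K$-excisive, by Suslin's theorem.

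The main points requiring care are, first, that identifying $E^1_{\bullet,q}$ as the chain complex of a simplicial abelian group uses the naturality in $V$ of the isomorphism of Lemma \ref{lem:freetenso}, which follows from the naturality in $V$ of the short exact sequences appearing in its proof; and second, the Eilenberg-Zilber and K\"unneth bookkeeping showing that the levelwise tensor powers $V_\bullet^{\otimes q+1}$ stay acyclic. The remaining steps — the flatness verifications and the convergence of a first-quadrant spectral sequence — are routine, and the flatness hypothesis on one of $A,B$ enters only through the use of $B$ flat above (to make $F\otimes B$ a resolution by flat rings and to meet the hypothesis of Lemma \ref{lem:freetenso}). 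I expect the precise matching of the $d^1$-differential with the simplicial structure of $V_\bullet^{\otimes q+1}$ to be the chief place where one must be careful.
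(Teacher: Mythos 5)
Your proof is correct and is essentially the paper's argument: both keep the flat $K$-excisive factor intact, resolve the other factor by free rings $TV_\bullet$, apply Lemma \ref{lem:freetenso} levelwise to identify the $E^1$/$E^2$-term of the bar (equivalently, Proposition \ref{prop:specseq}) spectral sequence as $L_{\ast}(\text{flat factor})\otimes V_\bullet^{\otimes \ast+1}$, and kill it by Corollary \ref{cor:freeres} plus K\"unneth/Eilenberg--Zilber. The only differences are the symmetric choice of which factor is assumed flat and the indexing of the spectral sequence.
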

\begin{proof}
Assume $A$ is flat. Let $F\fibeq B$ be a simplicial resolution by free rings. Then $A\otimes F\fibeq A\otimes B$ is a resolution by flat rings. By Lemma \ref{lem:freetenso}, the second page of the spectral sequence of Proposition \ref{prop:specseq} is
\[
E^2_{p,q}=\pi_q(L_{p-1}A\otimes (F/F^2)^{\otimes p+1})=L_{p-1}A\otimes\pi_q((F/F^2)^{\otimes p+1})
\]
which equals zero by Corollary \ref{cor:freeres} and the K\"unneth formula, since $B$ is $K$-excisive by assumption,
and $L_{p-1}A$ is flat by Lemma \ref{lem:freetenso}.
\end{proof}

\subsection{Crossed products}

\bigskip

Let $G$ be a group and $\pi:\Z[G]\to \Z$ the augmentation $g\mapsto
1$. Put
\[
JG=\ker\pi
\]
\begin{lem}\label{lem:freecross} Let $V$ be a $\Z[G]$-module,
free as an abelian group. Then
\[
\tor^{\widetilde{TV\rtimes G}}_n(\Z,TV\rtimes G)=V^{\otimes
n+1}\otimes JG^{\otimes n}\otimes\Z[G] \qquad n\ge 0
\]
\end{lem}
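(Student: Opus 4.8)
The plan is to compute the groups $\tor^{\widetilde{TV\rtimes G}}_*(\Z,TV\rtimes G)$ through the bar complex, using the word-length grading on $TV$ together with an untwisting change of variables that separates the tensor-algebra and group-algebra contributions.

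Since $V$ is free as an abelian group, so is $TV\rtimes G=\bigoplus_{k\ge 1}V^{\otimes k}\otimes\Z[G]$, and hence, by the results recalled in Subsection \ref{subsec:barcomp}, $\tor^{\widetilde{TV\rtimes G}}_n(\Z,TV\rtimes G)=H_n(C^{bar}(TV\rtimes G))$, where $C^{bar}_n(TV\rtimes G)=(TV\rtimes G)^{\otimes n+1}$ with differential $b'=\sum_{i=0}^{n-1}(-1)^i d_i$, $d_i$ multiplying the $i$-th and $(i+1)$-st factors. The grading $(TV\rtimes G)=\bigoplus_{k\ge 1}(TV\rtimes G)_k$ with $(TV\rtimes G)_k=V^{\otimes k}\otimes\Z[G]$ is multiplicative, so $b'$ preserves total word length and $C^{bar}(TV\rtimes G)=\bigoplus_{N\ge 1}C^{(N)}$, with $C^{(N)}$ concentrated in degrees $0\le n\le N-1$ and $C^{(N)}_{N-1}=\big(V\otimes\Z[G]\big)^{\otimes N}$.

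Next I would introduce the untwisting isomorphism
\[
\Phi_n\big((a_0\rtimes g_0)\otimes\cdots\otimes(a_n\rtimes g_n)\big)=\big(a_0\otimes g_0(a_1)\otimes\cdots\otimes(g_0\cdots g_{n-1})(a_n)\big)\otimes\big(g_0\otimes g_0g_1\otimes\cdots\otimes g_0\cdots g_n\big),
\]
a $\Z$-linear bijection $C^{bar}_n(TV\rtimes G)\iso (TV)^{\otimes n+1}\otimes\Z[G]^{\otimes n+1}$ (the inverse recovers $g_i$ from the partial products and untwists the $a_i$), and then check that it carries $b'$ into $\sum_{i=0}^{n-1}(-1)^i(\mu_i\otimes\varepsilon_i)$, where $\mu_i$ concatenates the $i$-th and $(i+1)$-st factors on the $TV$-side and $\varepsilon_i\colon\Z[G]^{\otimes n+1}\to\Z[G]^{\otimes n}$ applies the augmentation to the $i$-th factor. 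Under $\Phi$ the word-length-$N$ summand becomes $V^{\otimes N}\otimes F^{(N)}$, where $F^{(N)}$ is the complex of $\Z$-free $\Z[G]$-modules with $F^{(N)}_n=\bigoplus\Z[G]^{\otimes n+1}$, the sum running over the compositions $(k_0,\dots,k_n)$ of $N$ into $n+1$ positive parts, and differential induced by merging adjacent parts and deleting the corresponding group-algebra factor via $\varepsilon$; flatness of $V^{\otimes N}$ gives $H_*(C^{(N)})=V^{\otimes N}\otimes H_*(F^{(N)})$.

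Finally, since $F^{(N)}$ has no incoming differential in top degree $N-1$, one has $H_{N-1}(F^{(N)})=\bigcap_{i=0}^{N-2}\ker\varepsilon_i$, which equals $JG^{\otimes N-1}\otimes\Z[G]$ by the splitting $\Z[G]=\Z\oplus JG$; for $n=N-1$ this is exactly $V^{\otimes n+1}\otimes JG^{\otimes n}\otimes\Z[G]$. The main obstacle is to prove that $F^{(N)}$ is exact in degrees $<N-1$. I would do this by refining the splitting $\Z[G]=\Z\oplus JG$ in each of the $n+1$ factors and noting that the differential preserves the number of $JG$-factors while strictly decreasing the number of $\Z$-factors; this expresses $F^{(N)}$ as a finite direct sum of subcomplexes $JG^{\otimes p}\otimes\mathcal{C}^{(N,p)}$, where $\mathcal{C}^{(N,p)}$ is a complex of free abelian groups spanned by compositions of $N$ carrying $p$ marked parts, with differentials given by signed sums of identity maps. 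One then identifies $\mathcal{C}^{(N,p)}$, up to reindexing, with a relative version of the augmented simplicial chain complex of a simplex, which is acyclic for $p\le N-2$ and has homology $\Z$ in top degree for $p\in\{N-1,N\}$ — the boundary cases of the contracting homotopy (when a marked part is already a singleton) being precisely what leaves the top homology nonzero. Reassembling the $N$-graded pieces yields $H_n(C^{bar}(TV\rtimes G))=V^{\otimes n+1}\otimes JG^{\otimes n}\otimes\Z[G]$. The careful verification that $\Phi$ is a chain isomorphism, and the combinatorial bookkeeping in the acyclicity step, are the parts requiring the most attention.
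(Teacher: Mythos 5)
Your proposal is correct, but it takes a genuinely different route from the paper. The paper never touches the bar complex of $TV\rtimes G$ directly: it exhibits, for each $n$ and each $\Z$-flat module $M$, a short exact sequence of $\widetilde{TV\rtimes G}$-modules
\[
0\to T^{\ge n+1}V\otimes JG\otimes M\to \widetilde{TV\rtimes G}\otimes V^{\otimes n}\otimes M\to T^{\ge n}V\otimes M\to 0
\]
whose middle term is extended (hence flat), and iterates it starting from $M=\Z[G]$ to dimension-shift:
$\tor_n^{\widetilde{TV\rtimes G}}(\Z,TV\rtimes G)=\tor_0^{\widetilde{TV\rtimes G}}(\Z,T^{\ge n+1}V\otimes JG^{\otimes n}\otimes\Z[G])=V^{\otimes n+1}\otimes JG^{\otimes n}\otimes\Z[G]$, exactly parallel to the proof of Lemma \ref{lem:freetenso}. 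Your argument instead grades $C^{bar}(TV\rtimes G)$ by word length, untwists the crossed product via $\Phi$, and reduces to an explicit combinatorial complex $F^{(N)}$; all the individual steps check out (the differential does preserve word length, $\Phi$ does conjugate $b'$ into $\sum(-1)^i\mu_i\otimes\varepsilon_i$ since merging $g_i,g_{i+1}$ deletes the partial product $p_i=g_0\cdots g_i$, and the top homology computation is right, including the observation that the untouched last factor accounts for the full $\Z[G]$ as the sum of the $p=N-1$ and $p=N$ contributions). The one step that genuinely needs care is the acyclicity of $F^{(N)}$ below top degree: after splitting each factor as $\Z\oplus JG$ you should further decompose by the \emph{fixed} set $T$ of marked cut points, whereupon the piece indexed by $T$ is (up to a diagonal sign change, which is harmless) the augmented chain complex of the simplex on the unmarked cut points, acyclic precisely when that vertex set is nonempty, i.e.\ when $p\le N-2$ (or $p=N-1$ with the last factor marked). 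With that made explicit your proof is complete. What your approach buys is that it makes the answer visible as the top homology of a finite complex and uniformly handles the interaction between $JG$ and $V$; what the paper's buys is brevity and the avoidance of any combinatorics, at the cost of having to guess the short exact sequence in advance.
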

\begin{proof}
Note that the subset
\[
V^{\otimes n}\oplus TV^{\ge n+1}\rtimes G\subset TV\rtimes G
\]
is a left ideal, and that the map
\begin{equation}\label{map:isog}
\begin{array}{c}
\widetilde{TV\rtimes G}\otimes V^{\otimes n}\to V^{\otimes n}\oplus TV^{\ge n+1}\rtimes G\\
1\otimes y\mapsto y\qquad\\
x\rtimes g\otimes y\mapsto xg(y)\rtimes g\\
\end{array}
\end{equation}
is a $\widetilde{TV\rtimes G}$-module isomorphism. Let $M$ be a $\Z[G]$-module. Consider the map
\[
V^{\otimes n}\otimes M\oplus (TV^{\ge n+1}\rtimes G)\otimes M\to TV^{\ge n}V\otimes M,\quad
(x,(y\rtimes g)\otimes m)\mapsto x+y\otimes gm
\]
Tensoring the isomorphism \eqref{map:isog} with $M$ and composing, we obtain a $\Z$-split surjective homomorphism of
$\widetilde{TV\rtimes G}$-modules
\[
\widetilde{TV\rtimes G}\otimes V^{\otimes n}\otimes M\onto TV^{\ge n}\otimes M
\]
This map fits in an exact sequence
\[
0\to T^{\ge n+1}V\otimes JG\otimes M\to \widetilde{TV\rtimes G}\otimes V^{\otimes n}\otimes M\to T^{\ge n}V\otimes M\to 0
\]
If $M$ is flat as an abelian group, then the middle term in the exact sequence above is a flat
$\widetilde{TV\rtimes G}$-module. Applying this successively, starting with $M=\Z[G]$, we obtain
\begin{align*}
\tor_n^{\widetilde{TV\rtimes G}}(\Z,TV\rtimes G)=&
\tor_0^{\widetilde{TV\rtimes G}}(\Z,TV^{\ge n+1}\otimes JG^{\otimes n}\otimes \Z[G])\\
=&V^{\otimes n+1}\otimes JG^{\otimes n}\otimes \Z[G]
\end{align*}
\end{proof}
\begin{prop}\label{prop:crossbar}
Let $G$ be a group and $A\in G-\ring$. Assume $A$ is $K$-excisive. Then $A\rtimes G$ is $K$-excisive.
\end{prop}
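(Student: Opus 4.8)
The plan is to verify the $\tor$-theoretic criterion recalled in \ref{subsec:appsusli}: by the theorems of Suslin, a ring $B$ is $K$-excisive if and only if $\tor^{\tilde B}_*(M,B)=0$ in every degree for $M=\Z$ and $M=\Z/m\Z$ ($m\ge 2$). So, knowing this vanishing for $A$, I must establish it for $A\rtimes G$. First I would choose a free simplicial resolution $F\fibeq A$ \emph{in} $G-\ring$, for instance the cotriple resolution attached to the forgetful functor $G-\ring\to\sets$ and its left adjoint $S\mapsto T(\Z[G\times S])$, with $G$ acting on the first factor; this is a resolution because forgetting to $\sets$ it acquires extra degeneracies, so the underlying augmented simplicial abelian group is contractible. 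Then $F_q=TW_q$ with $W_q=\Z[G\times S_q]$ a free $\Z[G]$-module, in particular free over $\Z$, and $F_q/F_q^2=W_q$. Since $-\rtimes G=-\otimes\Z[G]$ on underlying abelian groups and $\Z[G]$ is flat, $F\rtimes G\fibeq A\rtimes G$ is again a simplicial resolution, now by rings flat over $\Z$ (each $TW_q\rtimes G=TW_q\otimes\Z[G]$ is free over $\Z$).

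Next I would feed $F\rtimes G\fibeq A\rtimes G$ into the spectral sequence of Proposition \ref{prop:specseq}, obtaining a first quadrant spectral sequence $E^2_{p,q}=\pi_q(\tor^{\widetilde{F_\bullet\rtimes G}}_p(M,F_\bullet\rtimes G))\Rightarrow\tor^{\widetilde{A\rtimes G}}_{p+q}(M,A\rtimes G)$. Because each $F_q\rtimes G=TW_q\rtimes G$ is flat and $W_q$ is a $\Z[G]$-module free over $\Z$, Lemma \ref{lem:freecross} applies; and since the groups it produces, $W_q^{\otimes p+1}\otimes JG^{\otimes p}\otimes\Z[G]$, are free over $\Z$, the complex $C^{bar}(\cdot)\otimes M$ computes $\tor(M,-)$ with no universal-coefficient correction, so $\tor^{\widetilde{F_q\rtimes G}}_p(M,F_q\rtimes G)=M\otimes W_q^{\otimes p+1}\otimes JG^{\otimes p}\otimes\Z[G]$. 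Writing $W_\bullet=F_\bullet/F_\bullet^2$ for the simplicial abelian group $[q]\mapsto W_q$ and using that $JG^{\otimes p}\otimes\Z[G]$ is a fixed free abelian group, hence passes through $\pi_q$, this reads $E^2_{p,q}=\pi_q(M\otimes W_\bullet^{\otimes p+1})\otimes JG^{\otimes p}\otimes\Z[G]$.

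The final step is to see that this $E^2$-page is zero. By Corollary \ref{cor:freeres}, $\pi_*(W_\bullet)=\pi_*(F/F^2)=\tor^{\tilde A}_*(\Z,A)$, which vanishes in every degree because $A$ is $K$-excisive. Thus $W_\bullet$ is an acyclic simplicial abelian group with degreewise free terms, hence (its normalized complex being a bounded-below acyclic complex of free abelian groups) contractible; therefore $W_\bullet^{\otimes p+1}$ is contractible, and so is $M\otimes W_\bullet^{\otimes p+1}$, giving $E^2_{p,q}=0$ for all $p,q$. Consequently $\tor^{\widetilde{A\rtimes G}}_*(M,A\rtimes G)=0$ for $M=\Z$ and $M=\Z/m\Z$, so $A\rtimes G$ is $K$-excisive.

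I expect the only delicate point to be organizational rather than conceptual: one must pick $F\fibeq A$ so that it is simultaneously $G$-equivariant, ``free'' in the sense required by Corollary \ref{cor:freeres} (each $F_q$ a tensor algebra on a free abelian group), and such that $F_q/F_q^2$ is a $\Z[G]$-module free over $\Z$, so that Lemma \ref{lem:freecross} applies verbatim to $F_q\rtimes G$; the cotriple resolution does all three at once. The other thing to keep track of is that passing from $\Z$- to $\Z/m\Z$-coefficients introduces no $\tor^\Z_1$ terms, which is exactly why it matters that the groups occurring in Lemma \ref{lem:freecross} are free abelian. Everything else is formal manipulation of the spectral sequence of Proposition \ref{prop:specseq}.
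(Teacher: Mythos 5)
Your proof is correct and follows essentially the same route as the paper's: the paper also takes the cotriple resolution $F\fibeq A$ attached to the adjunction $X\mapsto T(\Z[G\times X])$ between $G-\ring$ and $\sets$, forms the $\Z$-flat resolution $F\rtimes G\fibeq A\rtimes G$, and then runs the spectral sequence of Proposition \ref{prop:specseq} exactly as in the proof of Proposition \ref{prop:tenso}, with Lemma \ref{lem:freecross} supplying the identification of the $E^2$-page. You merely spell out the details the paper leaves implicit (the extra degeneracies, the freeness of $F_q/F_q^2$ over $\Z[G]$, and the absence of universal-coefficient corrections for $M=\Z/m\Z$), all of which check out.
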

\begin{proof}
Note that the forgetful functor from $G-\ring$ to sets has a left adjoint; namely
$X\mapsto T(\Z[G\times X])$. Hence $A$ admits a free resolution $F\fibeq A$ such that each $F_n$ is a $G$-ring; for example we may take the cotriple resolution associated to the adjoint pair just described. Since $F$ is a simplicial $G$-ring, we can take its crossed product with $G$, to obtain a $\Z$-flat resolution $F\rtimes G\fibeq A\rtimes G$. Now proceed as in the proof of Proposition \ref{prop:tenso}, using Lemma \ref{lem:freecross}.
\end{proof}

\begin{prop}\label{prop:crossh}
Let $G$ be a group and $A\in G-\ring$. Assume $A$ is $H$-unital. Then $A\rtimes G$ is $H$-unital.
\end{prop}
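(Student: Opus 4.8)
By definition, $A\rtimes G$ is $H$-unital precisely when $C^{bar}(A\rtimes G)\otimes V$ is acyclic for every abelian group $V$; the plan is to establish this, reducing it to the same property of $A$. The starting observation is that $A\rtimes G=\bigoplus_{g\in G}(A\rtimes\{g\})$ is $G$-graded, with $(A\rtimes\{g\})(A\rtimes\{h\})\subseteq A\rtimes\{gh\}$, and that the bar differential $b'$ preserves the total $G$-degree $g_0g_1\cdots g_n$ of an elementary tensor $a_0\rtimes g_0\otimes\cdots\otimes a_n\rtimes g_n$. Hence $C^{bar}(A\rtimes G)$ splits as a direct sum of subcomplexes indexed by $g\in G$, and it suffices to treat one summand at a time.

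Fixing a summand, I would perform the standard untwisting: replace the group coordinates $(g_0,\dots,g_n)$ by their partial products $p_i=g_0\cdots g_i$ (a bijection onto $G^n$ once the total degree is fixed) and correspondingly replace $a_i$ by $\gamma_i=p_{i-1}(a_i)$ (again a bijection, $p_{i-1}$ acting by a ring automorphism). A direct computation shows the summand becomes, independently of $g$, the complex $D_\bullet$ with $D_n=A^{\otimes n+1}\otimes\Z[G]^{\otimes n}$ whose differential multiplies $\gamma_i\gamma_{i+1}$ in $A$ and deletes $p_i$ from the group string. Now $D_\bullet$ carries natural chain maps $\sigma\colon C^{bar}(A)\to D_\bullet$ (insert $1\in G$ in every slot) and $\epsilon\colon D_\bullet\to C^{bar}(A)$ (augment every $\Z[G]$-factor) with $\epsilon\sigma=\mathrm{id}$, so $D_\bullet\cong C^{bar}(A)\oplus\ker\epsilon$ naturally as complexes; after $\otimes V$ the summand $C^{bar}(A)\otimes V$ is acyclic by hypothesis, and one is left to show $\ker\epsilon\otimes V$ is acyclic. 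For this I would filter $D_\bullet$ by the number of slots carrying $1\in G$: in the associated graded the ``frozen'' $1$-slots cut the string $\gamma_0,\dots,\gamma_n$ into blocks, and each block is again a merge-and-delete complex, but now built on $A$ and on the \emph{free} abelian group $\Z[G\smallsetminus\{1\}]$, whose pure group direction is the bar complex of a ring with a one-sided unit, hence contractible; iterating the $\sigma/\epsilon$ splitting peels off from each block a copy of $C^{bar}(A)$ together with contractible factors. Thus the associated graded, hence $\ker\epsilon$, is acyclic after $\otimes V$, and --- the filtration being exhaustive and finite in each degree, so that its spectral sequence converges --- so is $C^{bar}(A\rtimes G)\otimes V$.

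The plan parallels the proof of Proposition~\ref{prop:crossbar}, and the main obstacle is precisely the step where that proof passes to a flat resolution: here neither $A$ nor $A\rtimes G$ need be flat over $\Z$, so $C^{bar}(A\rtimes G)\otimes V$ cannot be computed by resolving, and one genuinely needs the acyclicity of $C^{bar}(A)\otimes W$ for \emph{all} abelian $W$, not merely its instance $W=\Z$. The delicate point in the d\'evissage is therefore to organise the splittings so that $V$ (or the coefficient group $W$) is ever only tensored with a genuine $C^{bar}(A)$-factor and with free abelian groups or contractible complexes, never with a non-flat tensor power of $A$. One could also frame the reduction around the ring-split, hence pure, extension $0\to A\rtimes G\to\widetilde{A}\rtimes G\to\Z[G]\to0$ together with Wodzicki's characterization of $H$-unitality (Theorem~\ref{thm:pure}), but the essential combinatorial input --- the graded, untwisted description of the bar complex of the crossed product --- is the same.
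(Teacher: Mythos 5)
Your argument takes a genuinely different route from the paper's. The paper never touches $C^{bar}(A\rtimes G)$ directly: it applies the bar resolution $E(G,-)$ of the group dimensionwise to $C^{bar}(A)$, extracts from the resulting double complex a pure pseudo-free resolution of the kernel of the linear augmentation $A\rtimes G\to A$, checks that the multiplication $(A\rtimes G)^{\otimes 2}\to A\rtimes G$ is pure surjective, and then quotes condition d) of \cite[Theorem 7.8]{qs} to conclude $H$-unitality. You instead compute $C^{bar}(A\rtimes G)\otimes V$ head-on. Your skeleton is sound: the bar differential does preserve the total degree $g_0\cdots g_n$, so the complex splits over $\con$--free conjugation issues do not even arise, it splits over $g\in G$ itself; the untwisting by partial products is correct and identifies each summand with $D_n=A^{\otimes n+1}\otimes\Z[G]^{\otimes n}$, whose $i$-th face multiplies $\gamma_i\gamma_{i+1}$ and applies the augmentation $\Z[G]\to\Z$ to the $i$-th group slot; and the splitting off of $C^{bar}(A)$ and the filtration by the number of nontrivial group slots are legitimate (the filtration is finite in each degree, so its spectral sequence converges). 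Your route is more self-contained, avoiding the Suslin--Wodzicki pseudo-free-resolution machinery at the price of doing its combinatorics by hand.

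One step needs repair. In the associated graded of your filtration the frozen slots carry \emph{no} residual differential, so there is no ``pure group direction'' and no ``bar complex of a ring with a one-sided unit'' anywhere; the weight-$k$ piece is (up to shift) $T^{\otimes k+1}\otimes \Z[G\smallsetminus\{1\}]^{\otimes k}$, where $T=C^{bar}(A)[-1]$ and the differential is the tensor-product differential of the internal bar differentials of the $k+1$ blocks. What you must prove is that $T^{\otimes k+1}\otimes F\otimes V$ is acyclic for $F$ free, and your guiding principle --- never let $V$ meet a non-flat tensor power of $A$ --- cannot be realized, since these groups visibly contain such powers. The correct fix is the opposite: exploit the full strength of $H$-unitality. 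Since $T\otimes W$ is acyclic for \emph{every} abelian group $W$, the column filtration of the bounded-below double complex $T_p\otimes C_q$ shows that $T\otimes C$ is acyclic for every bounded-below complex $C$, with no K\"unneth or flatness input; applying this with $C=T^{\otimes k}\otimes F\otimes V$ (whose terms are absorbed into the coefficient group $W$) closes the argument. With that lemma made explicit your proof is complete. The closing aside is not a real alternative: purity of $0\to A\rtimes G\to\tilde{A}\rtimes G\to\Z[G]\to 0$ together with Theorem \ref{thm:pure} does not let you deduce $H$-unitality of the ideal from that of the other two terms.
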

\begin{proof} The bar resolution $E(G,M)$ (\cite[\S6.5]{chubu}) is functorial on the $G$-module $M$. Applying it
dimensionwise to $C^{bar}(A)$, we obtain a simplicial chain
complex\\
$E(G,C^{bar}(A))$. We may view the latter as a double chain complex
with $A^{\otimes q+1}\otimes\Z[G^{p+1}]$ in the $(p,q)$ spot.
Removing the first row and the first column yields a double complex
whose total chain complex we shall call $M[-1]$. Note $M$ is a chain
complex of $A\rtimes G$-modules and homomorphisms. We have $M_0\cong
(A\rtimes G)^{\otimes 2}$, and the multiplication map $(A\rtimes
G)^{\otimes 2}\to A\rtimes G$ induces a surjection onto the kernel
$L$ of the augmentation $A\rtimes G\to A$, $a\rtimes g\to a$. Note
that the hypothesis that $A$ is $H$-unital implies that the
augmented complex
\begin{equation}\label{seq:pseudo}
\dots\to M_1\to M_0\to L
\end{equation}
 is pure acyclic. Since each $M_n$ is extended, \eqref{seq:pseudo} is a pure pseudo-free resolution
 in the terminology of \cite[7.7]{qs}. On the other hand, because $A$ is $H$-unital, the multiplication map
 $\mu:A^{\otimes 2}\to A$ is pure surjective; thus $\mu\circ (id\otimes g)$ is pure surjective for each $g\in G$.
 It follows from this that the multiplication map $(A\rtimes G)^{\otimes 2}\to A\rtimes G$ is pure surjective. We
 have shown that $A\rtimes G$ satisfies condition d) of \cite[Theorem 7.8]{qs}, which by \emph{loc. cit.} implies
 that $A\rtimes G$ is $H$-unital.
\end{proof}

\bibliographystyle{plain}

\end{document}